\documentclass[12pt,reqno]{amsart}
\usepackage{latexsym,amsmath,amssymb, amsthm, mathscinet}
\usepackage{cases, verbatim}

\renewcommand{\baselinestretch}{1.05}

\usepackage{amsfonts,amsmath,amsthm, amssymb}
\usepackage{cases}
\usepackage[usenames]{color}
\usepackage{enumerate}

\newtheorem{theorem}{Theorem}[section]
\newtheorem{lemma}{Lemma}[section]
\newtheorem{definition}{Definition}[section]
\newtheorem{proposition}{Proposition}[section]
\newtheorem{remark}{Remark}[section]
\newtheorem{corollary}{Corollary}[section]
\newtheorem{example}{Example}[section]
\numberwithin{equation}{section}

\newtheorem{TheoremX}{Theorem}

\newcommand{\R}{\mathbb{R}}
\newcommand{\eps}{\varepsilon}
\newcommand{\Proof}{\begin{proof}}
\newcommand{\End}{\end{proof}}

\newcommand{\cS}{\mathcal{S}^{c,-}}
\newcommand{\cSS}{\mathcal{SS}^c}
\newcommand{\Tc}{T^{c,-}_t}

\newcommand{\Bc}{\mathcal{B}^{c,-}}
\newcommand{\chc}{(CH$_c$)\,\,}
\newcommand{\spt}{\text{supp}\,}

\begin{document}
\title[Inhomogeneous discounted Hamilton-Jacobi equations]{On the inhomogeneous discounted Hamilton-Jacobi equations}

\author{Liang Jin}
\address[L. Jin]{
School of Mathematics and Statistics, Nanjing University of Science and Technology, Nanjing 210094, China}
\email{jl@njust.edu.cn}

\author{Jun Yan}
\address[J. Yan]{
School of Mathematical Sciences, Fudan University, Shanghai 200433, China}
\email{yanjun@fudan.edu.cn}

\author{Kai Zhao}
\address[K. Zhao]{
 School of Mathematical Sciences, Tongji University, Key Laboratory of Intelligent Computing and Applications (Ministry of Education),Shanghai 200433, China}
\email{zhaokai93@tongji.edu.cn}

\makeatletter
\@namedef{subjclassname@2020}{\textup{2020} Mathematics Subject Classification}
\makeatother

\date{\today}

\keywords{ Hamilton--Jacobi equations; viscosity solutions; weak KAM theory; Lyapunov stability; Large-time behavior}
\subjclass[2020]{35F21, 37J51, 35D40}

\begin{abstract}
  In this paper, we study the family of inhomogeneous discounted Hamilton-Jacobi equations
  \begin{equation}\label{hjs1}
  \lambda(x)u+h(x,d_x u)=c \quad \tag{$\ast$}
  \end{equation}
  on a closed manifold $M$ with a non-identically vanishing discount factor $\lambda(x)$. There is a critical value $c_0\in[-\infty,\infty)$ such that \eqref{hjs1} admits a viscosity solution if $c>c_0$ and no solution if $c<c_0$. Inspired by the recent development \cite{RWY} on the stability theory of viscosity solution, we show that the equation admits an asymptotically stable solution if and only if $c>c_0$. In this case, we determine the basin of the stable solution and investigate the long time behavior of the solution semigroup associated to \eqref{hjs1}. In particular, we relate the lowest convergence rate to the integral of $\lambda$ over Mather measures, which leads to an asymptotic behavior of Mather measures when $c$ goes to infinity. Assume $c\geqslant c_0$ and the equation admits a solution, we classify ergodic Mather measures and locate their distribution in the phase space.
\end{abstract}

\maketitle
%
\tableofcontents


\section{Introduction}
\setcounter{equation}{0}
\setcounter{footnote}{0}

Let $M$ be a smooth, connected, compact Riemannian manifold without boundary. As usual, $T^\ast M$ denotes the cotangent bundle of $M$ and $|\cdot|_x$ stands for the dual norm induced by the Riemannian metric on $T^\ast_xM$; $T^\ast M\times\R$ denotes the $1$-jets bundle over $M$ with the global coordinates $(x,p,u)$, where $(x,p)$ is the usual Darboux coordinate on $T^\ast M$ and $u\in\R$. We normalize this metric so that diam$(M)=1$ and consider the family of stationary Hamilton-Jacobi equations
\begin{equation}\label{HJs}
\lambda(x)u+h(x,d_xu)=c,\quad x\in M \quad \tag{dS$_c$}
\end{equation}
and the corresponding evolutionary equations
\begin{equation}\label{HJe}\tag{dE$_c$}
\begin{cases}
\partial_t U+\lambda(x)U+h(x,\partial_x U)=c,\quad\, (x,t)\in M\times(0,+\infty)\\
\hspace{7.7em}U(x,0)\,=\varphi(x),\hspace{1.3em}x\in M
\end{cases}
\end{equation}
parametrized by the righthand constant $c\in\R$. The above two equations are called \textbf{discounted Hamilton-Jacobi equations}, in which $h:T^\ast M\rightarrow\R$ and $\lambda:M\rightarrow\R$, called the \textbf{discount factor}, are $C^\infty$ functions satisfying
\begin{enumerate}
	\item[(H1)] (Convexity) the Hessian $\frac{\partial^2 h}{\partial p^2}$ is positive definite for all $(x,p)\in T^*M$;
	\item[(H2)] (Superlinearity) for every $K\geqslant 0$, there is $A(K)>0$ such that $h(x,p)\geqslant K|p|_x-A(K)$;
	\item[(H3)] (Nondegeneracy) there exists $x_0\in M$ such that $\lambda(x_0)\neq0$.
\end{enumerate}
The first two assumptions indicate that $h$ is a Tonelli Hamiltonian and the last assumption just asserts that the term $\lambda(x)u$ genuinely occurs in the equation. Due to (H3), we have
\[
\text{either}\quad\textbf{(H3)$_+$ $\displaystyle\,\,\max_{x\in M}\lambda(x)>0$}\hspace{1em}\text{or}\quad\textbf{(H3)$_-$$\displaystyle\,\, \min_{x\in M}\lambda(x)<0$}.
\]
In the following context, we use $C(M,\R)$ to denote the Banach space of continuous functions on $M$ equipped with the standard sup-norm $\|\cdot\|_\infty$, the solutions to \eqref{HJs} and \eqref{HJe} are always understood in \textbf{viscosity sense}.

\subsection{Brief summary of related works}
Discounted Hamilton-Jacobi equations \eqref{HJs} appeared in classical literatures on the theory of viscosity solutions \cite{CL, Crandall_Evans_Lions1984} and homogenization \cite{LPV_Hom} where the discount factor is \textbf{homogeneous}, that is $\lambda(x)\equiv\lambda$ and positive. The discounted term $\lambda u$ guarantee the validity of the comparison principle and Perron's method for stationary equations, thus implying the existence and uniqueness of the solution, see \cite[Page 22, 33]{Tran} for a modern treatment. It is well-known that the solution semigroup of \eqref{HJe} $C^0$ converges to the stationary solution exponentially for any continuous initial data. More refined convergence of the solution semigroup was discussed in \cite{AS}.

\vspace{0.5em}
Based on the well-posedness of \eqref{HJs}, \cite{LPV_Hom} developed a technique to solve the ergodic problem of classical Hamilton-Jacobi equations: the authors showed that there is a unique constant $c(h)\in\R$ such that the unique solution $u_\lambda$ of \eqref{HJs} with $c=c(h)$ is uniformly bounded and Lipschitz in $\lambda$. By sending $\lambda$ to $0$, any limit point of $\{u_\lambda\}_{\lambda>0}$ gives a solution of $h(x,d_xu)=c(h)$. By adapting weak KAM theory \cite{Fathi_book} to \eqref{HJs}, it is proved in \cite{DFIZ} that $u_\lambda$ actually converges to a specific solution $u_0$. This process is now called \textbf{vanishing discount limit} and attracts much research interests in last ten years, for instance \cite{CCIZ,CFZZ,IJ,IMT1,IMT2,WYZ,Z,DNYZ}.

\vspace{0.5em}
The characteristic systems of homogeneous Hamilton-Jacobi equations define the so-called \textbf{conformally symplectic} vector fields on $T^\ast M$: the Lie derivative of the canonical 2-form $\omega_{std}=dp\wedge dx$ with respect to such vector fields is $\lambda\,\omega_{std}$. The dynamic of the phase flows of such vector fields is quite different from the Hamiltonian dynamics and a mixture of dissipative and conservative phenomena. For instance, \cite{MS} investigated the systems from the perspective of Aubry-Mather theory and proved the existence of global attractors; the invariant submanifolds of the system was explored in \cite{AF}; the higher dimensional Birkhoff attractors of the systems were studied via the symplectic approach in \cite{AHV}.

\vspace{0.5em}
In this paper, we shall mainly focus on the \textbf{inhomogeneous} case, that is, the discount factor is not constant. From the PDE aspects, if the discount factor is positive everywhere, the well-posedness theory and the large time behavior of the solution semigroup share similarities with the positive, homogeneous case: let $T_t^{c,-}$ be the solution semigroup associated to \ref{HJe} and $\mu=\min_{x\in M}\lambda(x)>0$, the equation \eqref{HJs} has a unique solution $u_-$ such that for any $\varphi \in C(M,\mathbb{R})$ and $t>0$,
$$
\|T_t^{c,-}\varphi- u_-\|_\infty\leqslant e^{-\mu t}\|\varphi- u_-\|_\infty.
$$
This is also why the equations \eqref{HJs}-\eqref{HJe} did not receive much attention earlier. The situation changed as a new trend emerged, generalizing the problem of vanishing discount limit to a \textbf{selection problem},  for instance \cite{GMT,MT,NYZ}.  In general, the equation $h(x,d_xu)=c(h)$ admits multiple solutions, determined by different static classes. One considers the family of equations
$$
\varepsilon\lambda(x)u+h(x,d_xu)=c(h).
$$
The solution structure of the above equation, which is presented below, helps to pick up a special solution $u_\varepsilon$. By the similar process $\lim_{\varepsilon\rightarrow0}u_\varepsilon$, the inhomogeneous discount factor is used to choose the favored solution among others.

\vspace{0.5em}
Another motivation to study the inhomogeneous equations \eqref{HJs}-\eqref{HJe} comes from the wave-particle duality. That is, to explore the associated characteristic systems, which are the \textbf{contact Hamiltonian systems} defined on the 1-jet bundle of $M$. Such systems are natural extension of conformally symplectic systems and become hotspot in the research of thermodynamics and contact geometry. Their variational principle was established in \cite{WWY1,WWY2} and \cite{CCWY,CCJWY} recently, enabling the study of \eqref{HJs} via Aubry-Mather theory. \cite{WWY3} did first work in this direction, assuming the discount factor is positive. The attempt to explore \eqref{HJs}-\eqref{HJe} in more general cases forces us to consider discount factor \textbf{changing signs} on $M$. If both \textbf{(H3)$_\pm$} hold, \cite{JYZ} defined the critical value $c_0\in\R$ for \eqref{HJs} such that the equation
\begin{itemize}
  \item admits a solution if and only if $c\geqslant c_0$,
  \item admits at least two solutions if $c>c_0$.
\end{itemize}
In \cite{NW}, the authors continued to investigate the solution structure of such equations and the large time behavior of the solution semigroup. We refer to Section 1.2 and 3.3 for detailed conclusions.

\vspace{0.5em}
From the viewpoint of control theory, \eqref{HJs} naturally appeared as the Bellman equation for optimal control problems including linear quadratic regulator and Ramsey-Cass-Koopmans growth model with infinite horizon in economics and engineering, where the running costs decay exponentially in time at a certain rate $\lambda$. This kind of decay leads to boundedness of the total costs and the value function of the problem corresponds to the solution of the Bellman equation. One can expect the equations \eqref{HJs}-\eqref{HJe} have potential for being \textbf{suitable models} in areas including optimal control/transportation theory and economy.

\subsection{Preliminary definitions and observations}
In this part, we recall some definitions and observations that are necessary for the formal statement of our main result. To proceed, we use $\{T^{c,-}_t\}_{t\geqslant0}$ to denote the (backward) \textbf{solution semigroup} associated to \eqref{HJe}: $\{T^{c,-}_t\}_{t\geqslant0}$ continuously act on $C(M,\R)$ such that $(x,t)\mapsto T^{c,-}_t\varphi(x)$ is the \textbf{unique} solution to \eqref{HJe} on $M\times[0,+\infty)$. Thus $\{T^{c,-}_t\}_{t\geqslant0}$ defines a semiflow on $C(M,\R)$. From this perspective, solutions to \eqref{HJs} are identified with the set of stationary points $\mathcal{S}^{c,-}:=\{u_-\in C(M,\R):T^{c,-}_t u_-=u_-, \forall t\geqslant0\}$ of this semiflow.
In the recent works \cite{RWY,RWY2}, the authors succeeded in generalizing the ideas of stability theory for equilibria of finite dimensional
ODE systems to the semiflow $\{T^{c,-}_t\}_{t\geqslant0}$ defined on the infinite dimensional space $C(M,\R)$. Indeed, they proposed the following interesting

\begin{definition}\cite[Definition 1.1]{RWY}\label{L-stable/A-stable}
$u_-\in\mathcal{S}^{c,-}$ is said \textbf{Lyapunov stable} if for every $\varepsilon>0$, there is some $\delta>0$ such that for any $\varphi\in C(M,\R)$ with $\|\varphi-u_-\|_\infty<\delta$, $\|T^{c,-}_t\varphi-u_-\|_\infty<\varepsilon$ for every $t\geqslant0$. Moreover, $u_-$ is \textbf{asymptotically stable} if it is Lyapunov stable and there is some $\delta>0$ such that for any $\varphi\in C(M,\R)$ with $\|\varphi-u_-\|_\infty<\delta, \lim_{t\rightarrow+\infty}\|T^{c,-}_t\varphi-u_-\|_\infty=0$.
\end{definition}

The second definition concerns the relation between the solvability of \eqref{HJs} and the righthand constant $c$.
\begin{definition}\cite[Theorem A]{CIP}-\cite[Section 2.1]{JYZ}
Assume $h(x,p)$ satisfies (H1)-(H2), let
\begin{equation}\label{dcv}
c_0:=\inf_{\psi\in C^\infty(M,\R)}\sup_{x\in M} \left[\lambda(x)\psi(x)+ h(x,d_x\psi(x))\right]\in[-\infty,+\infty).
\end{equation}
\end{definition}

\begin{remark}
The value $c_0$ is unchanged if we replaced $C^\infty(M,\R)$ in the formula \eqref{dcv} by $C^2(M,\R)$ or $C^{1,1}(M,\R)$, see for instance \cite{Be,FS}. Assume $\lambda\equiv 0$, by \cite[Theorem A]{CIP}, $c_0\in\R$ is the \textbf{Ma\~{n}\'{e} critical value} and by \cite{Fathi_book}, \eqref{HJs} admits a solution if and only if $c=c_0$. We shall abuse to call $c_0$ the \textbf{critical value} in our setting.
\end{remark}

In view of the definition of $c_0$, a simple but fundamental consequence is
\begin{proposition}\label{prop:1}
Assume (H1)-(H2), then
\begin{enumerate}[(1)]
  \item If \eqref{HJs} admits a subsolution, then $c\geqslant c_0$.

  \item $c_0\in\R$ if and only if $\lambda(x)$ vanishes somewhere on $M$.

  \item In addition, if (H3) holds, then \eqref{HJs} admits a solution if and only if it admits a subsolution. In particular, if $c>c_0$, then \eqref{HJs} admits a solution.
\end{enumerate}
\end{proposition}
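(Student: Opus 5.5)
Part (1) is proved by regularizing the subsolution, Part (2) by choosing suitable test functions in \eqref{dcv} and by a minimum-point argument, and Part (3) by Perron's method between a subsolution and a supersolution.

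\textbf{Part (1).} Let $v$ be a viscosity subsolution of \eqref{HJs}. By (H2), $h(x,d_xv)\leqslant c-\lambda v$ is bounded, so $v$ is Lipschitz. I will regularize $v$ to show that for every $\eps>0$ there is $\psi\in C^\infty(M,\R)$ with $\lambda\psi+h(x,d\psi)\leqslant c+\eps$; this gives $c_0\leqslant c+\eps$.

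\begin{itemize}
\item Since $h$ is convex in $p$, $v$ satisfies $\lambda(x)v(x)+h(x,p)\leqslant c$ for every $p$ in the Clarke differential at a.e.\ $x$.
\item Use a partition of unity and mollify in charts to get $v_\eps$. The quantity $\lambda v$ changes by $O(\|\lambda\|_{C^1}\cdot$mollification radius$)$.
\item By Jensen's inequality with convexity of $h$, plus uniform continuity of $h$ on compact sets (the gradients are bounded), we get $\lambda v_\eps+h(x,dv_\eps)\leqslant c+\eps$.
\item This is standard; the paper's remark already allows $C^{1,1}$ or $C^2$ test functions, so any such regularization suffices.
\end{itemize}

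\textbf{Part (2).}
\begin{itemize}
\item If $\lambda$ never vanishes, by continuity it has a fixed sign, say $\lambda\geqslant\mu>0$. Take constants $\psi\equiv -k$. Then $\sup[\lambda\psi+h(x,0)]\leqslant -\mu k+\max h(\cdot,0)\to-\infty$, so $c_0=-\infty$. The case $\lambda\leqslant-\mu$ is the same with $\psi\equiv k$.
\item Conversely, suppose $\lambda(x_1)=0$. For any smooth $\psi$, let $\psi$ attain its minimum at some point; the natural route, however, is the point $x_1$ itself. At $x_1$ we have $\lambda\psi+h(x,d\psi)=h(x_1,d\psi(x_1))\geqslant\min_p h(x_1,p)$, which is finite by (H2).
\item Hence $\sup_x[\lambda\psi+h]\geqslant\min_p h(x_1,p)>-\infty$ for every $\psi$, so $c_0\in\R$. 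The upper bound $c_0<+\infty$ follows from $\psi\equiv0$.
\end{itemize}

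\textbf{Part (3).} One direction is trivial. For the converse, assume (H3) and let $v$ be a (Lipschitz) subsolution. I will construct a supersolution $w\geqslant v$ and apply Perron's method between them.

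\begin{itemize}
\item \emph{Perron's method.} It needs the comparison principle, which can fail when $\lambda$ changes sign. So instead of relying on comparison I will use the semigroup: show that $t\mapsto T^{c,-}_tv$ is nondecreasing, since $v$ is a subsolution.
\item \emph{Boundedness.} Then show $T^{c,-}_tv$ is bounded above uniformly in $t$, and use equi-Lipschitz estimates to get a limit that is a fixed point, hence a solution.
\item \emph{Upper bound, case (H3)$_+$.} Near $x_0$ with $\lambda>0$, trajectories spending time near $x_0$ make large values of $U$ decrease. I plan to use the variational (Herglotz) representation of $T^{c,-}_t$ from \cite{WWY1,WWY2,CCWY}. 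Comparing with curves that travel to $x_0$ and stay there gives a bound $T^{c,-}_tv\leqslant C$.
\item \emph{Upper bound, case (H3)$_-$ only.} Here I would instead use the forward semigroup $T^{c,+}_t$ and argue symmetrically, or invoke \cite{JYZ}/\cite{NW}.
\end{itemize}

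The "in particular" statement then follows: if $c>c_0$, the definition of $c_0$ gives a smooth $\psi$ with $\lambda\psi+h(x,d\psi)<c$, which is a subsolution.

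\textbf{Main obstacle.} I expect the uniform upper bound on $T^{c,-}_tv$ to be the hardest step, because comparison is unavailable when $\lambda$ changes sign. I would try the explicit calibrating-curve estimate first.
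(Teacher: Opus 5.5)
You follow the paper's route in Parts (1) and (2), and in Part (3) under \textbf{(H3)$_+$}. That means smoothing the Lipschitz subsolution as in \cite[Lemma 2.2]{DFIZ}, and using constant test functions and evaluation at a zero of $\lambda$. It also means showing that $t\mapsto T^{c,-}_tv$ is nondecreasing and bounded above independently of $t$, using curves that sit at a point where $\lambda>0$ and move to $x$ only at the end (Lemma~\ref{lem:estimation}).

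\textbf{The gap.} It lies in the remaining case, $\lambda\leqslant0$ with $\min\lambda<0$. There $T^{c,-}_tv$ really can blow up: for $\lambda\equiv-1$, $h=\frac12|p|^2$, $c=0$, the subsolution $v\equiv1$ gives $T^{c,-}_tv\equiv e^t$. So you pass to $T^{c,+}_t$. But the symmetric argument only yields $v_+=\lim_{t\to+\infty}T^{c,+}_tv\in\mathcal{S}^{c,+}$, that is, a function such that $-v_+$ solves \eqref{HJs'}. Viscosity solutions are not preserved under $u\mapsto-u$, so such a $v_+$ is a subsolution of \eqref{HJs} but in general not a viscosity solution of it. The statement is therefore not yet proved in this case. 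Falling back on \cite{JYZ} or \cite{NW} does not help: those works treat the sign-changing case $(\pm)$, which your \textbf{(H3)$_+$} argument already covers, not the case $\lambda\leqslant0$.

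\textbf{How to close it.} The missing idea is the passage from $\mathcal{S}^{c,+}$ to $\cS$. The paper takes this from Proposition~\ref{mono}(4): $\lim_{t\to+\infty}T^{c,-}_tv_+$ exists and lies in $\cS$. With your own tools it goes as follows.
\begin{enumerate}
\item Since $T^{c,+}_tv_+=v_+$, Proposition~\ref{mono} shows $v_+$ is a subsolution of \eqref{HJs}. Hence $t\mapsto T^{c,-}_tv_+$ is nondecreasing and bounded below by $v_+$.
\item Take $y\in M$ and a forward calibrated curve $\gamma_+$ of $v_+$ with $\gamma_+(0)=y$. Along $\gamma_+$, the Herglotz ODE with initial value $v_+(y)$ is solved by $s\mapsto v_+(\gamma_+(s))$. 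Hence $T^{c,-}_tv_+(\gamma_+(t))\leqslant v_+(\gamma_+(t))$, so $x_t:=\gamma_+(t)$ is a contact point: $T^{c,-}_tv_+(x_t)=v_+(x_t)$.
\item By the semigroup property and monotonicity of $h^c_{z,u}$ in $u$, for all $t\geqslant0$ and $x\in M$,
\[
T^{c,-}_{t+1}v_+(x)\leqslant h^c_{x_t,T^{c,-}_tv_+(x_t)}(x,1)\leqslant h^c_{x_t,\|v_+\|_\infty}(x,1)\leqslant\max_{z,x'\in M}h^c_{z,\|v_+\|_\infty}(x',1).
\]
This bound is independent of $t$.
\item Proposition~\ref{mono}(2), or your monotone plus equi-Lipschitz argument, then produces an element of $\cS$.
\end{enumerate}
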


\begin{remark}
For the first statement: if $c=c_0$ and $\lambda(x)$ changes signs on $M$, then it is proved in \cite{JYZ} that \eqref{HJs} always admits subsolution; however, if $c=c_0$ and $\lambda(x)\geqslant0$, then \eqref{HJs} may admit no subsolution. We give another characterization of the solvability of \eqref{HJs} with non-negative discount factor in Appendix C.
\end{remark}

\begin{remark}\label{thm:4}
As the action of $T^{c,-}_t$ on the subsolution is increasing in $t$ (see Proposition \ref{mono}), we provide a dynamical interpretation of (1) in Appendix \ref{proof:thm4}, concluding that for $c<c_0, \lim_{t\to +\infty} T_t^{c,-} \varphi=-\infty$ for any $\varphi\in C(M,\R)$.
\end{remark}

\begin{remark}
The last conclusion presents a difference from the degenerate case, that is $\lambda\equiv0$, where for any $c>c_0$, there are many subsolutions but no solution to \eqref{HJs}.
\end{remark}

The formulation of the next observation relies on a counterpart of $\{T^{c,-}_t\}_{t\geqslant0}$. To define it, we use $\breve{T}^{c,-}_t\varphi$ to denote the solution semigroup associated to the Cauchy problem
\begin{equation}\label{HJe'}\tag{dE$'_c$}
\begin{cases}
\partial_t U-\lambda(x)U+h(x,-\partial_x U)=c,\quad\, (x,t)\in M\times(0,+\infty)\\
\hspace{8.6em}U(x,0)\,=\varphi(x),\hspace{1.2em}x\in M.
\end{cases}
\end{equation}
The \textbf{forward solution semigroup} associated to \eqref{HJe} is $T^{c,+}_t\varphi:=-\breve{T}^{c,-}_t(-\varphi)$. Like its twin brother, $\{T^{c,+}_t\}_{t\geqslant0}$ also defines a semiflow on $C(M,\R)$. The set of stationary points of this semiflow are $\mathcal{S}^{c,+}:=\{v^c_+\in C(M,\R):T^{c,+}_t v^c_+=v^c_+, \forall t\geqslant0\}$. An equivalent description is that $v^c_+\in\mathcal{S}^{c,+}$ if and only if  $-v^c_+$ solves the equation
\begin{equation}\label{HJs'}\tag{dS$'_c$}
-\lambda(x)v+h(x,-d_x v)=c,\quad x\in M.
\end{equation}

\begin{remark}\label{+ to -}
It is curious to notice that if $\lambda(x)$ satisfies \textbf{(H3)$_-$}, then $-\lambda(x)$ satisfies \textbf{(H3)$_+$}. This obvious fact allow us to translate the main theorems proved under \textbf{(H3)$_+$} to similar results under \textbf{(H3)$_-$}.
\end{remark}

Our main results are built upon the following two observations. In these observations, we pick up certain solutions to the static equations \eqref{HJs} and \eqref{HJs'} that govern on the large time behavior of solution semigroups. Notice that these observations are essentially known to \cite[Theorem 1-2]{NW} with the additional assumption $(\pm)$. We state the first observation as

\begin{proposition}\label{prop:new-NW-JDE}
Assume \eqref{HJs} admits a subsolution. Then
\begin{enumerate}[(1)]
  \item If {\bf (H3)$_+$} holds, there is a \textbf{maximal} element $u_-^c$ in the set of  $\mathcal{S}^{c,-}$ such that for any $\varphi\in C(M,\R)$ with $\varphi \geqslant u_-^c,\\ T_t^{c,-}\varphi$  converges to $u_-^c$  uniformly on $M$ as $t\to +\infty$.

  \item If {\bf (H3)$_-$} holds, there is a \textbf{minimal} element $v_+^c$ in the set of  $\mathcal{S}^{c,+}$ such that for any $\varphi\in C(M,\R)$ with $\varphi \leqslant v_+^c,\\ T_t^{c,+}\varphi $ converges to $v_+^c$  uniformly on $M$ as $t\to +\infty$.
\end{enumerate}
\end{proposition}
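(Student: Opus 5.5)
By Remark \ref{+ to -}, part (2) reduces to part (1): the map $\varphi\mapsto-\varphi$ conjugates $T^{c,+}_t$ (for $\lambda$) to $\breve T^{c,-}_t$, which is the backward semigroup of the equation with discount factor $-\lambda$ and Hamiltonian $\breve h(x,p)=h(x,-p)$. Under \textbf{(H3)$_-$}, the factor $-\lambda$ satisfies \textbf{(H3)$_+$}. A subsolution $w$ of \eqref{HJs} produces a subsolution of the twin equation, either because $-w$ is a subsolution of \eqref{HJs'} directly, or more safely via Proposition \ref{prop:1}(3), which gives a solution $u$ of \eqref{HJs} and hence, through the Lax–Oleinik representation, a fixed point of $T^{c,+}_t$. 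A maximal element of $\breve{\mathcal S}^{c,-}$ then becomes a minimal element of $\mathcal S^{c,+}$, and the monotone convergence carries over. So I would concentrate on (1) under \textbf{(H3)$_+$}.

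\textbf{Step 1: an upper barrier.} By Proposition \ref{prop:1}(3), $\mathcal S^{c,-}\neq\emptyset$, and all its elements will be shown to lie below a universal bound. Pick $x_0$ with $\lambda(x_0)>0$. For any solution $u_-$, use the backward calibrated curve $\gamma$ through a point near $x_0$ and the implicit variational representation
\[
u_-(x)=\inf_{\gamma(t)=x}\Bigl\{u_-(\gamma(0))+\int_0^t \bigl[L(\gamma,\dot\gamma)+c-\lambda(\gamma)u_-(\gamma)\bigr]\,ds\Bigr\},\qquad L:=h^*.
\]
Alternatively, and more simply, use the equation itself: at a maximum point of $u_-$ restricted to a small neighborhood $B$ of $x_0$ where $\lambda\geqslant\mu>0$, the Lipschitz bound for semiconcave solutions together with $\lambda u_-\leqslant c-\min h$ gives $u_-\leqslant C$ on $B$. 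Since solutions are equi-Lipschitz wherever they are bounded (superlinearity (H2)), a constant $K$ large enough that $\varphi_K\equiv K$ is a supersolution,
\[
\lambda(x)K+h(x,0)\geqslant c\quad\text{at least on }\{\lambda>0\},
\]
dominates everything. I expect this step to be the main obstacle. Where $\lambda\leqslant 0$ the constant $K$ is not a supersolution, so the bound has to be propagated from $\{\lambda>0\}$ using the Lipschitz estimate on $M$ together with diam$(M)=1$.

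\textbf{Step 2: monotone limit.} Take $\varphi\geqslant u$ for some solution $u$. By comparison, $T^{c,-}_t\varphi\geqslant u$. Next consider $\psi:=\max(\varphi,K)$ with $K$ large. I would show that $\psi$ can be chosen to be a supersolution in the viscosity sense; the fallback is $\psi=$ the solution of the homogeneous discounted equation with a modified discount $\tilde\lambda\geqslant\lambda$, $\tilde\lambda>0$. By Proposition \ref{mono} (monotonicity along sub/supersolutions), $t\mapsto T^{c,-}_t\psi$ is then nonincreasing and bounded below by $u$. It is also equi-Lipschitz for $t\geqslant1$, so by Dini's theorem it converges uniformly to some $u^c_-$, which is a fixed point by continuity of the semigroup. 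For every solution $v$, take $\psi\geqslant v$; then $T^{c,-}_t\psi\geqslant v$, so $u^c_-\geqslant v$ and $u^c_-$ is the maximal element, independent of the choice of $K$.

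\textbf{Step 3: general $\varphi\geqslant u^c_-$.} Squeeze: $u^c_-\leqslant T^{c,-}_t\varphi\leqslant T^{c,-}_t\max(\varphi,\psi)$, and the right-hand side converges to $u^c_-$ by Step 2, since $\max(\varphi,\psi)$ can itself be taken to be a large supersolution. This gives uniform convergence.
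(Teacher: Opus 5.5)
Your reduction of (2) to (1) is fine, but the core of (1) is not established. The main gap is the supersolution $\psi$ on which Steps 2 and 3 rest. It is your only source of a $t$-independent upper bound for $T^{c,-}_t\varphi$, and none of your candidates is a supersolution once $\lambda$ is negative somewhere, which (H3)$_+$ allows.
A constant $K$ is a supersolution only if $\lambda(x)K+h(x,0)\geqslant c$ on all of $M$. This fails for large $K$ wherever $\lambda<0$, and may fail where $\lambda=0$. The function $\max(\varphi,K)$ is no better, since $\varphi$ is arbitrary and the maximum of two supersolutions need not be a supersolution. For the fallback, let $\tilde u$ solve $\tilde\lambda\tilde u+h(x,d_x\tilde u)=c'$ with a constant $\tilde\lambda\geqslant\lambda$. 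At points of differentiability you get $\lambda\tilde u+h(x,d_x\tilde u)=c'-(\tilde\lambda-\lambda)\tilde u$. Since $\tilde u\geqslant(c'-\mathbf{E}_0)/\tilde\lambda$, this is at most $\mathbf{E}_0+\lambda\tilde u$, which tends to $-\infty$ on $\{\lambda<0\}$ as $c'\to+\infty$.
More intrinsically, at a local minimum $x\in\{\lambda<0\}$ of any supersolution $\psi$ one has $0\in D^-\psi(x)$, which forces $\psi(x)\leqslant(\mathbf{E}_0-c)/|\lambda(x)|$. So large supersolutions must be built with care. They do exist: take $c_1>\max\{c,c_0\}$, $\psi_0\in\mathcal{SS}^{c_1}$, and $u_1>\psi_0$ a semiconcave solution of $(\mathrm{dS}_{c_1})$. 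Then $\delta u_1-(\delta-1)\psi_0$ is a supersolution of \eqref{HJs} by convexity of $h$, as in the proof of Proposition \ref{below c0}. You do not provide such a construction, and obtaining $u_1$ (e.g.\ as $\lim_{t}T^{c_1,-}_t\psi_0$) again needs a Step~1 type bound. Without it, neither the monotone limit in Step 2 nor the squeeze in Step 3 is justified.

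Step 1 is also left open, and the sketch is circular as written. The Lipschitz bound of a subsolution depends on its size, and a constant that is a supersolution only on $\{\lambda>0\}$ dominates nothing, since no comparison principle is available on $M$. It can be closed as follows. At $x_1$ with $\lambda(x_1)>0$, $v(x_1)\leqslant(c-\mathbf{e}_0)/\lambda(x_1)$. On $\{v>0\}$ one has $h(x,d_xv)\leqslant c+\Lambda\max v$ a.e., so (H2) with slope $2\Lambda$ gives $|d_xv|_x\leqslant\frac12\max v+C$ there. Following a geodesic of length at most $1$ from a maximum point toward $x_1$, until $v$ drops to $\max\{0,(c-\mathbf{e}_0)/\lambda(x_1)\}$, then bounds $\max v$.

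The paper avoids both difficulties with a single variational estimate (Lemma \ref{lem:estimation}). Testing the implicit action with the constant curve at $x_1$ gives $h^c_{x_1,u_0}(x_1,t)\leqslant\max\{u_0,(l(x_1,0)+c)/\lambda(x_1)\}$. The Markov property over a fixed time $\delta$ turns this into a $t$-independent upper bound for $T^{c,-}_t\varphi$ for \emph{every} $\varphi$, and via $v\leqslant T^{c,-}_tv$ it also bounds all subsolutions by a constant $D$. The paper then takes $\varphi>D$ and notes that $\liminf_{t\to+\infty}T^{c,-}_t\varphi$ is a solution dominating every subsolution. For $\varphi\geqslant u^c_-$ it concludes because $\limsup_{t\to+\infty}T^{c,-}_t\varphi$ is a subsolution and hence lies below $u^c_-$. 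No supersolution is needed.
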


The second observation concerns the large time behavior of solution semigroups on the set
\[
\mathcal{SS}^c:=\{\psi\in C^\infty(M,\R)\,:\,\lambda(x)\psi(x)+h(x,d_x\psi(x))<c\}
\]
\begin{proposition} \label{prop:2}
Assume (H1)-(H2) and $c>c_0$.
\begin{itemize}
  \item[(1)] If \textbf{(H3)$_+$} holds, $\displaystyle u^c_-=\lim_{t \to +\infty} T^{c,-}_t\psi>\psi$ for any $\psi\in\mathcal{SS}^c$.
  \item[(2)] If \textbf{(H3)$_-$} holds, $\displaystyle v^c_+=\lim_{t \to +\infty} T^{c,+}_t\psi<\psi$ for any $\psi\in\mathcal{SS}^c$.
\end{itemize}
\end{proposition}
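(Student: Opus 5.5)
By Remark \ref{+ to -} it suffices to prove (1). Part (2) follows by applying (1) to the equation \eqref{HJs'}, i.e.\ replacing $\lambda$ by $-\lambda$ and $h(x,p)$ by $h(x,-p)$. Under this substitution $\psi\in\cSS$ corresponds to $-\psi$ in the analogous strict-subsolution set, and $T^{c,+}_t\psi=-\breve{T}^{c,-}_t(-\psi)$. So assume \textbf{(H3)$_+$}, $c>c_0$, and fix $\psi\in\cSS$. The set $\cSS$ is nonempty by the definition \eqref{dcv} of $c_0$, and \eqref{HJs} has a subsolution, so Proposition \ref{prop:new-NW-JDE} provides the maximal solution $u^c_-$.

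\textbf{Step 1 (monotonicity and boundedness).} Since $\psi$ is a classical strict subsolution, the monotonicity result (Proposition \ref{mono}) gives that $t\mapsto T^{c,-}_t\psi$ is nondecreasing. I want an upper barrier. Take a constant $K$ so large that $\varphi:=\max\{\psi,u^c_-+K\}$, or simply $\varphi:=\psi+K'$, satisfies $\varphi\geqslant u^c_-$ and $\varphi\geqslant\psi$. By the comparison principle for \eqref{HJe}, $T^{c,-}_t$ is order preserving, so $T^{c,-}_t\psi\leqslant T^{c,-}_t\varphi$. By Proposition \ref{prop:new-NW-JDE}(1), $T^{c,-}_t\varphi\to u^c_-$ uniformly. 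Hence the monotone limit $\bar u:=\lim_{t\to\infty}T^{c,-}_t\psi$ exists pointwise and satisfies $\psi\leqslant\bar u\leqslant u^c_-$.

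\textbf{Step 2 ($\bar u$ is a solution).} Here I use the standard equi-Lipschitz estimates for $T^{c,-}_t\psi$ for $t\geqslant1$, which come from superlinearity together with uniform boundedness. With these, Dini's theorem upgrades the pointwise convergence to uniform convergence. Stability of viscosity solutions, or continuity of the semigroup, then gives $T^{c,-}_s\bar u=\lim_t T^{c,-}_{s+t}\psi=\bar u$, so $\bar u\in\cS$. Maximality of $u^c_-$ already gives $\bar u\leqslant u^c_-$, so the remaining task is the reverse inequality.

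\textbf{Step 3 (identification $\bar u=u^c_-$; the main obstacle).} I need $u^c_-\leqslant\bar u$. The plan is to show that every solution $w\in\cS$ satisfies $w\leqslant\bar u$, or at least that $u^c_-$ does.

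First I would try a perturbation argument. Strictness gives $\lambda\psi+h(x,d\psi)\leqslant c-\eta$ for some $\eta>0$. For a solution $w$, consider $w_\theta:=\theta w+(1-\theta)\psi$ with $\theta<1$ close to $1$. By convexity of $h$, $w_\theta$ is a strict subsolution, with defect at least $(1-\theta)\eta$. I would then compare $w_\theta$ with $T^{c,-}_t\psi$, using that $T^{c,-}_t$ applied to a subsolution lying below $\bar u$ stays below $\bar u$.

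Alternatively, I would use the variational (Herglotz) representation $T^{c,-}_t\varphi(x)=\inf_\gamma\{\varphi(\gamma(0))+\int_0^t(L(\gamma,\dot\gamma)-\lambda u_\gamma+c)\,ds\}$. Along calibrated curves of $u^c_-$, the backward trajectories accumulate on the set where $\lambda>0$ effectively dominates, since the Mather set must carry positive $\lambda$-mass when \textbf{(H3)$_+$} holds. There the initial discrepancy $u^c_--\psi$ is damped exponentially, so $u^c_-(x)-T^{c,-}_t\psi(x)\to0$.

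Making the second route rigorous is the delicate step, because $\lambda$ changes sign. I would handle it by comparing with $T^{c,-}_t\varphi$ from Step 1: both $T^{c,-}_t\psi$ and $T^{c,-}_t\varphi$ can be realized along the same minimizing curves, so I would try to bound their difference by $K\exp(-\int\lambda)$ along those curves. This reduces the problem to showing $\int_0^t\lambda(\gamma)\,ds\to+\infty$ along minimizers, which is where \textbf{(H3)$_+$} and $c>c_0$ enter.

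\textbf{Step 4 (strict inequality).} For the strict inequality $u^c_->\psi$, note that $\psi\leqslant u^c_-$ holds everywhere. If $u^c_-(x_0)=\psi(x_0)$ at some point, then $\psi$ is a $C^\infty$ test function touching $u^c_-$ from below at $x_0$. The supersolution property of $u^c_-$ then gives $\lambda\psi+h(x_0,d\psi(x_0))\geqslant c$ at $x_0$, contradicting $\psi\in\cSS$.
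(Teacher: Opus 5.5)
Steps 1, 2 and 4 are fine. Your Step 4, which tests the supersolution property of $u^c_-$ with $\psi$ at a contact point, is a neat substitute for the paper's $u^c_-\geqslant T^{c,-}_1\psi>\psi$. The gap is Step 3, which carries the whole content of the statement: neither route proves $u^c_-\leqslant\bar u$.

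In the first route you have the right object. The combination $w_\theta=\theta u^c_-+(1-\theta)\psi$ is a strict subsolution for every $\theta<1$ (Lemma \ref{strict-sub}). However, ``comparing $w_\theta$ with $T^{c,-}_t\psi$'' and the fact that $T^{c,-}_t$ keeps functions below $\bar u$ below $\bar u$ give nothing by themselves. The missing idea is a first-contact argument combined with strict monotonicity on strict subsolutions:
\begin{itemize}
\item The function $f(\theta):=\min_M(\bar u-w_\theta)$ is continuous, with $f(0)>0$ (your Step 4 applied to the solution $\bar u\geqslant\psi$) and $f(1)<0$ if $u^c_-\neq\bar u$.
\item Hence some $w_{\theta_0}$ with $\theta_0\in(0,1)$ lies below $\bar u$ and touches it at some $x_0$.
\item Since $w_{\theta_0}$ is strict everywhere, Proposition \ref{mono}(1) gives $w_{\theta_0}(x_0)<T^{c,-}_tw_{\theta_0}(x_0)\leqslant T^{c,-}_t\bar u(x_0)=\bar u(x_0)=w_{\theta_0}(x_0)$, a contradiction.
\end{itemize}
This is the argument of the paper's Lemma \ref{u-max}. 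You already have $\psi\leqslant\bar u\leqslant u^c_-$ from Proposition \ref{prop:new-NW-JDE}, so running it with $u^c_-$ would close the proof at once. It would also let you skip the paper's subsequent step through $\max\{u_-,u^\psi_-\}$.

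The second route is not a proof either, for three reasons.
\begin{itemize}
\item Along calibrated curves of $u^c_-$, the Gronwall inequality for $u^c_--\bar u$ points the wrong way: it only bounds this difference from below.
\item $T^{c,-}_t\psi$ and $T^{c,-}_t\varphi$ do not share minimizers. What does hold is the one-sided bound $T^{c,-}_t\varphi(x)-T^{c,-}_t\psi(x)\leqslant\|\varphi-\psi\|_\infty e^{-\int_0^t\lambda(\gamma(s))\,ds}$ for a minimizer $\gamma$ of $T^{c,-}_t\psi(x)$.
\item The divergence $\int_0^t\lambda(\gamma)\,ds\to+\infty$ that you leave open is then the entire difficulty, and your heuristic for it is misdirected. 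These minimizers shadow calibrated curves of the unknown $\bar u$, not of $u^c_-$. Positivity of $\int\lambda\,d\mu$ on $\mathfrak{M}_{u^c_-}$ is Theorem C (C1), which the paper proves afterwards using this very proposition. It also fails at $c=c_0$ even under \textbf{(H3)$_+$}, e.g.\ $\lambda=\sin x$, $h=|p|^2$.
\end{itemize}
The estimate can in fact be obtained directly from strictness. Write $\lambda\psi+h(x,d_x\psi)\leqslant c-\eta$ and set $V(s)=T^{c,-}_s\psi(\gamma(s))-\psi(\gamma(s))$. Then $V'\geqslant\eta-\lambda(\gamma)V$, and on $[s_0,t]$ you have $0<m_0:=\min_M(T^{c,-}_{s_0}\psi-\psi)\leqslant V\leqslant K_0:=\max_M(u^c_--\psi)$. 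Integrating $(\ln V)'$ gives $\int_{s_0}^t\lambda(\gamma)\,ds\geqslant\frac{\eta}{K_0}(t-s_0)-\ln\frac{K_0}{m_0}$. No such estimate appears in your proposal, so as written Step 3 remains unproved.
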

Proposition \ref{prop:1}-\ref{prop:2} are proved in Section \ref{proof:pre-observation}.

\vspace{0.5em}
Now we want to emphasize the role of characteristic dynamics played in the investigation of the dynamics of solution semigroups of Hamilton-Jacobi equations. The \textbf{characteristic system} for the Cauchy problem \eqref{HJe} is the contact Hamiltonian system \eqref{ch} associated to $H(x,p,u)=\lambda(x)u+h(x,p)-c$, see Appendix A1 for the explicit formula. We shall use $\Phi^t_{c}$ to denote the phase flow of \eqref{ch}. Assume $\mathcal{S}^{c,\pm}\neq\emptyset$, then (H2) implies that each $u_\pm\in\mathcal{S}^{c,\pm}$ is a Lipschitz function on $M$. It was proved in \cite[Theorem 1.1]{WWY3} that $\mathrm{J}^1_{u_-}/\mathrm{J}^1_{u_+}$ is backward/forward invariant under $\Phi^t_{c}$, that is,
\begin{equation}\label{1jet-inv}
\Phi^t_{c}(\mathrm{J}^1_{u_-})\subset\mathrm{J}^1_{u_-}\text{  for}\,\,\,t\leqslant0,\quad\quad \Phi^t_{c}(\mathrm{J}^1_{u_+})\subset\mathrm{J}^1_{u_+}\text{  for}\,\,\,t\geqslant0.
\end{equation}
Let $\mathcal{P}(T^*M \times \mathbb{R})$ be the space of Borel probability measures on $T^*M \times \mathbb{R}$, we translate \cite[Page 5,(1.4)]{RWY} into
\begin{definition}\label{mather}
$\mathfrak{M}^c_{u_\pm}:= \left\{ \mu \in \mathcal{P}(T^*M \times \mathbb{R}):\spt(\mu)\subset \mathrm{J}^1_{u_\pm},\ (\Phi^t_{c})_{\#}  \mu = \mu,\ \forall t \in \mathbb{R} \right\}$, where $(\Phi^t_{c})_{\#}\mu$ denotes the push-forward of $\mu$ through $\Phi^t_{c}$. Due to \eqref{1jet-inv} and Krylov-Bogoliubov method, $\mathfrak{M}^c_{u_\pm}\neq\emptyset$. Any $\mu\in \mathfrak{M}^c_{u_\pm}$ is called a \textbf{Mather measure} associated with $u_\pm$. The set of Mather measure for \eqref{ch} is defined as $\mathfrak{M}^c=\text{co}(\cup_{u_-\in\mathcal{S}^{c,-}} \mathfrak{M}^c_{u_-})=\text{co}(\cup_{u_+\in\mathcal{S}^{c,+}} \mathfrak{M}^c_{u_+})$, where co$(A)$ denotes the \textbf{closed convex hull} of $A$.
\end{definition}

\subsection{Statement of the main results}
In this part, we formulate the main theorems of this paper. The main theorems are grouped into two different aims:
\begin{enumerate}[(1)]
  \item Theorem A-C: we study the existence of the asymptotically stable solutions of \eqref{HJs} and the dynamical behavior of $\Tc$,
  \item Theorem D: we give some information about the distribution of Mather measure associated to \eqref{HJs}, especially when $c=c_0$.
\end{enumerate}
More precisely, under the assumptions (H1)-(H2) and \textbf{(H3)$_+$},
\begin{enumerate}[(a)]
    \item we provide conditions equivalent to the existence of an asymptotically stable solution and if such a solution exists, it is automatically unique (Theorem A).
    \item we characterize the basin of attraction for the unique asymptotically stable solution (Theorem B).
    \item we prove the exponential convergence of $T^{c,-}_t\varphi$ for each initial data $\varphi$ in the basin with an formula of the lowest convergence rate using Mather measures (see Definition \ref{mather} below). It is also shown that the lowest convergence rate is non-decreasing in $c$ and we describe the asymptotic convergence rate as $c$ goes to $+\infty$ (Theorem C).
    \item assume \eqref{HJs} admits a solution when $c=c_0$, we classify the Mather measures via the integration of discount factor upon them and then we relate their locations to specific solutions of \eqref{HJs}.
\end{enumerate}
For the following context, \textbf{(H3)$_+$} are divided into two subclasses:
\begin{itemize}
    \item[($+$)] $\lambda(x) \geqslant 0$ for all $x \in M$ and $\displaystyle \max_{x\in M} \lambda(x) > 0$.
    \item[($\pm$)] There exist $x_1, x_2 \in M$ such that $\lambda(x_1) > 0$ and $\lambda(x_2) < 0$.
\end{itemize}

Our first main theorem offers a clean answer to the existence of asymptotic solution to \eqref{HJs}.
\begin{TheoremX}\label{thm:1}
\eqref{HJs} admits an asymptotically stable solution if and only if $c>c_0$. In this case, the asymptotically stable solution to \eqref{HJs} is \textbf{unique} and equals $u^c_-$ found in Proposition \ref{prop:2}.
\end{TheoremX}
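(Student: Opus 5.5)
We work under (H1)--(H2) and \textbf{(H3)$_+$}, which is the setting of the theorem; the case \textbf{(H3)$_-$} follows by Remark \ref{+ to -}. The proof has two directions.

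\emph{Existence for $c>c_0$.} Fix $c>c_0$ and let $u^c_-$ be given by Propositions \ref{prop:new-NW-JDE} and \ref{prop:2}. We aim to show that $u^c_-$ is trapped between a strict subsolution and the attracting region $\{\varphi\geqslant u^c_-\}$.

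The first task is to build strict subsolutions arbitrarily close to $u^c_-$ from below. Pick $\psi_0\in\cSS$. For $\varepsilon$ small with $c-\varepsilon>c_0$, regard $u^{c-\varepsilon}_-$ as a subsolution of \eqref{HJs} with a uniform margin $\varepsilon$. By convexity, $\theta\psi_0+(1-\theta)u^{c-\varepsilon}_-$ is then a strict subsolution, and a standard mollification (with the strict margin absorbing the error) yields an element of $\cSS$. As $\varepsilon\to0$ we have $u^{c-\varepsilon}_-\to u^c_-$ uniformly; this follows from monotonicity in $c$ and the comparison/limit arguments behind Proposition \ref{prop:new-NW-JDE}. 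Hence we obtain $\psi_\varepsilon\in\cSS$ with $\|\psi_\varepsilon-u^c_-\|_\infty\to0$. By Proposition \ref{prop:2}, $\psi_\varepsilon<u^c_-$, and since both functions are continuous on a compact $M$, in fact $\psi_\varepsilon\leqslant u^c_--\delta_\varepsilon$ for some $\delta_\varepsilon>0$.

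Now take $\varphi$ with $\|\varphi-u^c_-\|_\infty<\delta_\varepsilon$. Then $\psi_\varepsilon\leqslant\varphi\leqslant u^c_-+\delta_\varepsilon$, and monotonicity of $\Tc$ gives
\[
\Tc\psi_\varepsilon\leqslant\Tc\varphi\leqslant\Tc(u^c_-+\delta_\varepsilon).
\]
Both bounds converge to $u^c_-$: the lower one by Proposition \ref{prop:2}, the upper one by Proposition \ref{prop:new-NW-JDE}(1). For Lyapunov stability we also need uniform-in-$t$ control. The lower bound is handled because $\Tc\psi_\varepsilon$ is nondecreasing (Proposition \ref{mono}), so $\Tc\psi_\varepsilon\geqslant\psi_\varepsilon\geqslant u^c_--\|\psi_\varepsilon-u^c_-\|_\infty$. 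For the upper bound I would use $\Tc(u^c_-+\delta)\leqslant u^c_-+C\delta$ with $C$ independent of $t$, obtained from a sup-bound along the variational representation; alternatively, I would replace $u^c_-+\delta$ by a strict supersolution constructed analogously (e.g. from $u^{c+\varepsilon}_-$), whose orbit is nonincreasing. \textbf{This uniform upper control is the step I expect to be the main obstacle}, since $\lambda$ may be negative somewhere and the naive estimate grows like $e^{\|\lambda\|t}\delta$. The supersolution route sidesteps this: $u^{c+\varepsilon}_-\geqslant u^c_-$ is a supersolution of \eqref{HJs}, so its orbit decreases, and the upper bound becomes $u^{c+\varepsilon}_-$. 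This requires $u^{c+\varepsilon}_-\to u^c_-$ and $u^{c+\varepsilon}_->u^c_-$ strictly, the latter via Proposition \ref{prop:2} applied to $u^c_-$ smoothed.

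\emph{Uniqueness.} If $w$ is asymptotically stable, then $u^c_-+\delta$ approaches $u^c_-$ by Proposition \ref{prop:new-NW-JDE}(1), and $w$ is the maximal-type limit along nearby data. More directly, $\max(w,u^c_-)$-type comparisons force $w=u^c_-$. Concretely: take $\varphi=w+\delta$. Its orbit tends to $w$ by stability, but it also lies above $\Tc w=w$; and by maximality $w\leqslant u^c_-$. If $w\neq u^c_-$, consider the data $\min(u^c_-,w+\delta)$ and $\psi_\varepsilon$: since $w$ attracts $w+\delta$ and $u^c_-$ attracts from below, one arranges initial data within $\delta$ of $w$ whose limit is $u^c_-$, which is a contradiction.

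\emph{Nonexistence for $c\leqslant c_0$.} For $c<c_0$ there is no solution at all by Proposition \ref{prop:1}(1). For $c=c_0$, suppose $w$ is asymptotically stable. The data $w-\delta$ has a nondecreasing orbit converging to $w$, and taking an $\omega$-limit argument shows there are strict subsolutions near $w$, whence $c_0<c$, a contradiction. More precisely, I would argue that $w-\delta$ is a strict subsolution wherever $\lambda>0$, and then use Remark \ref{thm:4}-type reasoning: lowering $c$ slightly to $c_0-\eta$ keeps the orbit of $w-\delta$ bounded below (by continuity of $\Tc$ in $c$ on finite times combined with the attraction), which contradicts $\Tc\varphi\to-\infty$ for $c<c_0$.
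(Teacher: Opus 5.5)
Your existence argument works and takes a different route from the paper's. The paper uses Lemma~\ref{lem:+global} and the basin results of \cite{NW} (Lemma~\ref{+-}). You instead trap a neighbourhood of $u^c_-$ between two functions: a strict subsolution close to $u^c_-$, whose orbit is nondecreasing with limit $u^c_-$, and the supersolution $u^{c+\varepsilon}_-$, whose orbit is nonincreasing by comparison and tends to $u^c_-$ by Proposition~\ref{prop:new-NW-JDE}. This gives Lyapunov stability and attraction at once, in both the $(+)$ and $(\pm)$ cases; the paper's route additionally yields the whole basin. Right-continuity $u^{c+\varepsilon}_-\downarrow u^c_-$ does follow from maximality, but left-continuity does not. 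Maximality only shows that the increasing limit $\bar u$ of $u^{c-\varepsilon}_-$ is a solution with $\bar u\leqslant u^c_-$. Identifying $\bar u=u^c_-$ needs Lemma~\ref{u-max}: a fixed $\psi\in\mathcal{SS}^c$ lies in $\mathcal{SS}^{c-\varepsilon}$ for small $\varepsilon$, so $\bar u>\psi$. It is simpler to mollify $\theta\psi_0+(1-\theta)u^c_-$, which is a strict subsolution by Lemma~\ref{strict-sub}.

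The genuine gap is uniqueness: you never produce the ``initial data within $\delta$ of $w$ whose limit is $u^c_-$''. The data $w+\delta$ and $\min\{u^c_-,w+\delta\}$ lie above $w$ and give no information. Your $\psi_\varepsilon$ is close to $u^c_-$, not to $w$, so it is never within $\delta$ of $w$ when $w\neq u^c_-$. The missing idea is to apply Lemma~\ref{strict-sub} to $w$ itself. For $\psi\in\mathcal{SS}^c$, the function $u_\theta=\theta\psi+(1-\theta)w$ is a strict subsolution with $\|u_\theta-w\|_\infty\leqslant\theta\|\psi-w\|_\infty$. For small $\theta$, stability and Proposition~\ref{mono}(1) give $w=\lim_{t\to+\infty}T^{c,-}_tu_\theta\geqslant T^{c,-}_1u_\theta>u_\theta$, i.e.\ $w>\psi$. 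Lemma~\ref{u-max} then forces $w=u^c_-$. (The paper argues differently in the $(\pm)$ case: every other solution touches $v^c_+$, so its downward perturbations diverge by Lemma~\ref{+-}.)

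For $c=c_0$, your opening claim is false when $\lambda$ changes sign. The function $w-\delta$ is a subsolution only where $\lambda\geqslant0$, so its orbit need not be monotone, and the ``$\omega$-limit gives strict subsolutions'' step is unsupported. Your ``more precisely'' sketch is the right one, but it omits what turns finite-time continuity into an all-time bound:
\begin{itemize}
\item attraction gives $\tau$ with $T^{c_0,-}_\tau(w-\delta)\geqslant w-\delta/2$;
\item continuity in $c$ then gives $T^{c_0-\eta,-}_\tau(w-\delta)\geqslant w-\delta$;
\item iterating with monotonicity yields $T^{c_0-\eta,-}_{n\tau+s}(w-\delta)\geqslant T^{c_0-\eta,-}_s(w-\delta)$ for $s\in[0,\tau]$, which is bounded below.
\end{itemize}
This contradicts Proposition~\ref{below c0}; it is the paper's Proposition~\ref{Prop:3.2} with Proposition~\ref{below c0} in place of Proposition~\ref{thG}. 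Finally, the case (H3)$_-$ does not ``follow by Remark~\ref{+ to -}''. In case $(-)$ that remark concerns $T^{c,+}_t$ and $v^c_+$, and \eqref{HJs} then has no asymptotically stable solution at all (first step of Proposition~\ref{Prop:3.2}). The theorem is a statement under (H3)$_+$.
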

\begin{remark}
 Here $u^c_-$ have a   representation  by \cite[Theorem 4.8]{Z} and \cite[Proposition 3.19]{DNYZ} that for any $t>0$ and $x\in M$,
 $$
 u^c_-(x)=\min_{ \substack{ \gamma\in \text{AC}([-t,0],M)\\ \gamma(0)=x}} \Big\{   e^{-  \int_{-t}^{0} a(\gamma(\tau)) \, d\tau}u^c_-(\gamma(-t))   +  \int_{-t}^{0} e^{-  \int_{s}^{0} a(\gamma(\tau)) \, d\tau} \left( \, l(\gamma(s), \dot{\gamma}(s)) + c \right) ds  \Big\}
 $$
 and there is a curve $\gamma_x : (-\infty, 0] \to M$ with $\gamma_x(0) = x$ such that
\[
u_-^c(x) = \int_{-\infty}^{0} e^{-  \int_{s}^{0} a(\gamma_x(\tau)) \, d\tau} \left( \, l(\gamma_x(s), \dot{\gamma}_x(s)) + c \right) ds, \quad \forall x\in M.
\]
According to this representation formula, the stable solution appears as the value function of a inhomogeneous discounted optimal control problem, which may have potential in the application of our results to global problems involving varying discount factor. This is also one of our motivations to study \eqref{HJs}. 
\end{remark}

The next goal is to determine the \textbf{basin of attraction} of the asymptotic solution $u^c_-$, that is
$$
\mathcal{B}^{c,-}:= \{\,\,\varphi\in C(M,\R) :\lim_{t\to +\infty} T_t^{c,-}\varphi=u^c_-\,\,\}.
$$
\begin{TheoremX}\label{thm:2}
Assume $c>c_0$ and $u^c_-,v^c_+$ are solutions appeared in Proposition \ref{prop:2}.
\begin{itemize}
  \item[(B1)] If ($+$) holds, then $\mathcal{B}^{c,-}=C(M,\R)$. In this case, we say $u^c_-$ is \textbf{globally asymptotically stable}.

  \item[(B2)] If $(\pm)$ holds, then $\mathcal{B}^{c,-}=\{\varphi \in C(M,\R): \varphi>v^c_+\}$.
\end{itemize}
\end{TheoremX}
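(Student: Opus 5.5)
The plan is a squeezing argument that uses order preservation of $\Tc$ together with the two convergence statements already available: Proposition \ref{prop:new-NW-JDE}(1), which handles data above $u^c_-$, and Proposition \ref{prop:2}(1), which handles strict subsolutions in $\cSS$. For any $\varphi$ we have $\varphi\leqslant\varphi+K$ with $K=\|\varphi-u^c_-\|_\infty$, so $\varphi+K\geqslant u^c_-$ and $T^{c,-}_t(\varphi+K)\to u^c_-$. Monotonicity then gives $\limsup_t T^{c,-}_t\varphi\leqslant u^c_-$ uniformly. Membership in $\Bc$ is therefore equivalent to a lower bound: we need some $\psi\in\cSS$ with $\psi\leqslant\varphi$, or more generally some $\psi\leqslant\varphi$ with $\liminf_t \Tc\psi\geqslant u^c_-$.

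\textbf{Case (B1).} Here $\lambda\geqslant0$. Fix $\psi_0\in\cSS$, which is nonempty because $c>c_0$, and let $K>0$ be large. Then
\[
\lambda(\psi_0-K)+h(x,d\psi_0)=\lambda\psi_0+h-K\lambda\leqslant\lambda\psi_0+h<c,
\]
so $\psi_0-K\in\cSS$. Choosing $K$ with $\psi_0-K\leqslant\varphi$, Proposition \ref{prop:2}(1) gives $T^{c,-}_t(\psi_0-K)\to u^c_-$. Combined with the upper bound above, this yields $\varphi\in\Bc$. This case is routine.

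\textbf{Case (B2), inclusion $\supseteq$.} Let $\varphi>v^c_+$. Since both are continuous and $M$ is compact, $\varphi\geqslant v^c_++\delta$ for some $\delta>0$. The key step is to produce $\psi\in\cSS$ with $v^c_+<\psi\leqslant\varphi$. I would first try to show that $\cSS$ is ``dense from above'' over $v^c_+$: every strict subsolution sits above $v^c_+$ by Proposition \ref{prop:2}(2), and we want to approximate $v^c_+$ from above by elements of $\cSS$. The approach is to look at $\psi_t:=T^{c,+}_t\psi_0$ for $\psi_0\in\cSS$; it decreases to $v^c_+$ uniformly, and $T^{c,+}_t$ should preserve strict subsolutions of \eqref{HJs}, at least in a Lipschitz/semiconcave sense. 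The resulting functions are not smooth, so I would then regularize by a standard $C^1$ approximation of strict Lipschitz subsolutions (Bernard-type or convolution), keeping a margin below $c$. This gives $\psi\in\cSS$ with $\|\psi-v^c_+\|_\infty<\delta$, hence $\psi\leqslant\varphi$, and the squeeze concludes. I expect this step to be the main obstacle: justifying that $T^{c,+}_t$ maps strict subsolutions to strict subsolutions, with a uniform strictness margin. Uniformity seems plausible because $\lambda\psi$ shifts only boundedly under a uniformly bounded decrease. If that fails, the fallback is a direct Lyapunov-type argument on the set $\{\varphi>v^c_+\}$, using the representation formula for $\Tc$ and the fact that calibrated curves of $v^c_+$ run forward.

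\textbf{Case (B2), inclusion $\subseteq$.} Let $\varphi\in\Bc$. Suppose $\varphi(x_0)\leqslant v^c_+(x_0)$ at some point $x_0$. I would use the duality between $\Tc$ and $T^{c,+}_t$: the inequality $\Tc\varphi\geqslant w$ is equivalent to $\varphi\geqslant T^{c,+}_t w$, coming from the shared action functional. One should also know that $u^c_->v^c_+$ strictly, which follows from Proposition \ref{prop:2} applied to a common $\psi\in\cSS$. If $\Tc\varphi\to u^c_-$, then for large $t$ we get $\Tc\varphi\geqslant v^c_++\eta$ with $\eta>0$ independent of $t$, by the strict gap. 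Duality then gives $\varphi\geqslant T^{c,+}_t(v^c_++\eta)$. Since $\lambda<0$ somewhere, $T^{c,+}_t(v^c_++\eta)$ should stay strictly above $v^c_+$; this is the forward analogue of $T^{c,-}$ pushing data above $u^c_-$ down to $u^c_-$ without crossing it. At minimum it stays $\geqslant v^c_+$, but strictness at $x_0$ is what is required. To get it, I would use the backward calibrated curve of $x_0$ and a Gronwall estimate on the discount weight $e^{-\int\lambda}$, which shows that the difference remains positive for finite $t$. That contradicts $\varphi(x_0)\leqslant v^c_+(x_0)$ and proves $\Bc\subseteq\{\varphi>v^c_+\}$.
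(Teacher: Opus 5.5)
The gap is in the inclusion $\supseteq$ of (B2); your (B1) argument and your proof of $\Bc\subseteq\{\varphi>v^c_+\}$ are sound (comments below). That inclusion rests entirely on the step you flag yourself: that $T^{c,+}_t$ maps strict subsolutions to strict subsolutions, so that smoothing yields $\psi\in\cSS$ within $\delta$ of $v^c_+$. The reason you offer, that $\lambda\psi$ shifts only boundedly, ignores the gradient term and proves nothing.

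If \emph{uniform} means uniform in $t$, the claim is false. Were every $T^{c,+}_t\psi_0$ a subsolution of $\lambda u+h(x,d_xu)=c-\eta$, so would be their uniform limit $v^c_+$. But at each differentiability point $\lambda v^c_++h(x,d_xv^c_+)=c$, because $-v^c_+$ solves $-\lambda v+h(x,-d_xv)=c$.

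For each fixed $t$ the claim is true: a margin $\varepsilon e^{-\Lambda t}$ survives if $\psi_0$ has margin $\varepsilon$. Proving it, however, needs three things you do not supply: a quantitative decrease $T^{c,+}_\tau w\leqslant w-\kappa\tau$ propagated through the semigroup, a first-order expansion converting this back into a viscosity inequality, and a smoothing lemma for the $u$-dependent Hamiltonian.

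The detour is unnecessary. You already noted that any $\psi\leqslant\varphi$ with $\liminf_t\Tc\psi\geqslant u^c_-$ suffices, and $\psi:=T^{c,+}_s\psi_0$ is such a barrier. Choose $s$ with $T^{c,+}_s\psi_0\leqslant v^c_++\delta\leqslant\varphi$, which is possible by Proposition \ref{prop:2}(2). Proposition \ref{sg-1}(5), which is just your own adjunction applied with $w=\psi_0$, then gives $T^{c,-}_tT^{c,+}_s\psi_0=T^{c,-}_{t-s}(T^{c,-}_sT^{c,+}_s\psi_0)\geqslant T^{c,-}_{t-s}\psi_0\to u^c_-$, and your upper squeeze concludes. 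This is the device the paper uses in case $(+)$ right after Lemma \ref{lem:+global}; for (B2) the paper simply quotes \cite{NW} through Lemma \ref{+-}(2).

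The rest of your proposal takes genuinely different routes from the paper.

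In (B1), your observation that $\psi_0-K\in\cSS$ when $\lambda\geqslant0$, combined with the upper squeeze from Proposition \ref{prop:new-NW-JDE}(1), is shorter than the paper's route. It replaces both the divergence Lemma \ref{lem:+global} and the appeal to \cite{NW} for the upper bound.

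In (B2) $\subseteq$, the paper uses Lemma \ref{+-}(1) together with a barrier $\Tc\varphi(x(t))\leqslant v^c_+(x(t))$ along the forward calibrated orbit of $v^c_+$ from a touching point. Your adjunction $\Tc\varphi\geqslant w\iff\varphi\geqslant T^{c,+}_tw$ (valid by Proposition \ref{Minimality}(1)--(2)), used at one large time, handles the touching and crossing cases at once. Two corrections there:
\begin{enumerate}
\item The strictness $T^{c,+}_t(v^c_++\eta)\geqslant v^c_++\eta e^{-\Lambda t}$ comes from the $\Lambda$-Lipschitz dependence on $u$ (compare solutions with their shifts by $\pm\eta e^{-\Lambda t}$). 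It has nothing to do with $\lambda$ being negative somewhere.
\item The calibrated curve of $v^c_+$ relevant for $T^{c,+}_t$ at $x_0$ is the forward one, not the backward one.
\end{enumerate}
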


To our surprise, under the assumption \textbf{(H3)$_+$}, the asymptotically stable solution to \eqref{HJs} always attracts the initial data in its basin \textbf{exponentially fast}. We improve \cite[Theorem 1.2]{RWY} to give a formula of the lowest convergence rate by using Mather measures. Curiously, the formula connects the convergence rate with an \textbf{ergodic optimization} problem.\,\,For $\varphi\in\mathcal{B}^{c,-}$, we define
\begin{equation}\label{def-rate}
R(c,\varphi):=-\limsup_{t \to \infty} \frac{\ln \| T_t^{c,-}\varphi - u^c_- \|_\infty}{t},\quad \quad R(c):=\inf_{\varphi\in\mathcal{B}^{c,-} } R(c,\varphi).
\end{equation}

\begin{TheoremX}\label{thm:3}  	
Assume $c>c_0$. Let $u^c_-$ be the unique asymptotically stable solution to \eqref{HJs}, then
\begin{enumerate}[(C1)]
  \item  $R(c)=a(c):=\inf_{\mu\in \mathfrak{M}_{u^c_-} } \int_{T^*M \times \R } \lambda d\mu  >0$.

  \item  For any $\varphi\in\mathcal{B}^{c,-}$ such that $\varphi(x)\neq u^c_-(x)$ for any 
  $x\in M,$ 
  $$R(c,\varphi)=-\lim_{t \to +\infty} \frac{\ln \| T_t^{c,-}\varphi - u^c_- \|_\infty}{t} = R(c).$$

  \item  $\lim_{c\to+\infty}R(c)= \lambda_+:=\max_{x\in M}\lambda(x)$.
\end{enumerate}
\end{TheoremX}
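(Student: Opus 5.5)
The plan is to reduce the behaviour of $T^{c,-}_t\varphi-u^c_-$ to a linear cocycle along backward calibrated curves, and then identify the exponent with an ergodic average of $\lambda$ over Mather measures.

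\textbf{Upper bound on the rate.} Take $\varphi\in\mathcal{B}^{c,-}$ and write $w_t:=T^{c,-}_t\varphi-u^c_-$. By Theorem B, for large $t_0$ the function $T^{c,-}_{t_0}\varphi$ lies in a small neighbourhood of $u^c_-$. Using the representation formula of the Remark after Theorem A for both functions, we get two one-sided estimates. The first comes from evaluating $T^{c,-}_t\varphi$ along a calibrated curve $\gamma_x$ of $u^c_-$:
\[
w_t(x)\leqslant e^{-\int_{-t}^0\lambda(\gamma_x)}\,w_0(\gamma_x(-t)).
\]
The second comes from evaluating $u^c_-$ along a minimizer $\eta_x$ of $T^{c,-}_t\varphi(x)$:
\[
w_t(x)\geqslant e^{-\int_{-t}^0\lambda(\eta_x)}\,w_0(\eta_x(-t)).
\]
It follows that
\[
\|w_t\|_\infty\leqslant \|w_0\|_\infty\sup_{\gamma}e^{-\int_{-t}^0\lambda(\gamma)},
\]
where the supremum runs over calibrated curves of $u^c_-$ and over minimizers. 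Minimizers starting near $u^c_-$ converge to $J^1_{u^c_-}$; this needs a semicontinuity argument on Lipschitz bounds. Together with the invariance \eqref{1jet-inv}, this lets us restrict to orbits in the compact backward-invariant set $J^1_{u^c_-}$. A standard ergodic-optimization argument (the Krylov–Bogoliubov time averages along the orbits have limits in $\mathfrak{M}_{u^c_-}$) gives
\[
\liminf_t \tfrac1t\inf_\gamma\int_{-t}^0\lambda=a(c).
\]
Hence $R(c,\varphi)\geqslant a(c)$ for every $\varphi\in\mathcal{B}^{c,-}$.

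\textbf{Sign of $a(c)$.} To see $a(c)>0$, I will use Proposition 2. Pick $\psi\in\cSS$. Integrating the strict subsolution inequality against a Mather measure $\mu$ supported on $J^1_{u^c_-}$ (where $u=u^c_-$) gives
\[
\int\lambda\,(u^c_--\psi)\,d\mu>0.
\]
Since $u^c_->\psi$, this already forces $\int\lambda\,d\mu>0$ after the following refinement. Use the Herglotz identity
\[
\tfrac{d}{dt}u=L+c-\lambda u
\]
along orbits to get $\int(\lambda u-L-c)\,d\mu=0$, and compare with $\psi$ along the projected curve. Then $\int\lambda\,(u^c_--\psi)\,d\mu\geqslant\delta$ for $\delta=\min(c-\lambda\psi-h(x,d\psi))>0$. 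Because $\mu$ ranges over a weak-$*$ compact set, this yields the uniform bound $a(c)>0$.

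\textbf{Equality, (C1) and (C2).} Choose an ergodic $\mu$ attaining $a(c)$ and a generic point on its support. Taking $\varphi=u^c_-+\epsilon$ (or any $\varphi$ with $\varphi-u^c_-$ bounded away from $0$), the lower bound along the calibrated curve through that point gives $\|w_t\|\geqslant \epsilon' e^{-(a(c)+o(1))t}$. This gives $R(c)\leqslant a(c)$, so (C1) holds, and it gives the $\lim$ statement (C2). The hypothesis $\varphi\neq u^c_-$ everywhere is what makes $|w_0|$ bounded below along the curve. For the sign issue we use that $w_t$ keeps a constant sign when $w_0$ does, by the comparison principle; in general we split $\varphi$ into $\max/\min$ with $u^c_-$.

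\textbf{(C3).} Since $R(c)=a(c)\leqslant\lambda_+$, it suffices to show that Mather measures concentrate on $\{\lambda\approx\lambda_+\}$ as $c\to\infty$. Rescale $u=c\,\tilde u$; then $\lambda\tilde u+h/c=1$. The identity $\int\lambda u\,d\mu=\int(L+c)\,d\mu\geqslant c+\min L$, combined with $\|u^c_-\|_\infty\leqslant (c+C)/\min_{\text{supp}}\lambda$ and a sharper estimate $u^c_-\approx c/\lambda_+$ near the maximum point $x^*$ of $\lambda$ (obtained by comparing with the constant curve at $x^*$), should force $\int\lambda\,d\mu\to\lambda_+$.

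This last concentration step is the main obstacle. The first thing I would try is the upper bound $u^c_-(x)\leqslant c/\lambda_+ + O(\sqrt c)$, using curves that reach $x^*$ in time $O(1/\sqrt c)$, together with $\int\lambda u\,d\mu\geqslant c-C$.
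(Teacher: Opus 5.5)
The main gap is in the positivity $a(c)>0$ in (C1). Your inequality $\int\lambda\,(u^c_--\psi)\,d\mu\geqslant\delta$ is correct; it is exactly what the test function $u-\psi(x)$ gives. But it does not imply $\int\lambda\,d\mu>0$. In case $(\pm)$ the positive weight $u^c_--\psi$ can be larger where $\lambda>0$ than where $\lambda<0$. Your ``refinement'' through $\int\dot u\,d\mu=0$ only re-derives the same weighted inequality.

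The missing idea is to keep the inequality pointwise and divide by the weight before integrating. By convexity (Lemma \ref{lem:appendix-1}), at every point of $T^*M\times\R$,
\[
\lambda(u-\psi)+\mathfrak L_{H,c}(u-\psi)\geqslant c-\lambda\psi-h(x,d_x\psi)\geqslant\delta .
\]
On $\mathcal D_\psi=\{u>\psi(x)\}$ you can divide by $u-\psi$, so with $G=\ln(u-\psi(x))$,
\[
\lambda+\mathfrak L_{H,c}G\geqslant\frac{\delta}{u-\psi(x)}\quad\text{on }\mathcal D_\psi .
\]
Since $\mathrm J^1_{u^c_-}\subset\mathcal D_\psi$ is compact and $\int\mathfrak L_{H,c}G\,d\mu=0$ for every $\mu\in\mathfrak M_{u^c_-}$, this gives $\int\lambda\,d\mu\geqslant\delta/\max_M(u^c_--\psi)$ uniformly in $\mu$. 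This is the paper's Lemma \ref{lem:pf-1.1-2}.

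The same logarithmic test function settles (C3), which you leave open. Fix $c'>c_0$ and $\psi\in\mathcal{SS}^{c'}$; then $a(c)\geqslant(c-c')/\max_M(u^c_--\psi)$ for all $c>c'$. The paper bounds $u^c_-$ on all of $M$ by $(\frac{1}{\lambda_+}+\delta)e^{\Lambda\delta}c+C_\delta$ (Lemma \ref{lem:Step4-bounded}): it uses the constant curve at a maximum point of $\lambda$, propagates for time $\delta$, and applies Gronwall in $c$. This yields $\liminf_{c\to\infty}a(c)\geqslant\lambda_+$.

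Your suggested route, $\int\lambda u\,d\mu\geqslant c-C$ plus an upper bound on $u$, does not conclude as stated. Where $\lambda<0$, an upper bound on $u$ gives no upper bound on $\lambda u$. You would additionally need a $c$-independent lower bound on $u^c_-$ and a concentration argument for $\int\lambda^+d\mu$.

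There is also a gap in (C2). Your calibrated-curve inequality $w_t(x_*)\leqslant e^{-\int_{-t}^0\lambda(\gamma_*)}w_0(\gamma_*(-t))$ gives a lower bound on $\|w_t\|_\infty$ only when $\varphi<u^c_-$. In that case it is cleaner than the paper's iteration, and $\varphi=u^c_--\epsilon$ already gives $R(c)\leqslant a(c)$. For $\varphi>u^c_-$ the inequality points the wrong way. There the lower bound must come from minimizers of $T^{c,-}_t\varphi(x_*)$, and you need to show they shadow the unique calibrated curve through a $\mu_*$-generic point (Lemmas \ref{lem2} and \ref{lem4}). The observation that $w_t$ keeps its sign does not supply this.
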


\begin{remark}\label{lim-mather}
Combining the conclusions (C1) and (C3), for any $\varepsilon>0$, there is $c(\varepsilon)>0$ such that for any $c>c(\varepsilon)$ and $\mu\in\mathfrak{M}_{u^c_-}, \int \lambda d\mu>\lambda_+-\varepsilon$. In particular, if $\mu$ is a Dirac-delta measure concentrated at $(x_0, p_0, u_0)$, then $\lambda(x_0)>\lambda_+-\varepsilon$.
\end{remark}

\begin{remark}
Regarding $a(c)$ as a function in $c\in[c_0,+\infty)$, it is readily seen that $a(c)$ is lower semi-continuous. This is a direct consequence of the continuity of $\mathrm{J}^1_{u^c_-}$ as well as upper semi-continuity of $\mathfrak{M}_{u^c_-}$ with respect to $c$. And $a(c)$ can be either continuous or discontinuous as the following examples show: we take $M=\mathbb{S}^1$,
\begin{enumerate}[(a)]
  \item let $h(x,p)=|p|^2$ and $\lambda(x)=\sin(x)$. We proved in \cite{JYZ} that $c_0=0$ and for $c>0, \mathfrak{M}_{u^c_-}$ is a singleton and its only element is the Dirac-delta measure supported on the hyperbolic fixed point $(\frac{\pi}{2},0,c)$. Thus $R(c)=a(c)=1=\lambda_+$ for any $c>0$, confirming (C3). For $c=0$, $\mathfrak{M}_{u^0_-}$ consists of Dirac-delta measures supporting on any point $(x_0,0,0)$ with $x_0\in[0,\pi]$, which implies that $a(0)=\min_{x_0\in[0,\pi]}\sin(x_0)=0<1=\lim_{c\rightarrow0_+}a(c)$. This fact shows that $a(c)$ is discontinuous at $c=0$.

  \item let $h(x,p)=\frac{1}{2}|p|^2+(\cos x-1)$ be the pendulum Hamiltonian and $\lambda(x)=\sin(x)$. By taking $\psi\equiv0$ in \eqref{mane}, we have $c_0\leqslant0$. From the definition, $c_0\geqslant\inf_{\psi\in C^\infty(M,\R)}\lambda(0)\psi(0)+h(0,d_x\psi(0))=\inf_{\psi\in C^\infty(M,\R)}\frac{1}{2}|d_x\psi(0)|^2\geqslant0$, which shows that $c_0=0$. For $c>0, \mathfrak{M}_{u^c_-}$ is a singleton consisting of the Dirac-delta measure supported on the hyperbolic fixed point $\left(\arccos\left(\frac{1}{c + 1}\right) ,\ 0,\sqrt{c(c + 2)}\right)$, where $\arccos\left(\frac{1}{c + 1}\right)\in[0,\frac{\pi}{2})$. Thus $R(c)=a(c)=\sin \Big( \arccos \frac{1}{c + 1} \Big)=\frac{ \sqrt{c(c+2)} }{|c+1|}>0$. For $c=0$, $\mathfrak{M}_{u^0_-}$ also contains a unique element Dirac-delta measures supported on any point $(0,0,0)$, which implies that $a(0) =\sin(0)=0=\lim_{c\rightarrow0_+}R(c)$. This fact shows that $R(c)$ is continuous at $c=0$.
\end{enumerate}
\end{remark}

\begin{remark}\label{exp-div}
Under the assumptions of Theorem A-C, one can show that $T_t^{c,-}$ converges or diverges exponentially on \textbf{generic} continuous initial data. More precisely, Theorem C shows that $T_t^{c,-}\varphi$ converges exponentially for any $\varphi$ larger than $v^c_+$; and for any $\varphi\in C(M,\R)$ satisfying $ \min_{x\in M}\{\varphi(x)-v_+^c(x)\}<0$, one can show that
$$
\liminf_{t\to+\infty }\,\,\frac{\ln\|T^{c,-}_t\varphi-u^c_-\|_{\infty}}{t}\geqslant - \inf_{\mu\in \mathfrak{M}_{v^c_+} } \int_{T^*M \times \R } \lambda d\mu>0.
$$
Similar conclusion holds for $T_t^{c,+}$ if $\lambda(x)$ satisfies $\textbf{(H3)}_-$.
\end{remark}

Our last result concerns the distribution of Mather measures. For its statement, it is necessary to introduce
\begin{definition}
For $c\geqslant c_0$, assume \eqref{HJs} admits a solution. We define the Mather sets
\begin{equation}\label{mather1}
\mathcal{M}(c):=\overline{\bigcup_{\mu\in\mathfrak{M}^c}\text{supp}(\mu)},\quad\quad \mathcal{M}_0(c):=\overline{\bigcup_{\mu\in\mathfrak{M}_0^c}\text{supp}(\mu)}
\end{equation}
where $\mathfrak{M}_0^c$ denotes the set of all ergodic measures $\mu\in\mathfrak{M}^c$ such that $\int \lambda \, d\mu =0$. As we shall see later, $\mathfrak{M}_0^c$ is of particular interest when $c=c_0$,
\end{definition}

\begin{TheoremX}\label{thm:4}
Assume \eqref{HJs} admits a solution. Then $c=c_0$ if and only if  $\mathcal{M}_0(c)\neq \emptyset$.
\end{TheoremX}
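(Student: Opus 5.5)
We argue the two implications separately. The basic tool is the integral identity satisfied by Mather measures. Let $\mu$ be an ergodic measure invariant under $\Phi^t_c$ with support in $\mathrm{J}^1_{u}$ for some $u\in\mathcal{S}^{c,-}$. Along orbits of the contact flow for $H=\lambda u+h-c$ we have $H\equiv 0$ on $\mathrm{J}^1_u$ (differentiability points) and $\dot u=p\cdot\partial_p h-H$, $\dot p=-\partial_x H-p\,\partial_u H$. In particular $\frac{d}{dt}u(x(t))=\partial_p h\cdot p=L(x,\dot x)+h(x,p)=L(x,\dot x)+c-\lambda(x)u$.

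Invariance of $\mu$ gives $\int \frac{d}{dt}u\,d\mu=0$, which yields
\[
\int \lambda u\,d\mu=\int (L+c)\,d\mu .
\]
We will use this together with a variational characterization of $c_0$ by duality. Following \cite{CIP} and \cite{JYZ}, the inf-sup formula \eqref{dcv} should be dual to
\[
c_0=\sup\Big\{-\int L\,d\nu\Big\}
\]
over closed probability measures $\nu$ on $TM$ with $\int\lambda\,d\nu=0$ in the appropriate sense; more precisely, with the constraint entering through a Lagrange multiplier on $\psi$. I would first establish the inequality needed directly.

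For the direction ``$\mathcal{M}_0(c)\neq\emptyset\Rightarrow c=c_0$'', we already know $c\geqslant c_0$ by Proposition \ref{prop:1}, so we must exclude $c>c_0$. Suppose $c>c_0$ and pick $\psi\in\mathcal{SS}^c$ with $\lambda\psi+h(x,d\psi)\leqslant c-\delta$. Take an ergodic $\mu\in\mathfrak{M}^c$ with $\int\lambda\,d\mu=0$, supported on some $\mathrm{J}^1_u$, and project it to $TM$ via $\dot x=\partial_ph$. Fenchel's inequality gives
\[
d\psi(\dot x)\leqslant L(x,\dot x)+h(x,d\psi)\leqslant L+c-\delta-\lambda\psi .
\]
Integrating and using closedness, $\int d\psi(\dot x)\,d\mu=0$, we get
\[
0\leqslant \int(L+c)\,d\mu-\delta-\int\lambda\psi\,d\mu .
\]
The difficulty is that $\int\lambda\psi\,d\mu$ and $\int\lambda u\,d\mu$ need not vanish even though $\int\lambda\,d\mu=0$. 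To handle this I plan to use the comparison $\psi<u^c_-$ (Proposition \ref{prop:2}) and the backward invariance of $\mathrm{J}^1_u$. Instead of integrating, I will work along a single orbit: set $w(t)=u(x(t))-\psi(x(t))$. The two relations above give
\[
\dot w\geqslant -\lambda(x(t))\,w+\delta ,
\]
so $\frac{d}{dt}\Big(w\,e^{\int_0^t\lambda}\Big)\geqslant \delta e^{\int_0^t\lambda}$. By ergodicity and $\int\lambda\,d\mu=0$, for $\mu$-a.e.\ orbit the Birkhoff sum $\int_0^t\lambda$ is $o(t)$, and it returns near zero infinitely often; the recurrence needed here is an Atkinson-type lemma for ergodic zero-mean cocycles. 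Integrating the differential inequality then gives $w(t)e^{\int_0^t\lambda}\geqslant w(0)+\delta\int_0^te^{\int_0^s\lambda}ds$. The integral on the right diverges, since $e^{\int_0^s\lambda}$ is not integrable by the zero-mean recurrence. Along return times $t_k$, where $\int_0^{t_k}\lambda$ stays bounded, this forces $w(t_k)\to\infty$, contradicting the boundedness of $u-\psi$. Hence $c=c_0$.

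For the converse, assume $c=c_0$ and that a solution $u$ exists. If every ergodic Mather measure had $\int\lambda\,d\mu\neq0$, I would construct a strict subsolution and contradict the definition of $c_0$ \eqref{dcv}. The candidate is a perturbation $\psi=u+\varepsilon f$, followed by smoothing, where $f$ is chosen so that $\lambda\psi+h<c_0$, i.e.\ so that it decreases $\lambda u$ near the Mather set. Off the Mather set, strictness comes from the standard argument (Fathi--Siconolfi type), using strictly subsolution directions away from $\mathcal{M}(c)$. Near the supports, I would use that the sign of $\int\lambda\,d\mu$ lets one solve the cohomological inequality $\frac{d}{dt}f+\lambda f<0$ along orbits, with $f=\pm1$ suitably adjusted. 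An alternative route is to argue by compactness: take $c_n\downarrow c_0$ with $a(c_n)$ and the associated measures converging, and show that a limit measure has $\int\lambda=0$ by combining the identity with $c_n$-dependence.

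The main obstacle is this converse direction, namely gluing local strict subsolutions near a Mather set whose ergodic components have $\int\lambda$ of both signs. I would first try the duality/minimax approach: if $\int\lambda\,d\mu\neq0$ for all ergodic $\mu$, the dual problem attains $c_0$ at a measure which by the identity above must satisfy $\int\lambda=0$, giving the contradiction.
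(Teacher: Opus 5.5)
\textbf{Forward direction.} Your proof of ``$\mathcal{M}_0(c)\neq\emptyset\Rightarrow c=c_0$'' is correct, and it takes a different, more uniform route than the paper.

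Along \emph{every} orbit of $\Phi^t_c$ one has $\dot u=L(x,\dot x)+c-\lambda(x)u$. Fenchel's inequality then gives $\dot w\geqslant-\lambda w+\delta$ for $w(t)=u(t)-\psi(x(t))$. This is the orbitwise form of the paper's Lemma~\ref{lem:pf-1.1-2} with $G=\ln(u-\psi)$.

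The paper instead integrates that identity against Mather measures of $u^c_-$. This needs $u^c_->\psi$ (Proposition~\ref{prop:2}), then Theorem~\ref{thm:3}, then the structure results $\mathfrak{M}^c=\mathfrak{M}_{u^c_-}$ (case $(+)$) or $\mathfrak{M}^c=\mathfrak{M}_{u^c_-}\sqcup\mathfrak{M}_{v^c_+}$ (case $(\pm)$), and symmetry for $(-)$.

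Your Atkinson-type recurrence removes both the sign condition and the case analysis. Apply it to the time-one map; its conditional expectation on the invariant $\sigma$-algebra vanishes because the flow is ergodic. In fact your argument shows that for $c>c_0$ no compactly supported ergodic $\Phi^t_c$-invariant measure has $\int\lambda\,d\mu=0$.

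\textbf{Converse direction: the gap.} For ``$c=c_0\Rightarrow\mathcal{M}_0(c_0)\neq\emptyset$'' none of your three routes is carried out, and two of them fail as stated.
\begin{enumerate}
\item The compactness route cannot work because $a(c)$ may jump at $c_0$. In the paper's example $h=|p|^2$, $\lambda=\sin x$ on $\mathbb{S}^1$, the only Mather measure for $c>0$ is $\delta_{(\pi/2,0,c)}$. Its limit $\delta_{(\pi/2,0,0)}$ is a Mather measure at $c_0=0$ with $\int\lambda=1$.
\item The duality route is misformulated. The formal dual of \eqref{dcv} imposes $\int(d\psi(v)+\lambda\psi)\,d\nu=0$ for all $\psi$, not closedness plus $\int\lambda=0$. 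A maximizer $\nu$ on $TM$ also has no link to $\Phi^t_{c_0}$-invariant measures on a $1$-graph, and your identity $\int\lambda u\,d\mu=\int(L+c)\,d\mu$ does not supply one.
\item The strict-subsolution construction would need a $C^1$ perturbation of a merely semiconcave $u$ plus global gluing. You leave this open yourself.
\end{enumerate}
Even a non-ergodic Mather measure with $\int\lambda=0$ would not suffice. $\mathfrak{M}^{c_0}_0$ consists of ergodic measures, and in case $(\pm)$ the ergodic components could have mixed signs.

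\textbf{The missing idea.} What you need is the stability mechanism the paper uses. Under \textbf{(H3)$_+$}, work with the maximal solution $u^{c_0}_-$.
\begin{enumerate}
\item Suppose some ergodic $\mu\in\mathfrak{M}_{u^{c_0}_-}$ had $\int\lambda\,d\mu<0$. Then Corollary~\ref{cor:lem4} prevents $T^{c_0,-}_t(u^{c_0}_-+\delta)\to u^{c_0}_-$, contradicting Proposition~\ref{prop:new-NW-JDE}. Hence $\int\lambda\geqslant0$ on $\mathfrak{M}_{u^{c_0}_-}$; in case $(+)$ this is trivial.
\item Suppose the minimum of $\int\lambda$ over $\mathfrak{M}_{u^{c_0}_-}$ were positive. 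Then Theorem~\ref{RWY-1.2} makes $u^{c_0}_-$ asymptotically stable.
\item Proposition~\ref{Prop:3.2} then gives a contradiction. The strict inequalities $T^{c_0,-}_\tau(u^{c_0}_--\delta)>u^{c_0}_--\delta$ and $T^{c_0,-}_\tau(u^{c_0}_-+\delta)<u^{c_0}_-+\delta$ persist at $c_0-\varepsilon$. Proposition~\ref{thG} then produces a solution below $c_0$, contradicting Proposition~\ref{prop:1}.
\item So the minimum is $0$. It is attained on a closed face of the compact convex set $\mathfrak{M}_{u^{c_0}_-}$. The extreme points of that face are ergodic, so they belong to $\mathfrak{M}^{c_0}_0$.
\end{enumerate}
Case $(-)$ follows by the symmetry with $v^{c_0}_+$.
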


\begin{remark}
 Let $c(h):= \inf_{\psi\in C^\infty(M,\R)}\sup_{x\in M} h(x,d_x\psi(x))$ be the Ma\~{n}\'{e} critical value of $h(x,p)$. 
	\begin{equation}\label{eq:c(h)}
h(x, Du)+ \lambda (x)u = c(h) \quad \text{for } x \in M  
	\end{equation}
	The uniqueness of solutions to equation \eqref{eq:c(h)} have been studied in \cite{Z}.
Applying Theorem \ref{thm:4}, we have that equation  \eqref{eq:c(h)} admits a  globally asymptotically stable solution 
if and only if $\mathcal{M}_0(c(h)) = \emptyset$. Moreover, if $\lambda(x)\geqslant 0$ on $M$, it shown in  Corollary  \ref{Cor:B2} that equation \eqref{eq:c(h)} admits a  globally asymptotically stable solution if and only if $ \int_{TM} \lambda(x) \, d\mu>0$ ,  for every $h\text{-minimal measure } \mu$.
\end{remark}

\begin{remark}\label{rmk-main-thm}
In this long remark, we describe how to translate the main theorems into the case when the assumption \textbf{(H3)$_+$} is replaced by \textbf{(H3)$_-$}. In the same way as the beginning of this section, \textbf{(H3)$_-$} are divided into two subclasses:
\begin{itemize}
    \item[($-$)] $\lambda(x) \leqslant 0$ for all $x \in M$ and $\displaystyle \min_{x\in M} \lambda(x) < 0$.
    \item[($\pm$)] There exist $x_1, x_2 \in M$ such that $\lambda(x_1) > 0$ and $\lambda(x_2) < 0$.
\end{itemize}
Notice that for the definition of Lyapunov stable and asymptotically stable, one only needs a semiflow defined on $C(M,\R)$. Thus we can define the asymptotic stability of $v_+\in\mathcal{S}^{c,+}$ \textbf{with respect to $T^{c,+}_t$} in a similar fashion as Definition \ref{L-stable/A-stable}. As $T^{c,+}_t$ is defined via the backward solution semigroup $\breve{T}^{c,-}_t$ of \eqref{HJe'} defined by $\breve{H}$, it is not hard to check that
\begin{enumerate}[(i)]
  \item $v_+\in\mathcal{S}^{c,+}$ if and only if $-v_+$ is a stationary point of $\breve{T}^{c,-}_t$ and $u_-\in\mathcal{S}^{c,-}$ if and only if $-u_-$ is a stationary point of $\breve{T}^{c,+}_t$, where $\breve{T}^{c,+}_t$ denotes the forward solution semigroup of \eqref{HJe'}.

  \item $v_+\in\mathcal{S}^{c,+}$ is asymptotically stable with respect to $T^{c,+}_t$ \textbf{if and only if} $-v_+$ is an asymptotically stable solution to \eqref{HJs'}.

  \item $-u^c_-$ is minimal within stationary points of $\breve{T}^{c,+}_t$ and $-v^c_+$ is the maximal solution to \eqref{HJs'}.
\end{enumerate}
As is indicated in Remark \ref{+ to -}, if the discount factor $\lambda(x)$ of $H(x,p,u)$ satisfies \textbf{(H3)$_-$}, then the discount factor $-\lambda(x)$ of $\breve{H}(x,p,u)$ satisfies \textbf{(H3)$_+$}. Thus we can apply Theorem A-C to $\breve{T}^{c,-}_t$ to obtain same theorems for the equations \eqref{HJe'} and \eqref{HJs'} defined by $\breve{H}$. Due to (i)-(iii) above, we can translate the results for $\breve{T}^{c,-}_t$ into results for $T^{c,+}_t$ as
\begin{enumerate}[(A$'$)]
  \item There is $v_+\in\mathcal{S}^{c,+}$ asymptotically stable with respect to $T^{c,+}_t$ if and only if $c>c_0$. Such an asymptotically stable $v_+$ is \textbf{unique} and equals $v^c_+$.

  \item Assume $c>c_0$, define $\mathcal{B}^{c,+}:= \{\,\,\varphi\in C(M,\R) :\lim_{t\to +\infty} T_t^{c,+}\varphi=v^c_+\,\,\}$, then
        \begin{enumerate}[(B1')]
          \item If ($-$) holds, then $\mathcal{B}^{c,+}=C(M,\R)$. In this case, $v^c_+$ is \textbf{globally asymptotically stable}.

          \item If $(\pm)$ holds, then $\mathcal{B}^{c,+}=\{\varphi \in C(M,\R): \varphi< u^c_-\}$.
        \end{enumerate}

  \item Assume $c>c_0$. For $\varphi\in\mathcal{B}^{c,+}$, we define
        \begin{equation}\label{def-rate}
        b(c,\varphi):=\limsup_{t \to \infty} \frac{\ln \| T_t^{c,+}\varphi - v^c_+ \|_\infty}{t},\quad \quad b(c):=\sup_{\varphi\in\mathcal{B}^{c,+} } b(c,\varphi).
        \end{equation}
        then it follows that
        \begin{enumerate}[(C1')]
          \item  $b(c)=\sup_{\mu\in \mathfrak{M}_{v^c_+} } \int \lambda d\mu <0$.

          \item  For any $\varphi\in\mathcal{B}^{c,+}$ such that $\varphi(x)\neq v^c_+(x)$ for any $x\in M$, 
       $$
    b(c,\varphi)=\lim_{t \to +\infty} \frac{\ln \| T_t^{c,+}\varphi - v^c_+ \|_\infty}{t} = b(c)
    $$
          \item  $\lim_{c\to+\infty} b(c)= \lambda_-:=\min_{x\in M}\lambda(x)$.
        \end{enumerate}
\end{enumerate}
\end{remark}

\subsection{Organization of the contents}
In Section 2, we establish uniform bounds for the action of solution semigroups on subsolutions to prove Proposition \ref{prop:1}-\ref{prop:2} in the introduction. For Section 3, we recall some result on the large time behavior of solution semigroups to study of the existence of asymptotically stable solution, and prove Theorem A-B stated in the introduction. With the aid of the improved results in Appendix B as well as the test function method, we prove Theorem C in Section 4. The last section is devoted to a classification of ergodic Mather measures via their averaging discount in both non-critical and critical cases, this leads to a description of various Mather sets on the $(x,u)$-plane.

In Appendix A, we shall briefly recall the variational framework of contact Hamiltonian systems and Hamilton-Jacobi equations with emphasis on the solution semigroups and their properties. In Appendix B, we improve \cite[Theorem 1.2]{RWY} by showing that the lowest convergence rate of the solution semigroup is attained by any initial data near but strictly separated from the stable solution. The last Appendix offers a dynamical proof of Proposition \ref{prop:1} (1).

\section{Proof of preliminary observations}\label{proof:pre-observation}
This section is devoted to the proof of Proposition \ref{prop:1}-\ref{prop:2}. Here, we list some constants used in the proofs:
\begin{itemize}
  \item $\Lambda:=\|\lambda(x)\|_\infty$;

  \item $\mathbf{E}_0:=\max_{x\in M}h(x,0_x)=-\min_{(x,v)\in TM}l(x,v)$, where $0_x$ denotes the zero co-vector in $T^\ast_xM$;

  \item $\mathbf{e}_0:=\min_{(x,p)\in T^*M}h(x,p)=-\max_{x\in M}l(x,0_x)$, where $0_x$ denotes the zero vector in $T_xM$.
\end{itemize}

\subsection{Proof of Proposition \ref{prop:1}}

\textit{Proof of (1)}:  Assume \eqref{HJs} admits a subsolution $\varphi(x)\in C(M,\R)$, then it follows from the equation
\[
h(x,d_x\varphi(x))\leqslant\Lambda\|\varphi\|_\infty+\lambda(x)\varphi(x)+h(x,d_x\varphi(x))\leqslant c+\Lambda\|\varphi\|_\infty.
\]
Combining (H2), $\varphi$ is a subsolution of $|d_x\varphi|_x=c+\Lambda\|\varphi\|_\infty+A(1)$. By \cite[Proposition 1.14]{Ishii_chapter}, $\varphi\in \mathrm{Lip}(M,\R)$.

\vspace{0.5em}
Now we apply \cite[Lemma 2.2]{DFIZ} to $G(x,p)=\lambda(x)\varphi(x)+h(x,p)-c$ to get, for any $\varepsilon>0$, some $\psi_\varepsilon\in C^1(M,\R)$ such that $\|\varphi-\psi_\varepsilon\|_\infty\leqslant\varepsilon$ and $\lambda(x)\varphi(x)+h(x,d_x\psi_\varepsilon(x))\leqslant c+\varepsilon$. Therefore, we obtain for any $x\in M$,
\[
\lambda(x)\psi_\varepsilon(x)+h(x,d_x\psi_\varepsilon(x))\leqslant\Lambda\cdot\|\psi_\varepsilon-\varphi\|_\infty+\lambda(x)\varphi(x)
+h(x,d_x\psi_\varepsilon(x))\leqslant c+(\Lambda+1)\varepsilon,
\]
which implies that $c_0\leqslant c+(\Lambda+1)\varepsilon$. Since $\varepsilon>0$ is arbitrarily chosen, we complete the proof.

\vspace{1em}
Before the proof of (2), for $c\in\R$, set $\psi_c\equiv c\in C^1(M,\R)$.

\vspace{0.5em}
\noindent\textit{Proof of (2)}  $\Rightarrow$: Assume $\min_{x\in M}\lambda(x):=\lambda_0>0$. So that by (H2),
\[
c_0\leqslant\sup_{x\in M}\{\lambda(x)\psi_c(x)+h(x,d_x\psi_c(x))\}=\max_{x\in M}\{-\lambda(x)\cdot c+h(x,0_x)\}\leqslant\lambda_0\cdot c+\mathbf{E}_0,
\]
Sending $c$ to $-\infty$, we obtain that $c_0=-\infty$. The same conclusion holds with the same argument if $\max_{x\in M}\lambda(x)<0$.

\vspace{0.5em}
\noindent\textit{Proof of (2)} $\Leftarrow$: Assume $\lambda(x_0)=0$ for some $x_0\in M$, then
\begin{align*}
c_0&\,\geqslant\inf_{\psi\in C^1(M,\R)}\,\,\{\lambda(x_0)\psi(x_0)+h(x_0,d_x\psi(x_0))\}\geqslant\mathbf{e}_0>-\infty.\\
c_0&\,\leqslant\sup_{x\in M}\,\,\lambda(x)\psi_0(x)+h(x,d_x\psi_0(x))\leqslant\mathbf{E}_0<+\infty.\qed
\end{align*}

Before proving (3), we need to improve an estimate of $T_t^{c,\pm}$ on subsolutions in \cite[Lemma 2.12]{JYZ}. This estimate is also used in the proof of main theorems. For the definition of the action functions $h_{x_0,u_0}(x,t), h^{x_0,u_0}(x,t)$ appear below, see Appendix A.

\begin{lemma}\label{lem:estimation}
Assume (H1)-(H3). For $\delta>0$ and $(x,t)\in M\times(\delta,+\infty)$,
\begin{itemize}	
  \item[(1)]  if there is $x_1\in M$ with $\lambda(x_1)>0$, then for any subsolution $v$ of \eqref{HJs},
              $$
	          T_{t}^{c,-}v(x)\leqslant  \overline B_{c,\delta}(x):= h^c_{x_1,\frac{c-\mathbf{e}_0}{\lambda(x_1)}}(x,\delta).
              $$
              Moreover, for any $\varphi\in C(M,\R)$, $T_{t}^{c,-}\varphi $ has an upper bound independent of $t$.

  \item[(2)] if there is $x_2\in M$ with $\lambda(x_2) <0$, then for any subsolution $v$ of \eqref{HJs},
	         $$
	         T_{t}^{c,+}v(x)\geqslant  \underline  B_{c,\delta}(x):= h_c^{x_2,\frac{c-\mathbf{e}_0} {\lambda(x_2)} }(x,\delta).
	         $$
             Moreover, for any $\varphi\in C(M,\R)$, $T_{t}^{c,+}\varphi $ has a lower bound independent of $t$.
\end{itemize}
\end{lemma}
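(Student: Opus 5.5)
We argue with the implicit variational representation of the semigroup and the action functions from Appendix A. Recall that for $t>0$
\[
T^{c,-}_t\varphi(x)=\inf_{y\in M}h^c_{y,\varphi(y)}(x,t),
\]
and that $h^c_{x_0,u_0}(x,t)$ is increasing in $u_0$ and satisfies the Markov property
\[
h^c_{x_0,u_0}(x,t+s)=\inf_{z}h^c_{z,h^c_{x_0,u_0}(z,t)}(x,s).
\]
We treat statement (1); statement (2) follows by the symmetry of Remark \ref{+ to -}, applying (1) to $\breve H$ and using $T^{c,+}_t\varphi=-\breve T^{c,-}_t(-\varphi)$ together with the identities between $h^c$ and $h_c$.

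\emph{Step 1: pin $v$ below a fixed value at $x_1$.} Let $v$ be a subsolution. By the proof of Proposition \ref{prop:1}(1), $v$ is Lipschitz, and at points of differentiability $\lambda(x)v(x)+h(x,d_xv)\leqslant c$. We claim
\[
v(x_1)\leqslant u_1:=\frac{c-\mathbf{e}_0}{\lambda(x_1)}.
\]
At a differentiability point this follows from $\lambda(x_1)v(x_1)\leqslant c-h(x_1,d_xv)\leqslant c-\mathbf e_0$. If $x_1$ is not a differentiability point, we use the viscosity subsolution inequality with a test function touching $v$ from above if one exists. In general we approximate: subsolutions satisfy the inequality in the Clarke sense by convexity, so $\lambda(x_1)v(x_1)+h(x_1,p)\leqslant c$ for some $p\in\partial^Cv(x_1)$. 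This gives the claim.

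\emph{Step 2: monotonicity and the Markov property.} Since $v$ is a subsolution, $T^{c,-}_sv\geqslant v$ for $s\geqslant 0$ (Proposition \ref{mono}), and $T^{c,-}_sv$ is again a subsolution. Applying Step 1 to $T^{c,-}_{t-\delta}v$ gives $T^{c,-}_{t-\delta}v(x_1)\leqslant u_1$. Then
\[
T^{c,-}_tv(x)=T^{c,-}_\delta\bigl(T^{c,-}_{t-\delta}v\bigr)(x)\leqslant h^c_{x_1,T^{c,-}_{t-\delta}v(x_1)}(x,\delta)\leqslant h^c_{x_1,u_1}(x,\delta)=\overline B_{c,\delta}(x).
\]
The first inequality takes $y=x_1$ in the infimum; the second uses monotonicity of $h^c_{x_1,\cdot}$ in the initial value.

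\emph{Step 3: general $\varphi$.} This is the expected obstacle: $\varphi$ need not be a subsolution, and $T^{c,-}_t\varphi$ need not be one either. We dominate $\varphi$ by a subsolution-like object. If \eqref{HJs} has a subsolution $w$, then $w+K$ is not a subsolution where $\lambda>0$. We therefore argue directly at $x_1$ instead. Set
\[
m(t):=T^{c,-}_t\varphi(x_1).
\]
Using the constant curve at $x_1$ on $[t,t+s]$ gives
\[
m(t+s)\leqslant e^{-\lambda(x_1)s}m(t)+\frac{1-e^{-\lambda(x_1)s}}{\lambda(x_1)}\bigl(c+\max_v(-l(x_1,0))\bigr),
\]
which is the linear ODE comparison $\dot m\leqslant -\lambda(x_1)m+c-\mathbf{e}_0$, since $l(x_1,0)=-\min_p h(x_1,p)\leqslant-\mathbf e_0$. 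Hence
\[
m(t)\leqslant\max\Bigl\{\varphi(x_1),\frac{c-\mathbf e_0}{\lambda(x_1)}\Bigr\}=:u_\varphi \quad\text{for all } t\geqslant0.
\]
This is consistent with Step 1, and in fact re-proves it. Then, as in Step 2, for $t\geqslant\delta$ we get $T^{c,-}_t\varphi\leqslant h^c_{x_1,u_\varphi}(\cdot,\delta)$, which is bounded and independent of $t$. For $t<\delta$ we use the continuity of the semigroup on $[0,\delta]$.

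The delicate point is justifying the constant-curve estimate within the implicit variational principle, where the action depends on the unknown $u$ along the curve. We handle it via the comparison principle for the implicit Lax–Oleinik operator applied with the fixed curve $\gamma\equiv x_1$.
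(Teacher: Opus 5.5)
Your proof is correct. For the first assertion of (1) it takes a genuinely different and simpler route than the paper.

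The paper uses the bound $\lambda(x_1)v(x_1)\leqslant c-\mathbf{e}_0$ only at time $0$. It then propagates that bound forward with the constant-curve estimate $h^c_{x_1,u_0}(x_1,t)\leqslant \frac{l(x_1,0)+c}{\lambda(x_1)}+\bigl(u_0-\frac{l(x_1,0)+c}{\lambda(x_1)}\bigr)e^{-\lambda(x_1)t}$, combined with the Markov property of $h^c$. You instead observe that $T^{c,-}_{t-\delta}v$ is again a subsolution, because $T^{c,-}_\sigma T^{c,-}_{t-\delta}v=T^{c,-}_{t-\delta}T^{c,-}_\sigma v\geqslant T^{c,-}_{t-\delta}v$ and Proposition \ref{mono} is an equivalence. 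So the pointwise bound at $x_1$ holds at every time at no cost. One application of $T^{c,-}_\delta w\leqslant h^c_{x_1,w(x_1)}(\cdot,\delta)$ plus monotonicity in the initial value then concludes, with no implicit-action estimate needed.

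Your Step 1 is also more careful than the paper's. The paper tests the subsolution inequality on $D^+v(x_1)$, which can be empty for a Lipschitz subsolution; reachable or Clarke gradients are always nonempty and do the job.

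For the \emph{Moreover} part you converge with the paper: tracking $T^{c,-}_t\varphi(x_1)$ is equivalent to tracking $h^c_{x_1,\varphi(x_1)}(x_1,t)$. Here the paper's route buys economy, since a single estimate covers both claims, the subsolution case being $u_0\leqslant\frac{c-\mathbf{e}_0}{\lambda(x_1)}$.

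On the delicate point you flag, setting up the comparison on every window $[t,t+s]$ through the semigroup property, as you do, is exactly what makes it rigorous. The inputs are $m(t+s)\leqslant h^c_{x_1,m(t)}(x_1,s)$, the implicit formula along $\gamma\equiv x_1$, and $h^c_{x_1,u_0}(x_1,\tau)\to u_0$ as $\tau\to0^+$. From these you get $\limsup_{s\to0^+}\frac{m(t+s)-m(t)}{s}\leqslant c+l(x_1,0)-\lambda(x_1)m(t)$ at every $t$. Hence $e^{\lambda(x_1)t}\bigl(m(t)-\frac{c+l(x_1,0)}{\lambda(x_1)}\bigr)$ is nonincreasing. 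Alternatively, Herglotz' representation of $h^c_{x_1,u_0}(x_1,s)$ as an infimum of $u_\gamma(s)$ with $\dot u_\gamma=L^c(\gamma,\dot\gamma,u_\gamma)$ gives the bound directly for the constant curve.

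An integral inequality $F(t)\leqslant u_0+\int_0^t(A-\lambda(x_1)F)$ holding only from time $0$ would not suffice on its own. With $\lambda(x_1)>0$, Gronwall's comparison runs the wrong way.

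Finally, the constant in your displayed estimate should be $c+l(x_1,0)$, which is then bounded by $c-\mathbf{e}_0$, not $c+\max_v(-l(x_1,0))$.
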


\begin{proof}
We shall only prove the case  $\lambda(x_1)>0$, that is (1), the proof of (2) goes along the same way. Note that  $\lambda(x_1)>0 $. By the definition of the subsolution, $h(x_1,p )+ \lambda(x_1)v(x_1) \leqslant c$ for any $p\in D^+v(x_1)$, here $D^+v(x_1)$ denotes the sup-gradients of $v$ at $x_1$. Thus
$$
\lambda(x_1)v(x_1)\leqslant c-\min_{p\in T^\ast_{x_1}M}h(x_1,p)\leqslant c-\mathbf{e}_0.
$$
Hence, for any subsolution $v$ of \eqref{HJs}, we have $v(x_1)\leqslant \frac{c-\mathbf{e}_0}{\lambda(x_1)}$. By \eqref{eq:Tt-+ rep} and (2) of Proposition \ref{Minimality}, for any $t>0$ and $x\in M$,
\[
T_t^{c,-}v(x)=\inf_{y\in M}h^c_{y,v(y)}(x,t) \leqslant  h^c_{x_1,v(x_1)}(x,t)\leqslant h^c_{x_1,\frac{c-\mathbf{e}_0}{\lambda(x_1)}}(x,t).
\]
On the other hand, by \eqref{eq:Implicit variational} and choosing $\gamma_1(\tau)\equiv x_1$ with $\tau\in [0,t]$.
\begin{equation*}
\begin{split}
h^c_{x_1,v(x_1)}(x_1,t)=&\,v(x_1)+\inf_{\substack{\gamma(t)=x_1\\ \gamma(0)=x_1} }\int_0^t \bigg[l(\gamma(\tau),\dot \gamma(\tau))+c-\lambda(\gamma(\tau))h^c_{x_1,v(x_1)}(\gamma(\tau),\tau))\bigg]\ d\tau\\
\leqslant  &\, v(x_1)+\int_0^t \bigg[l(\gamma_1(\tau),\dot \gamma_1(\tau))+c-\lambda(\gamma_1(\tau))h^c_{x_1,v(x_1)}(\gamma_1(\tau),\tau))\bigg]\ d\tau\\
= &\, v(x_1)+\int_0^t \bigg[l(x_1,0)+c-\lambda(x_1)\cdot h^c_{x_1,v(x_1)}(x_1,\tau)\bigg]\ d \tau \\
= &\, v(x_1)+(l(x_1,0)+c)\cdot t-\lambda(x_1)\cdot \int_0^t h^c_{x_1,v(x_1)}(x_1,\tau) \ d \tau.
\end{split}
\end{equation*}
It follows from the above inequality that for any $t>0$,
\begin{equation}\label{eq:pf-lem2.1-1}
h^c_{x_1,v(x_1)}(x_1,t)\leqslant\frac{l(x_1,0)+c}{\lambda(x_1)}+\Big(v(x_1)-\frac{l(x_1,0)+c}{\lambda(x_1)}\Big)\cdot e^{-\lambda(x_1)t}\leqslant \frac{c-\mathbf{e}_0}{\lambda(x_1)}.
\end{equation}
Thus by (2)-(3) of Proposition \ref{Minimality}, for any $x\in M$ and $t,\delta >0$,
\begin{align*}
T_{t+\delta }^{c,-}v(x)\leqslant &\,h^c_{x_1, v(x_1) }(x,t+\delta) =  \inf_{y\in M}h^c_{y,h^c_{x_1,v(x_1)}(y,t)}(x,\delta)
 \\
 \leqslant &\,  h^c_{x_1,h^c_{x_1,v(x_1)}(x_1,t)}(x,\delta)
\leqslant  h^c_{x_1,\frac{c-\mathbf{e}_0} {\lambda(x_1)} }(x,\delta)=\overline B_{c,\delta}(x).
\end{align*}

\vspace{0.5em}
For the second conclusion, we apply \eqref{eq:pf-lem2.1-1} to the continuous initial data $\varphi$ to have: for any $t>0$,
\begin{align*}
h^c_{x_1,\varphi (x_1)}(x_1,t)	\leqslant &\, \frac{l(x_1,0)+c}{\lambda(x_1)}+\Big(\varphi (x_1)-\frac{l(x_1,0)+c}{\lambda(x_1)}\Big)\cdot e^{-\lambda(x_1)t} \\
\leqslant &\, D(x_1,c):= \max\Big\{ \varphi(x_1), \frac{l(x_1,0)+c}{\lambda(x_1)}\Big\}.
\end{align*}
Then by (2)-(3) of Proposition \ref{Minimality}, for any $x\in M$ and $t,\delta >0$,
\begin{align*}
		T_{t+\delta}^{c,-}\varphi (x)=&\, \inf_{y\in M}h^c_{y,\varphi (y)}(x,t+\delta) \leqslant   h^c_{x_1,\varphi (x_1)}(x,t+\delta)=  \inf_{y\in M}h^c_{y,h^c_{x_1,\varphi (x_1)}(y,t)}(x,\delta) \\
\leqslant &\,  h^c_{x_1,h^c_{x_1,\varphi (x_1)}(x_1,t)}(x,\delta)
\leqslant  h^c_{x_1,   D(x_1,c) }(x,\delta),
\end{align*}
where the righthand side is independent of $t$.

\end{proof}

\noindent\textit{Proof of (3)}: By the definition of viscosity solution, it is sufficient to show that if \eqref{HJs} admits a subsolution, then $\mathcal{S}^{c,-}\neq\emptyset$.

\vspace{0.5em}
By Proposition \ref{Minimality} (4), $\overline B_{c,\delta}, \underline B_{c,\delta}$ are Lipschitz functions on $M$. Then it follows from Lemma \ref{lem:estimation} that
\begin{enumerate}[(i)]
  \item if \textbf{(H3)$_+$} holds, then $T_{t}^{-}v(x)$ is uniformly bounded from above,
  \item if \textbf{(H3)$_-$} holds, then $T_{t}^{+}v(x)$ is uniformly bounded from below.
\end{enumerate}
For (i), $\displaystyle \lim_{t\to +\infty} T_t^-v=u_-$ exists and belongs to $\mathcal{S}^{c,-}$. For the second case, $\displaystyle \lim_{t\to +\infty} T_t^+v=v_+$ exists and belongs to $\mathcal{S}^{c,+}$. Thus by Proposition \ref{mono}(4), the limit $ \displaystyle \lim_{t\to +\infty} T_t^-v_+=u_-$ exists and belongs to $\mathcal{S}^{c,-}$.

\subsection{Proof of Proposition \ref{prop:new-NW-JDE}}
We shall only focus on the proof of (1), the conclusion (2) can be obtained from (1) and the discussions in Remark \ref{+ to -}.

\vspace{0.5em}
We begin to prove the existence of a solution $u^c_-$ to \eqref{HJs} that is maximal within \textbf{not only $\mathcal{S}^{c,-}$ but also all subsolutions} to \eqref{HJs}. By Lemma \ref{lem:estimation} (1), set $D:=\max_{x\in M}|\overline B_{c,\delta}(x)|\geqslant0$, then for all subsolutions $v$ to \eqref{HJs} and $x\in M, v(x)\leqslant D$. Choosing a continuous initial data $\varphi>D$, by Lemma  \ref{lem:estimation} (1) and Proposition \ref{sg-2} (3), $\Tc \varphi$ has an upper bound independent of $t$ and
$$
u^{\varphi}_-(x):= \liminf_{t\to +\infty} \Tc \varphi\in \mathcal{S}^{c,-}.
$$
Since $\varphi$ is larger than any subsolution $v$, we have for any subsolution $v$,
\begin{align*}
u^{\varphi}_-(x)=\liminf_{t\to +\infty} \Tc \varphi \geqslant \liminf_{t\to +\infty} \Tc v\geqslant v,
\end{align*}
where the first inequality is due to Proposition \ref{sg-1} (2) and the second follows from \eqref{eq:t-mono} of Proposition \ref{mono}. In particular, $u^{\varphi}_-(x)$ is the maximal solution of $\mathcal{S}^{c,-}$. It is readily seen that the maximal solution is unique and independent of $\varphi$, we denote it by $u^c_-$.

\vspace{0.5em}
Now we prove the conclusion on the convergence of $\Tc$ on any continuous initial data $\varphi\geqslant u^c_-(x)$. On one hand, Proposition \ref{sg-1} (2) implies that for any $x\in M$,
\begin{equation}\label{eq:2-1}
\Tc\varphi(x)\geqslant \Tc u^c_-(x)=u^c_-(x).
\end{equation}
On the other hand, Combining Lemma \ref{lem:estimation} (1) with Proposition \ref{sg-2} (3), $\Tc\varphi(x)$ has an upper bound independent of $t$, thus $\limsup_{t\rightarrow+\infty} T^{c,-}_t\varphi(x)$ is well-defined and a subsolution of \eqref{HJs}. Due to the maximality of $u^c_-(x)$  within subsolutions to \eqref{HJs} and \eqref{eq:2-1}, we conclude the proof by the inequalities
\begin{equation*}
u^c_-(x)\geqslant\limsup_{t\rightarrow+\infty}T^{c,-}_t\varphi(x)\geqslant\liminf_{t\rightarrow\infty}\Tc\varphi(x)\geqslant\liminf_{t\rightarrow\infty}u^c_-(x)=u^c_-(x).
\end{equation*}

\subsection{Proof of Proposition  \ref{prop:2}}
We shall only focus on the proof of (1), the conclusion (2) can be obtained from (1) and the discussions in Remark \ref{+ to -}. One can compare the proof to \cite[Lemma 5.3, Lemma 5.4]{NW}. Our proof relies on two lemmas.

\begin{lemma}\label{strict-sub}
Let $\psi\in\mathcal{SS}^c$ and $u_-$ is a solution to \eqref{HJs}, then for $\delta\in(0,1]$, the convex combination
\begin{equation}\label{conv-comb}
u_\delta=\delta\psi+(1-\delta)u_-
\end{equation}
is a strict subsolution to \eqref{HJs}.
\end{lemma}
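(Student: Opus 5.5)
The key structural fact is that $H(x,p,u)=\lambda(x)u+h(x,p)$ is \emph{affine} in $u$ and convex in $p$. So the map $(u,p)\mapsto \lambda(x)u+h(x,p)$ is jointly convex along convex combinations. The plan is to show that $u_\delta$ satisfies, in the viscosity sense,
\[
\lambda(x)u_\delta(x)+h(x,d_xu_\delta)\leqslant c-\delta\eta,
\qquad \text{with}\quad \eta:=c-\max_{x\in M}\big[\lambda\psi+h(x,d_x\psi)\big]>0 .
\]
Here $\eta>0$ because $\psi$ is smooth on the compact manifold $M$ and the inequality in $\mathcal{SS}^c$ is strict. For $\delta=1$ the claim is trivial, so we assume $\delta\in(0,1)$.

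First, $u_-$ is a viscosity solution, hence a subsolution. By the argument in the proof of Proposition \ref{prop:1}(1), $u_-$ is Lipschitz. Since $p\mapsto\lambda(x)u_-(x)+h(x,p)$ is convex, the standard equivalence for convex Hamiltonians applies. Hence the viscosity subsolution property is equivalent to the almost-everywhere inequality
\[
\lambda(x)u_-(x)+h(x,d_xu_-(x))\leqslant c \quad \text{at every differentiability point of } u_- .
\]
Then $u_\delta$ is Lipschitz and differentiable wherever $u_-$ is, with $d_xu_\delta=\delta d_x\psi+(1-\delta)d_xu_-$. At such points, convexity (H1) of $h$ in $p$ and linearity in $u$ give
\[
\lambda u_\delta+h(x,d_xu_\delta)\leqslant \delta\big[\lambda\psi+h(x,d_x\psi)\big]+(1-\delta)\big[\lambda u_-+h(x,d_xu_-)\big]\leqslant \delta(c-\eta)+(1-\delta)c=c-\delta\eta .
\]
So the inequality holds almost everywhere.

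The last step converts this a.e. inequality back into a viscosity subsolution inequality for $u_\delta$. This is again the convex-Hamiltonian equivalence for Lipschitz functions, applied to the Hamiltonian $\lambda(x)u_\delta(x)+h(x,p)$, which is convex in $p$ and continuous in $x$. One can also argue directly: at a point where a $C^1$ test function touches $u_\delta$ from above, use that the superdifferential $D^+u_\delta(x)$ lies in the closed convex hull of reachable gradients, then use convexity and continuity. This yields $u_\delta$ as a viscosity subsolution of $\lambda u+h=c-\delta\eta<c$, i.e. a strict subsolution.

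The main obstacle is justifying this a.e.$\Leftrightarrow$viscosity equivalence in the presence of the zeroth-order term. It is handled by freezing the $u$-argument: treat $G(x,p)=\lambda(x)u_\delta(x)+h(x,p)$ as a continuous Hamiltonian that is convex in $p$, exactly as the paper did with \cite[Lemma 2.2]{DFIZ} in the proof of Proposition \ref{prop:1}(1).
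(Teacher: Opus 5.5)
Your proposal is correct and follows essentially the same route as the paper: Lipschitz regularity of $u_-$, then the convexity estimate $\lambda u_\delta+h(x,d_xu_\delta)\leqslant c-\delta\eta$ at differentiability points, then the a.e.-to-viscosity equivalence for Lipschitz functions with a Hamiltonian convex in $p$, which the paper obtains by citing \cite[Theorem 8.3.1]{Fathi_book}. The only cosmetic differences are that you use the subsolution inequality for $u_-$ where the paper uses the equality, and that you sketch the Clarke-gradient justification of the last step instead of citing it.
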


\begin{proof}
The proof of Proposition \ref{prop:1} (1) shows that any subsolution to \eqref{HJs} is Lipschitz. Let $\mathrm{D}$ be the set of differentiable points of $u_-$, then $\mathrm{D}$, as a subset of $M$, has full Lebesgue measure and for every $x\in\mathrm{D}$,
\begin{equation}\label{cov:1}
\lambda(x) u_-(x)+h(x,d_x u_-(x))=c.
\end{equation}
Since $\psi\in\mathcal{SS}^c$, there is $\varepsilon>0$ such that for every $x\in M$,
\begin{equation}\label{cov:2}
\lambda (x)\psi(x)+h(x,d_x\psi(x))\leqslant c-\varepsilon.
\end{equation}
Therefore $w_\delta$ is differentiable on $\mathrm{D}$ with $d_xu_\delta=\delta d_x\psi+(1-\delta) d_x u_-$. Combining \eqref{cov:1}-\eqref{cov:2} and {\bf(H1)}, for each $x\in \mathrm{D}$,
\begin{align*}
&\,\lambda(x)u_\delta+h(x,d_xu_\delta(x))\\
\leqslant &\,\lambda(x)[\delta\psi(x)+(1-\delta)u_-(x)]+\delta h(x,d_x\psi(x))+(1-\delta )h(x,d_x u_-(x))\\
=&\,\delta\big[h (x,d_x\psi(x))+ \lambda(x)\psi(x)\big]+(1-\delta)\big[h(x,d_x u_-(x))+\lambda(x)u_-(x)\big]\\
\leqslant &\,\delta (c-\varepsilon)+(1-\delta)c=c-\delta\varepsilon.
\end{align*}
We invoke \cite[Theorem 8.3.1]{Fathi_book} to complete the proof.
\end{proof}

To describe the second lemma, we recall that, from the proof of Proposition \ref{prop:1} (3), for any subsolution $v$ to \eqref{HJs}, $T^{c,-}_tv$ always converges as $t$ goes to $+\infty$ since it is increasing and uniformly bounded from above in $t$.
\begin{lemma}\label{u-max}
Let $\psi\in\mathcal{SS}^c, u^\psi_-=\lim_{t\rightarrow+\infty}T^{c,-}_t\psi$, then $\{u_-\in\mathcal{S}^{c,-}:u_-\geqslant\psi\}=\{u^\psi_-\}$.
\end{lemma}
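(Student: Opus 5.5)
Since $\psi$ is a subsolution, $T^{c,-}_t\psi$ is nondecreasing in $t$ and bounded above, so $u^\psi_-$ exists, lies in $\mathcal{S}^{c,-}$, and satisfies $u^\psi_-\geqslant\psi$. This gives the inclusion $\{u^\psi_-\}\subset\{u_-\in\mathcal{S}^{c,-}:u_-\geqslant\psi\}$.

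For the reverse inclusion, fix $u_-\in\mathcal{S}^{c,-}$ with $u_-\geqslant\psi$. The plan is to prove $u_-\leqslant u^\psi_-$ and $u_-\geqslant u^\psi_-$ separately. The second inequality is easy: by monotonicity of the semigroup (Proposition \ref{sg-1} (2)), $T^{c,-}_t\psi\leqslant T^{c,-}_tu_-=u_-$, and letting $t\to+\infty$ gives $u^\psi_-\leqslant u_-$.

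The first inequality is the real content. I would use the strict subsolutions $u_\delta=\delta\psi+(1-\delta)u_-$ of Lemma \ref{strict-sub}. These satisfy $\psi\leqslant u_\delta\leqslant u_-$, so $T^{c,-}_tu_\delta$ increases to some solution $w_\delta$ with $u_\delta\leqslant w_\delta\leqslant u_-$. The goal is to show that $u_-$ is dominated by something produced from $\psi$. The natural tool is a comparison principle for a strict subsolution against a solution. Wherever the inequality
\[
\lambda(x)u_\delta+h(x,d_xu_\delta)\leqslant c-\delta\varepsilon
\]
holds, an argument in the spirit of \cite[Lemma 5.3]{NW} should give strict growth of $T^{c,-}_tu_\delta$. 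However, on the set $\{\lambda<0\}$ the comparison of two solutions fails, so a doubling-variables argument will not work directly.

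I would therefore argue through characteristics and calibrated curves instead. Take $x\in M$ and a backward calibrated curve $\gamma_x$ for $u_-$. Using the representation formula from Proposition \ref{Minimality}, I would compare the action along $\gamma_x$ starting from the value $\psi(\gamma_x(-t))\leqslant u_-(\gamma_x(-t))$. The difference $u_--T^{c,-}_t\psi$ along $\gamma_x$ then evolves like $e^{-\int\lambda}\,(u_--\psi)(\gamma_x(-t))$, via the Gronwall-type inequality also used in \eqref{eq:pf-lem2.1-1}. This quantity decays provided $\int\lambda>0$ along the $\alpha$-limit of $\gamma_x$, that is, $\int\lambda\,d\mu>0$ for Mather measures $\mu\in\mathfrak{M}_{u_-}$.

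The main obstacle is proving exactly this positivity using $c>c_0$, and this is where $\psi\in\mathcal{SS}^c$ enters. Integrating the strict inequality $\lambda\psi+h(x,d\psi)\leqslant c-\varepsilon$ against a Mather measure $\mu$ supported in $\mathrm{J}^1_{u_-}$, and using the Fenchel inequality
\[
h(x,d\psi)\geqslant\langle d\psi,v\rangle-l(x,v),
\]
together with the invariance of $\mu$ (which kills $\int\langle d\psi,\dot x\rangle$), should yield
\[
\int\lambda\,(u-\psi)\,d\mu\geqslant\varepsilon>0.
\]
Since $u-\psi\geqslant0$ on the support, this forces the weighted discount to be positive. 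I would then convert this into exponential decay of $(u_--T^{c,-}_t\psi)(x)$, concluding $u_-\leqslant u^\psi_-$ and hence $u_-=u^\psi_-$.
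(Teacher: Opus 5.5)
There is a genuine gap. The step that carries your argument is the decay of $u_--T^{c,-}_t\psi$ along a backward calibrated curve $\gamma_x$ of $u_-$, and it gives an inequality in the wrong direction. Along $\gamma_x$ the function $u_-$ satisfies the calibration \emph{equality}. The function $T^{c,-}_t\psi$ only satisfies the \emph{inequality} you get by inserting $\gamma_x$ as a competitor in the Lax--Oleinik infimum. Subtracting, $w(s)=u_-(\gamma_x(s))-T^{c,-}_{s+t}\psi(\gamma_x(s))$ satisfies $\dot w\geqslant-\lambda(\gamma_x)\,w$, hence
\[
u_-(x)-T^{c,-}_t\psi(x)\geqslant e^{-\int_{-t}^0\lambda(\gamma_x(s))\,ds}\,(u_--\psi)(\gamma_x(-t)).
\]
This is a lower bound. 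It is exactly the mechanism the paper uses in Lemma \ref{lem3} and Corollary \ref{cor:lem4} to obtain lower bounds and instability.

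An upper bound would have to be proved along minimizers of $T^{c,-}_t\psi(x)$, where only the domination inequality for $u_-$ is available. Those curves are not $u_-$-calibrated, and their $\lambda$-averages are not controlled by $\mathfrak{M}_{u_-}$. Near the endpoint they shadow calibrated curves of $u^\psi_-$, which is precisely what you are trying to identify with $u_-$. The estimates of \cite{RWY} that control such curves apply only once the data is within $\Delta$ of $u_-$, and $\psi$ may be far from $u_-$. So positivity of $\int\lambda\,d\mu$ on $\mathfrak{M}_{u_-}$ gives at most local asymptotic stability of $u_-$ (Theorem \ref{RWY-1.2}), not $T^{c,-}_t\psi\to u_-$.

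Separately, your Fenchel computation with the linear function $u-\psi$ only yields $\int\lambda\,(u-\psi)\,d\mu\geqslant\varepsilon$. When $\lambda$ changes sign this does not imply $\int\lambda\,d\mu>0$. You need the test function $\ln(u-\psi)$ of Lemma \ref{lem:pf-1.1-2}, which is admissible because $u_-\geqslant u^\psi_-\geqslant T^{c,-}_1\psi>\psi$.

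The missing idea is a first-contact argument in $\delta$ that compares $u_\delta=\delta\psi+(1-\delta)u_-$ with $u^\psi_-$ rather than with $u_-$; it needs no Mather measures. Suppose $u_-\neq u^\psi_-$ and set $f(\delta):=\min_M(u^\psi_--u_\delta)$. Then $f$ is continuous, $f(0)<0$, and $f(1)=\min_M(u^\psi_--\psi)>0$ because $u^\psi_-\geqslant T^{c,-}_1\psi>\psi$ by Proposition \ref{mono}(1). Hence $f(\delta_0)=0$ for some $\delta_0\in(0,1)$, with the minimum attained at some $x_0$. Since $u_{\delta_0}\leqslant u^\psi_-$ and $u_{\delta_0}$ is a strict subsolution (Lemma \ref{strict-sub}), for $t>0$
\[
u_{\delta_0}(x_0)=u^\psi_-(x_0)=T^{c,-}_tu^\psi_-(x_0)\geqslant T^{c,-}_tu_{\delta_0}(x_0)>u_{\delta_0}(x_0),
\]
which is a contradiction. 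Your concern about comparing two solutions on $\{\lambda<0\}$ never arises. Only order preservation of the semigroup and strict increase on strict subsolutions are used, at a single touching point.

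Your Mather-measure route can be rescued, but only by adding a continuation in $\delta$ of the same kind. By the logarithmic test function and Theorem \ref{RWY-1.2}, every solution above $\psi$ is locally asymptotically stable. Then $\delta\mapsto\lim_{t\to+\infty}T^{c,-}_tu_\delta$ is locally constant on $[0,1]$, so $u_-=u^\psi_-$. This is considerably heavier than the touching argument.
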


\begin{proof}
For each $u_-\in\cS$ satisfying $u_-\geqslant\psi$,
$$
u_-= \lim_{t\to +\infty} \Tc u_-\geqslant  \lim_{t\to +\infty}\Tc \psi=u^\psi_-.
$$
We argue by contradiction. Assume $u^\psi_-\neq u_-$, then
\begin{equation}\label{eq:solu-notexist-contradiction}
\max_{x\in M}\{u_-(x)- u^\psi_-(x)  \} >0.
\end{equation}
For $\delta\in [0,1]$, we set $u_\delta=\delta\psi+(1-\delta)u_-$. By Lemma \ref{strict-sub}, for $\delta>0, u_\delta$ is a strict subsolution to \eqref{HJs} and we apply Proposition \ref{mono} to conclude that for $\delta\in (0,1]$ and $t>0$,
\begin{equation}\label{eq:geq-unique-p2}
\quad \Tc u_{\delta}(x)  > u_{\delta}(x), \quad \forall x \in M.
\end{equation}
Now we define $f:[0,1]\to\R$ as $f(\delta):= \min_{x\in M}\{u^\psi_-(x)-u_{\delta}(x)\}$, $f$ is continuous since $u_\delta$ is continuous in $\delta$. Notice that by Proposition \ref{mono}, $u^\psi_-\geqslant T^{c,-}_1\psi>\psi$ and \eqref{eq:solu-notexist-contradiction},
$$
f(0)=- \max_{x\in M}\{u_-(x)-u^\psi_-(x)\} <0 \quad \text{and} \quad f(1)= \min_{x\in M}\{u^\psi_-(x)-\psi(x)\} >0
$$
Thus there exist $\delta_0\in (0,1)$ such that $f(\delta_0)=0$, i.e.
\begin{equation}\label{eq:delta0}
\min_{x\in M}\{u^\psi_-(x)-u_{\delta_0}(x)\}=f(\delta_0)=0.
\end{equation}
It follows from \eqref{eq:delta0} that $u^\psi_-\geqslant u_{\delta_0}$ and
\begin{equation}\label{eq:geq-unique-p1}
\Tc u^\psi_-(x) \geqslant\Tc u_{\delta_0}(x)  ,\quad \forall x \in M.
\end{equation}
Again by \eqref{eq:delta0} and the compactness of $M, \arg \min_{x\in M} \{u^\psi_-(x)-u_{\delta_0}(x)\}\neq \emptyset$. Then there is $x_0\in M$ such that $u^\psi_-(x_0)=u_{\delta_0}(x_0)$. Since $u_-\in\cS$, we use \eqref{eq:geq-unique-p1} to obtain
\begin{align*}
u_{\delta_0}(x_0)=u^\psi_-(x_0) =\Tc u^\psi_-(x_0)\geqslant\Tc u_{\delta_0}(x_0).
\end{align*}
which contradicts \eqref{eq:geq-unique-p2}.
\end{proof}

\vspace{0.5em}
\noindent\textit{Proof of (1)}: Given $\psi\in\cSS$, it is known that $\lim_{t\rightarrow+\infty}\Tc\psi$ exists and equals some $u_{-}^\psi\in\cS$. Now Lemma \ref{u-max} shows that
\begin{enumerate}[(C)]
  \item For \textbf{any subsolution} $v$ to \eqref{HJs} with $v\geqslant u_-^\psi,\,\,\,\lim_{t\rightarrow+\infty}\Tc v=u_-^\psi$.
\end{enumerate}
Now by \cite[Section 1.4, Page 128]{Ishii_chapter}, for any $u_-\in\cS$, the function $\max\{u_-,u^\psi_-\}$ is a subsolution to \eqref{HJs}. Thus (C) and Proposition \ref{mono} implies that
\[
u^\psi_-=\lim_{t\rightarrow+\infty}\Tc\max\{u_-,u^\psi_-\}\geqslant\max\{u_-,u^\psi_-\}\geqslant u_-.
\]
This implies that $u_-^\psi:=u^c_-$ is maximal in $\cS$, and independent of the choice of $\psi$.

\section{Proof of Theorem \ref{thm:1} and \ref{thm:2}}
In this section, we shall prove Theorem A and B. For later descriptions, we divide the assumption \textbf{(H3)} into the trichotomy:
\begin{itemize}
  \item[($+$)] $\lambda(x) \geqslant 0$ for all $x \in M$ and $\displaystyle \max_{x\in M} \lambda(x)>0$,
  \item[($-$)] $\lambda(x) \leqslant 0$ for all $x \in M$ and $\displaystyle \min_{x\in M} \lambda(x)<0$,
  \item[($\pm$)] There exist $x_1, x_2 \in M$ such that $\lambda(x_1) > 0$ and $\lambda(x_2) < 0$,
\end{itemize}
where ($+$) and ($\pm$) combines into the assumption \textbf{(H3)$_+$}. In the first part, we show that \textbf{(H3)$_+$} and $c>c_0$ are necessary conditions for the existence of the asymptotically stable solution to \eqref{HJs}. Since \textbf{(H3)$_+$} decomposes into ($+$) and ($\pm$), to complete the proof, we analyze the existence and uniqueness of asymptotically stable solution and discuss its basin of attraction according to these two cases separately.

\subsection{Necessary conditions for the existence of the asymptotically stable solution}
As a first step, we prove that $\lambda(x)$ satisfying \textbf{(H3)$_+$} and $c>c_0$ are necessary conditions for the existence of the asymptotically stable solution to the discounted Hamilton-Jacobi equation. Before going into the proof, we need
\begin{proposition}\cite[Proposition 14]{WY2021}\label{thG}
The following statements are equivalent:
\begin{itemize}
  \item [(1)] \eqref{HJs} admits solutions.
  \item [(2)] There exist $\varphi, \varphi'\in C(M,\R)$ and $t, t'>0$ such that $T^{c,-}_{t}\varphi\geqslant\varphi, T^{c,-}_{t'}\varphi'\leqslant\varphi'$.
\end{itemize}
\end{proposition}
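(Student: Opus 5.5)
The plan is to prove the two implications separately, using the monotonicity properties of $T^{c,-}_t$ together with the way solutions arise as limits of the semigroup.

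\emph{(1) $\Rightarrow$ (2).} If $u_-\in\mathcal{S}^{c,-}$ exists, we take $\varphi=\varphi'=u_-$ and any $t=t'>0$. Since $T^{c,-}_tu_-=u_-$, both inequalities hold with equality, so this direction is immediate.

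\emph{(2) $\Rightarrow$ (1).} The idea is to produce a nonempty order interval that is invariant under the semiflow and then extract a fixed point from it.

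First, from $T^{c,-}_t\varphi\geqslant\varphi$, the comparison principle (Proposition \ref{sg-1} (2)) and the semigroup property give
\[
T^{c,-}_{(n+1)t}\varphi\geqslant T^{c,-}_{nt}\varphi .
\]
Hence $n\mapsto T^{c,-}_{nt}\varphi$ is nondecreasing. In the same way, $n\mapsto T^{c,-}_{nt'}\varphi'$ is nonincreasing.

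Second, to obtain a continuous-time monotone object, we set
\[
\underline\varphi:=\min_{s\in[0,t]}T^{c,-}_s\varphi .
\]
Using $T^{c,-}_t\varphi\geqslant\varphi$, I expect the function $\tau\mapsto\min_{s\in[\tau,\tau+t]}T^{c,-}_s\varphi$ to be nondecreasing in $\tau$. I also expect $\underline\varphi$ to satisfy $T^{c,-}_\tau\underline\varphi\geqslant\underline\varphi$ for all $\tau\geqslant 0$, with the semigroup applied pointwise through the inf-representation \eqref{eq:Tt-+ rep} and monotonicity. Symmetrically, $\overline\varphi':=\max_{s\in[0,t']}T^{c,-}_s\varphi'$ should satisfy $T^{c,-}_\tau\overline\varphi'\leqslant\overline\varphi'$ for all $\tau$.

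Third, for the bounds: $T^{c,-}_\tau\underline\varphi$ is nondecreasing in $\tau$ and $T^{c,-}_\tau\overline\varphi'$ is nonincreasing. We need a uniform upper bound on the former. The cleanest route is to compare with the supersolution-like object. However, $\underline\varphi$ and $\overline\varphi'$ need not be ordered. To handle this, replace $\overline\varphi'$ by a translate $\overline\varphi'+K$ and check that the property $T_\tau(\cdot)\leqslant(\cdot)$ survives up to a controlled error. The Lipschitz-in-$u$ estimate from Proposition \ref{sg-2}, with constant $e^{\Lambda\tau}$, shows this is not automatic when $\lambda$ changes sign. This is the main obstacle.

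To get around it, I would instead use a subsolution and supersolution characterization. By the monotone limit argument (as in the proof of Proposition \ref{prop:1} (3)), $u:=\lim_{\tau\to\infty}T^{c,-}_\tau\underline\varphi$ exists once it is bounded above. The quantity $\sup_\tau\|T^{c,-}_\tau\underline\varphi\|_\infty$ is controlled under (H3)$_+$ by Lemma \ref{lem:estimation} (1). In the (H3)$_-$ case, one swaps roles and uses $\overline\varphi'$ together with the lower bound of Lemma \ref{lem:estimation} (2), after passing through $T^{c,+}_t$ via Proposition \ref{mono}(4). The function $\underline\varphi$ itself is a viscosity subsolution, since $T_\tau\underline\varphi\geqslant\underline\varphi$ is equivalent to the subsolution property. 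Once it is a subsolution, Proposition \ref{prop:1} (3) yields a solution directly. So the key step to verify carefully is the equivalence ``$T^{c,-}_\tau w\geqslant w$ for all $\tau\geqslant0$ iff $w$ is a subsolution'' (Proposition \ref{mono}), together with the construction of $\underline\varphi$ from the single-time inequality. The remaining bookkeeping is routine.
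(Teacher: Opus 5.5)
Your proposal is correct in this paper's setting. It follows a genuinely different route from the paper's proof, and it relies on (H3), which you should state explicitly.

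\textbf{What carries your proof.} The step that does the work is the last one. The function $\underline\varphi=\min_{s\in[0,t]}T^{c,-}_s\varphi$ is continuous. Since $T^{c,-}_\tau\psi(x)=\inf_{y}h^c_{y,\psi(y)}(x,\tau)$ with $u\mapsto h^c_{y,u}$ nondecreasing, $T^{c,-}_\tau$ commutes with pointwise minima. Hence $T^{c,-}_\tau\underline\varphi=\min_{s\in[\tau,\tau+t]}T^{c,-}_s\varphi\geqslant\underline\varphi$; to see the inequality, write $s=kt+r$ with $r\in[0,t)$ and use $T^{c,-}_{kt}\varphi\geqslant\varphi$. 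So $\underline\varphi$ is a subsolution by Proposition \ref{mono}, and Proposition \ref{prop:1} (3) gives a solution.

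\textbf{What the paper does instead.} The paper never builds a subsolution. It applies the dichotomy of Proposition \ref{sg-2} (4) to the increasing sequence $T^{c,-}_{nt}\varphi$ and to the decreasing sequence $T^{c,+}_{nt}\varphi$, using $T^{c,-}_t\varphi\geqslant\varphi\Leftrightarrow T^{c,+}_t\varphi\leqslant\varphi$. If either sequence converges, it obtains a solution. It rules out simultaneous divergence to $\pm\infty$ using $\varphi'$. From $T^{c,-}_{t'}\varphi'\leqslant\varphi'$ one gets $T^{c,-}_s\varphi'\leqslant K$ for all $s\geqslant0$. Together with $T^{c,-}_s\circ T^{c,+}_s\geqslant\mathrm{id}$, this gives the contradiction $K+1\leqslant T^{c,-}_{n_1t}\varphi\leqslant T^{c,-}_{(n_1+n_2)t}(T^{c,+}_{n_2t}\varphi)\leqslant T^{c,-}_{(n_1+n_2)t}\varphi'\leqslant K$.

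\textbf{What each approach buys.} The paper's argument uses no sign information on $\lambda$ and holds for general Tonelli contact Hamiltonians, as in \cite{WY2021}. Yours uses (H3) essentially, through Lemma \ref{lem:estimation} inside Proposition \ref{prop:1} (3), and never uses $\varphi'$. Since (H3) is a standing hypothesis here, this is legitimate. It even shows more: under (H3), the first half of (2) alone is equivalent to solvability. More directly still, Lemma \ref{lem:estimation} bounds $T^{c,-}_t\varphi$ from above under (H3)$_+$, resp. $T^{c,+}_t\varphi$ from below under (H3)$_-$, for every continuous $\varphi$, so the paper's third case cannot occur. Your argument breaks down for $\lambda\equiv0$, however. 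There subsolutions exist for every $c\geqslant c_0$ but solutions only for $c=c_0$, and it is exactly the $\varphi'$-condition that rules out $c>c_0$.

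\textbf{Remove the $\overline\varphi'$ material.} Delete, rather than repair, everything involving $\overline\varphi'=\max_{s\in[0,t']}T^{c,-}_s\varphi'$.
\begin{itemize}
\item The claim that it ``symmetrically'' satisfies $T^{c,-}_\tau\overline\varphi'\leqslant\overline\varphi'$ has no basis. Monotonicity only yields $T^{c,-}_\tau\max\{\psi_1,\psi_2\}\geqslant\max\{T^{c,-}_\tau\psi_1,T^{c,-}_\tau\psi_2\}$, which is the wrong direction. The inf-representation gives commutation with minima only, so there is no symmetry.
\item The (H3)$_-$ detour through $\overline\varphi'$ and $T^{c,+}_t$ has no monotonicity to exploit. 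Proposition \ref{mono} gives $T^{c,+}_tv\leqslant v$ only for subsolutions $v$, so the function to push forward there is $\underline\varphi$.
\end{itemize}
None of this is needed anyway. Once $\underline\varphi$ is known to be a subsolution, Proposition \ref{prop:1} (3) already covers both (H3)$_+$ and (H3)$_-$.
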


\begin{proof}
(1)$\,\,\Rightarrow\,\,$(2) is obvious: if \eqref{HJs} admits a solution $u_-$, then (2) automatically holds by choosing $\varphi=\varphi'=u_-, t=t'>0$.

\vspace{0.5em}					
\noindent(2)$\,\,\Rightarrow\,\,$(1): Notice that $T^{c,-}_{t_1}\varphi\geqslant\varphi$ if and only if $T^{c,+}_{t_1}\varphi\leqslant\varphi$. Thus, as a sequence of continuous functions on $M$,
\begin{itemize}
  \item $\{T^{c,-}_{nt_1}\varphi\}_{n\geqslant1}$ is increasing in $n$,
  \item $\{T^{c,+}_{nt_1}\varphi\}_{n\geqslant1}$ is decreasing in $n$.
\end{itemize}
We apply Proposition \ref{sg-2} (4) to obtain three cases:

\medskip
\noindent {\bf Case 1}:	$\lim_{n\to+\infty}T^{c,-}_{nt_1}\varphi(x)=\varphi^-_\infty(x)$ uniformly on $x\in M$ and $\varphi^-_\infty$ is Lipschitz on $M$. Then for any $s\in[0,t_1]$,
\[
\lim_{n\to\infty}T^{c,-}_{nt_1+s}\varphi(x)=T^{c,-}_s\varphi^-_\infty(x).
\]
Hence, $T^{c,-}_{t}\varphi(x)$ is bounded on $M\times[0,+\infty)$ and by $u_-(x):=\liminf_{t\to+\infty}T^{c,-}_{t}\varphi(x)$ is a solution to \eqref{HJs}.

\medskip				
\noindent {\bf Case 2}:	$\lim_{n\to+\infty}T^{c,+}_{nt_1}\varphi(x)=\varphi^+_\infty(x)$ uniformly on $x\in M$ and $\varphi^+_\infty$ is Lipschitz on $M$. Then for any $s\in[0,t_1]$,
\[
\lim_{n\to\infty}T^{c,+}_{nt_1+s}\varphi(x)=T^{c,+}_s\varphi^+_\infty(x).
\]
Hence, $T^{c,+}_{t}\varphi(x)$ is bounded on $M\times[0,+\infty)$ and by $u_+(x):=\limsup_{t\to+\infty}T^{c,+}_{t}\varphi(x)\in\mathcal{S}^{c,+}$. Then applying  Proposition \ref{mono}(4), we conclude that $u'_-(x):=\lim_{t\to +\infty}  \Tc u_+(x)$ is a solution to \eqref{HJs}.
				
\medskip				
\noindent {\bf Case 3}:	Excluding cases 1-2, we have
\begin{equation}\label{eq:appendixF-case3}
\lim_{n\to+\infty}T^{c,-}_{nt_1}\varphi (x)=+\infty,\quad \lim_{n\to+\infty}T^{c,+}_{nt_1}\varphi (x)=-\infty \quad  \text{uniformly on } x\in M .
\end{equation}
Set $K:= \max_{t\in[0,t_2],x\in M} T^{c,-}_{t}\psi(x)$, then for any $t>0$, there is $m\in\mathbb{N}$ such that $t=mt_2+\tau$ with $\tau\in[0,t_2)$ and
\begin{equation}\label{eq:3-1}
T^{c,-}_t\varphi'=T^{c,-}_{mt_2}\,[T_\tau^{c,-}\varphi']\leqslant T_\tau^{c,-}\varphi'\leqslant K.
\end{equation}
The equations \eqref{eq:appendixF-case3} imply that there exists $n_1,n_2\in \mathbb{N}$ such that
\begin{equation}\label{eq:3-2}
T^{c,+}_{n_2 t_1}\varphi\leqslant T^{c,-}_{t_2}\varphi'\leqslant\varphi'< K+1 \leqslant T^{c,-}_{n_1 t_1}\varphi.
\end{equation}
It follows from \eqref{eq:3-1}-\eqref{eq:3-2} that $K+1\leqslant T^{c,-}_{n_1 t_1}\varphi\leqslant T^-_{(n_1+n_2)t_1}T^{c,+}_{n_2 t_1}\varphi\leqslant T^-_{(n_1+n_2)t_1}\varphi'\leqslant K$, leading to a contradiction.
\end{proof}

The necessary part of Theorem A is implied by the following   	
\begin{proposition}\label{Prop:3.2}
Assume (H1)-(H2). If \eqref{HJs} admits an asymptotically stable solution $\mathbf{u}_-\in\cS$, then \textbf{(H3)}$_+$ holds and $c>c_0$.
\end{proposition}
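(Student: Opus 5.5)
The proof has two parts. Part (a) shows that an asymptotically stable $\mathbf{u}_-$ forces \textbf{(H3)}$_+$. Part (b) shows it forces $c>c_0$. Both use the same device: small constant perturbations $\mathbf{u}_-\pm\varepsilon$ of the stable solution, which are admissible initial data in Definition \ref{L-stable/A-stable} for every $\varepsilon<\delta$.

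\emph{Part (a).} Argue by contradiction and suppose \textbf{(H3)}$_+$ fails. By (H3) the $(-)$ case then holds, i.e. $\lambda\leqslant0$ and $\lambda\not\equiv0$. The key observation is that for $\lambda\leqslant 0$ and a constant $\varepsilon>0$, the function $\mathbf{u}_-+\varepsilon$ is a subsolution of \eqref{HJs}. Indeed, $\lambda(x)(\mathbf{u}_-+\varepsilon)+h(x,d_x\mathbf{u}_-)\leqslant c$ in the viscosity sense, since adding a constant does not change sub-differentials and $\lambda\varepsilon\leqslant 0$. By Proposition \ref{mono}, $\Tc(\mathbf{u}_-+\varepsilon)$ is then nondecreasing in $t$ and stays $\geqslant \mathbf{u}_-+\varepsilon$, so it cannot converge to $\mathbf{u}_-$. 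This contradicts asymptotic stability. So \textbf{(H3)}$_+$ must hold.

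\emph{Part (b).} Now assume \textbf{(H3)}$_+$ holds. Since a solution exists, Proposition \ref{prop:1} gives $c\geqslant c_0$, so it remains to exclude $c=c_0$. Suppose $c=c_0$. The plan is to produce, arbitrarily close to $\mathbf{u}_-$, another stationary point or a non-convergent orbit. The first step is to use Proposition \ref{prop:new-NW-JDE}: the maximal solution $u^c_-$ attracts everything above it. Apply this to $\mathbf{u}_-+\varepsilon$. Its orbit stays $\geqslant \Tc\mathbf{u}_- = \mathbf{u}_-$ by monotonicity (Proposition \ref{sg-1}), and by stability it tends to $\mathbf{u}_-$. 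Taking $\max\{\mathbf{u}_-+\varepsilon,u^c_-\}$ as initial data shows $\mathbf{u}_-\geqslant u^c_-$ in the limit. Hence $\mathbf{u}_-=u^c_-$ is the maximal solution.

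The second step is to perturb \emph{downward}. Take $\varphi=u^c_- -\varepsilon$. I would argue that for $c=c_0$ the orbit $\Tc(u^c_--\varepsilon)$ cannot return to $u^c_-$. The idea is to exploit the criticality $c=c_0$: there is no strict subsolution, since a strict subsolution $\psi$ with $\sup(\lambda\psi+h)<c$ would give $c_0<c$ after the smoothing of \cite[Lemma 2.2]{DFIZ}, as in the proof of Proposition \ref{prop:1}(1). I would then try to show that convergence of all nearby data to $u^c_-$ forces a uniform exponential contraction near $u^c_-$. Concretely, I would look for $T>0$ with $\Tc_{T}(u^c_--\varepsilon)\geqslant u^c_--\varepsilon/2$ for all small $\varepsilon$, using stability together with the homogeneity-type estimate of the semigroup. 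From this one can build a strict subsolution $\psi=\frac1T\int_0^T \Tc_s(u^c_--\varepsilon)\,ds$, suitably mollified, contradicting the absence of strict subsolutions at $c=c_0$.

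The main obstacle is this last step: turning asymptotic stability into a strict subsolution at $c=c_0$. An alternative route, if the averaging argument fails, goes through Mather measures. At $c=c_0$ one expects an invariant measure $\mu$ on $\mathrm{J}^1_{u^c_-}$ with $\int\lambda\,d\mu\leqslant0$. Along its support, the linearized discounted dynamics on the $u$-fiber would not contract the perturbation $-\varepsilon$, since the difference of two backward characteristics decays like $e^{-\int\lambda}$. This blocks convergence to $u^c_-$ and contradicts asymptotic stability.
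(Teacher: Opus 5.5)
Your Part (a) is the paper's argument and is fine. Part (b), however, is not a proof. The decisive step, a contradiction from asymptotic stability at $c=c_0$, is left as an open ``main obstacle'', and your sketch for it asks for the wrong thing.

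\begin{itemize}
\item You ask for one $T$ with $T^{c,-}_T(u^c_--\varepsilon)\geqslant u^c_--\varepsilon/2$ \emph{for all small} $\varepsilon$, via a ``homogeneity-type estimate''. No such homogeneity holds for the nonlinear semigroup $T^{c,-}_t$. Uniform contraction of this kind amounts to an exponential rate, which the paper only obtains for $c>c_0$ (Theorem C).
\item You never show why the time-average $\psi$ is a strict subsolution.
\item The Mather-measure fallback is circular. In the paper, the existence of $\mu\in\mathfrak{M}_{u^{c_0}_-}$ with $\int\lambda\,d\mu=0$ (proofs of (D1)(C) and (D2)(C)) is deduced \emph{from} this very proposition. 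Moreover, $\int\lambda\,d\mu=0$ only gives neutral linearized behaviour, which does not by itself block convergence.
\item Your preliminary step $\mathbf{u}_-=u^c_-$ is unnecessary and incorrectly justified. $\max\{\mathbf{u}_-+\varepsilon,u^c_-\}$ lies in the stability neighbourhood of $\mathbf{u}_-$ only if $u^c_-<\mathbf{u}_-+\delta$, which is what you are trying to prove.
\end{itemize}

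The paper closes the gap as follows. Stability gives, for large $\tau$, $T^{c_0,-}_\tau(\mathbf{u}_--\delta)>\mathbf{u}_--\delta$ and $T^{c_0,-}_\tau(\mathbf{u}_-+\delta)<\mathbf{u}_-+\delta$. These strict inequalities survive replacing $c_0$ by $c_0-\varepsilon$, by continuity in $c$. Proposition \ref{thG} then yields a solution at level $c_0-\varepsilon$, contradicting Proposition \ref{prop:1}(1).

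Your averaging idea can be completed with a single $\delta$ and no uniformity. Fix $\delta$ smaller than the stability radius and set $\varphi=\mathbf{u}_--\delta$. Stability gives $\tau$ with $T^{c,-}_\tau\varphi\geqslant\varphi+\delta/2$. Since $\varphi$ is Lipschitz, $U(x,s)=T^{c,-}_s\varphi(x)$ is Lipschitz on $M\times[0,\tau]$ and satisfies the equation a.e. Put $\psi=\frac1\tau\int_0^\tau U(\cdot,s)\,ds$. Integrating the equation in $s$ and applying Jensen's inequality via (H1) to $d_x\psi=\frac1\tau\int_0^\tau\partial_xU\,ds$ gives
\[
\lambda(x)\psi(x)+h(x,d_x\psi(x))\leqslant c-\frac{U(x,\tau)-U(x,0)}{\tau}\leqslant c-\frac{\delta}{2\tau}\quad\text{for a.e. }x.
\]
By Lemma \ref{sub-criteria}, $\psi$ is a subsolution at level $c-\delta/(2\tau)$, so Proposition \ref{prop:1}(1) forces $c>c_0$. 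No mollification is needed. This route is more elementary than the paper's, since it avoids Proposition \ref{thG} and continuity in $c$, and it uses only attraction of the single datum $\mathbf{u}_--\delta$. As submitted, though, your proposal contains neither this computation nor the observation that one $\delta$ suffices, so the key step is missing.
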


\begin{proof}
We begin by excluding the case $(-)$: assume $\lambda(x)$ is non-positive, then for any $\delta >0$, $\mathbf{u}_-+\delta$ is a subsolution of \eqref{HJs}. Proposition \ref{mono} implies that for any $\delta>0$ and $t\geqslant0, \Tc(\mathbf{u}_-+\delta)\geqslant\mathbf{u}_-+\delta$ or equivalently,
\[
\Tc(\mathbf{u}_-+\delta)-\mathbf{u}_-\geqslant\delta
\]
This contradicts the assumption that $\mathbf{u}_-$ is asymptotically stable.

\vspace{0.5em}
Next, notice that by Proposition \ref{prop:1} (1), $\cS\neq\emptyset$ implies $c\geqslant c_0$. If $c=c_0$, since $\mathbf{u}_-$ is asymptotically stable, there is $\delta>0$ such that $\lim_{t\to+\infty}  T^{c_0,-}_t(\mathbf{u}_--\delta)=\lim_{t\to+\infty} T^{c_0,-}_t(\mathbf{u}_-+\delta)=\mathbf{u}_-$. This implies the existence of $\tau>0$ and two continuous initial data $\varphi=\mathbf{u}_--\delta,\varphi'=\mathbf{u}_-+\delta$ such that
$$
T^{c_0,-}_{\tau}\varphi>\varphi, \quad T^{c_0,-}_{\tau}\varphi'<\varphi'.
$$
By the continuous of $\Tc$ with respect to $c$, there is $\varepsilon>0$ such that
$$
T^{c_0-\varepsilon,-}_{\tau}\varphi>\varphi, \quad T^{c_0-\varepsilon,-}_{\tau}\varphi'<\varphi'.
$$
Hence Proposition \ref{thG} implies that \eqref{HJs} with $c=c_0-\varepsilon$ admits at least a solution. This contradicts Proposition \ref{prop:1} (1).
\end{proof}

\subsection{The asymptotically stable solution and its basin in the case ($+$)}
For this and next part, we always assume (H1)-(H2). This part focus on the existence and uniqueness of the asymptotically stable solution to \eqref{HJs} in the case $(+)$. Here, $u^c_-\in\cS$ denotes the solution to \eqref{HJs} found in Proposition \ref{prop:2}. Notice that for any $\psi\in\cSS$,
\begin{equation}\label{eq:3-3}
u^c_-=\lim_{t\to +\infty}\Tc\psi>\psi\in\cS.
\end{equation}
To proceed, we need two lemmas.
\begin{lemma}\label{lem:+global}
Assume $c>c_0$ and $(+)$ holds. Then $\lim_{t\to +\infty} T_t^{c,+}\psi=-\infty$ for any $\psi\in\cSS$.
\end{lemma}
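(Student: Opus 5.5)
The plan is to argue by contradiction, using the monotonicity of $T^{c,+}_t$ on subsolutions together with a forward calibrated curve of the would-be limit.

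\emph{Step 1: the limit exists in $[-\infty,\infty)$.} Fix $\psi\in\cSS$. Since $\psi$ is a (strict) subsolution, Proposition \ref{mono} gives $T^{c,-}_t\psi\geqslant\psi$. As in the proof of Proposition \ref{thG}, this is equivalent to $T^{c,+}_t\psi\leqslant\psi$. By the semigroup property and monotonicity, $t\mapsto T^{c,+}_t\psi$ is therefore non-increasing. Applying the dichotomy of Proposition \ref{sg-2} (4), exactly as in Cases 1--2 of the proof of Proposition \ref{thG}, there are only two possibilities:
\begin{itemize}
\item either $T^{c,+}_t\psi\to-\infty$ uniformly on $M$, which is the claim;
\item or $T^{c,+}_t\psi$ converges uniformly to some Lipschitz $v_+\in\mathcal{S}^{c,+}$.
\end{itemize}
So assume the second alternative. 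Since the family is non-increasing from $\psi$, we get $v_+\leqslant\psi$ on $M$.

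\emph{Step 2: a forward calibrated curve.} From the variational representation of $T^{c,+}_t$ in Appendix A, the fixed point $v_+$ has, for every $x\in M$, a forward calibrated curve $\gamma:[0,+\infty)\to M$ with $\gamma(0)=x$. Calibration means that for all $0\leqslant s<t$,
\[
v_+(\gamma(t))-v_+(\gamma(s))=\int_s^t\Big[l(\gamma,\dot\gamma)+c-\lambda(\gamma)\,v_+(\gamma)\Big]\,d\tau .
\]
This is the forward counterpart of the curves $\gamma_x$ in the representation formula for $u^c_-$. The curve is obtained by concatenating minimizers on $[n,n+1]$ and using the semigroup property. I expect this to be the main technical point; it should follow from Proposition \ref{Minimality} by a diagonal argument with uniform Lipschitz bounds on minimizers.

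\emph{Step 3: the comparison along $\gamma$.} Since $\psi\in C^\infty$ satisfies \eqref{cov:2} with some $\varepsilon>0$, the Fenchel inequality $d_x\psi(\dot\gamma)\leqslant l(\gamma,\dot\gamma)+h(\gamma,d_x\psi)$ gives
\[
\frac{d}{d\tau}\psi(\gamma(\tau))\leqslant l(\gamma,\dot\gamma)+c-\varepsilon-\lambda(\gamma)\,\psi(\gamma).
\]
Subtract the calibration identity and set $w(\tau):=\psi(\gamma(\tau))-v_+(\gamma(\tau))$. This yields, in the integrated sense,
\[
w(t)-w(s)\leqslant\int_s^t\big[-\varepsilon-\lambda(\gamma(\tau))\,w(\tau)\big]\,d\tau .
\]

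\emph{Step 4: the contradiction.} By Step 1, $w\geqslant0$, and by assumption $(+)$, $\lambda\geqslant0$. Hence the integrand is at most $-\varepsilon$, so $w(t)\leqslant w(0)-\varepsilon t\to-\infty$ as $t\to+\infty$. This contradicts $w\geqslant0$ (equivalently, the boundedness of the continuous function $\psi-v_+$ on the compact $M$). Therefore the second alternative is impossible, and $\lim_{t\to+\infty}T^{c,+}_t\psi=-\infty$.

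Only $v_+\leqslant\psi$ and the sign of $\lambda$ are used. The hypothesis $c>c_0$ enters only through $\cSS\neq\emptyset$.
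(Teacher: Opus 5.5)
Your proof is correct, but it takes a different route from the paper.

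The paper also supposes that $u_+=\lim_{t\to+\infty}T^{c,+}_t\psi\in\mathcal{S}^{c,+}$ exists, and then argues purely with semiflow properties:
\begin{itemize}
\item under $(+)$ each $T^{c,-}_t$ is non-expansive in $\|\cdot\|_\infty$;
\item from $T^{c,-}_t\circ T^{c,+}_t\geqslant\mathrm{id}$ and Proposition \ref{prop:2} (which gives $u^c_-=\lim_{t\to+\infty}T^{c,-}_t\psi>\psi>u_+$) it obtains $T^{c,-}_t(u_++\delta)\to u^c_-$;
\item meanwhile $T^{c,-}_tu_+$ keeps touching $u_+$ (Proposition \ref{mono}).
\end{itemize}
So two orbits starting $\delta$ apart end up at distance at least $\min_M(u^c_--u_+)>\delta$, a contradiction.

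You instead use the strict smooth subsolution $\psi$ as a barrier against $v_+$ along a forward calibrated curve of $v_+$. This is the same comparison device the paper uses later in Lemma \ref{lem2-thmD} and in the proof of (D1) of Proposition \ref{prop-thm:4}. Your Step 2 is not a real obstacle: forward calibrated curves of elements of $\mathcal{S}^{c,+}$ are recorded after Definition \ref{def:weakKAM} and are used in exactly this way in Lemma \ref{lem2-thmD}. (For $T^{c,+}_t$ one concatenates maximizers rather than minimizers.)

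What your route buys is generality and economy. It uses only $\lambda\geqslant0$ and $\cSS\neq\emptyset$, not Proposition \ref{prop:2}, whose construction of $u^c_-$ relies on $\max\lambda>0$; so it also covers $\lambda\equiv0$.

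It can even dispense with the contradiction. Run your Steps 3--4 along a maximizing curve $\gamma$ for $T^{c,+}_t\psi(x)=h_c^{y,\psi(y)}(x,t)$, with $u(s)=h_c^{y,\psi(y)}(\gamma(s),t-s)$ in place of $v_+(\gamma(s))$. Then $w(s)=\psi(\gamma(s))-u(s)\geqslant\psi(\gamma(s))-T^{c,+}_{t-s}\psi(\gamma(s))\geqslant0$ and $w(t)=0$. This yields the explicit bound $T^{c,+}_t\psi\leqslant\psi-\varepsilon t$, so Steps 1--2 become unnecessary.

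The paper's route, in turn, needs no curves at all. It stays within the order and contraction structure of the semigroups and reuses the already constructed attractor $u^c_-$.
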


\begin{proof}
Since $\psi\in\cSS, T_t^{c,+}\psi$ is strictly decreasing in $t$. We argue by contradiction: assume the limit
\begin{equation}\label{eq:3-4}
u_+:=\lim_{t\to +\infty} T_t^{c,+}\psi<\psi
\end{equation}
exists, then $u_+\in\mathcal{S}^{c,+}$. For any $\delta\in(0,\frac{1}{2} \min_{x\in M}\{u^c_--u_+\})$ and $t\geqslant0$,
\begin{equation}\label{eq:3-5}
\|\Tc(u_++\delta)-\Tc u_+ \|_\infty \leqslant \|u_++\delta-u_+\|_\infty=\delta,
\end{equation}
where the first inequality is due to $(+)$. By \eqref{eq:3-4}, there is $\tau>0$ such that $T_\tau^{c,+}\psi<u_++\delta$, then for $t\geqslant\tau$,
$$
T^{c,-}_{t-\tau}\psi\leqslant T^{c,-}_{t}[T^{c,+}_\tau\psi]\leqslant T^{c,-}_{t}[u_++\delta]\leqslant T^{c,-}_{t}u^c_-=u^c_-.
$$
As $t$ tends to $+\infty$, we combine the above inequality with \eqref{eq:3-3} to conclude that
\begin{equation}\label{eq:3-6}
\lim_{t\to +\infty} T_t^{c,-}(u_++\delta)=u^c_-.
\end{equation}
This contradicts \eqref{eq:3-5} since by Proposition \ref{mono}, $\{x\in M: T_t^{c,-}u_+(x)=u_+(x) \}\neq \emptyset$ and then
$$
\lim_{t\rightarrow+\infty}\|T_t^{c,-}(u_++\delta)-T_t^{c,-}u_+ \|_\infty \geqslant \frac{1}{2} \min_{x\in M}\{ u^c_--u_+\}>\delta.
$$
\end{proof}

Our aim is to show
\begin{proposition}\label{necessary +}
Assume ($+$) and $c>c_0$, then $u^c_-$ is globally asymptotically stable with respect to $\Tc$. In particular, $u^c_-$ is the unique asymptotically stable solution to \eqref{HJs} and $\mathcal{B}^{c,-}=C(M,\R)$.
\end{proposition}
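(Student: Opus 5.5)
Assume ($+$) and $c>c_0$, and fix $\varphi\in C(M,\R)$. The plan is to show $\Tc\varphi\to u^c_-$ by trapping $\varphi$ between two initial data whose evolutions we already control, and then to deduce Lyapunov stability and uniqueness.

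\emph{Trapping $\varphi$.} Since $c>c_0$, the definition \eqref{dcv} of $c_0$ gives some $\psi\in\cSS$. For the upper side, Proposition \ref{prop:new-NW-JDE}(1) handles any initial datum above $u^c_-$, so we use $\varphi_1:=\max\{\varphi,u^c_-\}\geqslant\varphi$, for which $\Tc\varphi_1\to u^c_-$. For the lower side, Lemma \ref{lem:+global} gives $T^{c,+}_\tau\psi\to-\infty$ uniformly, so we choose $\tau>0$ with $T^{c,+}_\tau\psi\leqslant\varphi$. The duality between the forward and backward semigroups (Proposition \ref{mono}(4) and the Appendix) gives $\Tc T^{c,+}_t\leqslant \mathrm{id}$ in the form $\psi\leqslant T^{c,-}_\tau T^{c,+}_\tau\psi$. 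Monotonicity of $T^{c,-}$ (Proposition \ref{sg-1}(2)) and the semigroup property then yield, for $t\geqslant\tau$,
\[
T^{c,-}_{t-\tau}\psi\leqslant T^{c,-}_{t}\bigl[T^{c,+}_\tau\psi\bigr]\leqslant \Tc\varphi\leqslant\Tc\varphi_1 .
\]
This is exactly the chain used in the proof of Lemma \ref{lem:+global}. As $t\to+\infty$, the left side tends to $u^c_-$ by Proposition \ref{prop:2}(1) and the right side tends to $u^c_-$ by Proposition \ref{prop:new-NW-JDE}(1). Hence $\Tc\varphi\to u^c_-$ uniformly, and $\mathcal{B}^{c,-}=C(M,\R)$.

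\emph{Lyapunov stability.} Under ($+$), $\lambda\geqslant0$ makes $\Tc$ non-expansive in sup-norm: $\|\Tc\varphi-\Tc\varphi'\|_\infty\leqslant\|\varphi-\varphi'\|_\infty$. This is the inequality already used in \eqref{eq:3-5}, and it follows from the comparison principle applied to $\varphi'\pm\|\varphi-\varphi'\|_\infty$: with $\lambda\geqslant0$, adding a nonnegative constant $k$ gives $\Tc(\varphi'+k)\leqslant\Tc\varphi'+k$. Taking $\varphi'=u^c_-$ gives $\|\Tc\varphi-u^c_-\|_\infty\leqslant\|\varphi-u^c_-\|_\infty$, so $\delta=\varepsilon$ works. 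Together with global attraction, $u^c_-$ is globally asymptotically stable.

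\emph{Uniqueness.} If $\mathbf{u}_-\in\cS$ were another asymptotically stable solution, it would be a fixed point. Since every initial datum converges to $u^c_-$, we get $\mathbf{u}_-=\lim_t\Tc\mathbf{u}_-=u^c_-$.

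\emph{Main obstacle.} The delicate step is the lower comparison $\psi\leqslant T^{c,-}_\tau T^{c,+}_\tau\psi$, i.e.\ the correct composition inequality between $T^{c,-}$ and $T^{c,+}$ in the contact (non-homogeneous) setting. I would take it from the properties of the solution semigroups listed in Appendix A (Proposition \ref{mono}). If those properties are unavailable in that form, I would instead use the strict subsolution $\psi-K$ for large $K$ as the lower barrier. However, this requires $\psi-K$ to remain a strict subsolution, which holds because $\lambda\geqslant0$ under ($+$).
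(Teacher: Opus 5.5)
Your argument is correct, and its lower half is the paper's. The chain $T^{c,-}_{t-\tau}\psi\leqslant T^{c,-}_t[T^{c,+}_\tau\psi]\leqslant\Tc\varphi$, obtained from Lemma \ref{lem:+global}, is exactly what the paper writes.

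The differences are elsewhere:
\begin{itemize}
\item \emph{Upper bound.} The paper only says ``similarly with \cite[Theorem 2]{NW}''. You instead dominate $\varphi$ by $\max\{\varphi,u^c_-\}$ and invoke Proposition \ref{prop:new-NW-JDE}(1). This keeps the proof inside the paper and makes the two-sided squeeze explicit.
\item \emph{Lyapunov stability.} The paper passes directly from global attraction to global asymptotic stability, leaving Lyapunov stability implicit; attraction alone does not give it. Your non-expansiveness estimate under $\lambda\geqslant0$ supplies it with $\delta=\varepsilon$. This is the same fact the paper uses in \eqref{eq:3-5}.
\item \emph{Uniqueness.} Every fixed point is attracted to $u^c_-$, so $\mathcal{S}^{c,-}=\{u^c_-\}$. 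This makes the ``in particular'' explicit.
\end{itemize}

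Two small corrections. First, the composition inequality you flag as the main obstacle is not delicate: it is Proposition \ref{sg-1}(5), not Proposition \ref{mono}(4). Second, your shorthand ``$\Tc T^{c,+}_t\leqslant\mathrm{id}$'' has the inequality reversed. The form $\psi\leqslant T^{c,-}_\tau T^{c,+}_\tau\psi$ that you actually use is the right one.

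Finally, your fallback is the most economical route of all. Under ($+$), $\psi-K\in\cSS$ for every constant $K\geqslant0$. Choose $K$ with $\psi-K\leqslant\varphi$ and apply Proposition \ref{prop:2}(1) to $\psi-K$. This gives the lower bound $\liminf_{t\to+\infty}\Tc\varphi\geqslant u^c_-$ directly, with no need for Lemma \ref{lem:+global} or the forward semigroup.
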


\begin{proof}
Now for any $\varphi\in C(M,\R)$, by Lemma \ref{lem:+global}, there exists $t_0>0$ such that  $T_{t_0}^{c,+}\psi\leqslant\varphi$, then for any $t\geqslant t_0$,
$$
T_t^{c,-}\varphi \geqslant T_t^{c,-}[T_{t_0}^{c,+}\psi]\geqslant T_{t-t_0}^{c,-}\psi.
$$
As $t$ tends to $+\infty$, it follows that $\liminf_{t\to +\infty} T_t^{c,-}\varphi\geqslant u^c_-$.
Similarly with \cite[Theorem 2]{NW}, we have that
$$
\lim_{t\to +\infty}\Tc\varphi =u^c_-.
$$	
This fact shows that $u^c_-$ is globally asymptotically stable, thus completes the proof.
\end{proof}

\subsection{The asymptotically stable solution and its basin in the case ($\pm$)}
In this part, we focus on the description of the asymptotically stable solution to \eqref{HJs} and its basin when $\lambda(x)$ changes signs on $M$. Notice that $(\pm)$ is a subcase of both \textbf{(H3)$_+$} and \textbf{(H3)$_-$}, by Proposition \ref{prop:2}, there are $u^c_-\in\cS$ and $v^c_+\in\mathcal{S}^{c,+}$ that attracts each orbit of $\Tc$ and $T^{c,+}_t$ initiating from $\cSS$, respectively. We recall
\begin{lemma}\label{+-}\cite[Theorem 2, Theorem 3]{NW}
Assume $(\pm)$. Then
\begin{enumerate}[(1)]
  \item for $c\geqslant c_0, \lim_{t\rightarrow+\infty}\Tc\varphi=-\infty$\,\, for any continuous function $\varphi$ with $\varphi(x_0)<v^c_+(x_0)$ for some $x_0\in M$;
  \item for $c>c_0, \lim_{t\rightarrow+\infty}\Tc\varphi=u^c_-$\,\, for any continuous function $\varphi>v^c_+$.
\end{enumerate}
\end{lemma}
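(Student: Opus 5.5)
Lemma \ref{+-} is quoted from \cite[Theorem 2, Theorem 3]{NW}, so one option is simply to check that its hypotheses match ours; the plan below gives a self-contained argument from the tools already in the paper. Both parts compare $\Tc\varphi$ with orbits starting at a strict subsolution $\psi\in\cSS$ and use the two-sided semigroup relations of Proposition \ref{mono}: $T^{c,-}_t T^{c,+}_t\varphi\leqslant\varphi$ and $T^{c,+}_tT^{c,-}_t\varphi\geqslant \varphi$. Since $(\pm)$ holds, Proposition \ref{prop:2} supplies both $u^c_-=\lim\Tc\psi$ and $v^c_+=\lim T^{c,+}_t\psi$, with $v^c_+<\psi<u^c_-$.

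For (2), let $\varphi>v^c_+$. By compactness there is $\eta>0$ with $\varphi\geqslant v^c_++\eta$. Since $T^{c,+}_t\psi$ decreases to $v^c_+$ uniformly, there is $t_0$ with $T^{c,+}_{t_0}\psi\leqslant\varphi$. Then
\[
\Tc\varphi\geqslant T^{c,-}_{t-t_0}\big(T^{c,-}_{t_0}T^{c,+}_{t_0}\psi\big)\geqslant T^{c,-}_{t-t_0}\psi\to u^c_-,
\]
which gives $\liminf\Tc\varphi\geqslant u^c_-$.

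For the upper bound, Lemma \ref{lem:estimation}(1) makes $\Tc\varphi$ bounded above uniformly in $t$. I would then pass to $\bar\varphi=\max\{\varphi,u^c_-\}$. By monotonicity $\Tc\varphi\leqslant\Tc\bar\varphi$, and Proposition \ref{prop:new-NW-JDE}(1) gives $\Tc\bar\varphi\to u^c_-$. Hence $\limsup\Tc\varphi\leqslant u^c_-$, and (2) follows.

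For (1), suppose $\varphi(x_0)<v^c_+(x_0)$. The first step is to arrange that the initial datum is dominated by a solution of the forward problem. Consider $\tilde\varphi=\min\{\varphi,v^c_+\}$; since $\Tc\varphi\leqslant\Tc\tilde\varphi$, it suffices to treat $\tilde\varphi\leqslant v^c_+$ with strict inequality at $x_0$. Next I would show that $\{\Tc\tilde\varphi\}_t$ cannot stay bounded below. If it did, then together with the upper bound from Lemma \ref{lem:estimation}(1) the orbit would be bounded, and $w=\liminf\Tc\tilde\varphi$ would lie in $\cS$ (Proposition \ref{sg-2}). The relation $T^{c,+}_t\Tc\geqslant\mathrm{id}$ combined with the minimality of $v^c_+$ (Proposition \ref{prop:new-NW-JDE}(2)) should then force an inequality such as $\lim_t T^{c,+}_t w\geqslant \tilde\varphi$ paired with a reverse comparison. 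The target is $v^c_+\leqslant w$ pointwise together with the convergence $T^{c,-}_t v^c_+\to$ a solution lying below $\tilde\varphi$ near $x_0$, which is contradictory.

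The main obstacle is precisely this step. Unlike (2), there is no contraction available, and the argument must exploit that at points where $\lambda<0$ the negative deficit gets amplified. The concrete plan is to fix $\epsilon>0$ with $\tilde\varphi\leqslant v^c_+-\epsilon$ near $x_0$, and then use the forward invariance of $\mathrm{J}^1_{v^c_+}$ and a Mather measure $\mu\in\mathfrak M_{v^c_+}$ with $\int\lambda\,d\mu<0$, as in Remark \ref{exp-div}. Along a calibrated curve of $v^c_+$ that passes near $x_0$ and accumulates on the support of $\mu$, the implicit variational principle shows that the gap $v^c_+-\Tc\tilde\varphi$ grows like $e^{-t\int\lambda d\mu}$. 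This gives $\Tc\tilde\varphi\to-\infty$ along that curve. Finally, I would upgrade this to uniform divergence via the Lipschitz equicontinuity of $\Tc$ for $t\geqslant1$ (Proposition \ref{sg-2}) and the semigroup property.
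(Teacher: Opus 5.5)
Your proof of (2) is correct and self-contained. The lower bound $\liminf_{t}\Tc\varphi\geqslant u^c_-$, obtained from $T^{c,+}_{t_0}\psi\leqslant\varphi$ and $T^{c,-}_{t_0}T^{c,+}_{t_0}\psi\geqslant\psi$, is the paper's own argument in Proposition \ref{necessary +}. Your comparison with $\max\{\varphi,u^c_-\}$ via Proposition \ref{prop:new-NW-JDE}(1) supplies the upper bound that the paper delegates to \cite{NW}; for this lemma the paper gives no proof and only cites \cite{NW}.

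There are three slips before the real problem.
\begin{enumerate}
  \item Your opening paragraph states Proposition \ref{sg-1}(5) backwards. The correct relations are $T^{c,-}_tT^{c,+}_t\varphi\geqslant\varphi$ and $T^{c,+}_tT^{c,-}_t\varphi\leqslant\varphi$.
  \item Consequently your contradiction sketch for (1), which invokes $T^{c,+}_t\Tc\geqslant\mathrm{id}$, starts from a false relation.
  \item The reduction to $\tilde\varphi=\min\{\varphi,v^c_+\}$ goes the wrong way. Since $\tilde\varphi\leqslant\varphi$, monotonicity gives $T^{c,-}_t\tilde\varphi\leqslant\Tc\varphi$, so divergence of $T^{c,-}_t\tilde\varphi$ says nothing about $\Tc\varphi$. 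The reduction is also unnecessary. Let $\gamma_+$ be the forward calibrated curve of $v^c_+$ with $\gamma_+(0)=x_0$. The gap $g(t)=v^c_+(\gamma_+(t))-\Tc\varphi(\gamma_+(t))$ satisfies the linear inequality $g'\geqslant-\lambda(\gamma_+)g$ for arbitrary $\varphi$ (same computation as for \eqref{eq:4-9}). Hence $g(t)\geqslant g(0)e^{-\int_0^t\lambda(\gamma_+)}$.
\end{enumerate}

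\textbf{The genuine gap is the mechanism for (1).} You cannot choose a calibrated curve that ``passes near $x_0$ and accumulates on $\spt(\mu)$'' for a prescribed $\mu$. The positive gap is known only near $x_0$, and the forward calibrated curve from there is dictated by $v^c_+$.

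For $c>c_0$ this is repairable. The test-function argument of Section 4.1, applied to $\breve H$, gives $\sup_{\mu\in\mathfrak{M}_{v^c_+}}\int\lambda\,d\mu<0$ for the \emph{whole} set, and it does not use this lemma. Every weak-$\ast$ limit of empirical measures along the forward orbit lies in $\mathfrak{M}_{v^c_+}$, because that orbit stays in the compact forward-invariant set $\mathrm{J}^1_{v^c_+}$. So $\int_0^t\lambda(\gamma_+)\leqslant-\kappa t$ for some $\kappa>0$ and all large $t$, and $g$ grows exponentially.

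The statement, however, includes $c=c_0$, and there the mechanism fails.
\begin{enumerate}
  \item One only has $\int\lambda\,d\mu\leqslant0$ on $\mathfrak{M}_{v^{c_0}_+}$, with equality attained (cf.\ (D2)(C)).
  \item In example (b), the symmetry $x\mapsto-x$ gives $\mathfrak{M}_{v^0_+}=\{\delta_{(0,0,0)}\}$, and $\lambda(0)=0$.
  \item In example (a), $\delta_{(3\pi/2,0,0)}\in\mathfrak{M}_{v^0_+}$ does have $\int\lambda\,d\mu=-1$, but the curve from $x_0$ need not see it. One computes $v^0_+=0$ on $[\pi,2\pi]$ and $v^0_+(x)=-\frac14\big(\min\{\int_0^x\sqrt{\sin s}\,ds,\int_x^\pi\sqrt{\sin s}\,ds\}\big)^2$ on $[0,\pi]$. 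For $x_0\in(0,\pi/2)$, $\gamma_+$ tends to $0$ like $\frac{3}{2t}$, so $\int_0^t\lambda(\gamma_+)\sim\frac32\ln t$. Your estimate then only yields $g(t)\gtrsim t^{-3/2}\to0$.
\end{enumerate}
The critical case therefore needs an idea your proposal does not contain; this is exactly what the paper imports from \cite{NW}.

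Finally, the upgrade to uniform divergence cannot rest on Proposition \ref{sg-2}(2), which presupposes a bounded orbit. Use instead that $L$ is affine in $u$. Testing with a minimizing geodesic from $y$ to $x$ on $[0,1]$ gives $T^{c,-}_{t+1}\varphi(x)\leqslant h^c_{y,\Tc\varphi(y)}(x,1)\leqslant e^{-\Lambda}\Tc\varphi(y)+C$ whenever $\Tc\varphi(y)<0$.
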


The above lemma helps us to show that
\begin{proposition}\label{+-as}
Assume $c>c_0$ and $(\pm)$. Then
\begin{enumerate}[(a)]
  \item $u^c_-$ is the unique asymptotically stable solution to \eqref{HJs},
  \item $\Bc=\{\varphi\in C(M,\R):\varphi>v^c_+\}$.
\end{enumerate}
\end{proposition}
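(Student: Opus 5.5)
Part (b) follows almost immediately from Lemma \ref{+-}, while part (a) requires checking Lyapunov stability and then using (b) for uniqueness.

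\emph{Part (b).} Lemma \ref{+-}(2) gives $\{\varphi>v^c_+\}\subset\Bc$. For the reverse inclusion, let $\varphi\in\Bc$ and suppose $\varphi\not>v^c_+$.
\begin{itemize}
\item If $\varphi(x_0)<v^c_+(x_0)$ at some point, Lemma \ref{+-}(1) says $\Tc\varphi\to-\infty$, so $\varphi\notin\Bc$.
\item Otherwise $\varphi\geqslant v^c_+$ and $\varphi(x_0)=v^c_+(x_0)$ at some $x_0$. Lemma \ref{+-} does not cover this boundary case directly. The first thing I would try is to use $\Tc v^c_+=:w_t$. By Proposition \ref{mono}(4), $\Tc v^c_+$ converges to some $u_-\in\cS$. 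I would then show $u_-\neq u^c_-$ using the contact set $\{x: \Tc v^c_+(x)=v^c_+(x)\}\neq\emptyset$, which is the fact invoked in the proof of Lemma \ref{lem:+global}. Since $v^c_+<\psi<u^c_-$ for $\psi\in\cSS$ by Proposition \ref{prop:2}, this gives $\min_M(u^c_--\Tc v^c_+)\geqslant \min_M(u^c_--v^c_+)>0$ at those points.
\item The difficulty is that $\varphi\geqslant v^c_+$ only gives $\Tc\varphi\geqslant\Tc v^c_+$, which is the wrong direction. So instead I would perturb: for small $\eta>0$, compare $\varphi$ with a function that dips below $v^c_+$ near $x_0$.
\end{itemize}
I expect this boundary case to be the main obstacle. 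A cleaner route may be to show that the set of initial data whose orbit tends to $-\infty$ is closed under the relevant limits, or to use the Lipschitz contraction $\|\Tc\varphi-\Tc\varphi'\|_\infty\leqslant e^{\Lambda t}\|\varphi-\varphi'\|_\infty$ together with strict monotonicity near the contact point. If this fails, I would fall back on the precise form of \cite[Theorem 2]{NW}, which may already treat $\varphi\geqslant v^c_+$ with contact.

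\emph{Part (a), asymptotic stability.} Since $v^c_+<u^c_-$, take $\delta_0=\min_M(u^c_--v^c_+)>0$. Every $\varphi$ with $\|\varphi-u^c_-\|_\infty<\delta_0$ satisfies $\varphi>v^c_+$, so it converges to $u^c_-$ by (b).

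\emph{Part (a), Lyapunov stability.} Given $\varepsilon>0$, pick $\psi\in\cSS$; since $\psi<u^c_-$, the functions $\psi_s:=s\psi+(1-s)u^c_-$ are strict subsolutions by Lemma \ref{strict-sub}, and they lie below $u^c_-$ with $\|\psi_s-u^c_-\|_\infty\to0$ as $s\to0$. Choose $s$ small enough that $\|\psi_s-u^c_-\|_\infty<\varepsilon$. By Proposition \ref{mono}, $\Tc\psi_s$ is increasing in $t$ and tends to $u^c_-$ (Lemma \ref{u-max} and Proposition \ref{prop:2}). From above I would use $u^c_-+\delta$:
\begin{itemize}
\item I would show that $\Tc(u^c_-+\delta)$ stays within $\varepsilon$ of $u^c_-$. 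It converges to $u^c_-$ by Proposition \ref{prop:new-NW-JDE}(1), and by the expansion estimate it is close on any compact time interval.
\item Uniformity in $t$ needs an argument. I would use that on $[0,T]$ the deviation is at most $e^{\Lambda T}\delta$, while for $t\geqslant T$ it is bounded using monotone comparison with a single fixed orbit.
\item Alternatively, for an upper barrier I would take $\Tc(u^c_-+\delta_1)$ with $\delta_1$ fixed. Its orbit converges to $u^c_-$, so beyond some $T$ it is within $\varepsilon$, and for $t\leqslant T$ continuity applies. Then any $\delta<\delta_1 e^{-\Lambda T}$ works by monotonicity, since $u^c_-+\delta\leqslant u^c_-+\delta_1$.
\end{itemize}
Sandwiching $\varphi$ between $\psi_s$ and $u^c_-+\delta$ and using monotonicity of $\Tc$ gives Lyapunov stability.

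\emph{Part (a), uniqueness.} Any asymptotically stable $\mathbf u_-\in\cS$ lies in $\Bc$ or below $v^c_+$ somewhere. By Lemma \ref{+-}(1), a stationary point cannot lie below $v^c_+$ somewhere, since its orbit would tend to $-\infty$. So $\mathbf u_-\geqslant v^c_+$, and then the part (b) argument forces $\mathbf u_->v^c_+$ and hence $\mathbf u_-\in\Bc$. Being stationary, $\mathbf u_-=u^c_-$.
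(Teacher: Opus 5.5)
Your proof that $u^c_-$ is asymptotically stable is sound, and it is more explicit about Lyapunov stability than the paper, which does not spell that step out. It uses only the easy inclusion $\{\varphi>v^c_+\}\subset\Bc$ from Lemma \ref{+-}(2). There is one slip in the upper barrier: you need $\delta<\varepsilon e^{-\Lambda T}$ (or $\delta_1\leqslant\varepsilon$), not merely $\delta<\delta_1e^{-\Lambda T}$.

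The genuine gap is the boundary case of (b), $\varphi\geqslant v^c_+$ with $\varphi(x_0)=v^c_+(x_0)$, which you leave open; none of your fallbacks can close it. A function that dips below $v^c_+$ near $x_0$ lies below $\varphi$, so comparing with it only gives a \emph{lower} bound on $\Tc\varphi$. That is the same wrong direction you noticed with $\Tc v^c_+$. To show $\varphi\notin\Bc$ you need, for every $t$, some point where $\Tc\varphi$ is bounded \emph{above} by something uniformly separated from $u^c_-$. The ``orbits tending to $-\infty$'' idea also fails: $\varphi=v^c_+$ is itself a boundary datum, and $\Tc v^c_+$ converges to an element of $\cS$ by Proposition \ref{mono}(4).

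The missing idea is to use the contact point $x_0$ as the base point in the representation formula, and let the contact propagate along the forward characteristic of $v^c_+$. By \eqref{1jet-inv} there is an orbit $(x(t),p(t),u(t))\subset\mathrm{J}^1_{v^c_+}$, $t\geqslant0$, of $\Phi^t_c$ with $x(0)=x_0$ and $u(t)=v^c_+(x(t))$. The minimality property in Proposition \ref{Minimality}(1) then gives, for $t>0$,
\[
\Tc\varphi(x(t))\leqslant h^c_{x_0,\varphi(x_0)}(x(t),t)=h^c_{x_0,v^c_+(x_0)}(x(t),t)\leqslant u(t)=v^c_+(x(t)).
\]
Hence $\|u^c_--\Tc\varphi\|_\infty\geqslant\min_M(u^c_--v^c_+)>0$ for all $t$, which is the paper's argument. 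An equivalent route avoids characteristics. Since $T^{c,+}_tv^c_+=v^c_+$, the supremum $v^c_+(x_0)=\sup_y h_c^{y,v^c_+(y)}(x_0,t)$ is attained at some $y_t$. By Proposition \ref{Minimality}(1) this means $h^c_{x_0,v^c_+(x_0)}(y_t,t)=v^c_+(y_t)$. Your contact-set observation for $\Tc v^c_+$ is the special case $\varphi=v^c_+$; the key point is that the good point moves with $t$.

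Your uniqueness step inherits this gap, and as written it never uses that $\mathbf u_-$ is asymptotically stable. Part (b) only describes which data converge to $u^c_-$. A stationary $\mathbf u_-\neq u^c_-$ that touches $v^c_+$ is trivially outside $\Bc$, so (b) cannot force $\mathbf u_->v^c_+$. What excludes contact is the stability of $\mathbf u_-$ itself. If $\mathbf u_-(x_0)\leqslant v^c_+(x_0)$ for some $x_0$, then $(\mathbf u_--\delta)(x_0)<v^c_+(x_0)$ for every $\delta>0$. Lemma \ref{+-}(1) then gives $\Tc(\mathbf u_--\delta)\to-\infty$, contradicting asymptotic stability. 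This is how the paper argues. Together with Lemma \ref{+-}(2), it yields uniqueness without any appeal to (b).
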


\begin{proof}
(a): By Proposition \ref{prop:2}, for any $\psi\in\cSS$,
\begin{equation}\label{order}
u^c_->\psi>v^c_+.
\end{equation}
Thus (1) and (3) above implies that $u^c_-$ is asymptotically stable. Again by (3), for any $u_-\in\cS$ \textbf{different from $u^c_-$}, there is $x_0\in M$ such that $u_-(x_0)\leqslant v^c_+(x_0)$. Thus by (2), for any $\delta>0$,
\[
\lim_{t\rightarrow+\infty}\Tc(u_--\delta)=-\infty,
\]
this shows that $u_-$ is not asymptotically stable.

\medskip
\noindent(b): Notice that Lemma \ref{+-} implies that
$$
\{\varphi \in C(M,\R): \varphi >v^c_+\}\subset\mathcal{B}^{c,-}\subset\{\varphi \in C(M,\R):\varphi\geqslant v^c_+\}.
$$
Define $\partial\Bc=\{\varphi\in C(M,\R):\varphi\geqslant v^c_+,\,\,\exists\, x_0\in M,\,\,\varphi(x_0) =v^c_+(x_0)\}$. To finish the proof, we need to show that
\begin{equation}\label{eq:3-7}
\partial\Bc\cap\Bc=\emptyset.
\end{equation}
To prove \eqref{eq:3-8}, we shall show any $\varphi\in\partial\Bc$ does not belong to $\Bc$. We fix $x_0\in M$ such that $\varphi(x_0)=v^c_+(x_0)$. By \eqref{1jet-inv}, we choose an orbit $\{(x(t),p(t),u(t)):t\in[0,+\infty)\}\subset\mathrm{J}^1_{u_+}$ of \eqref{ch} satisfying $x(0)=x_0$, then $u(t)=v^c_+(x(t))$ for $t\geqslant0$. It follows that
\begin{equation}\label{eq:3-8}
\begin{split}
	\Tc\varphi(x(t))=&\,\inf_{y\in M} h^c_{y,\varphi(y)}(x(t),t) \\
	\leqslant &\, h^c_{x_0,\varphi(x_0)}(x(t),t)
=h^c_{x(0), v^c_+(x(0))}(x(t),t)\leqslant u(t)=v^c_+(x(t)),
\end{split}
\end{equation}
where the second inequality follows from Proposition \ref{Minimality} (1). Employing the inequality \eqref{eq:3-8}, we have for $t\geqslant0$,
$$
\|u^c_--\Tc\varphi\|_\infty \geqslant u^c_-(x(t))-\Tc\varphi(x(t))=u^c_-(x(t))-v^c_+(x(t))\geqslant \min_{x\in M}\{ u^c_-(x)-v^c_+(x)\}>0,
$$
where the last inequality follows from \eqref{order}. Thus $\varphi\notin\Bc$ and the proof is finished.
\end{proof}

\section{Proof of Theorem \ref{thm:3}}
In this section, for $c>c_0$, we show that the solution semigroup $T^{c,-}_t$ converges exponentially within the basin $\Bc$ of stable solution $u^c_-$ with an explicit formula for the lowest convergence rate and this rate is attained by some initial data near $u^c_-$. With the help of an estimate of the action functions, we derive an asymptotic behavior of the convergence rate when $c$ goes to infinity.

\subsection{Positivity of the averaging discount on the stable solution}
In this part, we show that for $c>c_0$, the integration of $\lambda(x)$ over any measure in $\mathfrak{M}_{u^c_-}$ is bounded below by a positive constant. To achieve the goal, we construct a test function $G(x,u,p)$ on the phase space so that $\lambda(x)+\mathfrak{L}_{H,c}G$ is positive everywhere on $\mathrm{J}^1_{u^c_-}$, where $\mathfrak{L}_{H,c}$ denotes the Lie derivative associated to $\Phi^t_{H,c}$. To proceed, we need an auxiliary
\begin{lemma}\label{lem:appendix-1}
Assume $h$ satisfies (H1)-(H2). Then for any $\psi\in C^\infty(M,\R)$,
\begin{equation}\label{eq:4-1}
\sup_{(x,p)\in T^\ast M}\{\lambda(x)\psi(x)+\langle d_x\psi,\frac{\partial h}{\partial p}(x,p)\rangle_x+[h(x,p)-\langle p,\frac{\partial h}{\partial p}(x,p)\rangle_x]\}=\lambda(x)\psi(x)+h(x,d_x\psi),
\end{equation}
In particular, $ \displaystyle c_0=\inf_{\psi\in C^\infty(M,\R) }\sup_{(x,p)\in T^*M}\Big\{ \lambda(x)\psi(x)+\langle d_x\psi,\frac{\partial h}{\partial p}(x,p)\rangle_x+h(x,p)-\langle p,\frac{\partial h}{\partial p}(x,p)\rangle_x\Big\}$.
\end{lemma}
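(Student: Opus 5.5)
The identity \eqref{eq:4-1} is a pointwise statement in $x$: the supremum should be read over $p\in T^*_xM$ for each fixed $x$, with the right-hand side depending on that $x$. So I will fix $x\in M$ and set $q:=d_x\psi(x)\in T_x^*M$. The term $\lambda(x)\psi(x)$ does not depend on $p$, so it suffices to show
\[
\sup_{p\in T^*_xM}\Big\{h(x,p)+\big\langle q-p,\tfrac{\partial h}{\partial p}(x,p)\big\rangle_x\Big\}=h(x,q).
\]
This is the supporting-hyperplane characterization of a convex function. By (H1), $h(x,\cdot)$ is convex and $C^1$ on the vector space $T^*_xM$, so for every $p$,
\[
h(x,q)\geqslant h(x,p)+\big\langle q-p,\tfrac{\partial h}{\partial p}(x,p)\big\rangle_x .
\]
This gives the inequality ``$\leqslant$''. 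Taking $p=q$ gives equality, so the supremum is attained at $p=d_x\psi(x)$. No use of (H2) is needed for this part; superlinearity only guarantees that everything is finite, and the attainment at $p=q$ already makes finiteness automatic.

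For the ``in particular'' statement, I take the supremum over $x\in M$ of both sides of the pointwise identity. This gives, for each $\psi\in C^\infty(M,\R)$,
\[
\sup_{(x,p)\in T^*M}\{\cdots\}=\sup_{x\in M}\big[\lambda(x)\psi(x)+h(x,d_x\psi(x))\big].
\]
Taking the infimum over $\psi$ on both sides and comparing with the definition \eqref{dcv} of $c_0$ yields the formula.

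There is no real obstacle. The only point requiring care is the convention: the identity must be read fiberwise. For a rigorous reading of the supremum over the whole of $T^*M$, one takes the supremum over $(x,p)$ of the left side and compares it with the supremum over $x$ of the right side. This is exactly what the formula for $c_0$ needs.
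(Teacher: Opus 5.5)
Your proof is correct, and you read the identity the same way the paper's proof does: fiberwise in $x$, followed by a supremum over $x$ and an infimum over $\psi$ to get the formula for $c_0$. The difference is in the key step.

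The paper sets $\mathrm{v}(x,p)=\frac{\partial h}{\partial p}(x,p)$ and uses the Legendre transform to rewrite the bracket as $h(x,p)-\langle p,\mathrm{v}(x,p)\rangle_x=-l(x,\mathrm{v}(x,p))$. It then uses that $p\mapsto\mathrm{v}(x,p)$ is a global diffeomorphism $T^*_xM\to T_xM$; this is where the Tonelli hypotheses (H1)--(H2) enter. That converts the supremum over $p$ into $\sup_{v\in T_xM}\{\langle d_x\psi,v\rangle_x-l(x,v)\}$, and Fenchel--Legendre duality finishes the argument.

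You bypass the Lagrangian entirely. You use the gradient inequality $h(x,q)\geqslant h(x,p)+\langle q-p,\frac{\partial h}{\partial p}(x,p)\rangle_x$, with equality at $p=q=d_x\psi(x)$.

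Your route is shorter and names the maximizer explicitly. It needs only that $h(x,\cdot)$ be convex and $C^1$: no superlinearity, no strict convexity, no surjectivity of the Legendre map. The paper's route shows that the bracket is $-l$ evaluated at the velocity $\dot x=\frac{\partial h}{\partial p}(x,p)$, which is how the expression arises along the characteristic flow in the Lie-derivative computation that follows. For the identity itself, though, your convexity argument is all that is needed.
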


\begin{proof}
Set $\mathrm{v}(x,p)=\frac{\partial h}{\partial p}(x,p)$, then
\begin{equation}\label{leg-tran}
h(x,p)=\langle p,\mathrm{v}(x,p)\rangle_x-l(x,\mathrm{v}(x,p))=\sup_{v\in T_xM}\{\langle p,v\rangle_x-l(x,v)\}.
\end{equation}
Therefore for fixed $\psi\in C^\infty(M,\R)$, we compute
\begin{align*}
&\sup_{p\in T^\ast_xM}\{\lambda(x)\psi(x)+\langle d_x\psi,\mathrm{v}(x,p)\rangle_x+[h(x,p)-\langle p,\mathrm{v}(x,p)\rangle_x]\}\\
=&\,\sup_{p\in T^\ast_xM}\{\lambda(x)\psi(x)+\langle d_x\psi,\mathrm{v}(x,p)\rangle_x-l(x,\mathrm{v}(x,p))\}\\
=&\,\lambda(x)\psi(x)+\sup_{v\in T_xM}\{\langle d_x\psi,v\rangle_x-l(x,v)\}=\lambda(x)\psi(x)+h(x,d_x\psi),
\end{align*}
where the first equality comes from \eqref{leg-tran} and since $h$ is Tonelli, the second equality follows from the fact that, when restricting to $T^\ast_xM$, the Legendre transform $p\mapsto\mathrm{v}(x,p)$ is a global diffeomorphism. By taking supremum over \( x \) on both sides, we arrive at \eqref{eq:4-1}. Now the proof is complete by taking the infimum over $\psi\in C^\infty(M,\R)$ on both sides.
\end{proof}

\begin{lemma}\label{lem:pf-1.1-2}
For any $c>c'>c_0$, there are $\psi\in\mathcal{SS}^{\,c'}$ and a $C^\infty$ function $G$ defined on
$$
\mathcal{D}_\psi:=\{(x,p,u)\in T^\ast M\times\R:u>\psi(x)\}
$$
such that for all $(x,p,u)\in\mathcal{D}_\psi$,
\begin{equation}\label{eq:4-7}
\lambda(x) + \mathfrak{L}_{H,c}G(x,p,u)\geqslant\frac{c-c'}{u-\psi(x)}>0,
\end{equation}
where $\mathfrak{L}_{H,c}$ denotes the Lie derivative with respect to the phase flow $\Phi^t_{H,c}$.
\end{lemma}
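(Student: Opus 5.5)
The plan is to take $G(x,p,u)=\ln\bigl(u-\psi(x)\bigr)$ on $\mathcal{D}_\psi$, which is smooth there because $u-\psi(x)>0$, for a suitable $\psi\in\mathcal{SS}^{\,c'}$. The inequality \eqref{eq:4-7} should then reduce to a pointwise computation combined with Lemma \ref{lem:appendix-1}.

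\emph{Choice of $\psi$.} Since $c'>c_0$, the definition \eqref{dcv} of $c_0$ as an infimum gives some $\psi\in C^\infty(M,\R)$ with
\[
\sup_{x\in M}\bigl[\lambda(x)\psi(x)+h(x,d_x\psi(x))\bigr]<c'.
\]
By compactness of $M$ this is a strict inequality at every point, so $\psi\in\mathcal{SS}^{\,c'}$.

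\emph{Computation of the Lie derivative.} The contact Hamiltonian is $H(x,p,u)=\lambda(x)u+h(x,p)-c$. Along $\Phi^t_{H,c}$ the characteristic system (Appendix A) gives
\[
\dot x=\frac{\partial h}{\partial p}(x,p),\qquad \dot u=\Big\langle p,\frac{\partial h}{\partial p}\Big\rangle_x-H(x,p,u).
\]
Hence
\[
\mathfrak{L}_{H,c}G=\frac{\dot u-\langle d_x\psi,\dot x\rangle_x}{u-\psi(x)}
=\frac{\langle p,\partial_p h\rangle_x-\lambda(x)u-h(x,p)+c-\langle d_x\psi,\partial_p h\rangle_x}{u-\psi(x)}.
\]
We add $\lambda(x)=\lambda(x)(u-\psi(x))/(u-\psi(x))$ to both sides. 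The terms $\pm\lambda(x)u$ cancel, leaving
\[
\lambda(x)+\mathfrak{L}_{H,c}G=\frac{c-\Big[\lambda(x)\psi(x)+\langle d_x\psi,\partial_p h\rangle_x+h(x,p)-\langle p,\partial_p h\rangle_x\Big]}{u-\psi(x)}.
\]

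\emph{Conclusion.} By \eqref{eq:4-1} of Lemma \ref{lem:appendix-1}, the bracket is at most $\lambda(x)\psi(x)+h(x,d_x\psi)$, and this is $<c'$ by the choice of $\psi$. Since $u-\psi(x)>0$ on $\mathcal{D}_\psi$, it follows that
\[
\lambda(x)+\mathfrak{L}_{H,c}G\geqslant\frac{c-c'}{u-\psi(x)}>0,
\]
which is \eqref{eq:4-7}.

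The only delicate point is guessing the logarithmic test function. It is chosen so that the multiplicative $\lambda u$ term in $\dot u$ cancels exactly against the added $\lambda(x)$, turning the expression into the Legendre-type quantity controlled by Lemma \ref{lem:appendix-1}. Once $G$ is fixed, the remaining steps are a direct calculation.
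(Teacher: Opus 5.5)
Your proposal is correct and matches the paper's proof: it uses the same test function $G=\ln(u-\psi(x))$ with $\psi$ taken from the definition of $c_0$, and the same Lie derivative computation along the characteristic system. It then applies Lemma \ref{lem:appendix-1} pointwise in $x$ to bound the bracket by $\lambda(x)\psi(x)+h(x,d_x\psi)<c'$. One small aside: compactness of $M$ is not needed to pass from $\sup_x[\lambda\psi+h(x,d_x\psi)]<c'$ to the pointwise strict inequality.
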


\begin{proof}
By definition of the critical value, $\mathcal{SS}^{\,c'}$ is non-empty. We choose $\psi\in\mathcal{SS}^{\,c'}$ and apply \eqref{eq:4-1} to have
\begin{equation}\label{eq:4-2}
\sup_{(x,p)\in T^*M} \Big\{ \lambda(x)\psi(x)+\langle d_x\psi,\frac{\partial h}{\partial p}(x,p)\rangle_x+[h(x,p)-\langle p,\frac{\partial h}{\partial p}(x,p)\rangle_x] \Big\}=\lambda(x)\psi(x)+h(x,d_x\psi)<c'.
\end{equation}
We construct $G:\mathcal{D}_\psi\rightarrow\R$ as
\begin{equation}\label{test-fun}
G(x,p,u):= \ln (u-\psi(x))
\end{equation}
then we compute, according to \eqref{ch},
\begin{align*}
&\, \Bigl( \lambda(x) + \mathfrak{L}_{H,c}G\Bigr)(x, p, u) \\
=&\, \lambda (x) + \left. \frac{d}{dt} \right|_{t=0} G\bigl( \Phi_{H,c}^t(x, p, u) \bigr)=\lambda (x) + \frac{\partial{G}}{\partial{u}}\cdot \dot{u}+ \frac{\partial{G}}{\partial{x}}\cdot \dot{x}\\
=&\, \lambda (x)+ \frac{1}{u-\psi(x)} \cdot \Bigl( \langle p,\frac{\partial h}{\partial p}(x,p)\rangle_x-h(x,p)-\lambda(x)u+c-\langle d_x\psi,\frac{\partial h}{\partial p}(x,p)\rangle_x\Bigr) \\
=&\, \frac{1}{u-\psi(x)} \cdot \Bigl( c-\lambda(x)\psi(x)-\langle d_x\psi,\frac{\partial h}{\partial p}(x,p)\rangle_x-[h(x,p)-\langle p,\frac{\partial h}{\partial p}(x,p)\rangle_x]  \Bigr)\\
>&\,\frac{1}{u-\psi(x)}\cdot\Bigl( c-\lambda(x)\psi(x)-h(x,d_x\psi) \Bigr)\geqslant\frac{c-c'}{u-\psi(x)},
\end{align*}
where the last inequality follows from \eqref{eq:4-2}.
\end{proof}

The conclusion of (C1) is equivalent to
\begin{proposition}
For any $c>c_0$, then $\displaystyle\inf_{ \mu\in\mathfrak{M}_{u^c_-}}\int_{T^*M\times\R} \lambda\ d\mu>0$.
\end{proposition}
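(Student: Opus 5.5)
Fix $c>c_0$ and choose an intermediate value $c'\in(c_0,c)$. Lemma \ref{lem:pf-1.1-2} then supplies $\psi\in\mathcal{SS}^{\,c'}$ and the test function $G(x,p,u)=\ln(u-\psi(x))$ on $\mathcal{D}_\psi$. The plan has three steps: place $\mathrm{J}^1_{u^c_-}$ inside $\mathcal{D}_\psi$ with a quantitative gap, integrate the inequality \eqref{eq:4-7} against an invariant measure, and observe that the Lie derivative term integrates to zero.

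\textbf{Step 1 (the gap).} We need $\mathrm{J}^1_{u^c_-}$ to lie in $\mathcal{D}_\psi$ and $u-\psi(x)$ to be bounded above and below there.
\begin{enumerate}[(i)]
  \item Since $\psi\in\mathcal{SS}^{c'}\subset\mathcal{SS}^c$, Proposition \ref{prop:2} (1) gives $u^c_->\psi$ pointwise. By compactness of $M$ there is $\eta>0$ with $u^c_--\psi\geqslant\eta$.
  \item On $\mathrm{J}^1_{u^c_-}$ the $u$-coordinate equals $u^c_-(x)$, so $\mathrm{J}^1_{u^c_-}\subset\mathcal{D}_\psi$.
  \item Also $0<u-\psi(x)\leqslant K:=\|u^c_-\|_\infty+\|\psi\|_\infty$ on $\mathrm{J}^1_{u^c_-}$.
  \item Since $u^c_-$ is Lipschitz, $\mathrm{J}^1_{u^c_-}$ is compact, and $G$ together with its derivatives is bounded on it.
\end{enumerate}

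\textbf{Step 2 (integration).} Take any $\mu\in\mathfrak{M}_{u^c_-}$. Its support lies in the compact set $\mathrm{J}^1_{u^c_-}$ and $\mu$ is $\Phi^t_{H,c}$-invariant. Then
\[
\int \mathfrak{L}_{H,c}G\,d\mu=\frac{d}{dt}\Big|_{t=0}\int G\circ\Phi^t_{H,c}\,d\mu=0.
\]
Differentiating under the integral is justified because the support is compact and invariant, so $G\circ\Phi^t$ and its $t$-derivative are uniformly bounded there. Integrating \eqref{eq:4-7} against $\mu$ gives
\[
\int\lambda\,d\mu=\int(\lambda+\mathfrak{L}_{H,c}G)\,d\mu\geqslant\int\frac{c-c'}{u-\psi(x)}\,d\mu\geqslant\frac{c-c'}{K}.
\]

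\textbf{Step 3 (uniformity).} The bound $(c-c')/K$ does not depend on $\mu$, so taking the infimum over $\mathfrak{M}_{u^c_-}$ gives the claim.

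The main obstacle is the justification in Step 2, specifically that the support of $\mu$ stays in the region where $G$ is defined and smooth. This is exactly why the strict, uniform inequality $u^c_->\psi$ from Proposition \ref{prop:2} is needed. Only the upper bound $K$ on $u-\psi$ enters the final constant; the lower gap $\eta$ serves only to keep $G$ smooth and bounded.

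A secondary technical point is that $\mathrm{J}^1_{u^c_-}$ may be defined through reachable gradients at nondifferentiable points. This does not matter, because \eqref{eq:4-7} holds at every point of $\mathcal{D}_\psi$ whatever $p$ is.
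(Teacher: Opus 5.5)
Your proof is correct and is essentially the paper's argument: the test function $G=\ln(u-\psi)$ from Lemma \ref{lem:pf-1.1-2}, the inclusion $\mathrm{J}^1_{u^c_-}\subset\mathcal{D}_\psi$ via the strict inequality $u^c_->\psi$ of Proposition \ref{prop:2}, and $\int\mathfrak{L}_{H,c}G\,d\mu=0$ by invariance (the paper simply fixes $c'=\frac{1}{2}(c+c_0)$). Your final constant $(c-c')/K$ uses an upper bound on $u-\psi(x)$, which is what the estimate requires; the paper's last display has $\min_{x\in M}\{u^c_-(x)-\psi(x)\}$ in the denominator, where it should be a maximum.
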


\begin{proof}
For $c'=\frac{1}{2}(c+c_0)$, we apply Lemma \ref{lem:pf-1.1-2} to obtain $\psi\in\mathcal{SS}^{c'}$ and $G:\mathcal{D}_\psi\rightarrow\R$ with
\begin{equation}\label{eq:4-3}
\Bigl( \lambda(x)+\mathfrak{L}_{H,c}G \Bigr)(x, p, u) \geqslant  \frac{c-c_0}{2(u-\psi(x))}, \quad \forall(x,p,u)\in\mathcal{D}_\psi.
\end{equation}
With the aid of Proposition \ref{prop:2}, we have
\begin{equation}\label{eq:4-4}
u^c_-= \lim_{t\to +\infty} T_t^{c,-}\psi>\psi.
\end{equation}
Since \(\mathrm{J}^1_{u^c_-}\) is compact subset of \(\mathcal{D}_\psi\) and
\[
\mathfrak{L}_{H,c}G(x, p, u) = \frac{\partial G}{\partial x} \frac{\partial H}{\partial p} - \frac{\partial G}{\partial p} \left( \frac{\partial H}{\partial x} + \frac{\partial H}{\partial u} p \right) + \frac{\partial G}{\partial u} \left( \frac{\partial H}{\partial p} p - H+c \right)
\]
is smooth on \(\mathcal{D}_\psi\), for $t$ close enough to $0$, the function $\frac{1}{t}[G\circ\Phi^t_{H,c}-G]$ is bounded around $\mathrm{J}^1_{u^c_-}$. Thus for any \(\mu \in \mathfrak{M}_{u^c_-}\), we have
\begin{align*}
\int_{T^*M \times \mathbb{R}} \mathfrak{L}_{H,c}G\,\, d\mu=& \,\int_{T^*M \times \mathbb{R}} \lim_{t\rightarrow0}\frac{G\circ\Phi^t_{H,c}-G}{t}\,\, d\mu\\
=&\,\lim_{t\rightarrow0}\frac{1}{t}\int_{T^*M \times \mathbb{R}}\bigg[G\circ\Phi^t_{H,c}-G\bigg]\,\, d\mu\\
=&\,\lim_{t\rightarrow0}\frac{1}{t}\bigg[\int_{T^*M \times \mathbb{R}}G\circ\Phi^t_{H,c}\,\, d\mu -\int_{T^*M \times \mathbb{R}}G\,\, d\mu\bigg]\\
=&\,\lim_{t\rightarrow0}\frac{1}{t}\bigg[\int_{T^*M \times \mathbb{R}}G\,\, d(\Phi^t_{H,c})_\sharp\mu -\int_{T^*M \times \mathbb{R}}G\,\, d\mu\bigg]=0,
\end{align*}
where the second inequality follows from the fact that supp$(\mu)\subset\mathrm{J}^1_{u^c_-}$ and dominated convergence and the last equality is a result of \(\Phi^t_{H,c}\)-invariance of \(\mu\). To complete the proof, we observe that for any $\mu \in \mathfrak{M}_{u^c_-}$,
\begin{equation*}
\begin{split}
	\int_{T^*M \times \mathbb{R}} \lambda(x)\,\,d\mu = &\,\int_{T^*M \times \mathbb{R}}\big[\lambda(x) + \mathfrak{L}_{H,c}G\big]\,\, d\mu \\
	\geqslant &\, \int_{T^*M\times\R}\frac{c-c_0}{2(u-\psi(x))}\,\,d\mu\geqslant \frac{c-c_0}{2\min_{x\in M} \{ u_-^c(x)-\psi(x)\}}>0,
\end{split}
\end{equation*}
where the second inequality also use the fact that $\mu$ supports on $\mathrm{J}^1_{u^c_-}$.
\end{proof}

\subsection{Exponential convergence of the solution semigroup}
In this part, we prove (C2). Our proof relies on Proposition \ref{prop:JYZ-new}, an improved statement of \cite[Theorem 1.2]{RWY}, which is of independent interests and applies to any contact Hamiltonian $H(x,p,u):T^*M\times\R\to\R$ satisfying assumptions given at the beginning of Appendix A.2. For the proof of Proposition \ref{prop:JYZ-new}, see Appendix B. To start with, we summarize the conclusions in Appendix B into the following

\begin{proposition}\label{con-rate}
Assume $c>c_0$ and $\inf_{\mu\in\mathfrak{M}_{u^c_-}}\int_{T^*M\times\R}\lambda\ d\mu>0$, then there exists $\Delta>0$ such that
\begin{enumerate}[(1)]
  \item for any $\varphi\in C(M,\R)$ with $\|\varphi-u^c_-\|_\infty<\Delta$,
        \begin{equation}\label{exp-cov1}
        \limsup_{t\rightarrow\infty}\frac{\ln\|T^{c,-}_t\varphi-u^c_-\|_\infty}{t}\leqslant-\inf_{ \mu\in\mathfrak{M}_{u^c_-}}\int_{T^*M\times\R} \lambda\ d\mu.
        \end{equation}

  \item for each $\varphi\in C(M,\R)$ such that $\|\varphi-u_-\|_{\infty}\leqslant\Delta$ and $\min_{x\in M}|\varphi(x)-u_-(x)|>0$,
        \begin{equation}\label{exp-cov2}
        \lim_{t\to +\infty}\dfrac{\ln\|T^{c,-}_t\varphi-u^c_-\|_{\infty}}{t}=-\inf_{ \mu\in\mathfrak{M}_{u^c_-}}\int_{T^*M\times\R} \lambda\ d\mu.
        \end{equation}
\end{enumerate}
\end{proposition}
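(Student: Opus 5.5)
This proposition is a local statement about the semiflow $T^{c,-}_t$ near the fixed point $u^c_-$. I would prove it by linearizing along characteristics, following the scheme of \cite[Theorem 1.2]{RWY}. Set $a:=\inf_{\mu\in\mathfrak{M}_{u^c_-}}\int\lambda\,d\mu>0$ (positive by the previous proposition), and write $w_t:=T^{c,-}_t\varphi-u^c_-$.

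\textbf{Step 1: comparison along backward calibrated curves.} For $x\in M$ take a $u^c_-$-calibrated curve $\gamma_x:(-\infty,0]\to M$ with $\gamma_x(0)=x$, whose lift lies in $\mathrm{J}^1_{u^c_-}$ and is a backward orbit of $\Phi^t_c$. Use the implicit variational representation of $T^{c,-}_t\varphi(x)$, with $\gamma_x|_{[-t,0]}$ as a test curve. The Lagrangian terms cancel against the calibration identity for $u^c_-$, which gives
\[
w_t(x)\leqslant\int\text{-type estimate: } w_t(x)\leqslant \sup_M w_0^+\cdot \exp\Big(-\int_{-t}^0\lambda(\gamma_x(s))\,ds\Big)
\]
up to the nonlinear correction coming from $\lambda\,(T^{c,-}_s\varphi-u^c_-)$ along the curve. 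This correction is linear in $w$, so Gronwall makes it exact.

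For the lower bound, use instead minimizers $\eta$ for $T^{c,-}_t\varphi(x)$. Their lifts are orbits of $\Phi^t_c$ starting in a neighbourhood of $\mathrm{J}^1_{u^c_-}$, provided $\|\varphi-u^c_-\|_\infty<\Delta$ is small. This uses upper semicontinuity of pseudographs and the Lipschitz bounds from Proposition \ref{Minimality}. The result is
\[
w_t(x)\geqslant -\sup_M w_0^-\cdot\exp\Big(-\int_{-t}^0\lambda(\eta(s))\,ds\Big).
\]

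\textbf{Step 2: uniform ergodic bound.} Show that for every $\varepsilon>0$ there are a neighbourhood $U$ of $\mathrm{J}^1_{u^c_-}$ and $T_\varepsilon$ such that every orbit segment of $\Phi_c$ of length $t\geqslant T_\varepsilon$ staying in $U$ satisfies
\[
\frac1t\int_0^t\lambda\geqslant a-\varepsilon.
\]
This is the standard argument by contradiction. If it failed, the empirical measures along bad segments would have a weak-$*$ limit that is invariant, supported in $\mathrm{J}^1_{u^c_-}$ (by compactness and backward invariance \eqref{1jet-inv}), and has $\int\lambda<a$, which is impossible. The point requiring care is that the orbits must remain in $U$ for all times. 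I plan to guarantee this by a bootstrap: the bound on $w_t$ shrinks $\|w_t\|$, and the minimizers then stay close to $\mathrm{J}^1_{u^c_-}$. I expect this to be the main obstacle, because $\mathrm{J}^1_{u^c_-}$ is only backward invariant, so closeness of the $1$-jets of $T^{c,-}_t\varphi$ to it has to come from semiconcavity of $T^{c,-}_t\varphi$ together with $C^0$-closeness.

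Combining Steps 1 and 2 gives $\|w_t\|_\infty\leqslant C e^{-(a-\varepsilon)t}$, which is (1).

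\textbf{Step 3: the matching lower bound for (2).} Choose an ergodic $\mu\in\mathfrak{M}_{u^c_-}$ with $\int\lambda\,d\mu$ close to $a$. Since the infimum over a closed convex set is attained at an extreme point, it is in fact attained at an ergodic measure. Take a $\mu$-generic point $(x_0,p_0,u_0)$; its orbit is a calibrated curve defined for all times. Suppose $\min|w_0|=\delta_0>0$. The sign of $w_0$ is constant on $M$, and the case $w_0\geqslant\delta_0$ is the easier one: Step 1 applied along this orbit, now using the variational inequality in the reverse direction, gives
\[
w_t(x(t))\geqslant \delta_0\,e^{-\int_0^t\lambda}\,(1+o(1)).
\]
For the case $w_0\leqslant-\delta_0$ I would use the forward-invariant structure, or a symmetric argument comparing with $u^c_--\delta_0$. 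Then $\liminf\frac1t\ln\|w_t\|\geqslant -\int\lambda\,d\mu$ by Birkhoff's theorem, which together with (1) yields the limit in (2).
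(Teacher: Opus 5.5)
\textbf{Steps 1--2.} These are essentially the argument of \cite[Theorem 1.2]{RWY}, which the paper simply quotes for (1), and they can be completed. The obstacle you flag is milder than you fear. You only need $|u-u^c_-(x)|$ small and $|p|$ bounded along the lifted minimizers. The reason is that every orbit in the support of an invariant measure carried by $\{u=u^c_-(x)\}$ satisfies $\frac{d}{ds}u^c_-(x(s))=\dot u(s)=L^c(x(s),\dot x(s),u(s))$, hence is calibrated and lies in $\mathrm{J}^1_{u^c_-}$. By contrast, semiconcavity plus $C^0$-closeness alone would only put the jets near the graph of $D^+u^c_-$, which is larger than $D^\ast u^c_-$.

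\textbf{The gap in Step 3.} The two sign cases are interchanged. Along a fixed curve, the infimum representation of $T^{c,-}_t$ gives only an \emph{upper} bound for $T^{c,-}_t\varphi$, while $u^c_-$ satisfies the calibration identity with equality. So along your $\mu$-generic calibrated orbit you get exactly $w_t(x(t))\leqslant w_0(x(0))\,e^{-\int_0^t\lambda(x(s))\,ds}$, and there is no ``reverse'' inequality. This settles $w_0\leqslant-\delta_0$ at once: it gives $|w_t(x(t))|\geqslant\delta_0e^{-\int_0^t\lambda(x(s))\,ds}$, and Birkhoff's theorem finishes. This is the paper's Lemma~\ref{lem3}. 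For $w_0\geqslant\delta_0$, however, it only bounds a positive quantity from above, so your estimate $w_t(x(t))\geqslant\delta_0e^{-\int_0^t\lambda}(1+o(1))$ is not justified.

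\textbf{What is missing for $\varphi\geqslant u^c_-+\delta_0$.} Here a lower bound can only come from your minimizer inequality $w_t(x)\geqslant\delta_0\exp\bigl(-\int_{-t}^0\lambda(\eta(s))\,ds\bigr)$, with $\eta$ a minimizer for $T^{c,-}_t\varphi(x)$. You then need $\int_{-t}^0\lambda(\eta)\leqslant(a+\varepsilon)t$ for a suitable $x$. Step 2 cannot give this. Because $a$ is an infimum, Step 2 only yields the opposite bound $\int_{-t}^0\lambda(\eta)\geqslant(a-\varepsilon)t$, and a minimizer drifting to where $\lambda$ is larger would decay faster. The missing idea is a shadowing statement: minimizers of $T^{c,-}_t(u^c_-+\delta)$ ending on the projection to $M$ of $\mathrm{supp}\,\mu_\ast$, with $\mu_\ast$ an ergodic measure realizing $a$, must follow the calibrated orbit $\gamma_\ast$.

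The paper obtains this on windows of fixed length $\tau$. By Lemma~\ref{lem2}, for $\delta\leqslant\delta_1(\varepsilon,\tau)$ the minimizer of $T^-_\tau u^\delta(x_\ast)$ is $C^1$-close to $\gamma_\ast$ on $[-\tau,0]$. This gives the one-step bound $T^-_\tau u^\delta(x_\ast)-u_-(x_\ast)>\delta e^{-(a+2\varepsilon)\tau}$ of Lemma~\ref{lem4}, which is then iterated over $[0,n\tau]$ in Lemma~\ref{lem5}. Your reference to ``the forward-invariant structure'' does not substitute for this.

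If you carry out such an iteration, be careful. The one-step bound is known only at points of the projection of $\mathrm{supp}\,\mu_\ast$, whereas chaining it through the monotonicity of $T^{c,-}_\tau$ requires an inequality on all of $M$. You must therefore also control where the minimizer sits at the start of each window.
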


\begin{proposition}\label{lem:RWY-main}
For $c>c_0$,
\begin{equation}\label{eq:alpha(c)}
R(c)=a(c):=\inf_{ \mu\in\mathfrak{M}_{u^c_-}}\int_{T^*M\times\R} \lambda\ d\mu.
\end{equation}
\end{proposition}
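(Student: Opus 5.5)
We have to show both $R(c)\geqslant a(c)$ and $R(c)\leqslant a(c)$, where $R(c)=\inf_{\varphi\in\Bc}R(c,\varphi)$. The upper bound is immediate from Proposition \ref{con-rate}(2). We pick $\varphi=u^c_-+\Delta/2$. Then $\|\varphi-u^c_-\|_\infty<\Delta$ and $\min_x|\varphi-u^c_-|=\Delta/2>0$. This $\varphi$ lies in $\Bc$ by Proposition \ref{prop:new-NW-JDE}, or by Theorem B. Hence $R(c,\varphi)=a(c)$, and therefore $R(c)\leqslant a(c)$. Recall that $a(c)>0$ by the preceding proposition, so the hypothesis of Proposition \ref{con-rate} is satisfied.

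For the lower bound, fix an arbitrary $\varphi\in\Bc$. By definition $T^{c,-}_t\varphi\to u^c_-$ uniformly, so there is $t_0>0$ with $\|T^{c,-}_{t_0}\varphi-u^c_-\|_\infty<\Delta$. Set $\tilde\varphi:=T^{c,-}_{t_0}\varphi$. By the semigroup property, $T^{c,-}_{t}\varphi=T^{c,-}_{t-t_0}\tilde\varphi$ for $t\geqslant t_0$. Proposition \ref{con-rate}(1) applied to $\tilde\varphi$ then gives
\[
\limsup_{t\to\infty}\frac{\ln\|T^{c,-}_t\varphi-u^c_-\|_\infty}{t}
=\limsup_{s\to\infty}\frac{\ln\|T^{c,-}_{s}\tilde\varphi-u^c_-\|_\infty}{s+t_0}\leqslant -a(c).
\]
Here we used that the factor $s/(s+t_0)\to1$. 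If the norm vanishes at some finite time, it stays zero afterwards since $u^c_-$ is stationary. In that case the $\limsup$ equals $-\infty$ and the inequality holds trivially. Thus $R(c,\varphi)\geqslant a(c)$ for every $\varphi\in\Bc$, and taking the infimum yields $R(c)\geqslant a(c)$.

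The only point that needs care is this time-shift reduction. It requires $\Bc$ to be invariant under the semiflow, which holds by the definition of $\Bc$, and it requires the convergence in $\Bc$ to be uniform, which is how $\Bc$ is defined. Everything substantive is contained in Proposition \ref{con-rate}, which the paper takes from Appendix B, so no real obstacle remains.
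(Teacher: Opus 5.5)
Your proposal is correct and follows essentially the same route as the paper. The paper also obtains $R(c)\geqslant a(c)$ by a time shift into the $\Delta$-ball followed by Proposition \ref{con-rate}(1), and $R(c)\leqslant a(c)$ by Proposition \ref{con-rate}(2). Your explicit choice $\varphi=u^c_-+\Delta/2$, with a check that it lies in $\Bc$, spells out what the paper leaves implicit.
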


\begin{proof}
Notice that for any $\varphi\in\Bc, \lim_{t\to +\infty } \Tc \varphi= u^c_-$ holds uniformly on $M$. Thus for $\Delta>0$ appear in Proposition \ref{con-rate}, there exists $t_0>0$ such that $\| T^{c,-}_{t_0}\varphi-u^c_-\|_\infty<\Delta$ for each $t\geqslant t_0$. Applying Proposition \ref{con-rate} (1), we have
$$
\limsup_{t\rightarrow\infty}\frac{\ln\|T^{c,-}_{t+t_0} \varphi-u^c_-\|_\infty}{t} = \limsup_{t\rightarrow\infty}\frac{\ln\|T^{c,-}_t\varphi-u^c_-\|_\infty}{t}\leqslant- \displaystyle\inf_{ \mu\in\mathfrak{M}_{u^c_-}}\int_{T^*M\times\R} \lambda\ d\mu.
$$
By the definition of $R(c)$ and $R(c,\varphi)$,
\begin{equation}\label{eq:pf-lem4.4-1}
 R(c)=\inf_{\varphi\in\mathcal{B}^{c,-}} R(c,\varphi)=\inf_{\varphi\in\mathcal{B}^{c,-}} -\limsup_{t \to \infty} \frac{\ln \|\Tc\varphi - u^c_- \|_\infty}{t} \geqslant \displaystyle\inf_{ \mu\in\mathfrak{M}_{u^c_-}}\int_{T^*M\times\R} \lambda\ d\mu.
\end{equation}
On the other hand, applying Proposition \ref{con-rate} (2), we find an initial data $\varphi_0\in \mathcal{B}^{c,-}$ with $\|\varphi_0-u^c_-\|_\infty\leqslant \Delta$ such that
$$
\lim_{t \to \infty} \frac{\ln \| T_t^{c,-} \varphi_0- u^c_- \|_\infty}{t}= -\displaystyle\inf_{ \mu\in\mathfrak{M}_{u^c_-}}\int_{T^*M\times\R} \lambda\ d\mu.
$$
It follows that
\begin{equation}\label{eq:pf-lem4.4-2}
R(c) \leqslant R(c,\varphi_0)=-\limsup_{t \to \infty} \frac{\ln \|\Tc\varphi_0 - u^c_- \|_\infty}{t}=\displaystyle\inf_{ \mu\in\mathfrak{M}_{u^c_-}}\int_{T^*M\times\R} \lambda\ d\mu.
\end{equation}
Combing \eqref{eq:pf-lem4.4-1} and \eqref{eq:pf-lem4.4-2}, we complete the proof.
\end{proof}

To end this part, we notice that the convergence rate only depends on the behavior of solution semigroup near the stable solution $u^c_-$. Thus one can globalize Proposition \ref{con-rate} (2) as
\begin{proposition}\label{lem:RWY-main}
For any $\varphi\in\mathcal{B}^{c,-}$ with $\min_{x\in M}|\varphi(x)-u^c_-(x)|>0$,
\begin{equation}\label{eq:A(c,varphi)= A(c)}
R(c,\varphi)= -\lim_{t \to \infty} \frac{\ln \| \Tc \varphi - u^c_- \|_\infty}{t}=R (c).
\end{equation}
\end{proposition}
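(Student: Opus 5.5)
The plan is to reduce to the local statement, Proposition \ref{con-rate} (2), by waiting until the orbit enters the $\Delta$-ball around $u^c_-$ while keeping a uniform positive gap. Fix $\varphi\in\Bc$ with $m:=\min_{x\in M}|\varphi(x)-u^c_-(x)|>0$. Since $M$ is connected and $\varphi-u^c_-$ is continuous and nonvanishing, it has a constant sign. So either $\varphi>u^c_-$ everywhere or $\varphi<u^c_-$ everywhere. I treat the two cases in parallel.

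\emph{Step 1 (the sign and a gap are preserved).} Suppose $\varphi\geqslant u^c_-+m$. By monotonicity (Proposition \ref{sg-1} (2)) we get $\Tc\varphi\geqslant \Tc(u^c_-+m)$. I then need a strict lower bound $\Tc(u^c_-+m)-u^c_->0$ uniformly on $M$ for each fixed $t$. I would obtain it from the representation $\Tc\phi(x)=\inf_y h^c_{y,\phi(y)}(x,t)$ together with the strict monotonicity of the action function $h^c_{y,u}(x,t)$ in $u$ (Proposition \ref{Minimality}). Concretely, along a minimizer the difference of two solutions satisfies a linear ODE with coefficient $-\lambda$, so it decays at most like $e^{-\Lambda t}$. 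This gives
\begin{equation*}
\Tc(u^c_-+m)\geqslant u^c_-+me^{-\Lambda t}.
\end{equation*}
The case $\varphi\leqslant u^c_--m$ is symmetric and gives $\Tc\varphi\leqslant u^c_--me^{-\Lambda t}$.

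\emph{Step 2 (entering the ball).} Since $\varphi\in\Bc$, there is $t_0$ with $\|T^{c,-}_{t_0}\varphi-u^c_-\|_\infty\leqslant\Delta$. By Step 1, $\varphi_0:=T^{c,-}_{t_0}\varphi$ satisfies $\min_x|\varphi_0-u^c_-|\geqslant me^{-\Lambda t_0}>0$. Proposition \ref{con-rate} (2) therefore applies to $\varphi_0$ and yields
\begin{equation*}
\lim_{t\to\infty}\frac{\ln\|T^{c,-}_t\varphi_0-u^c_-\|_\infty}{t}=-a(c).
\end{equation*}
By the semigroup property, $T^{c,-}_t\varphi_0=T^{c,-}_{t+t_0}\varphi$. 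Since $(t+t_0)/t\to1$, the same limit holds for $\varphi$. Hence the limit exists and $R(c,\varphi)=a(c)$, which equals $R(c)$ by the preceding Proposition \eqref{eq:alpha(c)}.

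\emph{Main obstacle.} The delicate point is Step 1: I must show that the nonvanishing property survives the flow quantitatively, that is, with a positive lower bound on the gap at time $t_0$. The ordering alone only gives a non-strict inequality. I would first try to use the Lipschitz-type estimate of $u\mapsto h^c_{y,u}(x,t)$ from below, of the form $h^c_{y,u+m}-h^c_{y,u}\geqslant me^{-\Lambda t}$, derived from the implicit variational principle \eqref{eq:Implicit variational} by a Gronwall argument on the difference along a common minimizing curve. If that route fails, I would fall back on a comparison principle for \eqref{HJe} with the explicit supersolution/subsolution $u^c_-\pm me^{-\Lambda t}$. That function is a sub/supersolution because $\lambda\leqslant\Lambda$, and this suffices.
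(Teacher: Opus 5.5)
Your proof is correct, but it handles the obstacle you flag differently from the paper.

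Both arguments obtain the upper bound $\limsup_{t\to\infty}\frac{1}{t}\ln\|T^{c,-}_t\varphi-u^c_-\|_\infty\leqslant -a(c)$ by letting the orbit enter the $\Delta$-ball. The paper does this by quoting $R(c)\leqslant R(c,\varphi)$ and $R(c)=a(c)$ from the preceding proposition.

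For the lower bound, the paper never propagates the gap. Assuming, say, $\varphi<u^c_-$, it compares $\varphi$ at time $0$ with the constant shift $\varphi_\Delta:=u^c_--\min\{\Delta,\min_M(u^c_--\varphi)\}$. This shift already lies in the $\Delta$-ball and is strictly separated from $u^c_-$. Order preservation alone then gives $0<u^c_--T^{c,-}_t\varphi_\Delta\leqslant u^c_--T^{c,-}_t\varphi$, and Proposition \ref{con-rate} (2) applied to $\varphi_\Delta$ yields $\liminf\geqslant -a(c)$.

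In your route the gap must survive until time $t_0$. You therefore need the extra estimate $T^{c,-}_t(u^c_-+m)\geqslant u^c_-+me^{-\Lambda t}$ and its mirror. This is not among the listed semigroup properties: Proposition \ref{sg-1} (3) gives only the upper Lipschitz bound.

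Both of your justifications of this estimate work. The function $u^c_-\pm me^{-\Lambda t}$ is a viscosity sub/supersolution of \eqref{HJe} because $\lambda\leqslant\Lambda$, and comparison against this Lipschitz function is standard. The Gronwall route also works, in the form of Lemma \ref{lem4}, with three adjustments:
\begin{enumerate}
\item the curve is a minimizer only for the shifted datum, and for $u^c_-$ you use only the domination inequality;
\item you should use the incremental inequality $D(s+\Delta s)-D(s)\geqslant-\int_s^{s+\Delta s}\lambda D\,d\tau$, since the version integrated from $0$ gives no lower bound where $\lambda>0$;
\item you pass from $h^c_{y,\cdot}$ to $T^{c,-}_t$ via $\inf_yA(y)-\inf_yB(y)\geqslant\inf_y(A(y)-B(y))$.
\end{enumerate}

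Your version buys a slightly stronger dynamical fact: strict separation persists for all times at an explicit rate. It also gets both bounds from a single application of Proposition \ref{con-rate} (2). The paper's version buys brevity and relies on monotonicity alone.
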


\begin{proof}
Let $\varphi\in\Bc$ be an initial data with $\varphi(x)\neq u^c_-(x)$ for any $x\in M$. Then either $\varphi<u^c_-$ or $\varphi>u^c_-$. For what follows, we assume $\varphi<u^c_-$ since the proof of the other case is completely similar. Note that
$$
\varphi \leqslant \varphi_\Delta:= u^c_- -  \min\Big\{ \Delta, \min_{x\in M} \{u^c_--\varphi \} \Big\}
$$
where $\Delta$ is the same as the one appeared in Proposition \ref{con-rate} (2). This implies that $\varphi\leqslant\varphi_\Delta< u^c_-$ and $\|\varphi_\Delta - u^c_- \|_\infty\leqslant\Delta$. We use Proposition \ref{mono} to derive that
$$
0<u^c_--\Tc\varphi_\Delta\leqslant u^c_--\Tc\varphi.
$$
We can apply Proposition \ref{con-rate} (2) to $\varphi_\Delta$ to obtain that
\begin{align*}\label{eq:pf-lem4.4-4}
R (c)\leqslant R (c,\varphi) = &\, -\limsup_{t \to \infty} \frac{\ln \|\Tc\varphi - u^c_- \|_\infty}{t}
\leqslant -\liminf_{t \to \infty} \frac{\ln \|\Tc\varphi -u^c_- \|_\infty}{t}\\
\leqslant &\,  -\liminf_{t \to \infty} \frac{\ln \|\Tc\varphi_\Delta - u^c_-\|_\infty}{t}= -\lim_{t \to \infty} \frac{\ln \| T_t^{c,-} \varphi_\Delta - u^c_- \|_\infty}{t} \\
=&\, R (c,\varphi_\Delta)= R (c),
\end{align*}
which finishes the proof.
 \end{proof}

\subsection{Asymptotic convergence rate as $c$ goes to infinity}
As the last part of this section, we study the limit $\lim_{c\rightarrow+\infty}R(c)$. This is done by relating an upper bound of $u^c_-$ to the parameter $c$. Throughout this part, we assume \eqref{HJs} admits a subsolution, so $c\geqslant c_0$, and \textbf{(H3)$_+$} holds. We recall that by Lemma \ref{lem:estimation}, for any subsolution $v$ of \eqref{HJs} and $\delta>0,$,
\begin{equation}\label{eq:4-5}
T_{t}^{c,-}v(x)\leqslant  \overline B_{c,\delta}(x):= h^c_{x_1,\frac{c-\mathbf{e}_0}{\lambda(x_1)}}(x,\delta),\quad (x,t)\in M\times(\delta,+\infty).
\end{equation}
Set $\lambda_+:= \max_{x\in M}\lambda(x)$. Due to the compactness of $M$, there are $x_1\in M$ such that $\lambda(x_1)=\lambda_+$.

\begin{lemma}\label{lem:eq-B-Lip}
Given any   \( c_1,  c_2 \in \R \) with $c_1\leqslant c_2$ and $x\in M$,   then for any $t>0,$
\begin{align*}
| \overline B_{c_2,t}(x) -  \overline B_{c_1,t}(x)|\leqslant (c_2-c_1)\Big[\frac{1}{\lambda_+}+ t \Big] e^{\Lambda t},
\end{align*}
where $\Lambda:= \max_{x\in M}| \lambda(x)|$.
\end{lemma}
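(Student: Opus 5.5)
We must bound $\overline B_{c_2,t}(x)-\overline B_{c_1,t}(x)$, where $\overline B_{c,t}(x)=h^c_{x_1,u_c}(x,t)$ and $u_c:=\frac{c-\mathbf{e}_0}{\lambda_+}$. Two parameters move simultaneously: the constant $c$ in the action and the initial value $u_c$. The difference of the initial values is exactly $u_{c_2}-u_{c_1}=\frac{c_2-c_1}{\lambda_+}$. The plan is to split
\[
\overline B_{c_2,t}-\overline B_{c_1,t}=\big[h^{c_2}_{x_1,u_{c_2}}(x,t)-h^{c_2}_{x_1,u_{c_1}}(x,t)\big]+\big[h^{c_2}_{x_1,u_{c_1}}(x,t)-h^{c_1}_{x_1,u_{c_1}}(x,t)\big]
\]
and estimate each bracket separately using the implicit variational principle \eqref{eq:Implicit variational} together with a Gronwall argument.

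For the first bracket, I would use the Lipschitz dependence of the action function on its initial value. The relevant fact, presumably among the properties in Proposition \ref{Minimality} or provable directly from \eqref{eq:Implicit variational}, is
\[
|h^c_{x_0,u}(x,t)-h^c_{x_0,u'}(x,t)|\leqslant |u-u'|e^{\Lambda t}.
\]
To prove it directly, fix a minimizer $\gamma$ for one of the two functions and plug $\gamma$ into the other. The difference $w(s)=h^c_{x_0,u}(\gamma(s),s)-h^c_{x_0,u'}(\gamma(s),s)$ then satisfies an integral inequality of the form $|w(s)|\leqslant |u-u'|+\Lambda\int_0^s |w(\tau)|\,d\tau$. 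Gronwall gives the bound $|u-u'|e^{\Lambda t}=\frac{c_2-c_1}{\lambda_+}e^{\Lambda t}$.

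For the second bracket the initial value is the same and only $c$ changes. Take the same minimizer trick. The integrands differ by the constant $c_2-c_1$ plus $-\lambda\cdot(\text{difference of the action functions along the curve})$. This yields $|w(s)|\leqslant (c_2-c_1)s+\Lambda\int_0^s|w|$, and Gronwall gives $|w(t)|\leqslant (c_2-c_1)te^{\Lambda t}$. Summing the two brackets gives $(c_2-c_1)\big[\frac1{\lambda_+}+t\big]e^{\Lambda t}$, as claimed.

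The main obstacle is making the comparison rigorous. In \eqref{eq:Implicit variational} the integrand involves the unknown function $h$ evaluated along the curve at intermediate times, so plugging a minimizer of one problem into the other gives only a one-sided inequality at the endpoint. To get the Gronwall inequality at every intermediate time $s$, I would apply the comparison on $[0,s]$, using that a minimizer restricted to $[0,s]$ is still a minimizer for the point $(\gamma(s),s)$ (the Markov/semigroup property in Proposition \ref{Minimality}). I would then do the argument twice, once with each minimizer, to get both signs. The sign of $\lambda$ is not controlled, so I would bound $|\lambda|\leqslant\Lambda$ throughout, which is why $e^{\Lambda t}$ rather than a decaying factor appears. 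If this direct approach is too delicate, the fallback is the monotonicity and Lipschitz property of $u\mapsto h^c_{x_0,u}$, which is standard for contact Hamiltonians with $|\partial_u H|\leqslant\Lambda$: compare $h^{c_2}_{x_1,u}$ with $h^{c_1}_{x_1,u}+(c_2-c_1)\frac{e^{\Lambda s}-1}{\Lambda}$ as a viscosity super/subsolution, and note that $\frac{e^{\Lambda t}-1}{\Lambda}\leqslant te^{\Lambda t}$.
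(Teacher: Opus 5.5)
Your split into an initial-value bracket and a $c$-bracket is a reasonable alternative to the paper's single comparison, and it would give exactly the stated constant. However, the Gronwall step as you set it up does not close.

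Let $\gamma$ minimize $h^c_{x_0,u'}(x,t)$. Restricting to $[0,s]$ gives equality for $h^c_{x_0,u'}(\gamma(s),s)$ but only an upper bound for $h^c_{x_0,u}(\gamma(s),s)$. So what you actually obtain is
\[
w(s)\leqslant (u-u')+\int_0^s\big(-\lambda(\gamma(\tau))\big)\,w(\tau)\,d\tau ,
\]
which bounds $w$, not $|w|$. If you replace $-\lambda w$ by $\Lambda|w|$, you have $|w|$ inside the integral but only $w$ on the left, so Gronwall cannot be applied.

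Running the argument a second time with the other minimizer does not help. It gives the reverse one-sided inequality along a \emph{different} curve. The two estimates therefore never combine into $|w(s)|\leqslant|u-u'|+\Lambda\int_0^s|w|$ along a single curve. Moreover, the one-sided integral inequality above, with the sign-indefinite kernel $-\lambda\circ\gamma$, does not by itself force an exponential bound. Your $c$-bracket has the same problem.

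The missing ingredient is the sign of $w$, which comes from the monotonicity in Proposition~\ref{Minimality}~(2). This is exactly the step the paper makes explicit when it notes $F\geqslant 0$.

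Since $\frac{c_2-\mathbf{e}_0}{\lambda_+}-\frac{c_1-\mathbf{e}_0}{\lambda_+}=\frac{c_2-c_1}{\lambda_+}\geqslant0$ and $c_2\geqslant c_1$, both of your brackets are nonnegative at every point $(y,s)\in M\times(0,t]$, not just at $(x,t)$. So in each bracket take a minimizer of the \emph{smaller} function, not ``one of the two''. Along it $w\geqslant0$, hence $-\lambda w\leqslant\Lambda w$. You then get
\[
0\leqslant w(s)\leqslant A(s)+\Lambda\int_0^s w\,d\tau,
\]
with $A\equiv\frac{c_2-c_1}{\lambda_+}$ in the first bracket and $A(s)=(c_2-c_1)s$ in the second. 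Both are nondecreasing, so Gronwall gives $w(t)\leqslant A(t)e^{\Lambda t}$. No two-sided estimate is needed, because the absolute value in the statement is just a nonnegative difference.

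With this fix your argument is correct. The paper simply does it in one pass: it takes a minimizer $\gamma_1$ of $\overline B_{c_1,t}(x)$ and compares directly with $h^{c_2}_{x_1,\frac{c_2-\mathbf{e}_0}{\lambda_+}}$. That handles the shift in initial value and in $c$ at once and avoids the intermediate function $h^{c_2}_{x_1,\frac{c_1-\mathbf{e}_0}{\lambda_+}}$.
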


\begin{proof}
Note that  \( c_1 \leqslant c_2 \), by Proposition \ref{Minimality} (2), for any $x\in M$ and $t>0$
$$
h_{x_1,  \frac{c_1-\mathbf{e}_0} {\lambda_+}}^{c_1}(x, t) \leqslant   h_{x_1,  \frac{c_2-\mathbf{e}_0} {\lambda_+}}^{c_1}(x, t) \leqslant h_{x_1,  \frac{c_2-\mathbf{e}_0} {\lambda_+}}^{c_2}(x, t).
$$
Let \( \gamma_1: [0,t] \to M\) be a minimizer of \( h_{x_1, \frac{c_1-\mathbf{e}_0} {\lambda_+}}^{c_1}(x, t) \). Then  for any \( s \in (0, t] \), we get
\begin{equation}\label{eq:4-6}
h_{x_1, \frac{c_1-\mathbf{e}_0} {\lambda_+}}^{c_1}(\gamma_1(s), s) \leqslant h_{x_1, \frac{c_2-\mathbf{e}_0} {\lambda_+} }^{c_2}(\gamma_1(s), s).
\end{equation}
We employ Proposition \ref{Implicit variational} to have
\begin{align*}
	h_{x_1, \frac{c_1-\mathbf{e}_0} {\lambda_+}}^{c_1}(\gamma_1(s), s)=  \frac{c_1-\mathbf{e}_0} {\lambda_+}+ \int_0^s \Big[  L(\gamma_1(\tau),\dot{\gamma}_1( \tau ), h_{x_1, \frac{c_1-\mathbf{e}_0} {\lambda_+}}^{c_1}(\gamma_1(\tau), \tau) )+c_1\Big] d\tau, \\
		h_{x_1, \frac{c_2-\mathbf{e}_0} {\lambda_+}}^{c_2}(\gamma_1(s), s)\leqslant  \frac{c_2-\mathbf{e}_0} {\lambda_+}+ \int_0^s \Big[  L(\gamma_1(\tau),\dot{\gamma}_1( \tau ), h_{x_1, \frac{c_2-\mathbf{e}_0} {\lambda_+}}^{c_2}(\gamma_1(\tau), \tau) )+c_2\Big] d\tau .
\end{align*}
The above equalities lead to
\[
\begin{aligned}
& \quad  h_{x_1,  \frac{c_2-\mathbf{e}_0} {\lambda_+}}^{c_2}(\gamma_1(s), s) - h_{x_1,  \frac{c_1-\mathbf{e}_0} {\lambda_+} }^{c_1}(\gamma_1(s), s) \\
 &\leqslant \frac{c_2-c_1}{\lambda_+} + (c_2 - c_1)s + \int_0^s \left[ L(\gamma_1, h_{x_1, \frac{c_2-\mathbf{e}_0} {\lambda_+}}^{c_2}(\gamma_1(\tau), \tau), \dot{\gamma}_1) - L(\gamma_1, h_{x_1, \frac{c_1-\mathbf{e}_0} {\lambda_+}}^{c_1}(\gamma_1(\tau), \tau), \dot{\gamma}_1) \right] d\tau \\
&\leqslant \frac{c_2-c_1}{\lambda_+} + (c_2 - c_1)t + \int_0^s \Lambda\cdot \left( h_{x_1, \frac{c_2-\mathbf{e}_0} {\lambda_+} }^{c_2}(\gamma_1(\tau), \tau) - h_{x_1, \frac{c_1-\mathbf{e}_0} {\lambda_+}}^{c_1}(\gamma_1(\tau), \tau) \right) d\tau.
\end{aligned}
\]
Set $F(s) := h_{x_1, \frac{c_2-\mathbf{e}_0} {\lambda_+}}^{c_2}(\gamma_1(s), s) - h_{x_1, \frac{c_1-\mathbf{e}_0} {\lambda_+}}^{c_1}(\gamma_1(s), s)$,  then \eqref{eq:4-6} implies that \( F(s) \geqslant 0 \) for any \( s \in (0, t] \). Hence,
\[
0\leqslant F(s) \leqslant \frac{c_2-c_1}{\lambda_+} + (c_2 - c_1)t + \int_0^s \Lambda \cdot F(\tau) d\tau, \quad \forall s\in (0,t].
\]
The Gronwall inequality yields
\[
F(s) \leqslant \Big[\frac{c_2-c_1}{\lambda_+}+ (c_2 - c_1) t \Big] e^{\Lambda s}, \quad s \in [0, t].
\]
Thus by definition \eqref{eq:4-5},
\begin{align*}
|\overline B_{c_2,t}(x) -  \overline B_{c_1,t}(x)|=\,  h_{x_1, \frac{c_2-\mathbf{e}_0} {\lambda_+}}^{c_2}(x, t) - h_{x_1,\frac{c_1-\mathbf{e}_0} {\lambda_+}}^{c_1}(x, t) \leqslant \Big[\frac{c_2-c_1}{\lambda_+}+ (c_2 - c_1) t \Big] e^{\Lambda t},
\end{align*}
which is the desired estimate.
\end{proof}

To relate the stable solution $u^c_-$ to the parameter $c$, we use the above estimate to obtain an upper bound of \( \Tc v \) independent of \( t \). As the fixed point of $\Tc, u_-^c$ admits the same upper bound.
\begin{lemma}\label{lem:Step4-bounded}
For any given subsolution $v$ of \eqref{HJs} and  $t\geqslant\delta>0$,
$$
T_t^{c,-} v(x) \leqslant \bigg(\frac{1}{\lambda_+ }+ \delta\bigg) e^{\Lambda   \delta} \cdot (c-c_0) +   \overline B_{c_0,\delta}(x), \quad \forall x\in M.
$$
In particular, the righthand side of the above inequality also gives an upper bound for $u^c_-$.
\end{lemma}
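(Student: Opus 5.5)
This bound should follow by chaining Lemma \ref{lem:estimation} (1) with the Lipschitz-in-$c$ estimate of Lemma \ref{lem:eq-B-Lip}, taking $x_1$ to be a maximum point of $\lambda$ so that $\lambda(x_1)=\lambda_+>0$ (possible under \textbf{(H3)$_+$}). First, since $v$ is a subsolution of \eqref{HJs}, Lemma \ref{lem:estimation} (1) with this choice of $x_1$ gives
\[
T^{c,-}_t v(x)\leqslant \overline B_{c,\delta}(x)=h^c_{x_1,\frac{c-\mathbf{e}_0}{\lambda_+}}(x,\delta),\qquad t>\delta .
\]
The case $t=\delta$ should be handled the same way. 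Running the proof of Lemma \ref{lem:estimation} with $t=0$, the semigroup property gives $T^{c,-}_\delta v\leqslant h^c_{x_1,v(x_1)}(x,\delta)$, and the bound $v(x_1)\leqslant (c-\mathbf{e}_0)/\lambda_+$ together with the monotonicity in Proposition \ref{Minimality} (2) then yields $T^{c,-}_\delta v\leqslant \overline B_{c,\delta}$ as well.

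Second, compare $\overline B_{c,\delta}$ with $\overline B_{c_0,\delta}$. This requires $c_0$ to be finite. That holds if $\lambda$ vanishes somewhere, by Proposition \ref{prop:1} (2). If $\lambda$ has no zero, then $c_0=-\infty$ and the right-hand side of the claimed inequality is $+\infty$, so the statement is vacuous; I would note this explicitly and assume $c_0\in\R$ from here on. The action function $\overline B_{c_0,\delta}$ is defined for every real parameter, so it does not matter whether \eqref{HJs} is solvable at $c_0$. Applying Lemma \ref{lem:eq-B-Lip} with $c_1=c_0\leqslant c_2=c$ and time $\delta$ gives
\[
\overline B_{c,\delta}(x)\leqslant \overline B_{c_0,\delta}(x)+(c-c_0)\Big(\frac{1}{\lambda_+}+\delta\Big)e^{\Lambda\delta}.
\]
Combining the two displays gives the claimed inequality for all $t\geqslant\delta$.

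Finally, $u^c_-$ is itself a subsolution and a fixed point of $T^{c,-}_t$. Hence $u^c_-=T^{c,-}_t u^c_-$ for any $t\geqslant\delta$, and the same right-hand side bounds $u^c_-$.

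The only delicate point is bookkeeping. Lemma \ref{lem:eq-B-Lip} was proved with the specific base point $x_1$ at which $\lambda=\lambda_+$, so Lemma \ref{lem:estimation} must be applied with the same $x_1$ for the two bounds to refer to the same function $\overline B$. Beyond checking this consistency and the edge case $t=\delta$, I do not expect any real obstacle.
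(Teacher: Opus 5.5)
Your proposal is correct and takes essentially the same approach as the paper. You chain Lemma \ref{lem:estimation} (1), with $x_1$ chosen so that $\lambda(x_1)=\lambda_+$, with Lemma \ref{lem:eq-B-Lip} for $c_1=c_0\leqslant c_2=c$, and then apply the bound to the fixed point $u^c_-$. Your extra handling of the endpoint $t=\delta$ and of the case $c_0=-\infty$ is valid bookkeeping that the paper skips; the paper only proves $t>\delta$ and evaluates $u^c_-=T^{c,-}_{2\delta}u^c_-$ instead.
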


\begin{proof}
By Lemma \ref{lem:eq-B-Lip}, $B_{c,t}(x)$ is Lipschitz with respect to $c$, then for any $t>0$,
$$
\overline B_{c_0,t}(x)= \lim_{c\to c_0} \overline  B_{c,t}(x), \quad \forall x\in M.
$$
Again we use Lemma \ref{lem:eq-B-Lip} and Lemma \ref{lem:estimation} to derive that for any $t>\delta>0$,
\begin{align*}
T_t^{c,-} v (x) \leqslant \overline B_{c,\delta}(x)\leqslant \bigg(\frac{1}{\lambda_+}+\delta\bigg) e^{\Lambda   \delta}\cdot(c-c_0) +\overline B_{c_0,\delta}(x), \quad \forall x\in M.
\end{align*}
Since $u_-^c$ is a subsolution of \eqref{HJs}, for any $\delta>0$
$$
u_-^c(x)=T_{2\delta}^{c,-}u_-^c(x) \leqslant  \bigg(\frac{1}{\lambda_+}+\delta\bigg) e^{\Lambda   \delta}\cdot(c-c_0) +\overline B_{c_0,\delta}(x), \quad \forall x\in M.
$$
This completes the proof.
\end{proof}

We combining the above Lemma \ref{lem:appendix-1}, Lemma \ref{lem:pf-1.1-2}, Lemma \ref{lem:Step4-bounded} to have
\begin{lemma}\label{asy-c}
For any $\varepsilon>0$, there exists $C(\varepsilon)$ such that for any $c>C$,  there exist a $C^\infty$ function $G(x,p,u)$ such that
$$
\Bigl( \lambda(x) + \mathfrak{L}_{H,c}G(x,p,u) \Bigr) \Big|_{\mathrm{J}^1_{u^c_-}} \geqslant  \lambda_+-\varepsilon.
$$
\end{lemma}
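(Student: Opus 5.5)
The plan is to reuse the test function of Lemma \ref{lem:pf-1.1-2}, but with a \emph{fixed} strict subsolution that does not depend on $c$, and then to control the denominator $u-\psi(x)$ on $\mathrm{J}^1_{u^c_-}$ using the linear-in-$c$ upper bound of Lemma \ref{lem:Step4-bounded}.

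\textbf{Step 1: fix the test function.} I fix once and for all $c'=c_0+1$ and some $\psi\in\mathcal{SS}^{\,c'}$, which is nonempty by the definition of $c_0$ (if $c_0=-\infty$, any fixed finite $c'$ works). For every $c>c'$ we have $\psi\in\mathcal{SS}^{\,c}$. Proposition \ref{prop:2} then gives $u^c_->\psi$, so $\mathrm{J}^1_{u^c_-}\subset\mathcal{D}_\psi$. I set $G(x,p,u)=\ln(u-\psi(x))$ on $\mathcal{D}_\psi$. The computation in the proof of Lemma \ref{lem:pf-1.1-2}, which uses Lemma \ref{lem:appendix-1}, gives on $\mathcal{D}_\psi$
\[
\lambda(x)+\mathfrak{L}_{H,c}G(x,p,u)\geqslant \frac{c-c'}{u-\psi(x)} .
\]
Only the value of $G$ near the compact set $\mathrm{J}^1_{u^c_-}$ matters. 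If a globally defined smooth function is wanted, I would multiply by a cutoff that equals $1$ on a neighbourhood of $\{u\geqslant \psi+\eta\}$, where $\eta=\min(u^c_--\psi)/2$.

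\textbf{Step 2: bound the denominator.} On $\mathrm{J}^1_{u^c_-}$ we have $u=u^c_-(x)$. Lemma \ref{lem:Step4-bounded} gives, for any $\delta>0$,
\[
u-\psi(x)\leqslant \Big(\frac1{\lambda_+}+\delta\Big)e^{\Lambda\delta}(c-c_0)+K_\delta,
\qquad K_\delta:=\max_{M}\big(\overline B_{c_0,\delta}-\psi\big),
\]
where $K_\delta$ is finite and independent of $c$. (When $c_0=-\infty$, I would replace $c_0$ by any fixed finite $\tilde c$ with $\tilde c<c$ and rerun Lemmas \ref{lem:eq-B-Lip}--\ref{lem:Step4-bounded} from the level $\tilde c$, where subsolutions exist by Proposition \ref{prop:1}.) Combining with Step 1, on $\mathrm{J}^1_{u^c_-}$
\[
\lambda+\mathfrak{L}_{H,c}G\geqslant \frac{c-c'}{(\lambda_+^{-1}+\delta)e^{\Lambda\delta}(c-c_0)+K_\delta}.
\]

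\textbf{Step 3: choose the parameters.} As $c\to+\infty$, the right-hand side of the last display tends to $\lambda_+/\big((1+\lambda_+\delta)e^{\Lambda\delta}\big)$. I first choose $\delta=\delta(\varepsilon)$ small enough that this limit exceeds $\lambda_+-\varepsilon/2$. With $\delta$ fixed, $K_\delta$ is a fixed constant, so I then choose $C(\varepsilon)>c'$ such that the fraction exceeds $\lambda_+-\varepsilon$ for all $c>C(\varepsilon)$.

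The only delicate point is the uniformity in $c$: everything except $c$ itself must be fixed before $c$ is sent to infinity. This is exactly why $\psi$ is chosen at a fixed level $c'$ rather than at $\tfrac12(c+c_0)$ as in Lemma \ref{lem:pf-1.1-2}. It is also why we need the upper bound on $u^c_-$ from Lemma \ref{lem:Step4-bounded} to be affine in $c$ with slope close to $1/\lambda_+$, which is the content of Lemma \ref{lem:eq-B-Lip}.
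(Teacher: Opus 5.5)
Your proposal is correct and follows essentially the paper's argument. It uses the test function $G=\ln(u-\psi)$ from Lemma \ref{lem:pf-1.1-2} and the affine-in-$c$ upper bound on $u^c_-$ from Lemma \ref{lem:Step4-bounded}, and it sends $c\to+\infty$ before $\delta\to0$, exactly as the paper does.

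Fixing one $\psi\in\mathcal{SS}^{\,c_0+1}$, instead of the paper's sequence $c_n\downarrow c_0$ with subsolutions $\varphi_n$, is a harmless simplification. Your constant $K_\delta=\max_M(\overline B_{c_0,\delta}-\psi)$ also makes explicit the uniformity on $\mathrm{J}^1_{u^c_-}$ that the paper's pointwise $\liminf$ leaves implicit.
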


\begin{proof}
We choose a decreasing sequence $c_n$ with $\lim_{n\rightarrow\infty}c_n=c_0$. For $c>c_0$ and $n$ sufficiently large, by Lemma \ref{lem:pf-1.1-2} and \eqref{eq:4-7},
\begin{align*}
&\,  \Bigl(\lambda(x) + \mathfrak{L}_{H,c}G(x,p,u) \Bigr) \Big|_{\Lambda^c_{u_-}}  \geqslant \frac{c-c_n}{u^c_-(x)-\varphi_n(x)} >0,
\end{align*}
where $\varphi_n$ is a subsolution to $\lambda(x)u+h(x,d_xu)=c_n$. Lemma  \ref{lem:Step4-bounded} implies that for any $\delta>0$,
$$
0<u^c_-(x)-\varphi_n(x) \leqslant   \bigg(\frac{1}{\lambda_+}+\delta\bigg) e^{\Lambda   \delta}\cdot(c-c_0) +  \overline B_{c_0,\delta}(x) - \varphi_n(x).
$$
By the uniform boundedness of $\varphi_n$, for any $\delta>0$,
\begin{equation}\label{eq:4-8}
\liminf_{c\to +\infty}\frac{c-c_n}{u^c_-(x)-\varphi_n(x)} \geqslant ( \frac{1}{\lambda_+ }+ \delta)^{-1} e^{-\Lambda  \delta} .
\end{equation}
In view of the facts that $\delta$ is arbitrary in the above estimate and $\lim_{\delta\to 0} ( \frac{1}{\lambda_+ }+ \delta)^{-1} e^{-\Lambda  \delta} = \lambda_+$, we have
$$
\liminf_{c\to +\infty}\frac{c-c_n}{u^c_-(x)-\varphi_n(x)} \geqslant \lambda_+,
$$
and this is an equivalent formulation of the conclusion.
\end{proof}			
	
Finally, due to Lemma \ref{asy-c} and the definition of $a (c)$, for any $\varepsilon>0$ and $c>C(\varepsilon)$,
\begin{align*}
\lambda_+ \geqslant	a (c)=  \inf_{ \mu\in\mathfrak{M}_{u^c_-}}	 \int_{T^*M\times\R} \lambda \   d\mu
= &\, \inf_{ \mu\in\mathfrak{M}_{u^c_-}}	 \int_{T^*M\times\R}\Bigl(\lambda(x) + \mathfrak{L}_{H,c}G(x,p,u) \Bigr) \   d\mu\\
\geqslant &\,\min_{(x,u,p)\in\mathrm{J}^1_{u^c_-}}\Bigl(\lambda(x) + \mathfrak{L}_{H,c}G(x,p,u) \Bigr)\geqslant \lambda_+-\varepsilon.
\end{align*}
It follows that $\lim_{c\to \infty} R(c)=\lim_{c\to \infty} a (c)= \lambda_+$.

\section{Proof of Theorem \ref{thm:4}}
In this section, we study the distribution of ergodic Mather measures for \eqref{HJs} in the phase space $T^\ast M\times\R$ via the front projections $\pi:T^\ast M\times\R\rightarrow M\times\R;\quad (x,p,u)\mapsto(x,u)$. We shall always assume $c\geqslant c_0$ and the Hamilton-Jacobi equation \eqref{HJs} admits at least one solution (otherwise $\mathfrak{M}^c$ is empty and there is nothing to prove). Recall that
$$
\mathcal{M}(c):=\overline{\bigcup_{\mu\in\mathfrak{M}^c}\text{supp}(\mu)},\quad\quad\mathcal{M}_0(c):=\overline{\bigcup_{\mu\in\mathfrak{M}_0^c}\text{supp}(\mu)},
$$
and $\mathfrak{M}^c_0$ denotes the ergodic measures such that $\int \lambda \ d\mu=0$. To start with, we notice the following connection between solution semigroups and invariant sets. For any $(x_0,p_0,u_0)\in T^\ast M\times\R$ and $t\in\R$, we use $(x(t),p(t),u(t))=\Phi_c^t(x_0,p_0,u_0)$ to denote the $\Phi_c^t$-orbit passing through $(x_0,p_0,u_0)$.

In the following Proposition \ref{prop-thm:4}, we give a more information about the distribution of Mather measure associated to \eqref{HJs}. Thus, Theorem \ref{thm:4} is a direct consequence of this Proposition \ref{prop-thm:4}. 
\begin{proposition}\label{prop-thm:4}
	Assume $c\geqslant c_0$ and \eqref{HJs} admits a solution. Then
\begin{enumerate}
  \item[(D1)] if $(+)$ holds,\,\, for any ergodic measure $\mu\in\mathfrak{M}^c$ with $\int \lambda d \mu>0, \text{supp}(\mu)\subset\mathrm{J}^1_{u^c_-}$. Moreover,
      \begin{itemize}
        \item[(NC)] for $c>c_0,\,\, \mathcal{M}_0(c)=\emptyset,\,  \, \mathcal{M}(c)\subset\mathrm{J}^1_{u^c_-}$.
        \item[(C)] for $c=c_0,\,\,\mathcal{M}_0(c_0)\cap\mathrm{J}^1_{u^{c_0}_-}\neq\emptyset\quad\text{and}\quad\mathcal{M}_0(c_0)\subset \{(x,p,u)\in T^*M: u\leqslant u_-^{c_0}(x)\}$.
      \end{itemize}

  	\item[(D1')] if $(-)$ holds,\,\, for any ergodic measure $\mu\in\mathfrak{M}^c$ with $\int \lambda d \mu<0, \text{supp}(\mu)\subset\mathrm{J}^1_{v^c_+}$. Moreover,
  \begin{itemize}
  \item[(NC)] for $c>c_0, \,\, \mathcal{M}_0(c)=\emptyset,\,  \,  \mathcal{M}(c)\subset\mathrm{J}^1_{v^c_+}$.
  \item[(C)] for $c=c_0,\,\,\mathcal{M}_0(c_0)\cap\mathrm{J}^1_{v^{c_0}_+}\neq\emptyset\quad\text{and}\quad\mathcal{M}_0(c_0)\subset \{(x,p,u)\in T^*M: u\geqslant v_+^{c_0}(x)\}$.
  \end{itemize}

  \item[(D2)]if ($\pm$) holds, \,\, for any ergodic measure $\mu\in\mathfrak{M}^c$ with $\int \lambda d \mu\neq0, \text{supp}(\mu)\subset\mathrm{J}^1_{u^{c}_-}\sqcup\mathrm{J}^1_{v^{c}_+}$. Moreover,
      \begin{itemize}
        \item[(NC)] for $c>c_0,\,\,  \mathcal{M}_0(c)=\emptyset,\,\mathcal{M}(c)\subset\mathrm{J}^1_{u^c_-}\sqcup\mathrm{J}^1_{v^c_+}$.
        \item[(C)]  for $c=c_0$, both $\,\,\mathcal{M}_0(c_0)\cap \mathrm{J}^1_{u^{c_0}_-}$ and $\,\,\mathcal{M}_0(c_0)\cap \mathrm{J}^1_{v^{c_0}_+}$ are non-empty and
            $$
            \mathcal{M}_0(c_0)\subset \{(x,p,u)\in T^*M:v_+^{c_0}(x) \leqslant u \leqslant u_-^{c_0}(x)\}.
            $$
            Further, if there exists  $x_0\in M$ such that  $u_-^{c_0}(x_0) =v_+^{c_0}(x_0)$ , then there is $\mu_0\in \mathfrak{M}^{c_0}_0$ satisfying
            $$
            \mu_0\in \bigcap_{u_-\in \mathcal{S}^{c_0,-} } \mathfrak{M}^{c_0}_{u_-}.
            $$
      \end{itemize}
\end{enumerate}
\end{proposition}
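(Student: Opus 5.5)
The plan rests on one mechanism. For an ergodic invariant measure $\mu$, the $u$-coordinate along the characteristic flow satisfies, by \eqref{ch}, $\dot u = \langle p,\partial_p h\rangle - h - \lambda(x)u + c$. Along orbits in any $\mathrm{J}^1_{u_\pm}$ we have $u = u_\pm(x)$, and the comparison between two solutions is governed by the linearised equation $\frac{d}{dt}(u_1-u_2) = -\lambda(x)(u_1-u_2)$ when both orbits share the same $(x,p)$-projection. This gives the integrated form $\int \lambda\, d\mu$ through Birkhoff averages.

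\textbf{Step 1 (key lemma).} Let $\mu$ be ergodic in $\mathfrak{M}^c$, supported on $\mathrm{J}^1_{u_-}$ for some $u_-\in\mathcal S^{c,-}$, and let $\int\lambda\,d\mu>0$. I claim $\operatorname{supp}\mu\subset \mathrm{J}^1_{u^c_-}$. The argument compares $u_-$ with the maximal solution $u^c_-\geqslant u_-$ (Proposition \ref{prop:new-NW-JDE}). Take a $\mu$-generic point $(x_0,p_0,u_0)$ and its backward orbit $(x(t),p(t),u(t))\subset\mathrm{J}^1_{u_-}$, $t\leqslant 0$. Calibration in the representation formula gives
\[
u^c_-(x(0)) \leqslant e^{-\int_{-T}^0\lambda(x)}u^c_-(x(-T)) + \int_{-T}^0 e^{-\int_s^0\lambda}(l+c)\,ds,
\]
while equality holds with $u_-$ along the calibrated curve. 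Subtracting,
\[
0\leqslant u^c_-(x_0)-u_-(x_0)\leqslant e^{-\int_{-T}^0\lambda(x(\tau))d\tau}\,\|u^c_--u_-\|_\infty .
\]
By Birkhoff, $\int_{-T}^0\lambda \sim T\int\lambda\,d\mu\to+\infty$, so $u^c_-(x_0)=u_-(x_0)$. Invariance and density of generic points then give $u_-=u^c_-$ on $\pi(\operatorname{supp}\mu)$. The orbit is also calibrated for $u^c_-$, hence $p$ lies in $\mathrm{J}^1_{u^c_-}$. The same computation with the forward semigroup and the minimal $v^c_+$, via Remark \ref{rmk-main-thm}, gives $\int\lambda\,d\mu<0\Rightarrow \operatorname{supp}\mu\subset \mathrm{J}^1_{v^c_+}$. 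This proves the first sentences of (D1), (D1$'$) and (D2).

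\textbf{Step 2 (sign cases).} Under $(+)$, $\int\lambda\,d\mu\geqslant 0$ always holds. Under $(-)$ the reverse holds, and in case $(\pm)$ only the sign of $\int\lambda\,d\mu$ matters, which splits measures between $\mathrm{J}^1_{u^c_-}$ and $\mathrm{J}^1_{v^c_+}$. Disjointness of $\mathrm{J}^1_{u^c_-}$ and $\mathrm{J}^1_{v^c_+}$ for $c>c_0$ follows from $u^c_->v^c_+$ in \eqref{order}.

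\textbf{Step 3 (noncritical case).} For $c>c_0$, the test function of Lemma \ref{lem:pf-1.1-2}, $G=\ln(u-\psi)$, satisfies $\lambda+\mathfrak L G>0$ on $\{u>\psi\}$. Symmetrically, $\ln(\psi-u)$ gives $-\lambda+\mathfrak{L}(\cdot)>0$ on $\{u<\psi\}$. I need the support of an ergodic $\mu$ to avoid the set where $u=\psi$. This holds because $\psi$ is a strict subsolution and $\dot{(u-\psi)}$ has a sign there: the computation in Lemma \ref{lem:pf-1.1-2} shows $\frac{d}{dt}(u-\psi)\geqslant c-c'$ on $\{u=\psi\}$, so orbits cross that set only once and an invariant measure cannot charge a neighbourhood of it. An ergodic measure is therefore supported in one side, and integrating gives $\int\lambda\,d\mu\neq0$. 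Hence $\mathcal M_0(c)=\emptyset$, and Step 1 gives the inclusion for $\mathcal M(c)$ after taking closures and convex hulls.

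\textbf{Step 4 (critical case, the main obstacle).} For $c=c_0$ I need to show the following.
\begin{enumerate}[(i)]
\item There is an ergodic $\mu\in\mathfrak M_{u^{c_0}_-}$ with $\int\lambda\,d\mu=0$. I would argue by contradiction. If every measure in $\mathfrak M_{u^{c_0}_-}$ had $\int\lambda>0$, then Proposition \ref{con-rate} (whose hypothesis is exactly this) gives local exponential convergence, and $u^{c_0}_-$ would be asymptotically stable. This contradicts Theorem \ref{thm:1} (Proposition \ref{Prop:3.2}). Since $\int\lambda\geqslant0$ under $(+)$, an infimum equal to $0$ is attained by compactness, and ergodic decomposition yields an ergodic measure with $\int\lambda=0$.
\item Every $\mu\in\mathfrak M^{c_0}_0$ sits below $u^{c_0}_-$ and above $v^{c_0}_+$. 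The measure is supported on some $\mathrm{J}^1_{u_-}$, and $u_-\leqslant u^{c_0}_-$ by maximality. The lower bound in case $(\pm)$ comes from Lemma \ref{+-}(1): every $u_-\in\mathcal S^{c,-}$ satisfies $u_-\geqslant v^c_+$, since otherwise $T_t u_-\to-\infty$.
\item If $u^{c_0}_-(x_0)=v^{c_0}_+(x_0)$, every $u_-$ coincides with them at $x_0$. I would take a backward $u^{c_0}_-$-calibrated orbit and a forward $v^{c_0}_+$-calibrated orbit through $x_0$ and squeeze: every $u_-$ is pinched between them along the $\omega$-limit. A Krylov--Bogoliubov average along this orbit produces $\mu_0$ lying in every $\mathfrak M_{u_-}$, with $\int\lambda\,d\mu_0=0$ by Step 1.
\end{enumerate}
I expect the delicate points to be the attainment and ergodicity in (i) and the pinching in (iii). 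For the $v_+$ side of (i) I would apply the dual argument from Remark \ref{rmk-main-thm}.
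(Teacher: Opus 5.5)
Your Steps 1--3 are sound and take a different route from the paper. Comparing $u^c_-$ with $u_-$ along the backward $u_-$-calibrated orbit of a $\mu$-generic point localises measures with $\int\lambda\,d\mu>0$ directly, for every $c\geqslant c_0$. The paper's $(\diamondsuit)$ instead passes through the conjugate $v_+=\lim_{t\to\infty}T^{c_0,+}_tu_-$ and forward calibrated curves. Your Lyapunov function $\ln(u-\psi)$, combined with the transversality $\frac{d}{dt}(u-\psi)>c-c'$ on $\{u=\psi\}$, shows that for $c>c_0$ every compactly supported ergodic invariant measure has $\int\lambda\,d\mu\neq0$, in all sign cases at once. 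You should make ``cannot charge a neighbourhood'' precise with flow boxes, using that each orbit meets $\{u=\psi\}$ at most once. The paper uses uniqueness of the solution under $(+)$, and under $(\pm)$ a separation argument via Lemma \ref{+-} and Theorem \ref{thm:3}. (Your symmetric estimate has a sign slip; it should read $\lambda+\mathfrak{L}_{H,c}\ln(\psi-u)\leqslant\frac{c'-c}{\psi-u}$ on $\{u<\psi\}$.)

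The gap is in the critical case under $(\pm)$.
\begin{itemize}
\item Your argument in (i) only gives $\inf_{\mu\in\mathfrak{M}_{u^{c_0}_-}}\int\lambda\,d\mu\leqslant0$, i.e.\ some ergodic $\mu\in\mathfrak{M}_{u^{c_0}_-}$ with $\int\lambda\,d\mu\leqslant0$. To get one with $\int\lambda\,d\mu=0$ you must know that \emph{no} measure in $\mathfrak{M}_{u^{c_0}_-}$ has negative average, and dually that no measure in $\mathfrak{M}_{v^{c_0}_+}$ has positive average.
\item Nothing in your plan provides this. At $c=c_0$ there is no strict subsolution, so Step 3 is unavailable. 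Step 1 would merely push such a measure into $\mathrm{J}^1_{u^{c_0}_-}\cap\mathrm{J}^1_{v^{c_0}_+}$, which is no contradiction.
\item The same missing fact breaks the last claim of (iii). Step 1 says nothing about the sign of $\int\lambda\,d\mu_0$ for a measure living on $\mathrm{J}^1_{u^{c_0}_-}\cap\mathrm{J}^1_{v^{c_0}_+}$, so $\mu_0\in\mathfrak{M}^{c_0}_0$ is not justified.
\end{itemize}

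The paper supplies the sign claim by an instability argument. If an ergodic $\mu_\ast\in\mathfrak{M}_{u^{c_0}_-}$ had $\int\lambda\,d\mu_\ast<0$, Corollary \ref{cor:lem4} would give $\limsup_{t\to\infty}\|T^{c_0,-}_t(u^{c_0}_-+\delta)-u^{c_0}_-\|_\infty\geqslant\Delta_0$ for small $\delta>0$. This contradicts Proposition \ref{prop:new-NW-JDE}, which attracts every $\varphi\geqslant u^{c_0}_-$ to $u^{c_0}_-$. The argument needs a \emph{lower} bound on $T^{c_0,-}_t(u^{c_0}_-+\delta)$, obtained along minimizers that shadow $u^{c_0}_-$-calibrated curves (Lemma \ref{lem2}). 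Comparisons along a prescribed curve, as in your Step 1, only bound $T^{c_0,-}_t$ from above, so they cannot replace this. Once you have $\int\lambda\geqslant0$ on $\mathfrak{M}_{u^{c_0}_-}$ and $\int\lambda\leqslant0$ on $\mathfrak{M}_{v^{c_0}_+}$, your (i) goes through under $(\pm)$. Any invariant measure on $\mathrm{J}^1_{u^{c_0}_-}\cap\mathrm{J}^1_{v^{c_0}_+}$ then automatically has zero average.

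Two smaller points:
\begin{itemize}
\item In (i), cite Theorem \ref{RWY-1.2}, which holds for any solution satisfying (S2), rather than Proposition \ref{con-rate}, which presupposes $c>c_0$.
\item In (iii), pass to an ergodic component of the Krylov--Bogoliubov limit. Obtain $p=d_xu_-(x)$ from $v^{c_0}_+\leqslant u_-\leqslant u^{c_0}_-$, using semiconvexity of $v^{c_0}_+$ and semiconcavity of $u_-$.
\end{itemize}
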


\begin{lemma}\label{lem-thmD-lem+}
Given $\varphi\in C(M,\R)$ and a set $(x_0,p_0,u_0)\in T^\ast M\times\R$.
\begin{enumerate}[(1)]
  \item If $(x_0,u_0)\subset \{(x,u)\in M\times \R: u\geqslant \varphi(x)\}$, then for any $t\geqslant0, (x(t),u(t))\subset\{(x,u)\in M\times \R: u\geqslant T_t^{c,-}\varphi(x)\}$.

  \item If $(x_0,u_0)\subset\{(x,u)\in M\times \R: u\leqslant \varphi(x)\}$, then for any $t>0, (x(t),u(t))\subset \{(x,u)\in M\times \R: u\leqslant T_t^{c,+}\varphi(x)\}$.
\end{enumerate}
\end{lemma}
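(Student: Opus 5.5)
Both parts rest on the implicit variational representation of the semigroup together with the Lagrangian characterization of characteristics; part (2) should follow from part (1) by the duality of Remark \ref{rmk-main-thm}. For (1), recall from \eqref{eq:Tt-+ rep} that
\[
T^{c,-}_t\varphi(x)=\inf_{y\in M}h^c_{y,\varphi(y)}(x,t),
\]
so $T^{c,-}_t\varphi(x(t))\leqslant h^c_{x_0,\varphi(x_0)}(x(t),t)$. Since $u_0\geqslant \varphi(x_0)$, the monotonicity of the action function in its $u$-argument (Proposition \ref{Minimality} (2)) gives
\[
h^c_{x_0,\varphi(x_0)}(x(t),t)\leqslant h^c_{x_0,u_0}(x(t),t).
\]
The claim then reduces to showing $h^c_{x_0,u_0}(x(t),t)\leqslant u(t)$ for every orbit $(x(s),p(s),u(s))=\Phi^s_c(x_0,p_0,u_0)$ and every $t\geqslant0$. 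This is exactly the inequality already used in \eqref{eq:3-8}, attributed to Proposition \ref{Minimality} (1).

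If I had to justify it directly, I would argue as follows. Along the characteristic one has $\dot u=\langle p,\partial_p H\rangle-H+c$, and by the Legendre transform this equals $L(x,\dot x,u)+c$, where $L(x,v,u)=l(x,v)-\lambda(x)u$. Hence
\[
u(t)=u_0+\int_0^t\big[l(x(s),\dot x(s))-\lambda(x(s))u(s)+c\big]\,ds .
\]
Now set $w(s):=h^c_{x_0,u_0}(x(s),s)$. The implicit variational principle (Proposition \ref{Implicit variational}, via its minimality property along $x(\cdot)$ restricted to $[0,s]$) gives
\[
w(s)\leqslant u_0+\int_0^s\big[l(x,\dot x)-\lambda(x)w+c\big]\,d\tau .
\]
Subtracting, $F:=w-u$ satisfies $F(s)\leqslant\int_0^s(-\lambda)F\,d\tau$. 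To conclude $F\leqslant0$, Gronwall must be applied only to the positive part: on any interval where $F>0$, we get $F(s)\leqslant\Lambda\int F^+$, and a Gronwall argument starting from $F^+(0)=0$ forces $F^+\equiv0$. Since $F(0)=0$ (by the convention $h_{x_0,u_0}(x_0,0^+)=u_0$), the bound holds for all $t$. This gives (1) for $t\geqslant0$.

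For (2), I use the definition $T^{c,+}_t\varphi=-\breve T^{c,-}_t(-\varphi)$. Under the map $(x,p,u)\mapsto(x,-p,-u)$, the characteristic flow of $H$ run backward corresponds to the characteristic flow of $\breve H$. Applying (1) to $\breve H$ with initial data $-\varphi$ and the reflected point then yields $-u(t)\geqslant \breve T^{c,-}_t(-\varphi)(x(t))$. Alternatively, I would use the forward representation $T^{c,+}_t\varphi(x)=\sup_y h_c^{y,\varphi(y)}(x,t)$ and prove $h_c^{x(t),u(t)}(x_0,t)\geqslant u_0$ by the symmetric Gronwall argument along the same orbit.

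The main obstacle is the sign bookkeeping in this reflection. The orbit runs forward from $(x_0,u_0)$, while $h_c^{x,u}$ is naturally defined backward, so I must check that the inequalities combine in the right direction to give $u(t)\leqslant T^{c,+}_t\varphi(x(t))$. A second point needing care is justifying that the inequality extends to the full orbit even when $\lambda$ changes sign; the positive-part Gronwall argument in (1) is designed to handle this.
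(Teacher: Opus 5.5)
Your argument for (1) is exactly the paper's: bound the infimum in \eqref{eq:Tt-+ rep} by the term $y=x_0$, use monotonicity of $h^c_{x_0,\cdot}$ in the initial value, and apply the infimum formula of Proposition \ref{Minimality} (1) to the orbit segment from $(x_0,u_0)$ to $(x(t),u(t))$.

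One caution on your optional direct justification. The inequality $F(s)\leqslant\int_0^s(-\lambda)F\,d\tau$ with $F(0)=0$ does not by itself force $F\leqslant 0$. If $F<0$ on an early stretch where $\lambda>0$, then $\int_0^{s_0}(-\lambda)F$ is positive, which leaves room for $F$ to turn positive later. What you need is the restarted inequality $F(s)-F(s_0)\leqslant\int_{s_0}^s(-\lambda)F\,d\tau$ for all $0\leqslant s_0<s$. This comes from the dynamic-programming form of the variational principle, as in \eqref{eq:B-9}. You then apply Gronwall from the last zero of $F$.

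Part (2) has a genuine gap, and it lies exactly in the sign bookkeeping you flagged. The $\breve H$-characteristics are the $H$-characteristics reflected by $(x,p,u)\mapsto(x,-p,-u)$ \emph{and run backward in time}. So applying (1) to $\breve H$ at $(x_0,-p_0,-u_0)$ with datum $-\varphi$ gives $-u(-t)\geqslant\breve T^{c,-}_t(-\varphi)(x(-t))$. That is $u(-t)\leqslant T^{c,+}_t\varphi(x(-t))$, not an inequality at $(x(t),u(t))$.

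Your alternative route fails for the same reason. The inequality $h_c^{x(t),u(t)}(x_0,t)\geqslant u_0$ is just the dual form of $h^c_{x_0,u_0}(x(t),t)\leqslant u(t)$, with $x(t)$ as the \emph{terminal} point. But $T^{c,+}_t\varphi(x(t))=\sup_y h_c^{y,\varphi(y)}(x(t),t)$ involves curves \emph{starting} at $x(t)$.

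In fact the forward-time statement cannot be proved, because it is false. Take $M=\mathbb{S}^1$, $h=\frac12|p|^2$, $\lambda\equiv1$, $c=0$, $\varphi\equiv0$. Then $0$ solves both \eqref{HJs} and \eqref{HJs'}, so $T^{0,\pm}_t\varphi\equiv0$. The orbit through $(x_0,p_0,u_0)=(0,1,0)$ is $p(t)=e^{-t}$, $x(t)=1-e^{-t}$, $u(t)=\frac12(e^{-t}-e^{-2t})$, which is positive for $t>0$. Hence $u_0\leqslant\varphi(x_0)$, yet $u(t)>T^{0,+}_t\varphi(x(t))$.

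What the paper's ``by symmetry'' actually yields, and what you should state and prove, is the backward-orbit version: for $t>0$,
\[
T^{c,+}_t\varphi(x(-t))\geqslant h_c^{x_0,\varphi(x_0)}(x(-t),t)\geqslant h_c^{x_0,u_0}(x(-t),t)\geqslant u(-t).
\]
The three steps are as follows. The first takes $y=x_0$ in the supremum. The second is monotonicity in the terminal value, since $u_0\leqslant\varphi(x_0)$. The third is the supremum formula of Proposition \ref{Minimality} (1), applied to the segment $s\mapsto\Phi^{s-t}_c(x_0,p_0,u_0)$, $s\in[0,t]$, which starts at $x(-t)$ and ends at $(x_0,u_0)$. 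Only part (1) is used later in the paper (in the proof of (D2)), so nothing downstream is affected.
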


\begin{proof}
By symmetry, we only focus on (1). The proof is direct by noticing that for any $t\geqslant0$,
\begin{equation*}
T_t^{c,-}\varphi(x(t))=\inf_{y\in M} h^c_{y, \varphi(y) }(x(t),t) \leqslant h^c_{x_0,\varphi(x_0)}(x(t),t)\leqslant h^c_{x_0,u_0}(x(t),t)\leqslant u(t),
\end{equation*}
where the last inequality follows from Proposition \ref{Minimality} (1). This completes the proof.
\end{proof}

For the proof of (D2), we need a lemma on the construction of $\Phi^t_c$-invariant set by solutions in $\cS$ and $\mathcal{S}^{c,+}$.
\begin{lemma}\label{lem2-thmD}
For $u_-\in\cS, v_+\in\mathcal{S}^{c,+}$, assume $u_-\geqslant v_+$ and there is $x_0\in M$ such that $u_-(x_0)=v_+(x_0)$, then the set
$$
\mathrm{J}^1_{u_-}\cap\mathrm{J}^1_{v_+}= \{ (x,p,u)\in T^\ast M\times\R: u=u_-(x)=v_+(x), p= d_xu_-(x)=d_xv_+(x) \}
$$
is non-empty and $\Phi_{c}^t$-invariant.
\end{lemma}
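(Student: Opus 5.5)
Three things must be shown: the displayed description of the intersection, its non-emptiness, and its $\Phi^t_c$-invariance. We plan to use the one-sided differentiability of $u_-$ and $v_+$ at contact points, together with the invariance \eqref{1jet-inv} and Lemma \ref{lem-thmD-lem+}.

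\emph{Description.} At a contact point $x$ where $u_-(x)=v_+(x)$, the function $u_--v_+\geqslant0$ attains a minimum. Each $u_-\in\cS$ is semiconcave, so $D^+u_-(x)\neq\emptyset$. Each $v_+\in\mathcal{S}^{c,+}$ is semiconvex, since $-v_+$ solves \eqref{HJs'} and is therefore semiconcave, so $D^-v_+(x)\neq\emptyset$. At a minimum of $u_--v_+$ we have $D^-v_+(x)\subset D^-u_-(x)$. Hence $u_-$ has both a sub- and a super-differential at $x$, so it is differentiable there, and the same argument shows $v_+$ is differentiable there with $d_xu_-=d_xv_+$. Writing $\mathrm{J}^1_{u_-}$ as the closure of the graph of the reachable gradients, we get that a point $(x,p,u)$ lies in both 1-graphs exactly when $u=u_-(x)=v_+(x)$ and $p$ is the common derivative. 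This gives the displayed formula. Non-emptiness then follows at once from the existence of $x_0$.

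\emph{Invariance.} By \eqref{1jet-inv}, $\mathrm{J}^1_{u_-}$ is backward invariant and $\mathrm{J}^1_{v_+}$ is forward invariant. It therefore suffices to prove two things. First, for $t\geqslant0$ the forward orbit of a point $z_0=(x_0,p_0,u_0)$ in the intersection stays in $\mathrm{J}^1_{u_-}$. Second, the backward orbit stays in $\mathrm{J}^1_{v_+}$.

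For the forward direction, let $(x(t),p(t),u(t))=\Phi^t_c(z_0)$ for $t\geqslant0$. This orbit lies in $\mathrm{J}^1_{v_+}$, so $u(t)=v_+(x(t))\leqslant u_-(x(t))$. Conversely, $u_0\geqslant u_-(x_0)$, so Lemma \ref{lem-thmD-lem+} (1) applied with $\varphi=u_-$ gives $u(t)\geqslant T^{c,-}_tu_-(x(t))=u_-(x(t))$. Hence $u_-(x(t))=v_+(x(t))=u(t)$, and the contact set persists along the orbit. By the description above, $p(t)=d_xv_+(x(t))=d_xu_-(x(t))$, so $\Phi^t_c(z_0)$ stays in the intersection.

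The backward direction is symmetric. Let $t\leqslant0$. The orbit lies in $\mathrm{J}^1_{u_-}$, so $u(t)=u_-(x(t))\geqslant v_+(x(t))$. For the reverse inequality we apply Lemma \ref{lem-thmD-lem+} (2) with $\varphi=v_+$, starting from the point $\Phi^{t}_c(z_0)$ and flowing forward for time $-t$ to reach $z_0$. We would like to conclude $u(t)\leqslant v_+(x(t))$, but this is backwards from what the lemma yields directly, so we will argue instead via the action: $u_0=u_-(x_0)=T^{c,-}_{-t}u_-(x_0)\leqslant h^c_{x(t),u(t)}(x_0,-t)\leqslant u_0$, where the last inequality holds along a calibrated orbit. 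This forces equality, and hence $u(t)$ realizes the value of the minimal action. The corresponding representation of $v_+=T^{c,+}v_+$ gives $v_+(x(t))\geqslant h_c^{x_0,u_0}(x(t),-t)=u(t)$.

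\emph{Main obstacle.} The hardest step is this backward-orbit inequality. It relies on the duality $h^{x_0,u_0}(x,s)\leqslant u\iff h_{x,u}(x_0,s)\geqslant u_0$ from Proposition \ref{Minimality}, which is what makes the argument symmetric. Failing that, we would apply the forward argument to the reversed Hamiltonian $\breve H$ as in Remark \ref{rmk-main-thm}, which exchanges the roles of $u_-$ and $v_+$.
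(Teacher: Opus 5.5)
Your proof is correct, but it takes a different route from the paper's.

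\textbf{The paper's route.} The paper argues along curves. It takes a $v_+$-calibrated curve $\gamma_+$ with $\gamma_+(0)=x_0$. Along $\gamma_+$, $u_-$ is only dominated while $v_+$ is calibrated, so $w=u_-\circ\gamma_+-v_+\circ\gamma_+\geqslant0$ satisfies $w'\leqslant-\lambda(\gamma_+)\,w$, and $w(0)=0$ forces $w\equiv0$. It repeats this backward along the unique $u_-$-calibrated curve ending at $\gamma_+(\varepsilon)$. It then glues the two pieces into a bi-infinite curve calibrated by both functions, whose lift is an orbit inside the intersection.

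\textbf{Your route.} You never use calibrated curves. The invariance \eqref{1jet-inv} gives one of the two needed inclusions in each time direction. Forward, the missing inequality comes from Lemma \ref{lem-thmD-lem+}(1) with $\varphi=u_-$. Backward, it comes from four ingredients:
\begin{itemize}
\item $T^{c,-}_su_-=u_-$;
\item Proposition \ref{Minimality}(1), giving $h^c_{x(t),u(t)}(x_0,-t)=u_0$;
\item the duality $h^c_{x,u}(x_0,s)=u_0\Leftrightarrow h_c^{x_0,u_0}(x,s)=u$;
\item $T^{c,+}_sv_+=v_+$.
\end{itemize}
Finally, the touching argument (semiconcave $u_-$ lying above semiconvex $v_+$) pins down the $p$-component.

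\textbf{What each buys.} Your approach avoids the differential inequality and the gluing step, which silently needs $\gamma_-$ to coincide with a shift of $\gamma_+$ on $[0,\varepsilon]$. It proves invariance at every point of the intersection directly, rather than ``by arbitrariness of $x_0$''. It also actually proves the displayed description of the set, which the paper only argues later, in the proof of (D2). The paper's argument is more hands-on and exhibits the doubly calibrated orbit explicitly, whereas yours relies on the cited results \eqref{1jet-inv} and Proposition \ref{Minimality}.

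\textbf{Minor clean-ups.}
\begin{itemize}
\item In the backward step, $h^c_{x(t),u(t)}(x_0,-t)\leqslant u_0$ holds for every orbit segment by Proposition \ref{Minimality}(1); no calibration is involved.
\item The equality form of the duality stated there is exactly what you use. The inequality version and the fallback via $\breve H$ are unnecessary.
\item The identity $D^\ast u_-(x)=\{d_xu_-(x)\}$ at a point of differentiability uses semiconcavity, through $D^\ast u_-(x)\subset D^+u_-(x)$. You have this, but it is worth saying explicitly.
\end{itemize}
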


\begin{proof}
Let $\gamma_+:[0,+\infty) \to M$ be a calibrated curve  (defined in  Definition \ref{def:weakKAM}) of $v_+$ with $\gamma_+(0)=x_0$. For any $s, s+\Delta s\geqslant 0$, since $u_-$ is a subsolution,
\begin{align*}
& u_{-}(\gamma_+(s+ \Delta  s)) - u_{-}(\gamma_+(s)) \leqslant \int_{s}^{s+ \Delta  s}  L(\gamma_+, \dot{\gamma}_+) + c - \lambda(\gamma_+(\tau)) u_{-}(\gamma_+(\tau))\,\, d\tau \\
& v_{+}(\gamma_+(s+ \Delta  s)) - v_{+}(\gamma_+(s)) = \int_{s}^{s+ \Delta  s}  L(\gamma_+, \dot{\gamma}_+) + c - \lambda(\gamma_+(\tau)) v_{+}(\gamma_+(\tau))\,\, d\tau
\end{align*}
Set $w(s)= u_{-}(\gamma_+(s))- v_{+}(\gamma_+(s))$, then  $w(s)\geqslant0$ and
$$
w(s+\Delta s)- w(s)\leqslant -\int_s^{s+\Delta s } \lambda(\gamma_+(s)) w(s) ds
$$
which implies that for all $s\in [0,+\infty)$,
\begin{equation}\label{eq:4-9}
w'(s)\leqslant -\lambda(\gamma_+(s)) \cdot w(s).
\end{equation}
Since $w(0)=u_-(x_0)-v_{+}(x_0)=0$, the equation \eqref{eq:4-9} implies $w(s)\equiv 0$ and that $\gamma_+$ is also calibrated by $u_-$. Thus
$$
u_{-}(\gamma_+(s))= v_{+}(\gamma_+(s)), \quad d_{x}u_-(\gamma_+(s))= d_{x}v_+(\gamma_+(s)), \quad \forall s\geqslant 0.
$$
This shows that $\mathrm{J}^1_{u^{c_0}_-}\cap\mathrm{J}^1_{v^{c_0}_+}$ is non-empty.

\vspace{0.5em}
\noindent By a similar argument with $\gamma_+$ replaced by the \textbf{unique} calibrated curve $\gamma_-:(-\infty,0] \to M$ of $u_-$ ending at $\gamma_+(\eps)$ for some $\eps>0$, we have
$$
u_{-}(\gamma_-(s))= v_{+}(\gamma_-(s)), \quad d_{x}u_-(\gamma_-(s))= d_{x}v_+(\gamma_-(s)), \quad \forall s\leqslant 0.
$$
By gluing $\gamma_-$ and $\gamma_+ $, we obtain a curve $\gamma:\R \to M$ passing through $x_0$ and calibrated by both $u_-$ and $v_+$, so
$$
u_{-}(\gamma(s))\equiv v_{+}(\gamma(s)), \quad d_{x}u_-(\gamma(s))= d_{x}v_+(\gamma(s)) , \quad \forall s\in \R.
$$
and by $\{(\gamma(s),d_{x}u_-(\gamma(s)),u_-(\gamma(s))):s\in\R\}$ is a $\Phi_c^t$-orbit. The arbitrariness of $x_0$ finishes the proof.
\end{proof}

\subsection{Proof of (D1)}
Assume $(+)$ holds. For the non-critical case {\bf  (NC)}, the proof is almost obvious: if $c>c_0$, then \eqref{HJs} admits a unique solution $u_-^c$ and  $\mathfrak{M}^c=\mathfrak{M}_{u^c_-}$. This proves the first conclusion and that $\mathcal{M}(c)\subset\mathrm{J}^1_{u^c_-}$. Due to Theorem \ref{thm:3}, for any Mather measure $\mu\in\mathfrak{M}_{u^c_-}$,
$$
\int_{T^*M\times \R} \lambda d\mu\geqslant a(c)>0.
$$
Thus $\mathfrak{M}^c_0=\emptyset$ and $\mathcal{M}_0(c)=\emptyset$, which completes the proof.

\vspace{1em}
\noindent\textit{Proof of {\bf(C)}}:\quad Since the discount factor $\lambda(x)$ is non-negative, for any $\mu\in\mathfrak{M}_{u_-^{c_0}}, \int_{T^*M\times \R}\lambda \, d\mu\geqslant0$. Due to the weakly compactness of $\mathfrak{M}_{u_-^{c_0}}$, the infimum in
\begin{equation}\label{eq:thmD-step1-claim1}
a(c_0)=\inf_{ \mu\in\mathfrak{M}_{u^{c_0}_-}}\int_{T^*M\times\R} \lambda\ d\mu
\end{equation}
is attained. If $a(c_0)>0$, then by Theorem \ref{RWY-1.2} and Proposition \ref{Prop:3.2}, $u^c_-$ is asymptotically stable and $c>c_0$, which leads to a contradiction. Thus $a(c_0)=0$ and any \textbf{extreme point} of the compact convex set $a^{-1}(0)$ is an ergodic measure in $\mathfrak{M}_{u^{c_0}_-}$.
As a consequence, $\mathcal{M}_0(c_0)\cap \mathrm{J}^1_{u^{c_0}_-}\neq\emptyset$. We use the maximality of $u^{c_0}_-$ in Proposition \ref{prop:new-NW-JDE} to conclude that
\begin{equation}\label{eq:proof-D1-1}
\mathcal{M}_0(c_0)\subset\mathcal{M}(c_0)\subset \{(x,p,u)\in T^*M\times\R: u\leqslant u_-^{c_0}(x)\}.
\end{equation}
To complete the proof, it is sufficient to show that
\begin{itemize}
  \item[($\diamondsuit$)] for any ergodic measure $\mu\in \mathfrak{M}^{c_0}\setminus\mathfrak{M}_{u^{c_0}_-}$, we have $\int_{T^*M \times \R} \lambda d\mu=0$.
\end{itemize}

We argue by contradiction to assume that there is an ergodic measure $\mu_\ast\in \mathfrak{M}^{c_0}\setminus\mathfrak{M}_{u^{c_0}_-}$ satisfying $\int_{T^*M \times \R} \lambda d\mu_\ast=2a>0$. We choose $u_-\in\mathcal{S}^{c_0,-}$ such that $\mu_\ast\in\mathfrak{M}_{u_-}$ and set $v_+= \lim_{t\to +\infty} T_t^{c_0,+}u_-\in\mathcal{S}^{c_0,+}$. By maximality of $u^{c_0}_-$ and Proposition \ref{mono}, we have
\begin{equation}\label{eq:proof-D1-2}
v_+\leqslant u_-\leqslant u^{c_0}_-\quad\text{and}\quad\mu_\ast\in \mathfrak{M}_{v_+}.
\end{equation}
Since $\mu_\ast\notin\mathfrak{M}_{u^{c_0}_-}$, it follows that
\begin{equation}\label{eq:proof-D1-3}
\mu_\ast(\{(x,p,u)\in T^\ast M\times\R: u<u^{c_0}_-(x)\})>0.
\end{equation}
For $\varepsilon>0$, \eqref{eq:proof-D1-3} allow us to find a calibrated curve $\gamma_+:[0,+\infty) \to M$ of $v_+$ and an open neighborhood $\mathcal{U}_\ast$ of $(x_0,p_0,u_0)\in\spt(\mu_\ast)$ such that
\begin{enumerate}[(I)]
  \item $x_0=\gamma_+(0),\quad p_0=d_x v_+(\gamma_+(0)),\quad u_0=v_+(\gamma_+(0))$,
  \item $\mu_\ast(\mathcal{U}_\ast)>0$,
  \item $u^{c_0}_-(\gamma_+(0))\geqslant v_+(\gamma_+(0))+2\varepsilon$ and $u^{c_0}_-(x)\geqslant v_+(x)+\varepsilon$ for any $(x,p,u)\in\mathcal{U}_\ast$.
\end{enumerate}
For any $t\geqslant0$ and $\Delta t>0$, we have
\begin{align*}
u^{c_0}_{-}(\gamma_+(t+ \Delta t))-u^{c_0}_{-}(\gamma_+(t))&\,\leqslant \int_{t}^{t+ \Delta  t} L(\gamma_+(\tau), \dot{\gamma}_+(\tau)) + c_0 - \lambda(\gamma_+(\tau))\cdot u^{c_0}_{-}(\gamma_+(\tau))\  d\tau, \\
v_{+}(\gamma_+(t+ \Delta t)) - v_{+}(\gamma_+(t))&\,=\int_{t}^{t+\Delta t} L(\gamma_+(\tau), \dot{\gamma}_+(\tau)) + c_0 - \lambda(\gamma_+(\tau))\cdot v_{+}(\gamma_+(\tau))\ d\tau.
\end{align*}
Let $w(t)= u^{c_0}_{-}(\gamma_+(t))- v_{+}(\gamma_+(t))$, then \eqref{eq:proof-D1-2} implies that $w(t)\geqslant0$ and is Lipschitz in $t$. Subtracting the two inequalities gives
$$
w(t+\Delta t)-w(t)\leqslant-\int_t^{t+\Delta t}\lambda(\gamma_+(\tau))\cdot w(\tau) d\tau
$$
Dividing both sides by $\Delta t$ and letting $\Delta t\to 0^+$, we obtain that for $t\in [0,+\infty)$,
$$
w'(t)\leqslant -\lambda(\gamma_+(t)) \cdot w(t).
$$
Integrating the above inequality yields for all $t>0$,
$$
w(t)\leqslant w(0) e^{-\int_0^t \lambda (\gamma_+(\tau))d\tau}.
$$
By Birkhoff's ergodic theorem, there is $\mathcal{T}(a)>0$ such that for all $t\geqslant\mathcal{T}(a)$, we have that $\int_0^{t}\lambda(\gamma_+(\tau))\ d\tau>at$ and that
\begin{equation}\label{eq:proof-D1-4}
\lim_{t\to +\infty}w(t)\leqslant\lim_{t\to +\infty}w(0)e^{-at}=0.
\end{equation}
However, by setting $(x(t),p(t),u(t))=\Phi^t_c(x_0,p_0,u_0)$, it follows that $x(t)=\gamma_+(t)$ for all $t\geqslant0$. Applying (II) above, there is a sequence $\{t_n\}_{n\geqslant1}$ with $\lim_{n\rightarrow\infty}t_n=+\infty$ such that $(x(t_n),p(t_n),u(t_n))\in\mathcal{U}_\ast$ and
$$
\lim_{n\to +\infty} w(t_n)=\lim_{t\to +\infty} u^{c_0}_{-}(x(t_n))- v_{+}(x(t_n))\geqslant\varepsilon>0,
$$
which contradicts the equality \eqref{eq:proof-D1-4}.

\subsection{Proof of (D2)}
Assume $(\pm)$ holds. For the proof of the non-critical case {\bf (NC)}, we claim that
\textit{for any ergodic measure $\mu\in\mathfrak{M}^c$,
$$
\pi(\spt(\mu))\subset\{(x,u)\in M\times\R: u\geqslant u^c_-(x)\}\sqcup\{(x,u)\in M\times\R:u\leqslant v^c_+(x)\},
$$
where the righthand side is a disjoint union of two closed sets}.

\vspace{0.5em}
\noindent\textit{Proof of the claim:} It is sufficient to show that
$$
\mu(\{(x,p,u)\in T^\ast M\times\R: v^c_+(x)<u<u^{c}_-(x)\})=0.
$$
We argue by contradiction. Assume there is $(x_0, p_0, u_0)\in\spt(\mu)$ such that $v^c_+(x_0)<u_0<u^{c}_-(x_0)$, thus for some $\varepsilon_0>0$, we can find an open neighborhood $\mathcal{U}_0$ of $(x_0,p_0,u_0)$ such that
\begin{enumerate}[(I)]
  \item $\mu(\mathcal{U}_0)>0$,
  \item $u^{c}_-(x)-\varepsilon_0\geqslant u$ for any $(x,p,u)\in\mathcal{U}_0$.
\end{enumerate}
By setting $(x(t),p(t),u(t))=\Phi^t_c(x_0,p_0,u_0)$ and applying (I) above, there is a sequence $\{t_n\}_{n\geqslant1}$ with $\lim_{n\rightarrow\infty}t_n=+\infty$ such that $(x(t_n),p(t_n),u(t_n))\in\mathcal{U}_0$. Since $u_0>v^c_+(x_0)$, we can choose $\varphi\in C(M,\R)$ with $\varphi(x)>v^c_+(x)$ and $u_0>\varphi(x_0)$. By Lemma \ref{lem-thmD-lem+}, we deduce that for each $n\geqslant1$,
$$
u^{c}_-(x(t_n))-\varepsilon_0\geqslant u(t_n)\geqslant\Tc\varphi(x(t_n)),
$$
which contradicts Lemma \ref{+-}, saying that $\lim_{n\rightarrow\infty}|\Tc\varphi(x(t_n))-u^c_-(x(t_n))|=0$.\qed

\vspace{0.5em}
\noindent Now we use the ergodicity of $\mu$ again to deduce that $\spt(\mu)$ is connected and can only belong to one of these two closed sets. Assume $\pi(\spt(\mu))\subset\{(x,u)\in M\times\R: u\geqslant u^c_-(x)\}$, by definition of Mather measures, there is $u_-\in\cS$ such that $\mu\in\mathfrak{M}^c_{u_-}$, the only possibility is $u_-=u^c_-$ and $\mu\in\mathfrak{M}_{u^c_-}$. Likewise, assume $\pi(\spt(\mu))\subset\{(x,u)\in M\times\R: u\leqslant v^c_+(x)\}$, then $\mu\in\mathfrak{M}_{v^c_+}$. Thus we have
\begin{equation}\label{eq:proof-D2-2}
\mathfrak{M}^c=\mathfrak{M}_{u^c_-} \sqcup \mathfrak{M}_{v^c_+}.
\end{equation}
Applying Theorem \ref{thm:3} and Remark \ref{rmk-main-thm}, we have
$$
\int_{T^*M\times \R} \lambda d\mu >0, \quad \forall \mu \in  \mathfrak{M}_{u^c_-} \quad \text{and} \quad  \int_{T^*M\times \R} \lambda d\mu <0, \quad \forall \mu \in  \mathfrak{M}_{v^c_+}.
$$
Thus \eqref{eq:proof-D2-2} helps to imply that $\int_{T^*M\times\R}\lambda\,d\mu\neq 0$ for any $\mu\in \mathfrak{M}^c$ and all conclusions follow from this fact.

\vspace{0.5em}
\noindent{\bf  (C)} For $c=c_0$,  we begin to claim that
\begin{equation}\label{eq:thmD-step1-claim1+}
\begin{split}
\int_{T^*M\times \R} \lambda \, d\mu \geqslant 0\quad \text{for all}\quad\mu\in \mathfrak{M}_{u_-^{c_0}}\quad \text{and} \quad  \exists\,\, \mu\in \mathfrak{M}_{u_-^{c_0}} \quad \text{with}\quad\int_{T^*M\times \R} \lambda \, d\mu =0, \\
\int_{T^*M\times \R} \lambda \, d\mu \leqslant 0\quad \text{for all}\quad\mu\in \mathfrak{M}_{v_+^{c_0}}\quad \text{and} \quad  \exists\,\, \mu\in \mathfrak{M}_{v_+^{c_0}} \quad \text{with}\quad\int_{T^*M\times \R} \lambda \, d\mu =0.
\end{split}
\end{equation}
Direct consequences of the claim are both $\mathcal{M}_0(c_0)\cap\mathrm{J}^1_{u^{c_0}_-}$ and $ \mathcal{M}_0(c_0)\cap \mathrm{J}^1_{v^{c_0}_+}$ are non-empty.

\vspace{0.5em}
\noindent\textit{Proof of the claim}: We focus on the first two conclusions, the remaining can be translated from them by Remark \ref{rmk-main-thm}. We argue by contradiction. Assume there is $\mu_\ast\in \mathfrak{M}_{u_-^{c_0}}$ such that $\int_{T^*M\times \R}\lambda\, d\mu_\ast<0$. We apply Corollary \ref{cor:lem4} to see that there is $\Delta_0>0$ such that for $u^\delta:=u^{c_0}_-+\delta$ with $\delta>0$ small enough,
$$
\limsup_{t\rightarrow+\infty}\|u^{c_0}_--T^{c_0,-}_tu^\delta\|_\infty\geqslant\Delta_0.
$$
This leads to a contradiction since Proposition \ref{prop:new-NW-JDE} ensures that for all $\delta>0, T^{c_0,-}_tu^\delta$ converges to $u^{c_0}_-$ as $t$ goes to infinity. This proves the first conclusion. The proof of the second conclusion goes in the same line as that of (+): the infimum $\inf_{\mu\in\mathfrak{M}_{u_-^{c_0}}}\int_{T^*M\times \R} \lambda \, d\mu>0$ implies that $u^{c_0}_-$ is asymptotically stable, which, by Theorem A, can only happen in the non-critical case. \qed

\vspace{0.5em}
\noindent By the maximality (resp. minimality) of $u^{c_0}_-$ (resp.$v^{c_0}_+$) within $\mathcal{S}^{c_0,-}$ (resp. $\mathcal{S}^{c_0,+}$) and Proposition \ref{mono} (4), for any $u_-\in\mathcal{S}^{c_0,-}$,
\begin{equation}\label{eq:4-10}
v^{c_0}_+(x)\leqslant\lim_{t\rightarrow\infty}T^{c_0,+}_tu_-(x)\leqslant u_-(x)\leqslant u^{c_0}_-(x),
\end{equation}
where the second inequality follows from \eqref{eq:t-mono}. It follows that for any $\mu\in\mathfrak{M}^{c_0}_{u_-}$,
$$
\pi(\spt(\mu))\subset\{(x,u)\in M\times\R:v^{c_0}_+(x)\leqslant u\leqslant u^{c_0}_-(x)\}.
$$
Thus $\mathcal{M}_0(c_0)\subset\mathcal{M}(c_0)\subset\{(x,p,u)\in T^*M\times\R: v_+^{c_0}(x) \leqslant u \leqslant u_-^{c_0}(x)\}$.

\vspace{0.5em}
\noindent One can repeat the proof of $(\diamondsuit)$ before to show that
\begin{enumerate}[(1)]
  \item for all ergodic measure $\mu\in \mathfrak{M}^{c_0}\setminus\mathfrak{M}_{u^{c_0}_-},\quad \int_{T^*M \times \R} \lambda d\mu\leqslant 0$,
  \item for all ergodic measure $\mu\in \mathfrak{M}^{c_0}\setminus\mathfrak{M}_{v^{c_0}_+},\quad \int_{T^*M \times \R} \lambda d\mu\geqslant 0$.
\end{enumerate}
Thus for any ergodic measure $\mu\in \mathfrak{M}^{c_0}\backslash\,\,(\mathfrak{M}_{u_-^{c_0}}\,\cup\,\mathfrak{M}_{v_+^{c_0}}), \int_{T^\ast M \times \R} \lambda d\mu =0$. Equivalently, for any ergodic measure $\mu\in\mathfrak{M}^{c_0}$ with $\int \lambda d \mu\neq0$, $\mu\in\mathfrak{M}_{u_-^{c_0}}\,\cup\,\mathfrak{M}_{v_+^{c_0}}$ and $\spt(\mu)\subset\mathrm{J}^1_{u^{c_0}_-}\sqcup\mathrm{J}^1_{v^{c_0}_+}$.

\vspace{1em}
\noindent Finally, assume that there is $x_0\in M$ such that $u_-^{c_0}(x_0) =v_+^{c_0}(x_0)$. Then we apply Lemma \ref{lem2-thmD} to $c=c_0$ to conclude that $\mathrm{J}^1_{u^{c_0}_-}\cap \mathrm{J}^1_{v^{c_0}_+}$ is non-empty, compact and $\Phi^{t}_{c_0}$-invariant. Due to the Krylov-Bogoliubov construction and ergodic decomposition, there is an ergodic measure $\mu_0$ supporting on $\mathrm{J}^1_{u^{c_0}_-}\cap \mathrm{J}^1_{v^{c_0}_+}$. Clearly, $\mu_0\in\mathfrak{M}_{u^{c_0}_-}\cap \mathfrak{M}_{v^{c_0}_+}$ and \eqref{eq:thmD-step1-claim1+} implies that
$$
\mu_0\in\mathfrak{M}^{c_0}_0.
$$
For any $u_-\in \mathcal{S}^{{c_0},-}$, by \eqref{eq:4-10}, $v_+^{c_0}(x)= u_-(x)$ for any $x\in\{x\in M : v_+^{c_0}(x)= u_-^{c_0}(x)\}$. Since $v_+^{c_0}$ is semiconvex and $u_-$ is semiconcave, then both $u_-$ and $v^{c_0}_+$ are differentiable on $\{x\in M : v_+^{c_0}(x)= u_-(x)\}$ and they share the same differential. Thus
$$
\mathrm{J}^1_{u^{c_0}_-} \cap  \mathrm{J}^1_{v^{c_0}_+} \subset \mathrm{J}^1_{u_-} \cap  \mathrm{J}^1_{v^{c_0}_+}\subset \mathrm{J}^1_{u_-}.
$$
This yields that $\spt(\mu_0)\subset \mathrm{J}^1_{u_-}$ and thus $\mu_0\in \mathfrak{M}_{u_-}$, which completes the proof.

\appendix
\section{Preliminaries}\label{Appendix-A}
In this appendix, we collected the definitions and properties of crucial objects and tools appeared in the main context of this paper. The content is divided into three part: the first part is devoted to some definitions related to the theory of viscosity solutions; in the second part, we introduce the variational setting for contact Hamiltonian systems developed in \cite{WWY1}-\cite{WWY3} with emphasis on the action functions; in the last part, we focus on the basic properties of the solution semigroups to the associated evolutionary Hamilton-Jacobi equations.

\subsection{Viscosity solutions to Hamilton-Jacobi equations}
We begin by repeating the definition of generalized differentials of a continuous function. In the following definition, we use $\exp^{-1}_x:M\rightarrow T_xM$ to denote the local inverse, by inverse map theorem it exists, of the exponential map $\exp_x$ based at $x$ associated to the auxiliary Riemannian metric on $M$.
\begin{definition}\cite[Definition 1.3]{Tran}
Let $u:M\rightarrow\R$ be a continuous function. For any $x\in M$ and open neighborhood $U\subset M$ of $x$, the convex sets
\begin{equation}\label{sub-sup diff}
\begin{split}
D^-u(x)&=\bigg\{p\in T^\ast_xM\,:\,\liminf_{d(x,y)\rightarrow0_+}\frac{u(y)-u(x)-\langle p,\exp^{-1}_x(y)\rangle}{d(x,y)}\geqslant0\,\,\text{for any }y\in U\bigg\},\\
D^+u(x)&=\bigg\{p\in T^\ast_xM\,:\,\limsup_{d(x,y)\rightarrow0_+}\frac{u(y)-u(x)-\langle p,\exp^{-1}_x(y)\rangle}{d(x,y)}\leqslant0\,\,\text{for any }y\in U\bigg\}.
\end{split}
\end{equation}
are called the (Frech\'{e}t) subdifferential and superdifferential of $u$ at $x$, respectively.

\vspace{0.5em}
Let us further assume that $u$ is Lipschitz. By Rademacher's theorem, $u$ is differentiable almost everywhere with respect to the volume measure. In particular, for each $x\in M$, the set
\begin{equation}
D^\ast u(x)=\bigg\{p\in T^\ast_xM\,:\exists\,\, x_k\in M\,\,\text{such that}\,\,\lim_{k\rightarrow\infty}x_k=x, u\text{ is differentiable at }x_k\text{ and }p=\lim_{x_k\rightarrow x}d_{x_k}u\bigg\}
\end{equation}
is non-empty and compact. The $1$-graph of the function $u$ are defined by
\begin{align*}
\mathrm{J}^1_{u} :=&\,\left\{ (x, p, u) : p\in D^\ast u(x),\,\, u = u(x) \right\} \\
=&\,\mathrm{cl}\left( \left\{ (x, p, u) : x \text{ is a point of differentiability of } u,\,\, p =d_x u,\,\, u = u(x) \right\} \right),
\end{align*}
where $\mathrm{cl}(B)$ denotes the closure of $B \subset T^*M \times \mathbb{R}$. It is easily seen that $\mathrm{J}^1_{u}$ is compact.
\end{definition}

Let $H:T^\ast M\times\R\rightarrow\R; (x,p,u)\mapsto H(x,p,u)$ be a continuous function, called contact Hamiltonian, and $c\in\R$. In this part, we shall be a bit more general to consider the static Hamilton-Jacobi equations of the form
\begin{equation}\label{hjs}\tag{S$_c$}
H(x,d_xu,u)=c
\end{equation}
and the evolutionary Hamilton-Jacobi equations of the form
\begin{equation}\label{hje}\tag{E$_c$}
\begin{cases}
\partial_t U+H(x,\partial_x U,U)=c,\quad\, (x,t)\in M\times(0,+\infty)\\
\hspace{5.3em}U(x,0)\,=\varphi(x),\hspace{1.2em}x\in M.
\end{cases}
\end{equation}
Using generalized differentials, the definition of viscosity solution to the Hamilton-Jacobi equations can be given as
\begin{definition}\cite[Definition 1.1, 1.5]{Tran}
Let $u:M\rightarrow\R$ be a continuous function. Then $u$ is called a
\begin{itemize}
  \item viscosity subsolution to \eqref{hjs} if $H(x,p,u(x))\leqslant c$ for any $x\in M, p\in D^+u(x)$.
  \item viscosity supersolution to \eqref{hjs} if $H(x,p,u(x))\geqslant c$ for any $x\in M, p\in D^-u(x)$.
  \item visocisty solution of \eqref{hjs} if it is both a viscosity subsolution and viscosity supersolution to \eqref{hjs}.
\end{itemize}
Furthermore, a viscosity subsolution $u$ of \eqref{hjs} is said \textbf{strict} at $x$ if $H(x,p,u(x))<c$ for any $p\in D^+u(x)$.

\vspace{0.5em}
Let $U:[0,+\infty)\times M\rightarrow\R$ be a continuous function. The definition of viscosity subsolution translates into the evolutionary case by asserting that $U$ is a subsolution to \eqref{hje} if and only if $U(0,\cdot)\leqslant\varphi$ and for any $(x,t)\in M\times(0,+\infty)$,
$$
p_t+H(x,p_x,U(t,x))\leqslant c\quad\text{for each}\,\,\,(p_t,p_x)\in D^+U(t,x).
$$
Replacing $D^+$ by $D^-$ and reversing the inequalities above, we can define the supersolution and solution in a completely similar fashion as in the static case.
\end{definition}

A direct consequence of the definition of viscosity subsolution is that a Lipschitz continuous subsolution $v$ to \eqref{hjs} is also an almost everywhere subsolution, i.e. $H(x,d_xv,v(x))\leqslant c$ for a.e. $x\in M$. An useful fact is
\begin{lemma}\label{sub-criteria}
Assume $H:T^\ast M\times\R\rightarrow\R$ is continuous and convex in $p$. Then a Lipschitz function $v:M\rightarrow\R$ is a subsolution to \eqref{hjs} if and only if $H(x,d_xv,v(x))\leqslant c$ holds almost everywhere.
\end{lemma}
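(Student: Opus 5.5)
One direction of Lemma \ref{sub-criteria} is immediate. If $v$ is a viscosity subsolution, let $x$ be a point where $v$ is differentiable. Then $D^+v(x)=\{d_xv\}$, so the definition gives $H(x,d_xv,v(x))\leqslant c$. By Rademacher's theorem, differentiability points have full measure, so the inequality holds almost everywhere.

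For the converse, assume $H(x,d_xv,v(x))\leqslant c$ for a.e. $x$. I plan to pass through the Clarke generalized gradient. Fix $x\in M$ and work in a chart around $x$. For a Lipschitz function, the superdifferential is contained in the Clarke gradient:
\[
D^+v(x)\subset\partial_C v(x)=\mathrm{co}\,D^\ast v(x).
\]
This inclusion, together with the description of $\partial_C v(x)$ as the convex hull of reachable gradients, is the standard result from Clarke's nonsmooth analysis, and I will quote it. Take $p\in D^\ast v(x)$. Then there are differentiability points $x_k\to x$ with $d_{x_k}v\to p$.

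The a.e. inequality holds only off a null set, so the reachable gradients must be taken along differentiability points at which the inequality also holds. I would handle this with the refined characterization of the Clarke gradient: $\partial_C v(x)$ equals the convex hull of limits of $d_{x_k}v$ taken along points avoiding any prescribed null set. This is Clarke's theorem, and it is the step I expect to be the main obstacle, since everything else is soft. So I may choose the sequence $x_k$ so that $H(x_k,d_{x_k}v,v(x_k))\leqslant c$ for every $k$. Since $H$ and $v$ are continuous, passing to the limit gives $H(x,p,v(x))\leqslant c$ for every $p\in D^\ast v(x)$.

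Next, because $H(x,\cdot,v(x))$ is convex, its sublevel set $\{q : H(x,q,v(x))\leqslant c\}$ is convex. It contains $D^\ast v(x)$, so it contains $\mathrm{co}\,D^\ast v(x)=\partial_C v(x)\supset D^+v(x)$. Hence $H(x,p,v(x))\leqslant c$ for all $p\in D^+v(x)$, which is exactly the viscosity subsolution condition.

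If I want to avoid the null-set refinement of Clarke's theorem, I would use a mollification argument instead, as in \cite[Theorem 8.3.1]{Fathi_book}. In a chart, set $v_\varepsilon=v*\rho_\varepsilon$. Then $d v_\varepsilon$ is an average of $dv$ over nearby points. By Jensen's inequality for the convex function $H$, and by the uniform continuity of $H$ on compact sets, using that $v$ is Lipschitz, we get $H(x,dv_\varepsilon,v_\varepsilon)\leqslant c+o(1)$ locally uniformly. Since $v_\varepsilon\to v$ uniformly, stability of viscosity subsolutions under uniform convergence gives that $v$ is a subsolution. Either route finishes the proof, and I would present the mollification one since it uses only Jensen's inequality and stability.
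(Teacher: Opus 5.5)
The paper does not prove this lemma. It is recorded in Appendix A as a standard fact, and the one place where its nontrivial direction is used (the end of the proof of Lemma \ref{strict-sub}) simply invokes \cite[Theorem 8.3.1]{Fathi_book}. Your proposal therefore gives a self-contained argument where the paper relies on the literature, and it is correct.

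The easy direction is fine as written, and it does not use convexity. The mollification route also goes through:
\begin{itemize}
\item In a chart, a Lipschitz $v$ is $W^{1,\infty}_{\mathrm{loc}}$, so $dv_\varepsilon=(dv)*\rho_\varepsilon$.
\item Jensen in $p$ gives $H(x,d_xv_\varepsilon,v_\varepsilon(x))\leqslant\int\rho_\varepsilon(x-y)\,H(x,d_yv,v_\varepsilon(x))\,dy$.
\item Uniform continuity of $H$ on the relevant compact set ($x$ in a compact part of the chart, $|p|\leqslant \mathrm{Lip}(v)$, $u$ bounded), together with $|v_\varepsilon(x)-v(y)|\leqslant 2\,\mathrm{Lip}(v)\,\varepsilon$ for $|x-y|\leqslant\varepsilon$, then gives $H(x,d_xv_\varepsilon,v_\varepsilon(x))\leqslant c+\omega(\varepsilon)$.
\item Stability applies because the Fr\'echet superdifferential used in the paper coincides with the set of differentials of $C^1$ test functions touching from above. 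The subsolution property is local, which takes care of the charts.
\end{itemize}

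Of your two routes, the Clarke one yields more: $H(x,p,v(x))\leqslant c$ for every $p\in\partial_C v(x)$, a genuinely pointwise statement. The cost is quoting Clarke's theorem with exclusion of null sets. The mollification route is elementary (Jensen plus stability) but gives only the viscosity inequality, which is all the lemma asks.

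One step of the Clarke route should be reordered. Let $D^\ast_N v(x)$ denote the limits of $d_{x_k}v$ along differentiability points $x_k\to x$ lying outside a null set $N$. Clarke's theorem identifies only convex hulls: $\mathrm{co}\,D^\ast_N v(x)=\partial_C v(x)=\mathrm{co}\,D^\ast v(x)$. It does not say that a given $p\in D^\ast v(x)$ can be approximated along points off $N$, and this can fail. For example, $v(x)=\mathrm{dist}(x,\{0\}\cup\{\pm 1/n\}_{n\geqslant 1})$ on $\R$ is differentiable at $0$ with $v'(0)=0$, while $|v'|=1$ at every other differentiability point. Hence $0\in D^\ast v(0)\setminus D^\ast_{\{0\}}v(0)$.

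The fix is to take $N$ to be the non-differentiability set together with the set where the a.e.\ inequality fails. Pass to the limit only for $p\in D^\ast_N v(x)$. Then use convexity of the sublevel set to reach $\mathrm{co}\,D^\ast_N v(x)=\partial_C v(x)\supset D^+v(x)$. The conclusion, including $H(x,p,v(x))\leqslant c$ on all of $D^\ast v(x)$, is unchanged.
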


Two more properties concerning smooth approximation of subsolutions and the regularity of solution are needed.

\begin{lemma}\label{smooth-app}\cite[Lemma 2.2]{DFIZ}
Assume $H:T^\ast M\rightarrow\R; (x,p)\mapsto H(x,p)$ is continuous and convex in $p$ and $v$ is a Lipschitz subsolution to \eqref{hjs}. Then for all $\varepsilon>0$, there is a smooth function $v_\eps:M\rightarrow\R$ such that $\|v-v_\eps\|_\infty<\eps$ and $H(x,d_xv_\eps)\leqslant c+\eps$.
\end{lemma}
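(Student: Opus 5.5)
This is a standard mollification argument, as in \cite[Lemma 2.2]{DFIZ}, and I would run it chart by chart with a partition of unity. The key inputs are convexity of $H$ in $p$ and uniform continuity of $H$ on compact subsets of $T^\ast M$.

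\emph{Step 1 (a.e.\ bound and uniform continuity).} Since $v$ is Lipschitz with constant $K$, Lemma \ref{sub-criteria} gives
\[
H(x,d_xv)\leqslant c \quad \text{for a.e. } x\in M.
\]
The gradients $d_xv$ stay in the compact set $\{|p|_x\leqslant K\}$. Later we will need to compare $H$ at nearby base points and nearby covectors, so I fix a slightly larger compact set $\mathcal{K}=\{|p|_x\leqslant K'\}$ with $K'>K$ (enough room to absorb the error terms produced below). On $\mathcal{K}$ I fix a modulus of continuity $\omega$ for $H$.

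\emph{Step 2 (local mollification in charts).} Cover $M$ by finitely many coordinate charts $U_i$ and take a subordinate smooth partition of unity $\{\theta_i\}$. In each chart set $v_i=v*\rho_\delta$, where $\rho_\delta$ is a standard mollifier. Then
\[
d v_i(x)=\int \rho_\delta(y)\, dv(x-y)\,dy
\]
is an average of a.e.\ gradients of $v$ at points within distance $\delta$ of $x$. To use this, I compare $H(x,\cdot)$ with $H(x-y,\cdot)$ on $\mathcal{K}$, which costs at most $\omega(C\delta)$. After that comparison, Jensen's inequality applied to the convex function $p\mapsto H(x,p)$ gives
\[
H(x,dv_i(x))\leqslant c+\omega(C\delta).
\]
Here the chart's coordinate expression of the norm also enters, but only as a uniformly continuous factor, so it is absorbed into $\omega$ as well.

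\emph{Step 3 (gluing).} Define $v_\eps=\sum_i\theta_i v_i$. Its differential is
\[
dv_\eps=\sum_i\theta_i\, dv_i+\sum_i (v_i-v)\,d\theta_i,
\]
where I have used $\sum_i d\theta_i=0$ to insert $-v$ into the second sum. The first sum is a convex combination of the $dv_i$, so by convexity
\[
H\Big(x,\sum_i\theta_i\, dv_i\Big)\leqslant c+\omega(C\delta).
\]
The second sum is bounded by $\sum_i\|v_i-v\|_\infty\|d\theta_i\|_\infty\leqslant CK\delta$, a small perturbation, which keeps us inside $\mathcal{K}$. Uniform continuity of $H$ on $\mathcal{K}$ then bounds its effect by $\omega(CK\delta)$. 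Also
\[
\|v_\eps-v\|_\infty\leqslant\max_i\|v_i-v\|_\infty\leqslant K\delta.
\]

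Finally, choose $\delta$ small enough that $K\delta<\eps$ and $\omega(C\delta)+\omega(CK\delta)\leqslant\eps$; the resulting $v_\eps$ satisfies both conclusions of the lemma.

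The main obstacle is Step 2, namely justifying Jensen's inequality when the base point $x$ varies inside the mollification integral. The estimate is only as good as the uniform continuity of $H$ in $x$, restricted to the bounded set of $p$ fixed in Step 1.
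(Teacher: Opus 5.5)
Your proof is correct and is essentially the standard mollification argument behind \cite[Lemma 2.2]{DFIZ}, which the paper quotes without proof. You regularize in charts, pay a uniform-continuity error to move the base point, apply Jensen to the convex map $p\mapsto H(x,p)$, and glue with a partition of unity, using $\sum_i d\theta_i=0$ to make $\sum_i (v_i-v)\,d\theta_i$ of size $O(\delta)$. Two points are worth writing out explicitly: in Step 2 the modulus should be taken for the coordinate Hamiltonian $(z,q)\mapsto H(\phi_i^{-1}(z),q\circ d\phi_i)$ on a compact set, and $\delta$ must be smaller than the distance from $\phi_i(\mathrm{supp}\,\theta_i)$ to the boundary of the chart image so that each $v_i$ is defined where it is used.
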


\begin{lemma}\label{regularity}\cite[Theorem 5.3.7]{CS}
Assume $H:T^\ast M\times\R\rightarrow\R$ is locally Lipschitz continuous and strictly convex in $p$. Then all Lipschitz solutions to \eqref{hjs} is locally semi-concave. In particular, for any such solution $u, D^+u(x)=coD^\ast u(x)\neq\emptyset$.
\end{lemma}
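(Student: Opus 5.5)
This statement is the regularity lemma cited from Cannarsa--Sinestrari: Lipschitz solutions of $H(x,d_xu,u)=c$, with $H$ locally Lipschitz and strictly convex in $p$, are locally semiconcave, and $D^+u(x)=\mathrm{co}\,D^\ast u(x)\neq\emptyset$. The plan is to reduce to the $u$-independent case. Fix a Lipschitz solution $u$ and set $G(x,p):=H(x,p,u(x))$. Since $u$ is Lipschitz and $H$ is locally Lipschitz, $G$ is Lipschitz in $x$ on bounded $p$-sets and strictly convex in $p$. Moreover $u$ is a viscosity solution of $G(x,d_xu)=c$, because the test conditions only involve $H$ at the point $(x,p,u(x))$. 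For the superdifferential statement, note that on a compact manifold localization is harmless, so everything may be done in coordinate charts.

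To get semiconcavity I would use a representation formula. Since $|d_xu|$ is bounded by the Lipschitz constant, I will modify $G$ outside a large ball in $p$ so that it becomes superlinear and Tonelli-like, keeping strict convexity. This does not change the notion of solution, because viscosity test covectors lie in that ball.

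Next I will localize in a ball $B$ and write $u$ through the Lax--Oleinik / optimal control representation. The cleanest version uses the evolutionary form: $U(t,x)=u(x)$ solves $\partial_tU+G(x,\partial_xU)=c$. By the Hopf--Lax type formula for Tonelli $G$ (or via the semigroup $T_t$ of the paper's appendices), this gives
\[
u(x)=\min_{\gamma(t)=x}\Big\{u(\gamma(0))+\int_0^t \big(L_G(\gamma,\dot\gamma)+c\big)\,ds\Big\}.
\]
The minimizer at $x$ then yields the standard semiconcavity estimate. I will perturb the endpoint, $\gamma_h(s)=\gamma(s)+\tfrac{s}{t}h$, and compare $u(x+h)+u(x-h)-2u(x)$ with the second-order expansion of the action. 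Semiconcavity of $L_G$ in $(x,v)$ on compact sets suffices for this. The minimizer velocities are bounded because the Lipschitz bound controls them.

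The main obstacle is regularity of $L_G$ in $x$. Here $G$ is only Lipschitz in $x$, since $u(x)$ enters, rather than $C^2$. I would handle this with the version of the Cannarsa--Sinestrari theorem that needs only $L$ locally semiconcave, which follows from $H$ being locally Lipschitz and strictly convex. Failing that, I would accept a linear modulus of semiconcavity coming from the Lipschitz bound. The second alternative: for the paper's $H=\lambda(x)u+h$ this is easier. The term $\lambda(x)u(x)$ is Lipschitz and enters linearly, so it suffices that $L_G$ is semiconcave along the perturbed curves. Indeed $\lambda u$ appears as $-\lambda(\gamma)u(\gamma)$, and one can instead use the paper's implicit variational representation, where the dependence on $u$ is absorbed into the Gronwall-type estimate of the earlier Lemma \ref{lem:eq-B-Lip}.

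Finally, for a locally semiconcave $u$, the standard facts give the remaining conclusions. First, $D^+u(x)$ is nonempty, convex and compact at every $x$. Second, $D^+u(x)=\mathrm{co}\,D^\ast u(x)$; this follows from Rademacher's theorem together with upper semicontinuity of $D^+$ for semiconcave functions, and then Carathéodory/Clarke-gradient arguments. I would cite these directly from \cite{CS}.
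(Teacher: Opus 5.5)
\textbf{Verdict: genuine gap.} The paper gives no proof of this lemma; it only cites \cite{CS}. Your argument breaks down at the step you flag as the main obstacle, and both remedies you propose are false, not merely unproved.

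\textbf{Where it fails.} Once you freeze $u$, the function $G(x,p)=H(x,p,u(x))$ is only Lipschitz in $x$. This is true even for the paper's smooth $H=\lambda(x)u+h(x,p)$, because $u$ itself is only Lipschitz. Hence $L_G$ is only Lipschitz in $x$, and that does not give semiconcavity in $x$.
\begin{itemize}
\item For the paper's Hamiltonian, $L_G(x,v)=l(x,v)-\lambda(x)u(x)$. At a concave kink of $u$ where $\lambda>0$, the term $-\lambda u$ has a convex kink.
\item For $H=|p|^2-|\sin x|$ on $\mathbb{S}^1$, one gets $L_G=\frac14|v|^2+|\sin x|$, which is not semiconcave in $x$ at $0$.
\end{itemize}
So the claim ``$L$ locally semiconcave follows from $H$ locally Lipschitz and strictly convex'' is wrong.

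The fallback of a linear modulus is also wrong. Take the same $H$ and $c=0$, and let $u(x)=\int_0^x \mathrm{sgn}(\sin s)\sqrt{|\sin s|}\,ds$. This is a $C^1$, hence viscosity, solution on $\mathbb{S}^1$, and $u'(y)-u'(0)\approx\sqrt{y}$. It is therefore semiconcave only with a modulus of order $r^{1/2}$. Under the stated hypotheses one can in general only expect semiconcavity with some non-linear modulus, which is all the lemma asserts. Finally, appealing to Lemma~\ref{lem:eq-B-Lip} does not help: it is a first-order Gronwall bound in $c$, not a bound on second differences.

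\textbf{How to repair your approach.} Drop the demand that $L_G$ be semiconcave in $x$, and balance two error terms instead.
\begin{itemize}
\item Use only two facts: $L_G$ is $K$-Lipschitz in $x$, and (after your superlinear modification) $L_G$ is $C^1$ in $v$ with $\partial_v L_G$ uniformly continuous on compact sets, with some modulus $\omega$, by strict convexity of $G$.
\item Split the second difference of $L_G$ at $(\gamma\pm\frac{s}{t}h,\ \dot\gamma\pm\frac{h}{t})$ into two $x$-increments plus a pure $v$-second difference. Your endpoint perturbation then gives $u(x+h)+u(x-h)-2u(x)\leqslant Kt|h|+|h|\,\omega(2|h|/t)$.
\item Since $u$ is reproduced by the optimal-control Lax--Oleinik formula for every $t>0$, and calibrated curves have speed bounded in terms of $\mathrm{Lip}(u)$, you may take $t=|h|^{1/2}$. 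This yields the modulus $Kr^{1/2}+\omega(2r^{1/2})$, which is enough for the lemma.
\end{itemize}
Because $G$ is not smooth, you must use the general optimal-control representation here, not the Tonelli machinery of Appendix A.

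A linear modulus is available only for smooth $H$, as in the paper's application. In that case you should not freeze $u$. Keep it as the third argument of $L(x,v,u)$ through the implicit variational principle, and apply Gronwall to the second difference $F(s)=h^c_{y,u_0}(\gamma_+(s),s)+h^c_{y,u_0}(\gamma_-(s),s)-2h^c_{y,u_0}(\gamma(s),s)$, using smoothness of $L$ in $(x,v,u)$.

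Your final step, from semiconcavity to $D^+u(x)=\mathrm{co}\,D^\ast u(x)\neq\emptyset$, is standard and correct.
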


To conclude this part, let us recall that the \textbf{characteristic system} for the Cauchy problem \eqref{hje} is the contact Hamiltonian system associated to $H-c$, writing in the coordinates $(x,p,u)$ as
\begin{equation}\label{ch}\tag{CH$_c$}
\begin{cases}
\dot{x} = \frac{\partial H}{\partial p}(x, p, u) \\
\dot{p} = -\frac{\partial H}{\partial x}(x, p, u) - \frac{\partial H}{\partial u}(x, p, u) p \\
\dot{u} = \frac{\partial H}{\partial p}(x, p, u) \cdot p - H(x, p, u)+c
\end{cases}
\end{equation}
Curiously, if we set $\breve{H}(x,p,u):=H(x,-p,-u)$ and write down the characteristic system for the Cauchy problem
\begin{equation}\label{reverse ham}\tag{E$'_c$}
\begin{cases}
\partial_t u+\breve{H}(x,\partial_x u,u)=c,\quad (x,t)\in M\times[0,+\infty),\\
u(0,x)=-\varphi(x),
\end{cases}
\end{equation}
by replacing $H$ in \eqref{ch} with $\breve{H}$, then a simple change of coordinate $(X,U,P)=(x,-u,-p)$ and \textbf{a time inversion} transform this new system into \eqref{ch}. Thus the characteristic systems for \eqref{hje} and \eqref{reverse ham} share the same dynamics, especially the same set of invariant measures.

Weak KAM theory is the connection between the Aubry-Mather theory of Lagrangian Systems and the Crandall-Lions theory of viscosity solutions of the Hamilton-Jacobi equation. Nowadays discrete weak KAM theory has been developed in \cite{Zbook}. Notably, according to \cite{WWY2}, viscosity solutions and backward weak KAM solutions are essentially equivalent. However, the distinctive advantage of the Weak KAM framework lies in its ability to connect the solutions of the equation with the dynamics of contact Hamiltonian systems through a geometric tool: the \textbf{calibrated curves}.

To establish this connection, we first introduce the notion of backward (resp. forward) weak KAM solutions.

\begin{definition}\label{def:weakKAM}
Let $u_-\in C(M,\mathbb{R})$ (resp. $u_+\in C(M,\mathbb{R})$). We say that $u_-$ (resp. $u_+$) is a backward (resp. forward) weak KAM solution of \eqref{hjs} if it satisfies the following two conditions:

\begin{itemize}
\item [(1)] For every continuous piecewise $C^1$ curve $\gamma:[t_1,t_2]\rightarrow M$, we have
\begin{align}\label{do}
u_\pm(\gamma(t_2))-u_\pm(\gamma(t_1))\leqslant\int_{t_1}^{t_2}L\big(\gamma(s),\dot{\gamma}(s),u_\pm(\gamma_\pm(s))\big)-c \ ds;
\end{align}

\item [(2)] For every $x\in M$, there exists a $C^1$ curve $\gamma:(-\infty,0]\rightarrow M$ (resp. $\gamma:[0,+\infty)\rightarrow M$) with $\gamma(0)=x$ such that for any $t_1,t_2\in(-\infty,0]$ (resp. $t_1,t_2\in[0,+\infty)$),
\begin{align}\label{cali1}
u_\pm(\gamma(t_2))-u_\pm(\gamma(t_1))=\int^{t_2}_{t_1}L\big(\gamma(s),\dot{\gamma}(s),u_\pm(\gamma(s))\big)-c \ ds.
\end{align}
where curves $\gamma:[a,b]\to M$ ($-\infty\leqslant a<b\leqslant+\infty$) satisfying \eqref{cali1} are called $(u_\pm, L, c)$-{\bf calibrated curves} on $M$.
\end{itemize}
\end{definition}

		Moreover, it follows directly from the definition that  for any $u_-\in \mathcal{S}^{c,-},u_+\in \mathcal{S}^{c,+} $, $T^{c,\pm}_t u_\pm =u_\pm,$ for any $  t>0. $
By Proposition \ref{Implicit variational}, the above equality implies that for any $x\in\mathbb{R}$, there is an orbit
\[
(x(t),p(t),u(t)),\quad t\in(-\infty,0] , \quad  (resp. \ t \in[0,+\infty))
\]
of the characteristic system \eqref{ch} with the contact Hamiltonian
such that $\gamma(0)=x,\,p(0)\in D^\ast u_\pm(x),\,u(t)=u_\pm(x(t))$, here $D^{\ast}u_\pm(x)$ denotes the reachable gradients of the solution $u_\pm$ at $x$. Moreover, for any $t<0$ (resp. $t>0$) , $x(t)\in\mathcal{D}u_\pm $ with $u'_\pm(x(t))=p(t)$. For such an orbit and any $s\leqslant t\leqslant 0$ (resp.$ 0\leqslant s \leqslant t$), it follows that
\[
u_\pm(x(t))-u_\pm(x(s))=\int^{t}_{s} L(x(\tau), \dot x(\tau),u_\pm(x(\tau)))-c\ d\tau.
\]
Due to this identity, $(x(t),p(t),u(t)),\,t\in(-\infty,0] $(resp. $t\in[0,+\infty)$ is said to be calibrated by $u_-$(resp. $u_+$)  or briefly called calibrated orbit on $T^*M\times \R$.

\subsection{Variational setting for Tonelli contact Hamiltonian systems}
We shall use $TM$ to denote the tangent bundle of $M$. As usual, a point of $T M$ will be denoted by $(x,\dot{x})$, where $x\in M$ and $\dot{x}\in T_x M$. A smooth function $H:T^{\ast}M\times\R\rightarrow\R, (x,p,u)\mapsto H(x,p,u)$ is said a Tonelli contact Hamiltonian if $H$ satisfies (H1)-(H2) (carried over to the case of contact Hamiltonian in an obvious way) and is uniformly Lipschitz in $u$, i.e., there is $\Lambda>0$ such that for any $u,u'\in\R$,
\[
|H(x,p,u)-H(x,p,u')|\leqslant\Lambda|u-u'|.
\]
The Lagrangian $L:TM\times\R\rightarrow\R$ associated to $H$ is defined as
\[
L(x,\dot{x},u)=\sup_{p \in T_x^*M}\{p \cdot \dot{x}-H(x,p,u)\}.
\]
Notice that \eqref{hjs} is a family of equations parametrized by $c$, we shall introduce $L^c(x,\dot{x},u):=L(x,\dot{x},u)+c$. The variational principle of \chc is encoded in the following implicitly defined action functions.
\begin{proposition}\cite[Theorem 2.1, 2.2]{WWY3}\label{Implicit variational}
Given $x_0\in M$ and $u_0,c\in \R$, there exist continuous functions $h^c_{x_0,u_0}(x,t), h_c^{x_0,u_0}(x,t)$ defined on $M\times (0,+\infty)$ by
\begin{equation}\label{eq:Implicit variational}
\begin{split}
h^c_{x_0,u_0}(x,t)=&\inf_{\substack{\gamma(t)=x\\ \gamma(0)=x_0}}\Big\{u_0+\int_0^t L^c(\gamma(\tau), \dot \gamma(\tau),h^c_{x_0,u_0}(\gamma(\tau) ,\tau )  )\ d\tau\Big\},\\
h_c^{x_0,u_0}(x,t)=&\sup_{\substack{\gamma(t)=x_0\\ \gamma(0)=x } }\Big\{u_0-\int_0^t L^c(\gamma(\tau), \dot \gamma(\tau),h_c^{x_0,u_0}(\gamma(\tau) ,t-\tau )  )\ d\tau\Big\},
\end{split}
\end{equation}
where the infimum and supremum are taken among Lipschitz continuous curves $\gamma:[0,t]\rightarrow M$ and are achieved. We call $h^c_{x_0,u_0}(x,t)$ the backward action function and $h_c^{x_0,u_0}(x,t)$ the forward action function.
\end{proposition}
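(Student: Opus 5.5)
The plan is to obtain $h^c_{x_0,u_0}$ as the fixed point of a contraction built from the implicit formula \eqref{eq:Implicit variational}. The forward function $h_c^{x_0,u_0}$ will then follow from the backward one by the symmetry $\breve H(x,p,u)=H(x,-p,-u)$ described after \eqref{ch}. Concretely, I expect $h_c^{x_0,u_0}(x,t)=-\breve h^{c}_{x_0,-u_0}(x,t)$, where $\breve h$ is the backward action function of $\breve H$; this follows by reversing the parametrization $\tau\mapsto t-\tau$ of the curves and noting that $\breve L(x,\dot x,u)=L(x,-\dot x,-u)$. So only the backward case needs a real argument.

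First I fix a reference function. Let $A^c_t(x_0,x)$ be the classical Tonelli minimal action of the $u$-independent Lagrangian $L(x,\dot x,0)+c$ over curves from $x_0$ to $x$ in time $t$. By classical Tonelli theory this is continuous on $M\times(0,+\infty)$, its minimizers exist, and they are $C^1$ with speed bounded in terms of $d(x_0,x)/t$. Since $H$ is $\Lambda$-Lipschitz in $u$, so is $L$: $|L(x,v,u)-L(x,v,u')|\leqslant\Lambda|u-u'|$. I then work in the space $\mathcal X_T$ of continuous $F$ on $M\times(0,T]$ such that $F-u_0-A^c_t(x_0,\cdot)$ is bounded. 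On $\mathcal X_T$ I take the weighted norm $\|F-G\|_\ast:=\sup e^{-2\Lambda t}|F(x,t)-G(x,t)|$. The operator is
\[
(\mathcal A F)(x,t):=\inf_{\gamma(0)=x_0,\ \gamma(t)=x}\Big\{u_0+\int_0^t L^c\big(\gamma(\tau),\dot\gamma(\tau),F(\gamma(\tau),\tau)\big)\,d\tau\Big\}.
\]

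Three estimates are needed for $\mathcal A$. First, $\mathcal A$ maps $\mathcal X_T$ into itself. The Lipschitz bound in $u$ gives $|\mathcal AF-u_0-A^c_t|\leqslant\Lambda\int_0^t\sup|F(\cdot,\tau)|\,d\tau$, and this is finite because $F\geqslant u_0+A^c_\tau-C\geqslant -C'$ while the upper bound on $F$ only enters along near-minimizing curves, whose actions are controlled. Second, $\mathcal A$ is a contraction. Comparing the two integrands along a common curve gives
\[
|\mathcal AF-\mathcal AG|(x,t)\leqslant\Lambda\int_0^t\|F(\cdot,\tau)-G(\cdot,\tau)\|_\infty\,d\tau\leqslant\tfrac12 e^{2\Lambda t}\|F-G\|_\ast,
\]
so $\|\mathcal AF-\mathcal AG\|_\ast\leqslant\frac12\|F-G\|_\ast$. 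Third, $\mathcal AF$ is continuous, by the usual curve-concatenation argument with the locally uniform speed bounds. The Banach fixed point theorem then gives a unique fixed point on each $M\times(0,T]$. Uniqueness makes these fixed points compatible as $T$ increases, which defines $h^c_{x_0,u_0}$ on $M\times(0,+\infty)$.

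Finally, the infimum must be achieved with $F=h^c_{x_0,u_0}$ frozen. The functional $\gamma\mapsto\int_0^tL^c(\gamma,\dot\gamma,F(\gamma,\tau))\,d\tau$ has an integrand that is convex and superlinear in $\dot\gamma$ and continuous in $(\gamma,\tau)$ for $\tau>0$. On each $[\varepsilon,t]$, Tonelli's direct method (weak compactness in $W^{1,1}$ plus lower semicontinuity) applies.

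I expect the main obstacle to be the singularity at $\tau=0$, where $F(x,\tau)$ blows up for $x\neq x_0$. To handle it I will first derive an a priori Lipschitz bound for minimizing sequences. Near-minimizers $\gamma$ cannot visit points far from $x_0$ at small times $\tau$, because their $u_0+A^c$-action would then exceed that of a comparison curve. Along them, $F(\gamma(\tau),\tau)$ therefore stays bounded, giving a uniform bound $|F(\gamma(\tau),\tau)|\leqslant C$. With this bound the integrand is dominated uniformly up to $\tau=0$, and the direct method produces a Lipschitz minimizer on all of $[0,t]$.
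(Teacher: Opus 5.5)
The paper gives no proof of this proposition; it quotes it from \cite{WWY3}. So your argument has to stand on its own. Two parts of it are fine. Your reduction of the forward function to the backward one is correct: reversing time gives $h_c^{x_0,u_0}=-\breve h^{c}_{x_0,-u_0}$, since $\breve L(x,v,u)=L(x,-v,-u)$. The weighted estimate $\|\mathcal AF-\mathcal AG\|_\ast\leqslant\frac12\|F-G\|_\ast$ also holds whenever the actions involved are finite.

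\textbf{The main gap is the space $\mathcal X_T$.} Under (H1)--(H2) the Lagrangian may grow faster than quadratically. In that case $\mathcal A$ does not map $\mathcal X_T$ into itself, and the action function you want to construct does not lie in $\mathcal X_T$ at all. Here is a counterexample within the paper's assumptions.
\begin{itemize}
\item Take $M=\mathbb S^1$ with the flat metric and $c=0$. Let $H(x,p,u)=u+h(p)$, where $h$ is the Legendre transform of $l(v)=\frac12v^2+\frac14v^4$. Then $L^c(x,v,u)=l(v)-u$ and $\Lambda=1$.
\item For $d=d(x_0,x)>0$, the action $A^c_t(x_0,x)=t\,l(d/t)$ is realized by the constant-speed arc $\gamma$. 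Along this arc, $A^c_\tau(x_0,\gamma(\tau))=\tau\,l(d/t)$.
\item Let $F_0:=u_0+A^c_t(x_0,\cdot)$, which lies in $\mathcal X_T$. Testing $\mathcal AF_0$ on this arc gives
\[
\mathcal AF_0(x,t)-u_0-A^c_t(x_0,x)\leqslant-u_0t-\tfrac t2\,A^c_t(x_0,x)\leqslant-u_0t-\frac{d^4}{8t^2}\longrightarrow-\infty\quad(t\to0^+).
\]
\item The true action function is $h^0_{x_0,u_0}(x,t)=\inf_\gamma\{e^{-t}u_0+\int_0^te^{\tau-t}l(\dot\gamma)\,d\tau\}$. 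The same arc shows $h^0_{x_0,u_0}-u_0-A^c_t\leqslant(e^{-t}-1)u_0+(1-t-e^{-t})\,l(d/t)$, which also tends to $-\infty$.
\end{itemize}
Your justification was that ``the upper bound on $F$ only enters along near-minimizing curves, whose actions are controlled''. That only gives $\Lambda\int_0^t|F(\gamma(\tau),\tau)|\,d\tau\lesssim\Lambda\,t\,A^c_t(x_0,x)+C$. The quantity $t\,A^c_t$ stays bounded on $M\times(0,T]$ essentially only when $L$ grows at most quadratically.

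\textbf{Possible repairs.} One option is to change the reference or the weight. For example, control $|F-u_0-A^c_t|$ by $C(1+t\,|A^c_t(x_0,x)|)$, since the true defect is of order $t\,A^c_t$. Then obtain a contraction for small $T$ and extend in time, where the singularity is no longer present. Another option avoids singular function spaces altogether, using the Herglotz formulation of \cite{CCJWY}. For each Lipschitz $\gamma$ with $\gamma(0)=x_0$, solve $\dot u_\gamma=L^c(\gamma,\dot\gamma,u_\gamma)$ with $u_\gamma(0)=u_0$. Set $h^c_{x_0,u_0}(x,t)=\inf_{\gamma(t)=x}u_\gamma(t)$, and derive the implicit formula by dynamic programming.

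\textbf{A smaller gap at the end.} Minimizing sequences are only equi-absolutely continuous, not equi-Lipschitz, so the direct method gives an absolutely continuous minimizer. You only know that $F$ is continuous in $\tau$, so the integrand $L^c(x,v,F(x,\tau))$ is merely continuous in $\tau$. With such an integrand, neither Lipschitz regularity of the minimizer nor the absence of a Lavrentiev gap between Lipschitz and absolutely continuous competitors is automatic. You need an extra step. One way is to first prove that $h^c_{x_0,u_0}$ is locally Lipschitz on $M\times(0,+\infty)$. Then the $\tau$-dependence of the integrand is Lipschitz uniformly in $v$, and standard regularity results for minimizers apply. Another way is to show that minimizers generate $C^1$ solutions of the characteristic system \eqref{ch}, as stated in the remark after the proposition.
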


\begin{remark}
Let $\gamma:[0,t]\rightarrow M$ achieve the infimum (resp. supremum) in \eqref{eq:Implicit variational} and
\[
x(s):=\gamma(s), \ \ p(s):=\frac{\partial L}{\partial \dot x}(x(s),\dot x(s),u(s)), \ \ u(s):=h^c_{x_0,u_0}(x(s),s)\,(\text{resp.}\,\,h_c^{x_0,u_0}(x(s),t-s)).
\]
Then $(x(s),p(s),u(s))$ is $C^1$ and satisfies \chc with $x(0)=x_0,x(t)=x\,($resp. $x(0)=x,x(t)=x_0)$ and $\lim_{s\to 0^+ }u(s)=u_0\,($resp. $\lim_{s\to t^-}u(s)=u_0)$.
\end{remark}

We collect the properties of the above action functions that are used in this paper into the following
\begin{proposition}{\cite{WWY2}}\label{Minimality}
For each $c\in\R$, the action function $h^c_{x_0,u_0}(x,t)\,\,($resp.\,\,$h^{x_0,u_0}_c(x,t)) $ satisfies
\begin{enumerate}
  \item[(1)] \textbf{(Minimality)} Given $x_0,x\in M $  and $u_0\in \R $ and $t>0$, let $S^{x,t}_{x_0,u_0}$ be the set of the solutions $(x(s),p(s),u(s))$ of \chc on $[0,t]$ with $x(0)=x_0,x(t)=x,u(0)=u_0\,\,($resp. $x(0)=x, x(t)=x_0, u(t)=u_0)$. Then
    \begin{equation}\label{eq:Minimality}
    \begin{split}
    h^c_{x_0,u_0}(x,t)=&\inf \{u(t):(x(s),p(s),u(s))\in S^{x,t}_{x_0,u_0}\},\\
    (\text{resp.}\,\,h_c^{x_0,u_0}(x,t)=&\sup \{u(0):(x(s),p(s),u(s))\in S^{x,t}_{x_0,u_0}\}.)
    \end{split}
    \end{equation}
    for any $ (x,t)\in M\times(0,+\infty)$. As a result, $h^c_{x_0,u_0}(x,t)=u\Leftrightarrow h_c^{x,u}(x_0,t)=u_0$.

  \item[(2)]\textbf{(Monotonicity)} Given $x_0\in M, u_0\in \R, u_1< u_2$ and $c_1<c_2$, for all $t>0$ and $x\in M$,
    \begin{align*}
    	h^c_{x_0,u_1}(x,t)< h^c_{x_0,u_2}(x,t),\quad h_c^{x_0,u_1}(x,t)< h_c^{x_0,u_2}(x,t),\\
      h^{c_2}_{x_0,u_0}(x,t)>h^{c_1}_{x_0,u_0}(x,t), \quad h_{c_2}^{x_0,u_0}(x,t)<h_{c_1}^{x_0,u_0}(x,t).
    \end{align*}

  \item[(3)]\textbf{(Markov property)} Given $x_0 \in M,u_0 \in \R $, for any $s,t>0$ and all $x\in M$,
	\begin{equation}\label{markov}
    \begin{split}
	&h^c_{x_0,u_0}(x, t+s)=\inf_{y\in M}h^c_{y,h^c_{x_0,u_0}(y,t)}(x,s),\\
    &h_c^{x_0,u_0}(x, t+s)=\sup_{y\in M}h_c^{y,h_c^{x_0,u_0}(y,t)}(x,s).
    \end{split}
	\end{equation}

  \item[(4)]\textbf{(Lipschitz continuity)} The function $(x_0,u_0,x,t)\mapsto h^c_{x_0,u_0}(x,t)\,\,($resp. $h_c^{x_0,u_0}(x,t))$ is locally Lipschitz continuous on $M\times \R\times M\times (0,+\infty) $.
\end{enumerate}
\end{proposition}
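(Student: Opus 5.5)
These properties are established in \cite{WWY2,WWY3}. The plan is to reprove them from the implicit variational principle in Proposition \ref{Implicit variational}, using one basic tool: a Gronwall comparison along a fixed curve, based on the uniform Lipschitz bound $|L(x,\dot x,u)-L(x,\dot x,u')|\leqslant\Lambda|u-u'|$. The first observation concerns a solution $(x(s),p(s),u(s))$ of \chc. By the Legendre identity $p\cdot\frac{\partial H}{\partial p}-H=L(x,\dot x,u)$ on the graph of the Legendre transform, such a solution satisfies $\dot u=L^c(x,\dot x,u)$, that is,
\[
u(s)=u(0)+\int_0^sL^c(x,\dot x,u)\,d\tau .
\]
I only argue for the backward function $h^c_{x_0,u_0}$. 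The forward statements then follow by the coordinate change $(x,p,u)\mapsto(x,-p,-u)$ combined with time reversal, which is explained after \eqref{reverse ham}.

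\emph{(1) Minimality.} The remark after Proposition \ref{Implicit variational} shows that a minimizer of $h^c_{x_0,u_0}(x,t)$ lifts to an element of $S^{x,t}_{x_0,u_0}$ whose endpoint value is $h^c_{x_0,u_0}(x,t)$. Hence the infimum over $S^{x,t}_{x_0,u_0}$ is at most $h^c_{x_0,u_0}(x,t)$.

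For the reverse inequality, fix any orbit in $S^{x,t}_{x_0,u_0}$ and put $F(s)=h^c_{x_0,u_0}(x(s),s)-u(s)$. Restricting the orbit's own curve to $[0,s]$ and using it as a competitor in \eqref{eq:Implicit variational} gives
\[
h^c_{x_0,u_0}(x(s),s)\leqslant u_0+\int_0^sL^c\big(x,\dot x,h^c_{x_0,u_0}(x(\tau),\tau)\big)\,d\tau .
\]
Subtracting the integral identity for $u$ yields $F(s)\leqslant\Lambda\int_0^sF^+\,d\tau$, and Gronwall gives $F^+\equiv0$. Therefore $h^c_{x_0,u_0}(x,t)\leqslant u(t)$ for every orbit in $S^{x,t}_{x_0,u_0}$.

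The equivalence $h^c_{x_0,u_0}(x,t)=u\Leftrightarrow h_c^{x,u}(x_0,t)=u_0$ comes from reading the same minimizing orbit as a competitor for the forward problem. The uniqueness of solutions of \chc is what lets the inf characterization and the sup characterization correspond to each other.

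\emph{(2) Monotonicity.} Let $\gamma$ be a minimizer for $h^c_{x_0,u_2}(x,t)$ and use it as a competitor for $h^c_{x_0,u_1}$. The difference $G(s)=h^c_{x_0,u_2}(\gamma(s),s)-h^c_{x_0,u_1}(\gamma(s),s)$ then satisfies
\[
G(s)\geqslant(u_2-u_1)-\Lambda\int_0^s|G|\,d\tau .
\]
A Gronwall argument first excludes sign changes of $G$ and then gives $G(t)\geqslant(u_2-u_1)e^{-\Lambda t}>0$. The monotonicity in $c$ is proved the same way: $L^{c_2}=L^{c_1}+(c_2-c_1)$, so the constant gap $u_2-u_1$ is replaced by the source term $(c_2-c_1)s$.

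\emph{(3) Markov property.} Splitting a competing curve at time $t$ shows that the right-hand side of \eqref{markov}, viewed as a function of $(x,s)$, solves the same implicit variational problem on $[t,t+s]$ as $h^c_{x_0,u_0}(\cdot,t+\cdot)$. Both inequalities come from concatenating and restricting minimizers. Equality then follows from uniqueness of the solution of the implicit fixed-point problem. I would prove that uniqueness by the same Gronwall comparison as in (1), applied to two candidate solutions along a common minimizing curve.

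\emph{(4) Lipschitz continuity.} I expect this to be the main obstacle, because it needs a priori bounds. Step one is to bound $|h^c_{x_0,u_0}(x,t)|$ on compact sets of $(u_0,t)$ with $t\geqslant\delta$, by comparing with constant and geodesic competitors and applying Gronwall. Step two uses (H2) and the superlinearity of $L$ to show that minimizers have uniformly bounded speed on such sets. This is the standard Tonelli argument (energy bound via the Euler--Lagrange flow of \chc), and it is the delicate part. Once the speed bound holds, perturbing the endpoint $x$ along a short geodesic, perturbing $t$ by reparametrization, and changing $u_0$ via the estimate in (2) each change the action by $O(\text{perturbation})$. Gronwall transfers these changes to $h$, which gives local Lipschitz continuity in all variables.
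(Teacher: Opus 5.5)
The paper gives no proof of this proposition and only cites \cite{WWY2}, so your argument has to stand on its own. At its core it does not yet do so, because the Gronwall comparisons are written in a one-shot form, integrated from time $0$, that does not close.

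In (1), subtracting $u(s)=u_0+\int_0^sL^c(x,\dot x,u)\,d\tau$ from the competitor inequality gives only $F(s)\leqslant\Lambda\int_0^s|F|\,d\tau$, not $\Lambda\int_0^sF^+\,d\tau$. On $\{F<0\}$ the integrand $L^c(x,\dot x,h^c_{x_0,u_0})-L^c(x,\dot x,u)$ can be positive: in the present setting it equals $-\lambda(x)F$, which is positive wherever $\lambda>0$. The inequality $F(s)\leqslant\Lambda\int_0^s|F|\,d\tau$ allows $F$ to dip far below $0$ and then become positive, so $F^+\equiv0$ does not follow.

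Likewise in (2), $G(s)\geqslant(u_2-u_1)-\Lambda\int_0^s|G|\,d\tau$ allows $G$ to be very large on a short initial interval and negative afterwards. So neither the exclusion of sign changes nor $G(t)\geqslant(u_2-u_1)e^{-\Lambda t}$ follows.

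The missing ingredient is the two-time (dynamic programming) inequality: for every Lipschitz curve $\gamma$ and $0<s_1<s_2$,
\[
h^c_{x_0,u_0}(\gamma(s_2),s_2)\leqslant h^c_{x_0,u_0}(\gamma(s_1),s_1)+\int_{s_1}^{s_2}L^c\big(\gamma(\tau),\dot\gamma(\tau),h^c_{x_0,u_0}(\gamma(\tau),\tau)\big)\,d\tau,
\]
with equality when $\gamma$ is a restriction of a minimizer. It follows by concatenating a minimizer for $h^c_{x_0,u_0}(\gamma(s_1),s_1)$ with $\gamma|_{[s_1,s_2]}$. This is legitimate because the integrand in the implicit variational principle of Proposition \ref{Implicit variational} sees a competitor only through $h^c_{x_0,u_0}$ evaluated along that competitor.

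It yields $F(s_2)-F(s_1)\leqslant\Lambda\int_{s_1}^{s_2}|F|\,d\tau$ and $G(s_2)-G(s_1)\geqslant-\Lambda\int_{s_1}^{s_2}|G|\,d\tau$. You then restart Gronwall at the last zero of $F$ before a point where $F>0$. For $G$, you run it on the interval before a first zero, which then cannot exist. To start either comparison you also need $\limsup_{s\to0^+}h^c_{x_0,u_0}(\gamma(s),s)\leqslant u_0$ along Lipschitz curves issuing from $x_0$. This is an a priori estimate near $t=0$ that neither Proposition \ref{Implicit variational} as stated nor your bounds for $t\geqslant\delta$ supply.

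The same two-time comparison is what makes (3) work, but your route there has its own problem. The function $g(x,s):=\inf_y h^c_{y,h^c_{x_0,u_0}(y,t)}(x,s)$ is an infimum of functions, each solving its own implicit equation. Since $L$ is not monotone in $u$, you cannot replace $h^c_{y,\cdot}$ by $g$ inside the integral. So the claim that $g$ solves the same implicit problem as $h^c_{x_0,u_0}(\cdot,t+\cdot)$ is essentially the Markov property itself. Compare directly instead:
\begin{itemize}
\item along a minimizer of $h^c_{y,h^c_{x_0,u_0}(y,t)}(x,s)$ you get $h^c_{x_0,u_0}(x,t+s)\leqslant h^c_{y,h^c_{x_0,u_0}(y,t)}(x,s)$ for every $y$;
\item along the tail $\gamma|_{[t,t+s]}$ of a minimizer $\gamma$ of $h^c_{x_0,u_0}(x,t+s)$, with $y=\gamma(t)$, you get the reverse inequality.
\end{itemize}
No uniqueness statement is needed.

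The equivalence $h^c_{x_0,u_0}(x,t)=u\Leftrightarrow h_c^{x,u}(x_0,t)=u_0$ does not come from uniqueness for \eqref{ch}. It comes from minimality of both functions together with strict monotonicity. The minimizing orbit gives $h_c^{x,u}(x_0,t)\geqslant u_0$. An orbit from $x_0$ to $x$ with $u(t)=u$ and $u(0)=\tilde u_0>u_0$ would give $h^c_{x_0,u_0}(x,t)<h^c_{x_0,\tilde u_0}(x,t)\leqslant u$, a contradiction. The converse implication follows from strict monotonicity of $u\mapsto h_c^{x,u}(x_0,t)$. So (2) must be proved before this equivalence.

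Finally, in (4) the Lipschitz dependence on $u_0$ needs the upper bound $|h^c_{x_0,u_2}-h^c_{x_0,u_1}|\leqslant e^{\Lambda t}|u_2-u_1|$, not the lower estimate of (2). You get it from the opposite comparison, using a minimizer for $h^c_{x_0,u_1}$ as competitor for $h^c_{x_0,u_2}$. The dependence on $x_0$ also has to be treated.
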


\subsection{Solution semigroups to evolutionary Hamilton-Jacobi equations}
Based on the backward\,/\,forward action function defined above, we define, for each $c\in\R$, two families of nonlinear operators $\{T^{c,\pm}_t\}_{t\geqslant 0}$. For each $\varphi\in C(M,\R)$ and $(x,t)\in M\times(0,+\infty)$,
\begin{equation}\label{eq:Tt-+ rep}
\begin{split}
T^{c,-}_t\varphi(x):=\inf_{y\in M}h^c_{y,\varphi(y)}(x,t),\\
T^{c,+}_t\varphi(x):=\sup_{y\in M}h_c^{y,\varphi(y)}(x,t).
\end{split}
\end{equation}
By setting $T^{c,\pm}_0\varphi=\varphi$, one can show that

\begin{proposition}\label{sg-1}
For $t\geqslant0$, the functional operators $T^{c,\pm}_t$ defined by \eqref{eq:Tt-+ rep} maps $C(M,\R)$ to itself and
\begin{enumerate}[(1)]
  \item for any $t,s\geqslant0, T^{c,\pm}_{t+s}=T^{c,\pm}_{t}\circ T^{c,\pm}_{s}$, that is, $\{T^{c,\pm}_t\}_{t\geq0}$ form two semigroups.

  \item for any $t\geqslant0$ and two initial data $\varphi\leqslant\psi,\,\,T^{c,\pm}_t\varphi\leqslant T^{c,\pm}_t\psi$.

  \item for any $t\geqslant0$ and two initial data $\varphi,\psi,\,\,\|T^{c,\pm}_t\varphi-T^{c,\pm}_t\psi\|_\infty\leqslant e^{\Lambda t}\|\varphi-\psi\|_\infty$.

  \item $(x,t)\mapsto T^{c,-}_t\varphi(x)$ is the unique solution to \eqref{hje} and $(x,t)\mapsto-T^{c,+}_t\varphi(x)$ is the unique solution to \eqref{reverse ham}.

  \item for any $t\geqslant0$ and $\varphi\in C(M,\R)$, then $T^{c,-}_{t}\circ T^{c,+}_{t}\varphi\geqslant\varphi$ and $T^{c,+}_{t}\circ T^{c,-}_{t}\varphi\leqslant\varphi$.
\end{enumerate}
Due to (2)-(3), $\{T^{c,-}_t\}_{t\geqslant 0}$ and $\{T^{c,+}_t\}_{t\geqslant 0}$ are called the \textit{backward and forward solution semigroup} to \eqref{HJe} respectively.
\end{proposition}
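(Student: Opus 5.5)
Properties (1), (2), (3) and (5) are statements about the action functions, and I would derive each of them from Proposition \ref{Minimality} by manipulating the representation formula \eqref{eq:Tt-+ rep}. Property (4) needs PDE input and is the main obstacle. I describe the plan for the backward operator $T^{c,-}_t$; the forward case $T^{c,+}_t$ is symmetric (sup instead of inf, $h_c^{x_0,u_0}$ instead of $h^c_{x_0,u_0}$).

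\emph{(2) and (3).} For (2), I would use the strict monotonicity of $u_0\mapsto h^c_{y,u_0}(x,t)$ from Proposition \ref{Minimality} (2): if $\varphi\leqslant\psi$, then $h^c_{y,\varphi(y)}(x,t)\leqslant h^c_{y,\psi(y)}(x,t)$ for every $y$, and taking the infimum over $y$ gives the claim. For (3), it suffices to prove the Lipschitz bound in $u_0$:
\[
|h^c_{y,u_2}(x,t)-h^c_{y,u_1}(x,t)|\leqslant e^{\Lambda t}|u_2-u_1|,
\]
since taking the infimum over $y$ preserves this bound. To prove it, I would fix a minimizer $\gamma$ of $h^c_{y,u_1}(x,t)$ from Proposition \ref{Implicit variational} and use $\gamma$ as a competitor in the implicit variational formula for $h^c_{y,u_2}$. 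Because $L$ is $\Lambda$-Lipschitz in $u$, the difference $F(s)$ of the two action values along $\gamma(s)$ satisfies
\[
0\leqslant F(s)\leqslant (u_2-u_1)+\Lambda\int_0^s F(\tau)\,d\tau ,
\]
where nonnegativity comes from the monotonicity in (2). Gronwall's inequality then gives $F(t)\leqslant e^{\Lambda t}(u_2-u_1)$, exactly as in the proof of Lemma \ref{lem:eq-B-Lip}.

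\emph{(1).} I would write
\[
T^{c,-}_{t+s}\varphi(x)=\inf_y\inf_z h^c_{z,h^c_{y,\varphi(y)}(z,t)}(x,s)
\]
using the Markov property \eqref{markov}, and then exchange the two infima. Since $u_0\mapsto h^c_{z,u_0}(x,s)$ is nondecreasing and continuous (by (2) and (3)),
\[
\inf_y h^c_{z,h^c_{y,\varphi(y)}(z,t)}(x,s)=h^c_{z,\,\inf_y h^c_{y,\varphi(y)}(z,t)}(x,s)=h^c_{z,T^{c,-}_t\varphi(z)}(x,s).
\]
Taking the infimum over $z$ yields $T^{c,-}_sT^{c,-}_t\varphi(x)$, which is the semigroup law.

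\emph{(5).} By definition, $T^{c,+}_t\varphi(y)\geqslant h_c^{x,\varphi(x)}(y,t)$ for every $x$. By monotonicity,
\[
T^{c,-}_tT^{c,+}_t\varphi(x)\geqslant\inf_y h^c_{y,\,h_c^{x,\varphi(x)}(y,t)}(x,t).
\]
The duality relation in Proposition \ref{Minimality} (1), namely $h^c_{y,u}(x,t)=u_0\Leftrightarrow h_c^{x,u_0}(y,t)=u$, makes every term inside this infimum equal to $\varphi(x)$. Hence $T^{c,-}_tT^{c,+}_t\varphi\geqslant\varphi$. The inequality $T^{c,+}_tT^{c,-}_t\varphi\leqslant\varphi$ is proved in the same way.

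\emph{(4), the main obstacle.} I would first check that $T^{c,-}_t\varphi\in C(M,\R)$ and that $(x,t)\mapsto T^{c,-}_t\varphi(x)$ is continuous on $M\times(0,\infty)$. For Lipschitz $\varphi$, I would use Proposition \ref{Minimality} (4) together with uniform bounds on minimizers. The delicate point is continuity at $t=0$, that is $T^{c,-}_t\varphi\to\varphi$ uniformly. For Lipschitz $\varphi$, the upper bound $T^{c,-}_t\varphi(x)\leqslant h^c_{x,\varphi(x)}(x,t)\leqslant\varphi(x)+Ct$ follows from the constant curve. For the lower bound, superlinearity of $L$ confines minimizers to a small neighbourhood, so the lower bound holds up to $o(1)$. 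General continuous $\varphi$ then follow by density, using (3). Next I would show that $U(x,t)=T^{c,-}_t\varphi(x)$ is a viscosity solution. I would use the dynamic programming identity (1) in the form $U(x,t+s)=\inf_y h^c_{y,U(y,t)}(x,s)$ and test with smooth functions touching $U$ from above or below. The subsolution inequality follows from constant-velocity competitor curves, and the supersolution inequality follows from the minimizing characteristic given by the Remark after Proposition \ref{Implicit variational}. Finally, uniqueness would follow from the standard comparison principle for \eqref{hje}, which applies because $H$ is Tonelli in $p$ and uniformly Lipschitz in $u$. If needed, I would apply the change of variables $e^{-\Lambda t}U$ to make the equation monotone in $u$. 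The corresponding statement for $-T^{c,+}_t\varphi$ and \eqref{reverse ham} follows by the coordinate change $(x,p,u)\mapsto(x,-p,-u)$ described after \eqref{ch}.
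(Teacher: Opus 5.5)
The paper gives no proof of this proposition. It is stated in Appendix A as background from the variational theory of \cite{WWY1,WWY2,WWY3}, so there is no internal argument to compare with, and your proposal is an independent derivation. For (1), (2), (3) and (5) it is correct, and it uses only Proposition~\ref{Minimality} and the Remark after Proposition~\ref{Implicit variational}.
\begin{itemize}
\item The Remark is what justifies your Gronwall step in (3). Along a minimizer $\gamma$ of $h^c_{y,u_1}(x,t)$ you have $\frac{d}{ds}h^c_{y,u_1}(\gamma(s),s)=L^c\bigl(\gamma(s),\dot\gamma(s),h^c_{y,u_1}(\gamma(s),s)\bigr)$. So the variational identity holds on every $[0,s]$, while $\gamma|_{[0,s]}$ is only a competitor for $h^c_{y,u_2}(\gamma(s),s)$. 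This is the same mechanism as in Lemma~\ref{lem:eq-B-Lip}.
\item The exchange of infima in (1) is legitimate, because $u_0\mapsto h^c_{z,u_0}(x,s)$ is nondecreasing and continuous and $T^{c,-}_t\varphi(z)$ is finite.
\item In (5), the duality in Proposition~\ref{Minimality}~(1) does make every term of your lower bound equal to $\varphi(x)$.
\item You do not need $\varphi$ Lipschitz to get $T^{c,-}_t\varphi\in\mathrm{Lip}(M,\R)$ for $t>0$. It follows for every bounded $\varphi$ from the local Lipschitz continuity of $(x_0,u_0,x,t)\mapsto h^c_{x_0,u_0}(x,t)$ on the compact set $M\times[-\|\varphi\|_\infty,\|\varphi\|_\infty]\times M\times[\delta,T]$.
\end{itemize}

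In (4), three steps need more than you wrote.

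(i) \emph{Uniqueness.} The usual comparison theorem for $\partial_tU+H(x,\partial_xU,U)=c$ is proved under a structure condition such as $|H(x,p,u)-H(y,p,u)|\leqslant\omega(|x-y|(1+|p|))$ in charts. This fails for general Tonelli Hamiltonians, e.g.\ $\frac{1}{2}a(x)|p|^2$ with $a$ non-constant. The substitution $e^{-\Lambda t}U$ fixes monotonicity in $u$ but not this. A clean repair uses that $U=T^{c,-}_t\varphi$ is Lipschitz on $M\times[\delta,T]$. When you double variables against an arbitrary continuous solution $W$, the penalization momenta are then bounded by the Lipschitz constant of $U$, in either direction of the comparison. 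So local uniform continuity of $H$ suffices. Compare on $[\delta,T]$, then let $\delta\to0$.

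(ii) \emph{Continuity at $t=0$.} This needs uniform short-time bounds on $h^c_{y,u}(z,\tau)$ for $d(y,z)=O(\tau)$. These are not in Proposition~\ref{Minimality}, whose item (4) is only local on $t>0$. In your constant-curve argument the integrand still contains the unknown $h^c_{x,\varphi(x)}(x,\tau)$. So you must establish the lower bound first before Gronwall closes.

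(iii) \emph{The forward semigroup.} For $-T^{c,+}_t\varphi$, the change of coordinates on the characteristic system does not by itself identify the semigroups. What you need is
\[
h_c^{x_0,u_0}(x,t)=-\breve{h}^c_{x_0,-u_0}(x,t),
\]
where $\breve{h}$ is the backward action function of $\breve{H}$. You get this by reversing curves, using $\breve{L}(x,\dot x,u)=L(x,-\dot x,-u)$, and invoking uniqueness in the implicit variational principle. Then $-T^{c,+}_t\varphi=\breve{T}^{c,-}_t(-\varphi)$, and the backward case applied to $\breve{H}$ finishes the argument.
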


The next proposition build a connection between the definitions in the first part to $t$-monotonicity of the solution semigroups.
\begin{proposition}\label{mono}
Assume $v\in C(M,\R)$ is a subsolution to \eqref{HJs} if and only if for any $x\in M$ and $t\geqslant0$,
\begin{equation}\label{eq:t-mono}
v(x)\leqslant\,\,T^{c,-}_t v(x)\quad(\text{resp.}\quad v(x)\geqslant\,\,T^{c,+}_t v(x)).
\end{equation}
Furthermore, we have
\begin{enumerate}[(1)]
  \item if $v$ is strict at all of $x\in M$, then \eqref{eq:t-mono} holds \textbf{strictly}, that is for any $x\in M$ and $t>0$,
        \[
        v(x)<\,\,T^{c,-}_t v(x)\quad\text{and}\quad v(x)>\,\,T^{c,+}_t v(x).
        \]

  \item if\,\,$T^{c,-}_t v$ \big(resp. $T^{c,+}_t v$\big) has an upper bound (resp. lower bound), then the uniform limits $\lim_{t\rightarrow\infty}T^{c,-}_t v\,\,\,(\text{resp. }\lim_{t\rightarrow\infty}T^{c,+}_t v)$ exists and belongs to $\mathcal{S}^{c,-}($ resp. $\mathcal{S}^{c,+})$, the set of all fixed points of $\{T^{c,-}_t\}_{t\geqslant 0}($ resp. $\{T^{c,+}_t\}_{t\geqslant 0})$.

  \item $\cS$ coincides with the set of viscosity solutions to \eqref{hjs} and $v_+\in\mathcal{S}^{c,+}$ if and only if $-v_+$ is a solution to
      \begin{equation}\label{reverse eq}\tag{S$'_c$}
      \breve{H}(x,d_x u,u)=c,\quad x\in M.
      \end{equation}

  \item for each $u_-\in\mathcal{S}^{c,-}\,($resp. $v_+\in\mathcal{S}^{c,+})$, the uniform limit $\lim_{t\to+\infty}T^{c,+}_tu_-:=v_+ \,($resp. $\lim_{t\to+\infty}T^{c,-}_tv_+:=u_-)$ exists and belongs to $\mathcal{S}^{c,+}\,($resp. $\mathcal{S}^{c,-}$). In this case, for any $t\geqslant0$, the set $\{x\in M: T^{c,+}u_-(x)=v_+(x)\}\,($resp. $\{x\in M: T^{c,-}v_+(x)=u_-(x)\})$ is not empty.

\end{enumerate}
\end{proposition}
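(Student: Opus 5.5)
We must prove Proposition \ref{mono}: $v$ is a subsolution iff $v\leqslant T^{c,-}_tv$ for all $t\geqslant0$ (resp. $v\geqslant T^{c,+}_tv$), together with (1)--(4). The plan is to work through the action-function representation \eqref{eq:Tt-+ rep} and the comparison principle for the Cauchy problem \eqref{hje}. We use throughout that $H=\lambda(x)u+h(x,p)$ is Lipschitz in $u$ with constant $\Lambda$.

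\emph{The equivalence.} For the forward direction, let $v$ be a subsolution; it is Lipschitz by the argument in Proposition \ref{prop:1}(1). Then $V(x,t):=v(x)$ is a viscosity subsolution of \eqref{hje} with initial datum $v$. By the comparison principle, which underlies the uniqueness in Proposition \ref{sg-1}(4), we get $v\leqslant T^{c,-}_tv$.

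For the converse, suppose $v\leqslant T^{c,-}_tv$ for all $t$. Take a test function $\phi$ touching $v$ from above at $x_0$. Then $T^{c,-}_tv(x_0)\geqslant v(x_0)=\phi(x_0)$, while $T^{c,-}_t\phi\geqslant T^{c,-}_tv$ by monotonicity. For smooth $\phi$ we have $T^{c,-}_t\phi(x_0)=\phi(x_0)-t\,[H(x_0,d\phi,\phi)-c]+o(t)$. Comparing the two gives $H(x_0,d\phi(x_0),v(x_0))\leqslant c$.

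The statement for $T^{c,+}_t$ follows in the same way. Alternatively, apply the $T^{c,-}$ result to $\breve H$ and use the conjugation $T^{c,+}_t\varphi=-\breve T^{c,-}_t(-\varphi)$.

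\emph{Item (1).} Assume $v$ is strict at every point. By compactness and upper semicontinuity of $D^+v$, it is strict uniformly: $H(x,p,v(x))\leqslant c-\varepsilon$ for all $x$ and $p\in D^+v(x)$. Then $V(x,t)=v(x)+\eta t$ is a subsolution of \eqref{hje} on a short interval $t\in[0,t_0]$ as soon as $\eta+\Lambda\eta t_0\leqslant\varepsilon$. Comparison gives $T^{c,-}_tv\geqslant v+\eta t>v$ for $t\in(0,t_0]$. The semigroup property, combined with the $t$-monotonicity already proved, extends the strict inequality to all $t>0$. The $T^{c,+}$ case is symmetric.

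\emph{Item (2).} By the equivalence and the semigroup property, $t\mapsto T^{c,-}_tv$ is nondecreasing. Indeed, $T_{t+s}v=T_t(T_sv)\geqslant T_tv$. Assume it is bounded above. For $t\geqslant1$ the functions $T^{c,-}_tv$ are equi-Lipschitz, since they are solutions of \eqref{hje} that are bounded (from below by $v$, from above by assumption) and $h$ is superlinear (Proposition \ref{Minimality}(4)). So Dini/Arzelà--Ascoli gives a uniform limit $u$. Continuity of $T_s$ (Proposition \ref{sg-1}(3)) yields $T_su=\lim_t T_{s+t}v=u$.

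\emph{Item (3).} A fixed point gives a stationary solution of \eqref{hje}, hence a viscosity solution of \eqref{hjs}. Conversely, a viscosity solution is a time-independent solution of \eqref{hje}, hence a fixed point by uniqueness. The $\mathcal S^{c,+}$ statement follows by conjugation.

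\emph{Item (4).} Since $u_-$ is a solution, $T^{c,+}_tu_-\leqslant u_-$ by the equivalence, so the family is nonincreasing in $t$. The main obstacle is a lower bound for it. We use Proposition \ref{sg-1}(5): $T^{c,-}_t\circ T^{c,+}_tu_-\geqslant u_-$.

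We then show the contact set $\{x:T^{c,+}_tu_-(x)=u_-(x)\}$ is nonempty. Take $x$ on a backward calibrated curve of $u_-$; the curve $\gamma:[-t,0]\to M$ with $\gamma(0)=x$ and the representation \eqref{eq:Tt-+ rep} give $T^{c,+}_tu_-(\gamma(-t))\geqslant u_-(\gamma(-t))$. Hence equality holds at $\gamma(-t)$.

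A nonincreasing family of equi-Lipschitz functions, each touching $u_-$ at some point and with oscillation bounded uniformly in $t$, is bounded below uniformly. We then conclude as in (2).
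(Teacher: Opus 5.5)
Your equivalence argument, items (2) and (3), and your use of backward calibrated curves to show $\{x:T^{c,+}_tu_-(x)=u_-(x)\}\neq\emptyset$ are fine. Two steps, however, do not work as written.

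\emph{The uniform lower bound in (4) is circular.} You derive it from equi-Lipschitz continuity of $\{T^{c,+}_tu_-\}_{t\geqslant1}$. In this $u$-dependent setting such an estimate is only available once a two-sided bound is known; that is exactly Proposition~\ref{sg-2}(2). As soon as $\lambda$ is non-constant, the Lipschitz constant of $T^{c,+}_1\psi$ grows with $\|\psi\|_\infty$ (try $\psi\equiv-K$). At that stage you only know $T^{c,+}_tu_-\leqslant u_-$. Your contact points give the bound directly: for $t\geqslant1$ choose $z$ with $T^{c,+}_{t-1}u_-(z)=u_-(z)$. Then
\[
T^{c,+}_tu_-(x)=\sup_{y\in M}h_c^{y,\,T^{c,+}_{t-1}u_-(y)}(x,1)\geqslant h_c^{z,\,u_-(z)}(x,1)\geqslant\min\bigl\{h_c^{y,u_0}(x',1):\ y,x'\in M,\ |u_0|\leqslant\|u_-\|_\infty\bigr\},
\]
which is finite by the continuity in Proposition~\ref{Minimality}(4). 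Now the family is bounded, hence equi-Lipschitz, and you can conclude as in (2). Passing to the limit along contact points $z_{t_n}$ also gives $\{u_-=v_+\}\neq\emptyset$. Since $v_+\leqslant T^{c,+}_tu_-\leqslant u_-$, it follows that $\{T^{c,+}_tu_-=v_+\}\neq\emptyset$ for every $t$, which is the set literally written in (4).

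\emph{In (1), pointwise strictness is not automatically uniform.} Your appeal to ``upper semicontinuity of $D^+v$'' fails for general Lipschitz subsolutions. The Fr\'echet superdifferential has closed graph for semiconcave functions, not in general, and at convex kinks $D^+v=\emptyset$, so strictness there is vacuous. For instance, take $h=p^2$, $c=1$, $\lambda=0$ near $0$ and $v(x)=|x|-|x|^{3/2}$ near $0$. Then $|v'|<1$ off $0$ and $D^+v(0)=\emptyset$, yet $\sup_{x\neq0}|v'(x)|^2=1$. Your comparison with $v+\eta t$ therefore proves (1) only for subsolutions at level $c-\varepsilon$. That is what Lemma~\ref{strict-sub} supplies, so the paper's applications are unaffected.

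For the statement as written, argue by calibration. Suppose $T^{c,-}_tv(x)=v(x)$ with $t>0$. Let $\gamma:[0,t]\to M$ be a minimizer for $T^{c,-}_tv(x)$ with values $u(s)\geqslant T^{c,-}_sv(\gamma(s))\geqslant v(\gamma(s))$. Then $w=u-v\circ\gamma\geqslant0$ and $w(t)=0$. Domination gives $w(s_2)-w(s_1)\geqslant-\int_{s_1}^{s_2}\lambda(\gamma)w\,d\tau$, so $w\equiv0$ and $\gamma$ is $(v,L,c)$-calibrated.

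Now fix $0<s<t$. The function $v$ is touched at $\gamma(s)$ from above by $h^c_{\gamma(s-\varepsilon),v(\gamma(s-\varepsilon))}(\cdot,\varepsilon)$, since $v\leqslant T^{c,-}_\varepsilon v$. It is touched from below by $h_c^{\gamma(s+\varepsilon),v(\gamma(s+\varepsilon))}(\cdot,\varepsilon)$, since $v\geqslant T^{c,+}_\varepsilon v$. These are locally semiconcave, resp.\ semiconvex, so $v$ is differentiable at $\gamma(s)$. Equality in Fenchel's inequality along $\gamma$ then forces $H(\gamma(s),d_xv(\gamma(s)),v(\gamma(s)))=c$, contradicting strictness at $\gamma(s)$.
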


Concerning the large time behavior of solution semigroups on general initial data, we have

\begin{proposition}\label{sg-2}
For any $\varphi\in C(M,\R)$ and $t_0>0$,
\begin{enumerate}[(1)]
  \item the function $(x,t)\mapsto T^{c,\pm}_{t}\varphi(x)$ is locally Lipschitz on $M\times(0,+\infty)$.

  \item assume, as a family of functions on $M$, $\{T^{c,-}_{t}\varphi\}_{t\geqslant t_0}\,($resp. $\{T^{c,+}_{t}\varphi\}_{t\geqslant t_0})$ is uniformly bounded, then $\{T^{c,-}_{t}\varphi\}_{t\geqslant t_0}\,($resp. $\{T^{c,+}_{t}\varphi\}_{t\geqslant t_0})$ is uniformly Lipschitz in $x$.

  \item under the assumptions of (2), then $\liminf_{t\rightarrow+\infty}T^{c,-}_{t}\varphi\in\cS\,\,($resp.  $\limsup_{t\rightarrow+\infty}T^{c,+}_{t}\varphi\in\mathcal{S}^{c,+})$. In both cases, the limit functions are subsolutions to \eqref{hjs}.

  \item assume $\varphi\leqslant T^{c,-}_{t_0}\varphi\,($resp. $\varphi\geqslant T^{c,+}_{t_0}\varphi)$, then either $\lim_{n\to+\infty}T^{c,-}_{nt_0}\varphi(x)=+\infty$,  $(\text{resp.}$  $   \lim_{n\to+\infty}T^{c,+}_{nt_0}\varphi(x)=-\infty)$ uniformly on $x\in M$, or $\lim_{n\to+\infty}T^{c,-}_{nt_0}\varphi(x)/T^{c,+}_{nt_0}\varphi(x)=\varphi_\infty(x)$ uniformly on $x\in M$, where $\varphi_\infty(x)\in\mathrm{Lip}(M,\R)$.
\end{enumerate}
\end{proposition}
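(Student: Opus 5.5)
The statement collects four properties of the solution semigroups. I will prove them in order, for $T^{c,-}_t$ only. The $T^{c,+}_t$ versions follow from the identity $T^{c,+}_t\varphi=-\breve T^{c,-}_t(-\varphi)$ and the symmetry of the characteristic systems noted after \eqref{ch}. Throughout, the tools are the representation \eqref{eq:Tt-+ rep}, the Markov property and local Lipschitz continuity of the action functions (Proposition \ref{Minimality} (3)--(4)), and Proposition \ref{sg-1}.

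For (1) and (2), I write $T^{c,-}_{t}\varphi=T^{c,-}_{\sigma}\big(T^{c,-}_{t-\sigma}\varphi\big)$ with a fixed $\sigma\in(0,t_0/2]$. Then $T^{c,-}_t\varphi(x)=\inf_{y}h^c_{y,\psi(y)}(x,\sigma)$ with $\psi=T^{c,-}_{t-\sigma}\varphi$.
\begin{itemize}
\item For (1): $\psi$ ranges over a bounded set when $t$ ranges over a compact subinterval of $(0,\infty)$, by Proposition \ref{sg-1} (3).
\item For (2): $\psi$ ranges over a bounded set for all $t\geqslant t_0$, by hypothesis.
\end{itemize}
In both cases $(y,u_0)$ stays in a compact subset of $M\times\R$, and Proposition \ref{Minimality} (4) gives a Lipschitz constant of $x\mapsto h^c_{y,u_0}(x,\sigma)$ uniform in $(y,u_0)$. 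An infimum of uniformly Lipschitz functions is Lipschitz with the same constant, which gives the spatial part of (1) and all of (2). Lipschitz continuity in $t$ comes from the Lipschitz dependence of $h^c_{y,u_0}(x,\cdot)$ on $\tau\in[\sigma,2\sigma]$, again by Proposition \ref{Minimality} (4), combined with the Markov property.

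For (3), I set $u_s:=\inf_{t\geqslant s}T^{c,-}_t\varphi$. By (2), the family $\{u_s\}_{s\geqslant t_0}$ is equi-Lipschitz, and it is nondecreasing in $s$. Dini's theorem and Arzel\`a--Ascoli then give $u_s\to u:=\liminf_{t\to\infty}T^{c,-}_t\varphi$ uniformly, with $u$ Lipschitz. Monotonicity and the semigroup property give $T^{c,-}_\tau u_s\leqslant u_{s+\tau}$, hence $T^{c,-}_\tau u\leqslant u$ for all $\tau$. The reverse inequality is the main obstacle, since a liminf is naturally only a one-sided object. I would obtain it from the PDE side. Each $U_s(x,t):=T^{c,-}_{s+t}\varphi(x)$ solves \eqref{HJe}, and by (2) these solutions are uniformly Lipschitz in $(x,t)$ on $t\geqslant t_0$.
\begin{itemize}
\item The lower half-relaxed limit of the $U_s$ is $u$, which is constant in $t$; by Barles--Perthame stability it is a viscosity supersolution of \eqref{HJs}.
\item For the subsolution property I use convexity (H1) through Lemma \ref{sub-criteria}. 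Every $T^{c,-}_t\varphi$ satisfies $\partial_tU+\lambda U+h(x,\partial_xU)\leqslant c$ a.e. An infimum of equi-Lipschitz functions inherits the a.e. subsolution inequality for convex Hamiltonians (the Barron--Jensen stability of infima). Passing to the uniform limit $s\to\infty$ kills the time derivative, so $u$ is an a.e. subsolution of \eqref{HJs} and hence a viscosity subsolution.
\end{itemize}
Thus $u$ is a viscosity solution, and $u\in\cS$ by Proposition \ref{mono} (3).

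For (4), the sequence $\varphi_n:=T^{c,-}_{nt_0}\varphi$ is nondecreasing by induction, using Proposition \ref{sg-1} (1)--(2). There are two cases.
\begin{itemize}
\item Suppose $\sup_n\varphi_n(x^\ast)<\infty$ for some $x^\ast$. Then I claim $\{\varphi_n\}$ is bounded on all of $M$. The lower bound is $\varphi_n\geqslant\varphi$. For the upper bound, with $\psi=T^{c,-}_{nt_0-t_0}\varphi$,
\[
\varphi_n(x)=T^{c,-}_{t_0}\psi(x)\leqslant h^c_{x^\ast,\psi(x^\ast)}(x,t_0)\leqslant h^c_{x^\ast,K}(x,t_0),
\]
where $K=\sup_n\varphi_n(x^\ast)$. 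This uses $\psi(x^\ast)\leqslant\varphi_n(x^\ast)\leqslant K$ (monotonicity of the sequence) and Proposition \ref{Minimality} (2). The right-hand side is a continuous function of $x$. Then (2) gives equi-Lipschitz bounds, and monotone convergence plus Dini gives uniform convergence to a Lipschitz $\varphi_\infty$.
\item Otherwise $\varphi_n(x)\to+\infty$ for every $x$. Since the $\varphi_n$ are continuous and nondecreasing, Dini's theorem applied to $\min\{\varphi_n,N\}$ for each $N$ upgrades pointwise divergence to uniform divergence on the compact $M$.
\end{itemize}
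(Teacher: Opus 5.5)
The paper does not prove this proposition. It is listed in Appendix A as background from the variational literature it cites, so there is no in-paper argument to match, and I judge your proposal on its own merits. It is correct, with a few small repairs. Your arguments for (1), (2) and (4) are the natural ones: uniform Lipschitz bounds for $x\mapsto h^c_{y,u_0}(x,\sigma)$ over a compact set of $(y,u_0)$ pass to the infimum, and in (4) a nondecreasing sequence is either bounded (then equi-Lipschitz, and Dini applies) or diverges uniformly by Dini.

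For (3) your route is correct but much heavier than needed. You treat $u\leqslant T^{c,-}_\tau u$ as the hard direction and get it from two stability results: Barles--Perthame for the supersolution half, and stability of subsolutions under infima for convex Hamiltonians for the subsolution half. The second result is true but needs more than a name. The infimum over $\sigma\geqslant s$ need not be attained, so you must pass to the $C^0$-closure of the equi-Lipschitz family, and then use $D^-U\subset\partial_C U$ for Lipschitz $U$ together with convexity in $p$. It also needs Lipschitz bounds in $t$, which (2) as stated does not give, though your proof of (1) does.

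Within the paper's own toolkit the reverse inequality takes two lines. Fix $x,y\in M$ and choose $t_n\to+\infty$ with $T^{c,-}_{t_n}\varphi(x)\to u(x)$. By the semigroup property and \eqref{eq:Tt-+ rep},
\[
T^{c,-}_{t_n+\tau}\varphi(y)\leqslant h^c_{x,T^{c,-}_{t_n}\varphi(x)}(y,\tau).
\]
The left-hand side has $\liminf$ at least $u(y)$. The right-hand side tends to $h^c_{x,u(x)}(y,\tau)$ by continuity in $u_0$ (Proposition \ref{Minimality} (4)). Taking the infimum over $x$ gives $u\leqslant T^{c,-}_\tau u$. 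Together with your $T^{c,-}_\tau u\leqslant u$, this makes $u$ a fixed point directly, and Proposition \ref{mono} then gives the subsolution property. Your PDE route only buys independence from the action-function representation in this one step, and you use that representation everywhere else anyway.

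Small repairs:
\begin{enumerate}
  \item In (1), boundedness of $T^{c,-}_r\varphi$ for $r$ in a compact interval $[0,b]$ comes from continuity of the solution on $M\times[0,b]$, not from Proposition \ref{sg-1} (3).
  \item In (2), the hypothesis does not cover $T^{c,-}_{t-\sigma}\varphi$ for $t-\sigma\in[t_0/2,t_0)$; the same continuity argument handles this.
  \item In (4), you only know boundedness along the times $nt_0$, so (2) does not apply verbatim. Run your argument with $\sigma=t_0$ on $\varphi_n=T^{c,-}_{t_0}\varphi_{n-1}$ instead.
  \item In the forward case of (3), symmetry gives $v:=\limsup_{t\to+\infty}T^{c,+}_t\varphi\in\mathcal{S}^{c,+}$. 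That $v$ is a subsolution of \eqref{hjs} still needs a line: $H(x,d_xv,v)=c$ a.e., so Lemma \ref{sub-criteria} applies.
\end{enumerate}
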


\section{Solvability of the non-negative discounted equations}\label{section-Appendix-c}
In this part, we consider the solvability of \eqref{HJs} with $\lambda(x)\geqslant0$ and $c=c_0$. We give another definition of critical value depending on $h$ and $\lambda$, this leads to a necessary and sufficient condition for the existence of subsolution. Throughout this section, we assume (H1)-(H2) and the discount factor $\lambda(x)\geqslant 0$ with $\lambda^{-1}(0)\neq\emptyset$ and $\max_{x\in M}\lambda(x)>0$. We define

\begin{equation}\label{dcv2}
c_\lambda(h): = \inf_{u \in C^\infty(M,\R)} \sup_{x \in \lambda^{-1}(0)} h(x, d_x u(x))
\end{equation}

\begin{lemma}\label{eq:dcv}
$c_\lambda(h)=c_0$.
\end{lemma}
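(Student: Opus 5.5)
The identity follows from two opposite inequalities, both obtained by comparing the test functions in \eqref{dcv} and \eqref{dcv2} directly. Throughout, write $Z:=\lambda^{-1}(0)$, which is a nonempty compact set. Since $h\geqslant\mathbf{e}_0$, we have $c_\lambda(h)\geqslant\mathbf{e}_0>-\infty$. Also $c_\lambda(h)\leqslant\max_{x}h(x,0_x)<+\infty$, so $c_\lambda(h)$ is finite. This is consistent with Proposition \ref{prop:1} (2), which gives $c_0\in\R$.

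\emph{Step 1: $c_0\geqslant c_\lambda(h)$.} Fix $\psi\in C^\infty(M,\R)$. Since $\lambda=0$ on $Z$,
\[
\sup_{x\in M}\big[\lambda(x)\psi(x)+h(x,d_x\psi)\big]\geqslant\sup_{x\in Z}h(x,d_x\psi)\geqslant c_\lambda(h).
\]
Taking the infimum over $\psi$ gives the inequality. This step uses neither the sign of $\lambda$ nor any property of $h$.

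\emph{Step 2: $c_0\leqslant c_\lambda(h)$.} Fix $\varepsilon>0$ and choose $u\in C^\infty(M,\R)$ with $\sup_{x\in Z}h(x,d_xu)<c_\lambda(h)+\varepsilon$. The test functions will be the vertical translates $\psi_K:=u+K$ with $K<0$. Since $d\psi_K=du$, the Hamiltonian term is unchanged, and only the discount term moves. The maps $x\mapsto h(x,d_xu)$ and $x\mapsto\lambda(x)|u(x)|$ are continuous. The first is $<c_\lambda(h)+\varepsilon$ on $Z$, and the second vanishes on $Z$. Hence there is an open neighbourhood $U\supset Z$ on which
\[
h(x,d_xu)<c_\lambda(h)+2\varepsilon\quad\text{and}\quad\lambda(x)|u(x)|<\varepsilon.
\]
We estimate $\lambda\psi_K+h(x,d\psi_K)$ on $U$ and on its complement separately.
\begin{itemize}
\item On $U$: because $\lambda\geqslant0$ and $K<0$, we have $\lambda\psi_K+h(x,du)\leqslant\lambda u+h(x,du)<c_\lambda(h)+3\varepsilon$.
\item On the compact set $M\setminus U$: there $\lambda\geqslant m$ for some $m>0$, so
\[
\lambda\psi_K+h(x,du)\leqslant\Lambda\|u\|_\infty+\max_{M}h(x,d_xu)+mK.
\]
This bound is $<c_\lambda(h)$ once $K$ is sufficiently negative.
\end{itemize}
Thus $c_0\leqslant\sup_M[\lambda\psi_K+h(x,d\psi_K)]\leqslant c_\lambda(h)+3\varepsilon$, and letting $\varepsilon\to0$ gives the inequality.

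The only point requiring care is Step 2. There the hypotheses $\lambda\geqslant0$ and $\max\lambda>0$ are used so that a downward shift lowers the discount term everywhere on $U$ without raising it. The positivity of $\lambda$ away from the neighbourhood $U$ of its zero set then lets the shift push the expression below $c_\lambda(h)$ there. No regularity beyond smoothness of $u$ and compactness of $M$ is needed.
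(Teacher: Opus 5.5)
Your proof is correct and follows essentially the same route as the paper. The easy inequality comes from restricting the supremum to $\lambda^{-1}(0)$. For the reverse, you take a near-optimal $u$ for $c_\lambda(h)$, shift it down by a large constant, and split $M$ into a neighbourhood of $\lambda^{-1}(0)$ (the paper uses $\lambda^{-1}([0,\delta])$), where $\lambda\geqslant 0$ makes the shift harmless, and its complement, where $\lambda$ is bounded below so the shift dominates. The only cosmetic difference is that the paper normalizes the shifted function to be nonpositive on all of $M$, whereas you control $\lambda|u|$ on the neighbourhood directly.
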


\begin{proof}
By the definition \eqref{dcv}, we have
\begin{equation}
\begin{split}
	c_0= &\,\inf_{u \in C^\infty(M,\R)}\bigg(\sup_{x \in M}\big[\lambda(x) u(x) + h(x, d_x u(x))\big]\bigg) \\
	\geqslant &\,  \inf_{u \in C^\infty(M,\R)}\bigg(\sup_{x \in \lambda^{-1}(0)} h(x, d_x u(x))\bigg) = c_\lambda(h)
\end{split}
\end{equation}
To prove the other direction, for any $\varepsilon>0$, there is $g\in C^\infty(M,\R)$ such that
$$
\sup_{x \in \lambda^{-1}(0)} h(x, d_x g(x)) < c_\lambda(h) + \frac{\varepsilon}{2}$$
Due to the smoothness of \( h \) and \( g \), there is $\delta > 0$ such that
$$
 \sup_{x\in\lambda^{-1}([0, \delta])} h(x, d_x g(x)) < c_\lambda(h) + \varepsilon.
 $$
Set $E:=\sup_{x \in M} h(x, d_x g(x)), u_g:=g(x)-\|g\|_\infty-\frac{E+|c_\lambda(h)|}{\delta}\leqslant0$, then
\begin{align*}
c_0\leqslant &\, \sup_{x \in M} \{ \lambda(x)u_g(x) + h(x, d_x u_g(x)) \} \\
\leqslant &\, \sup\left\{ \sup_{x \in \lambda^{-1}([0, \delta])}\{ \lambda(x) u_g(x) + h\left(x, d_x u_g(x)\right)\}, \sup_{x\in M}\{ \delta u_g(x) + h\left(x, d_x u_g(x)\right)\} \right\}\\
\leqslant &\, \sup\left\{ \sup_{x \in \lambda^{-1}([0, \delta])} h\left(x, d_x u_g(x)\right), -|c_\lambda(h)| \right\}=c_\lambda(h) + \varepsilon.
\end{align*}
Since $\varepsilon$ is arbitrary, $c_0\leqslant c_\lambda(h)$. This completes the proof.
\end{proof}

We recall the definition of Ma\~{n}\'{e} critical value for $h$ as
\begin{equation}\label{mane}
c(h)=\inf_{u\in C^\infty(M,\R)}\sup_{x \in M} h(x, d_x u(x))=\inf_{\substack{u\in C^\infty(M,\R)\\ u\leqslant0}}\sup_{x \in M} h(x, d_x u(x)).
\end{equation}
From \eqref{dcv2}, it is evident that for any $a\in\R$,
\begin{equation}\label{eq:C-1}
\begin{split}
	c_0=c_\lambda(h)=&\, \inf_{u\in C^\infty(M,\R)} \sup_{x \in \lambda^{-1}(0)} a\lambda(x)+h(x, d_x u(x))\\
	 \leqslant &\, \inf_{u\in C^\infty(M,\R)} \sup_{x \in M} a\lambda(x)+h(x, d_x u(x))=c(h+a\lambda),
\end{split}
\end{equation}
where $h+a\lambda$ is the Tonelli Hamiltonian $h(x,p)+a\lambda(x)$. Moreover, we have

\begin{proposition}
With the definitions \eqref{dcv2} and \eqref{mane},
\begin{enumerate}[(1)]
  \item For $a\in\R, c_\lambda(h)\leqslant c(h+a\lambda)$ and $c_\lambda(h)=\lim_{a \to -\infty} c(h + a\lambda)$,
  \item The equation $(dS_{c_0})$ has a subsolution if and only if there is $a\in\R$ such that \(c_\lambda(h) = c(h + a\lambda)\).
\end{enumerate}
\end{proposition}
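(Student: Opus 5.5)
The plan is to prove (1) by an explicit choice of test functions, reusing the construction from the proof of Lemma \ref{eq:dcv}, and to prove (2) by shifting subsolutions vertically, using $\lambda\geqslant0$ to trade the term $\lambda u$ for $a\lambda$.

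\textbf{Part (1).} The inequality $c_\lambda(h)\leqslant c(h+a\lambda)$ for every $a\in\R$ is exactly \eqref{eq:C-1}. Since $\lambda\geqslant0$, the Hamiltonian $h+a\lambda$ is pointwise nondecreasing in $a$. Hence $a\mapsto c(h+a\lambda)$ is nondecreasing, and the limit as $a\to-\infty$ exists and is $\geqslant c_\lambda(h)$. It remains to prove the reverse inequality.

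Fix $\varepsilon>0$. As in the proof of Lemma \ref{eq:dcv}, choose $g\in C^\infty(M,\R)$ and $\delta>0$ with
\[
\sup_{x\in\lambda^{-1}([0,\delta])}h(x,d_xg)<c_\lambda(h)+\varepsilon,
\]
and set $E:=\sup_{x\in M}h(x,d_xg)$. For $a\leqslant -(E-c_\lambda(h))/\delta$ and $a\leqslant 0$, we bound $h(x,d_xg)+a\lambda(x)$ on two regions:
\begin{itemize}
\item on $\lambda^{-1}([0,\delta])$ we have $a\lambda\leqslant0$, so the value is $<c_\lambda(h)+\varepsilon$;
\item on $\{\lambda\geqslant\delta\}$ the value is $\leqslant E+a\delta\leqslant c_\lambda(h)$.
\end{itemize}
Testing \eqref{mane} with $u=g$ gives $c(h+a\lambda)\leqslant c_\lambda(h)+\varepsilon$ for all sufficiently negative $a$. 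Letting $\varepsilon\to0$ yields the limit formula.

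\textbf{Part (2), ``only if''.} Let $u$ be a subsolution of $(dS_{c_0})$. By the proof of Proposition \ref{prop:1}(1), $u$ is Lipschitz. Set $a:=\min_M u$. At a.e. $x$ we have
\[
h(x,d_xu)+a\lambda(x)\leqslant c_0-\lambda(x)\bigl(u(x)-a\bigr)\leqslant c_0,
\]
because $\lambda\geqslant0$ and $u\geqslant a$. By Lemma \ref{sub-criteria}, $u$ is a Lipschitz subsolution of $h+a\lambda=c_0$. Lemma \ref{smooth-app} then gives smooth functions $u_\varepsilon$ with $\sup_x\bigl(h+a\lambda\bigr)(x,d_xu_\varepsilon)\leqslant c_0+\varepsilon$. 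Therefore $c(h+a\lambda)\leqslant c_0=c_\lambda(h)$ by Lemma \ref{eq:dcv}. Combined with (1), this gives equality.

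\textbf{Part (2), ``if''.} Suppose $c(h+a\lambda)=c_0$ for some $a$. By weak KAM theory for the Tonelli Hamiltonian $h+a\lambda$ \cite{Fathi_book}, the critical equation $h(x,d_xw)+a\lambda(x)=c(h+a\lambda)$ has a Lipschitz (viscosity) solution $w$. Put $v:=w+a-\max_M w$, so that $v\leqslant a$ and $dv=dw$. At a.e. $x$,
\[
\lambda v+h(x,d_xv)\leqslant a\lambda+h(x,d_xw)\leqslant c_0,
\]
again using $\lambda\geqslant0$. By Lemma \ref{sub-criteria}, $v$ is a subsolution of $(dS_{c_0})$.

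\textbf{Main obstacle.} The only non-elementary input is the existence of a Lipschitz subsolution at exactly the Mañé critical level $c(h+a\lambda)$. The infimum in \eqref{mane} need not be attained by smooth functions, so smooth test functions alone do not suffice; this is where Fathi's weak KAM theorem is invoked. The rest reduces to the vertical shifts above and the sign condition $\lambda\geqslant0$.
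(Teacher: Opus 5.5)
Your proposal is correct and follows essentially the paper's argument: the comparison $a\lambda\leqslant\lambda u$ (from $\lambda\geqslant0$ and $a\leqslant u$) drives both directions of (2), the ``if'' direction shifts a critical weak KAM solution exactly as the paper does, and for (1) you rerun the splitting of Lemma \ref{eq:dcv} with the constant $a$ directly instead of going through $u_g$. The only real variation is in the ``only if'' direction, where you shift first and then smooth a subsolution of the $u$-independent Hamiltonian $h+a\lambda$, so that Lemma \ref{smooth-app} applies verbatim; the paper instead smooths the discounted subsolution first and then takes $a=\min\varphi-1$.
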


\begin{proof}
\noindent (1) Notice that $c(h+a\lambda)$ is monotone increasing in \(a\), so \eqref{eq:C-1} implies that $\lim_{a \to -\infty} c(h + a\lambda)$ exists. To show the limit is $c_\lambda(h)$, for any $\varepsilon>0$, we follow the proof of Lemma \ref{eq:dcv} to choose a smooth function $u_g$ such that
$$
\sup_{x \in M} \{ \lambda(x)u_g(x) + h(x, d_x u_g(x))\}\leqslant c_\lambda(h)+\varepsilon.
$$
Hence for $a\leqslant-\|u_g\|_\infty$, we have
\begin{align*}
	c(h+a\lambda)\leqslant  &\,\sup_{x \in M} \{ \lambda(x)a + h(x, d_x u_g(x))\} \\
	 \leqslant &\, \sup_{x \in M} \{ \lambda(x)u_g(x) + h(x, d_x u_g(x))\}\leqslant c_\lambda(h)+\varepsilon,
\end{align*}
which finishes the proof.

\vspace{0.5em}
\noindent (2) \(\Leftarrow\) Due to the monotonicity of $c(h+a\lambda)$ in $a$, we can assume that for some $a\leqslant0$, \(c_\lambda(h) = c(h + a\lambda)\). Then there is a Lipschitz solution $\varphi$ to the equation
\[
a\lambda(x) + h(x, d_x \varphi(x)) = c(h + a\lambda) = c_\lambda(h)
\]
Set \(u_\varphi = \varphi - \max \varphi + a\in C(M,\R)\), the inequality
\[
\lambda(x) u_\varphi(x) + h(x, d_x u_\varphi(x)) \leqslant \lambda(x) a + h(x, d_x \varphi(x)) = c_\lambda(h)=c_0
\]
implies that $u_\varphi$ is a subsolution to $(dS_{c_0})$.

\vspace{0.5em}
\(\Rightarrow\) Assume $(dS_{c_0})$ has a continuous subsolution \(\varphi\), then the standard viscosity solution theory implies that $\varphi$ is Lipschitz. By smooth approximation, for all \(n \in \mathbb{N}\), there exists \(\varphi_n \in C^\infty(M)\) such that $\|\varphi_n - \varphi\|_\infty < \frac{1}{n}$ and
\begin{equation*}
\lambda(x) \varphi_n(x) + h(x, d_x \varphi_n(x)) < c_\lambda(h) + \frac{1}{n}.
\end{equation*}
Set \(a = \min_{x \in M} \varphi(x) - 1\), then by the above inequality,
\[
c(h + a\lambda) \leq\sup_{x\in M}\{a\lambda(x) + h(x, d_x \varphi_n(x))\} \leq\sup_{x\in M}\{ \lambda(x) \varphi_n(x) + h(x, d_x \varphi_n(x))\}< c_\lambda(h) + \frac{1}{n}
\]
By the arbitrariness of \(n\), \(c(h + a\lambda) \leq c_\lambda(h)\). Combining \eqref{eq:C-1}, we have \(c(h + a\lambda) = c_\lambda(h)\).	
\end{proof}

An direct consequence of the above proposition is
\begin{corollary}\label{non-sol}
$(dS_{c_0})$ does not admit any subsolution if and only if for any $a\in\R, c(h+a\lambda)>c_\lambda(h)$.
\end{corollary}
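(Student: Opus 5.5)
This follows directly from the preceding Proposition. There, $c(h+a\lambda)$ is non-decreasing in $a$ and bounded below by $c_\lambda(h)$ for every $a\in\R$, by \eqref{eq:C-1}. Part (2) of that Proposition says that $(dS_{c_0})$ admits a subsolution if and only if equality $c_\lambda(h)=c(h+a\lambda)$ holds for some $a\in\R$. We simply negate both sides of this equivalence.

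For the direction ``$\Rightarrow$'', assume $(dS_{c_0})$ has no subsolution. By part (2), no $a\in\R$ gives $c(h+a\lambda)=c_\lambda(h)$. Combined with the universal inequality $c_\lambda(h)\leqslant c(h+a\lambda)$ from \eqref{eq:C-1} (equivalently part (1)), this forces $c(h+a\lambda)>c_\lambda(h)$ for all $a$.

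For the direction ``$\Leftarrow$'', assume $c(h+a\lambda)>c_\lambda(h)$ for every $a$. Then no equality is possible, so by part (2) there is no subsolution.

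Both directions are immediate, so there is no real obstacle. The only point to check is that the inequality in \eqref{eq:C-1} holds for \emph{all} real $a$, not just for $a\leqslant 0$. This is needed so that ``not equal'' upgrades to ``strictly greater''. It does hold, because on $\lambda^{-1}(0)$ the term $a\lambda(x)$ vanishes whatever the value of $a$. One may also record the consequence that, by part (1), $c_\lambda(h)$ is then an infimum that is not attained, namely $c_\lambda(h)=\lim_{a\to-\infty}c(h+a\lambda)$.
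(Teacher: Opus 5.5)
Your proposal is correct and follows the paper's own route: the paper treats this corollary as a direct consequence of the preceding Proposition. You negate part (2) and use the inequality $c_\lambda(h)\leqslant c(h+a\lambda)$ from \eqref{eq:C-1}, which holds for every $a\in\R$, to upgrade $\neq$ to $>$.
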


To end this part, we give an example where $(dS_{c_0})$ does not admit any subsolution.
\begin{example}
Let $M=\mathbb{T}$ and $h(x,p) = \frac{1}{2}p^2+1-\cos(x), \lambda(x)=(1-\cos x)^2\geqslant0$. Then $\lambda^{-1}(0)$ is a singleton and $c_\lambda(h)=0$. However, for any $a\in\mathbb{R}$,
$$
c(h+a\lambda) =\max\limits_{x\in S^1} (1-\cos x) + a(1-\cos x)^2 =\max\limits_{x\in S^1} (1-\cos x)\left(1 + a(1-\cos x)\right)>0=c_\lambda(h).
$$
Hence by Corollary \ref{non-sol}, $(dS_{c_0})$ does not admit any subsolution.
\end{example}

\begin{lemma}
	Let $\lambda(x) \geqslant 0$. Then $c(h) = c(h - a\lambda) $ for any $  a \geqslant 0$ if and only if there exists a h\text{-minimal measure } such that $\int \lambda \, d\mu = 0$.
\end{lemma}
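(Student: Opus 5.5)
The plan is to rewrite both sides of the equivalence through the variational characterization of the Ma\~{n}\'{e} critical value by invariant (or closed, holonomic) probability measures on $TM$. For a Tonelli Hamiltonian $k$ with Lagrangian $L_k$, Ma\~{n}\'{e}'s theory gives
\[
c(k)=-\min_{\mu}\int_{TM}L_k\,d\mu ,
\]
where the minimum runs over Euler--Lagrange invariant probability measures (equivalently closed measures, in the sense of Bernard and Fathi--Siconolfi). The minimum is attained, and the minimizers are precisely the $k$-minimal (Mather) measures. Since $h-a\lambda$ is Tonelli for each $a$, with Lagrangian $l+a\lambda$, we get
\[
c(h-a\lambda)=-\min_{\mu}\int_{TM}(l+a\lambda)\,d\mu .
\]
In the closed-measure formulation the admissible class does not depend on the Lagrangian, which makes comparisons between different $a$ clean.

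The first step is monotonicity. Since $\lambda\geqslant0$ and $a\geqslant0$, we have $h-a\lambda\leqslant h$, and therefore $c(h-a\lambda)\leqslant c(h)$ directly from the definition \eqref{mane}. So in both directions only the reverse inequality, or the consequences of equality, need attention.

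For the direction ($\Leftarrow$), let $\mu$ be an $h$-minimal measure with $\int\lambda\,d\mu=0$. Then
\[
\int(l+a\lambda)\,d\mu=\int l\,d\mu=-c(h).
\]
Hence the minimum over closed measures is at most $-c(h)$, which gives $c(h-a\lambda)\geqslant c(h)$. Equality follows from the first step.

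For the direction ($\Rightarrow$), it suffices to use $a=1$. Take a minimizing measure $\mu_1$ for $l+\lambda$, so that
\[
\int l\,d\mu_1+\int\lambda\,d\mu_1=-c(h-\lambda)=-c(h).
\]
Since $\mu_1$ is closed, $\int l\,d\mu_1\geqslant -c(h)$, and therefore $\int\lambda\,d\mu_1\leqslant0$. Together with $\lambda\geqslant0$ this forces $\int\lambda\,d\mu_1=0$ and $\int l\,d\mu_1=-c(h)$. Thus $\mu_1$ is $h$-minimal with $\int\lambda\,d\mu_1=0$.

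The main obstacle is justifying the measure formula with an attained minimum on a class that is common to all the Lagrangians $l+a\lambda$. If one works with Euler--Lagrange invariant measures instead, the flows differ for different $a$, and one must first pass to closed measures. I would invoke the closed-measure characterization of Bernard, together with the fact that minimizing closed measures are invariant and hence are Mather measures of the corresponding Hamiltonian. Existence of the minimizer follows from superlinearity, which gives tightness of minimizing sequences, and from lower semicontinuity of $\mu\mapsto\int L\,d\mu$.
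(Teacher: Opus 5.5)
Your argument is correct, and its core is the paper's. Both proofs pick a minimizing measure $\mu_0$ for $l+a\lambda$ and squeeze $-c(h)\leqslant\int l\,d\mu_0\leqslant\int(l+a\lambda)\,d\mu_0=-c(h-a\lambda)=-c(h)$, which with $\lambda\geqslant0$ forces $\int\lambda\,d\mu_0=0$.

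The difference lies in the class of competitors. The paper quotes Fathi's formula over Euler--Lagrange \emph{invariant} measures for both Lagrangians. It then uses $-c(h)\leqslant\int l\,d\mu_0$, and concludes that $\mu_0$ is $h$-minimal, for a measure known only to be invariant under the flow of $l+a\lambda$. It also leaves ($\Leftarrow$) implicit, where the mirror-image issue appears: an $h$-minimal measure must be admissible for $l+a\lambda$. Your passage to closed measures, a class that does not depend on the Lagrangian, is exactly what makes these comparisons legitimate, and you treat both directions explicitly.

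The price is quoting Bernard's theorem that minimizing closed measures are invariant, and this can be avoided with two elementary facts:
\begin{enumerate}
\item Fenchel's inequality $l(x,v)\geqslant\langle d_x\psi,v\rangle_x-h(x,d_x\psi)$, applied with a smooth $\psi$ satisfying $\sup_x h(x,d_x\psi)\leqslant c(h)+\varepsilon$, already gives $\int l\,d\mu\geqslant-c(h)$ for every closed $\mu$.
\item If $\lambda\geqslant0$ and $\int\lambda\,d\mu=0$, then $\lambda$ and $d\lambda$ both vanish on the projection of $\mathrm{supp}\,\mu$ to $M$. Hence the Euler--Lagrange vector fields of $l$ and $l+a\lambda$ coincide on the invariant set $\mathrm{supp}\,\mu$, and invariance under one flow transfers to the other.
\end{enumerate}
Taking $\mu_0$ to be a Mather measure of $h-a\lambda$, which is compactly supported, invariant and hence closed, these facts close both directions while staying within invariant measures.
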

\begin{proof}
	Let $l(x,v): TM \to \mathbb{R}$ be the Lagrangian associated to $h$. Then for any $a \geq 0$, $l(x,v) + a\lambda(x)$ is the Lagrangian associated to $h - a\lambda(x)$. Due to \cite[Corollary 4.8.1]{Fathi_book},  we have
	$$
	-c(h)= \inf_{\mu} \int_{TM } l(x,v) \ d\mu, \quad	-c(h-a\lambda(x) )= \inf_{\mu} \int_{TM } l(x,v)+ a\lambda(x) \ d\mu, 
	$$ 
	where $\mu$ varies among Borel probability measures on $TM$ invariant
by the Euler-Lagrange flow.  This implies that $c(h) = c(h - a\lambda)$ is equivalent to
\[
 \inf_{\mu} \int_{TM } l(x,v) \ d\mu= \inf_{\mu} \int_{TM } l(x,v)+ a\lambda(x) \ d\mu.
 \]
 However, it is clear that $\int_{TM} l \, d\mu \leqslant \int_{TM} l + a\lambda(x) \, d\mu$ for any $a \geqslant 0$ and any Borel probability measure $\mu$. Let $\mu_0$ be the h\text{-minimal measure } defined in  \cite[Definition 4.8.3]{Fathi_book} that 
 $$
 -c(h-a\lambda )=   \int_{TM } l(x,v)+a\lambda (x) \ d\mu_0 = \inf_{\mu} \int_{TM } l(x,v)+a\lambda(x) \ d\mu.
 $$
  Thus $c(h) = c(h - a\lambda)$  implies that 
\[
-c(h) \leqslant \int_{TM } l(x,v) \, d\mu_0 \leqslant \int_{TM} l(x,v) + a\lambda(x) \, d\mu_0=- c(h-a \lambda) = -c(h), \quad \forall a>0,
\]
thus $\int_{TM} \lambda(x) \, d\mu_0 = 0 $ with $-c(h) = \int_{TM } l(x,v) \, d\mu_0$.
Hence, $\mu_0$ is  a h\text{-minimal measure } satisfying $\int \lambda \, d\mu = 0$.  
\end{proof}
Applying Theorem \ref{thm:1} and above Lemma, we obtain that  the equation \eqref{HJs} has a unique asymptotically stable solution if and only if there exists $a>0$ such that $c(h)>c_0$. Note that 
\[
c(h) > c(h - a\lambda) \geqslant \lim_{a \to +\infty} c(h - a\lambda) = c_\lambda(h) = c_0,
\]
It follows that 
\begin{corollary}\label{Cor:B2}
Let $\lambda(x)\geqslant 0$. Then
	\[
h(x, Du)+ \lambda (x)u = c(h) \quad \text{for } x \in M  
\]
admits a  globally asymptotically stable solution if and only if $c(h)>c_0$  if and only if $ \int_{TM} \lambda(x) \, d\mu>0$ ,  for every $h\text{-minimal measure } \mu$.
\end{corollary}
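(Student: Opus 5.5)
The statement to be proved is Corollary \ref{Cor:B2}: for $\lambda\geqslant0$ (not identically zero), the equation $\lambda(x)u+h(x,d_xu)=c(h)$ admits a globally asymptotically stable solution iff $c(h)>c_0$, iff $\int\lambda\,d\mu>0$ for every $h$-minimal measure $\mu$. The plan is to prove the two equivalences separately, using Theorem \ref{thm:1}, Theorem \ref{thm:2}, the last Lemma of Appendix \ref{section-Appendix-c}, and the identity $c_0=c_\lambda(h)=\lim_{a\to-\infty}c(h+a\lambda)$ established there.

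\emph{First equivalence.} Suppose $c(h)>c_0$. Theorem \ref{thm:1} with $c=c(h)$ gives an asymptotically stable solution $u^{c(h)}_-$. Since $\lambda\geqslant0$ and $\max\lambda>0$, case $(+)$ holds, so (B1) of Theorem \ref{thm:2} gives $\mathcal{B}^{c(h),-}=C(M,\R)$; thus $u^{c(h)}_-$ is globally asymptotically stable. Conversely, a globally asymptotically stable solution is in particular asymptotically stable, and Proposition \ref{Prop:3.2} then forces $c(h)>c_0$.

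\emph{Second equivalence.} The first step is to record that $a\mapsto c(h-a\lambda)$ is nonincreasing for $a\geqslant0$. This holds because $\lambda\geqslant0$ gives $h-a\lambda\leqslant h-a'\lambda$ when $a\geqslant a'$, and the Mañé value is monotone in the Hamiltonian. Combined with the limit formula, this gives $c(h)=c(h-0\cdot\lambda)\geqslant c(h-a\lambda)\geqslant c_0$ for every $a\geqslant0$, and $c(h-a\lambda)\to c_0$ as $a\to+\infty$.

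Hence $c(h)=c_0$ iff $c(h-a\lambda)=c(h)$ for all $a\geqslant0$. The forward direction is squeezing between $c(h)$ and $c_0$. For the backward direction, let $a\to\infty$ in the constant sequence $c(h-a\lambda)=c(h)$.

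By the preceding Lemma, the condition "$c(h)=c(h-a\lambda)$ for all $a\geqslant0$" holds iff some $h$-minimal measure $\mu$ has $\int\lambda\,d\mu=0$. Since $\lambda\geqslant0$ gives $\int\lambda\,d\mu\geqslant0$ for every measure, the negation reads: $c(h)>c_0$ iff $\int\lambda\,d\mu>0$ for every $h$-minimal measure. This closes the chain.

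The main point to verify is the reverse implication in that Lemma. From $\int\lambda\,d\mu_0=0$ for some $h$-minimal $\mu_0$ one must deduce $c(h-a\lambda)\geqslant c(h)$. I would get this from Mañé's formula $-c(h-a\lambda)=\inf_\mu\int(l+a\lambda)\,d\mu\leqslant\int(l+a\lambda)\,d\mu_0=\int l\,d\mu_0=-c(h)$. This requires $\mu_0$ to be invariant for the Euler--Lagrange flow of $l+a\lambda$, which may fail. I would instead use the formulation over closed measures, as in Fathi's book, where the infimum can be taken over closed probability measures independent of the Lagrangian, so $\mu_0$ is admissible for every $a$.
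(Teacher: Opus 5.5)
Your argument is correct and is essentially the paper's: Theorem \ref{thm:1} with (B1) of Theorem \ref{thm:2} (and Proposition \ref{Prop:3.2} for the converse) gives the first equivalence. The last lemma of Appendix \ref{section-Appendix-c}, combined with the monotonicity of $a\mapsto c(h-a\lambda)$ and $\lim_{a\to+\infty}c(h-a\lambda)=c_\lambda(h)=c_0$, gives the second, which is exactly what the paper's terse (and somewhat garbled) final paragraph intends. The paper only writes out one direction of that lemma, so your closed-measure argument for the reverse implication genuinely fills a gap; even the invariant-measure version works, because $\lambda\geqslant0$ smooth forces $d\lambda=0$ on $\lambda^{-1}(0)$, so the Euler--Lagrange fields of $l$ and $l+a\lambda$ agree on $\mathrm{supp}\,\mu_0\subset\{(x,v):\lambda(x)=0\}$ and $\mu_0$ is invariant for both.
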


\section{Total divergence of the solution semigroups}\label{proof:thm4}
In this section, we show that an equivalent statement of the non-existence of subsolutions to \eqref{HJs} is the total divergence of both solution semigroups. Notice that if \eqref{HJs} does not admit any subsolutions, then by Proposition \ref{prop:1}, $c\leqslant c_0$. Before going into the proof, we need

\begin{lemma}
For any \textbf{supersolution} $\varphi,\,\,\lim_{t\to +\infty}  T_t^{c,-}\varphi=-\infty$.
\end{lemma}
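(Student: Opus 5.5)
The plan is to prove the lemma within the standing hypothesis of this appendix, namely that \eqref{HJs} admits \emph{no} subsolution (so $c<c_0$ by Proposition \ref{prop:1}). The idea is to show that $T^{c,-}_t\varphi$ is non-increasing in $t$, and then to rule out any finite limit using the fact that a bounded orbit would produce a solution.

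\emph{Step 1 (monotonicity).} Since $\varphi$ is a supersolution of \eqref{HJs}, the time-independent function $(x,t)\mapsto\varphi(x)$ is a viscosity supersolution of \eqref{HJe} with initial datum $\varphi$. The comparison principle behind Proposition \ref{sg-1}(4), which is the supersolution counterpart of \eqref{eq:t-mono} in Proposition \ref{mono}, then gives $T^{c,-}_t\varphi\leqslant\varphi$ for all $t\geqslant0$. Combining this with the semigroup property and the order preservation in Proposition \ref{sg-1}(1)--(2), we get $T^{c,-}_{t+s}\varphi\leqslant T^{c,-}_s\varphi$. Hence $m(t):=\min_M T^{c,-}_t\varphi$ and $\bar m(t):=\max_M T^{c,-}_t\varphi$ are both non-increasing, and $T^{c,-}_t\varphi\leqslant\varphi$ is bounded above uniformly in $t$.

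\emph{Step 2 ($\min\to-\infty$).} Suppose $m(t)$ were bounded below. Then $\{T^{c,-}_t\varphi\}_{t\geqslant1}$ would be uniformly bounded. Proposition \ref{sg-2}(2)--(3) would then give $\liminf_{t\to\infty}T^{c,-}_t\varphi\in\cS$, which is a solution and in particular a subsolution of \eqref{HJs}. This contradicts the standing assumption, so $m(t)\to-\infty$.

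\emph{Step 3 (uniform divergence, the main obstacle).} We still have to show that $\bar m(t)\to-\infty$, i.e.\ that the oscillation of $T^{c,-}_t\varphi$ cannot blow up. I would use the Markov/representation formula \eqref{eq:Tt-+ rep}: for $y_t$ a minimum point of $T^{c,-}_t\varphi$,
$$
T^{c,-}_{t+1}\varphi(x)\leqslant h^c_{y_t,m(t)}(x,1)\qquad\text{for all } x\in M .
$$
It then suffices to show that $\sup_{x,y\in M}h^c_{y,u}(x,1)\to-\infty$ as $u\to-\infty$. For this I would take a constant-speed geodesic $\gamma$ from $y$ to $x$ (speed at most $1$, since diam$(M)=1$) in \eqref{eq:Implicit variational}. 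Along $\gamma$, $g(s):=h^c_{y,u}(\gamma(s),s)$ satisfies
$$
g(s)\leqslant u+\int_0^s\bigl[C-\lambda(\gamma(\tau))\,g(\tau)\bigr]\,d\tau ,
$$
where $C$ bounds $l(x,v)+c$ over $|v|\leqslant1$. A Gronwall-type argument as in the proof of Lemma \ref{lem:eq-B-Lip} should yield $g(1)\leqslant e^{-\Lambda}u+C'$ for $u$ negative, which tends to $-\infty$ uniformly in $x,y$. The care needed is with the sign of $\lambda$, since $-\lambda g$ can be positive when $g<0$. I would handle this by comparing $g$ with the solution of the linear ODE $\dot w=C+\Lambda|w|$, $w(0)=u$: for $u$ very negative $w$ stays below $u e^{-\Lambda}+C'$ on $[0,1]$. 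Given this, $\bar m(t+1)\to-\infty$, so $T^{c,-}_t\varphi\to-\infty$ uniformly on $M$.
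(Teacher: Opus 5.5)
Your Steps 1 and 2 are exactly the paper's argument. A supersolution makes $t\mapsto T^{c,-}_t\varphi$ non-increasing. A bounded orbit would then converge (Proposition \ref{sg-2}) to an element of $\mathcal{S}^{c,-}$, contradicting the standing assumption that \eqref{HJs} has no subsolution. (That assumption gives $c\leqslant c_0$, not $c<c_0$, but you never use the strict inequality.)

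Step 3 is where you differ from the paper. The paper passes from $\min_M T^{c,-}_t\varphi\to-\infty$ to uniform divergence by asserting that $\{T^{c,-}_t\varphi\}_{t\geqslant1}$ is uniformly Lipschitz. Proposition \ref{sg-2}(2) only gives this for bounded families, and it cannot hold when $\max_M\lambda>0$. Indeed, it would force $U:=T^{c,-}_t\varphi\to-\infty$ uniformly. But at differentiability points $\partial_tU\leqslant0$ gives $h(x,\partial_xU)\geqslant c-\lambda(x)U$, which tends to $+\infty$ on $\{\lambda>0\}$, so the gradients blow up there. Your reduction, via the Markov property, to the estimate $\sup_{x,y}h^c_{y,u}(x,1)\to-\infty$ as $u\to-\infty$ is therefore a sound and genuinely needed replacement.

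The gap is in your proof of that estimate. The implicit variational principle only gives
\[
g(s)\leqslant u+\int_0^s\bigl[C-\lambda(\gamma(\tau))\,g(\tau)\bigr]\,d\tau .
\]
When the unknown reappears on the right with a negative coefficient, such an inequality yields no upper bound on $g$ where $\lambda>0$. Your comparison with $\dot w=C+\Lambda|w|$ requires $z\mapsto C+\Lambda|z|$ to be nondecreasing. It is decreasing for $z<0$, which is precisely the regime of interest. If $g$ dips far below $w$, the term $\int_0^s\Lambda|g|$ becomes large and the inequality lets $g$ climb above $w$ afterwards, so a first-crossing argument produces no contradiction. The Gronwall step in Lemma \ref{lem:eq-B-Lip} works only because there the unknown is $\geqslant0$ and its coefficient is $+\Lambda$.

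The fix is to use the Herglotz form of the action function \cite{CCJWY}:
\[
h^c_{y,u}(x,1)=\inf_\gamma u_\gamma(1),\qquad \dot u_\gamma=l(\gamma,\dot\gamma)+c-\lambda(\gamma)\,u_\gamma,\qquad u_\gamma(0)=u .
\]
Along your geodesic this is a linear ODE, so for $u\leqslant0$
\[
h^c_{y,u}(x,1)\leqslant e^{-\int_0^1\lambda(\gamma(\tau))\,d\tau}\,u+\int_0^1e^{-\int_s^1\lambda(\gamma(\tau))\,d\tau}\bigl(l(\gamma(s),\dot\gamma(s))+c\bigr)\,ds\leqslant e^{-\Lambda}u+e^{\Lambda}\max\{C,0\}.
\]
This holds uniformly in $x,y$, and with it Step 3 goes through.
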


\begin{proof}
If $T_t^{c,-}\varphi$ converges when $t$ goes to $+\infty$, then $\mathcal{S}^{c,-}\neq\emptyset$, which contradicts Proposition \ref{prop:1}. Notice that
\begin{itemize}
  \item for any supersolution $\varphi$ to \eqref{HJs}, $T_t^{c,-}\varphi$ is decreasing in $t$.
  \item $\{T^{c,-}_t\varphi\}_{t\geqslant1}$ is uniformly Lipschitz as a subset of $C(M,\R)$
\end{itemize}
So $T^{c,-}_t\varphi$ diverges if and only if $\,\lim_{t\to +\infty}  T_t^{c,-}\varphi=-\infty$.
\end{proof}

\begin{proposition}\label{below c0}
Assume \eqref{HJs} does not admit any subsolution, then for any $\varphi\in C(M,\R)$,
\[
\lim_{t\to +\infty} T_t^{c,-} \varphi=-\infty,\quad\quad\lim_{t\to +\infty} T_t^{c,+}\varphi=+\infty.
\]
The first conclusion is stated in Remark \ref{thm:4}.
\end{proposition}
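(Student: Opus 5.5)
The plan is to prove the statement for $T^{c,-}_t$ first, and then obtain the one for $T^{c,+}_t$ by the symmetry of Remark \ref{rmk-main-thm}. Indeed, $T^{c,+}_t\varphi=-\breve{T}^{c,-}_t(-\varphi)$. Moreover, $-v$ is a subsolution of \eqref{HJs'} if and only if $v$ is a subsolution of \eqref{HJs}; this holds because both are characterized via the $t$-monotonicity of Proposition \ref{mono}. Hence the non-existence of subsolutions passes to the reversed equation, and the forward statement reduces to the backward one applied to $\breve{H}$.

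For the backward semigroup, fix $\varphi\in C(M,\R)$. The key step is to trap $\varphi$ from above by a supersolution and invoke the preceding lemma. First, observe that $M$ is compact and $h$ is continuous, so a large enough constant $K$ will be a supersolution provided $\lambda(x)K+h(x,0)\geqslant c$ on $M$. This needs $\lambda\geqslant0$, which may fail, so instead I will use a time-shifted argument. Let $\psi:=T^{c,-}_1\varphi$. By the monotonicity and Lipschitz properties of Proposition \ref{sg-1} and Proposition \ref{sg-2}, it is enough to find some supersolution $w\geqslant\varphi$.

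To construct $w$, I will take $w:=\sup_{s\geqslant0}T^{c,-}_s\varphi$ if it is finite. By Lemma \ref{lem:estimation} (1) under \textbf{(H3)$_+$}, the orbit is bounded above uniformly in $t$, and $\limsup$-type envelopes are viscosity supersolutions, so they are admissible comparison functions. If instead the orbit is unbounded above, I will argue by contradiction: if $T^{c,-}_t\varphi\not\to-\infty$, then along a sequence $t_n\to\infty$ the functions stay bounded below at some point. Uniform Lipschitz bounds for $t\geqslant1$ (Proposition \ref{sg-2} (2)) then give local boundedness, and Proposition \ref{sg-2} (3) yields $\liminf_{t\to\infty}T^{c,-}_t\varphi\in\cS$, or at least a subsolution. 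This contradicts Proposition \ref{prop:1} (3) and the hypothesis.

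Combining, every orbit either converges to $-\infty$ uniformly or produces a subsolution; the latter is excluded. The case \textbf{(H3)$_-$}, where the uniform upper bound is not available, is again handled through the reversal with $\breve{H}$.

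The main obstacle is ruling out oscillation: the orbit could be bounded at some times and very negative or very positive at others, so that the liminf is not a genuine subsolution. I would first try to squeeze $\varphi$ between $T^{c,+}_{s}$ of a constant and use Proposition \ref{sg-1} (5) to compare with the supersolution lemma, which gives monotone divergence directly.
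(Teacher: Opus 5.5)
Your reduction matches the paper's: it suffices to find a supersolution $w\geqslant\varphi$ of \eqref{HJs}. Then $T^{c,-}_tw$ is nonincreasing and cannot converge (a limit would be a solution), so it tends to $-\infty$, and $T^{c,-}_t\varphi\leqslant T^{c,-}_tw$. The gap is that you never produce such a $w$.

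Under \textbf{(H3)$_+$}, Lemma \ref{lem:estimation} (1) bounds the orbit above. So everything rests on the claim that $w=\sup_{s\geqslant0}T^{c,-}_s\varphi$ is a supersolution, and that claim is unfounded. What the argument needs is $T^{c,-}_tw\leqslant w$. From $w\geqslant T^{c,-}_s\varphi$ you only get $T^{c,-}_tw\geqslant\sup_{s\geqslant t}T^{c,-}_s\varphi$, which gives no comparison between $T^{c,-}_tw$ and $w$. The principle you cite is also backwards. Upper envelopes and $\limsup_{t\to+\infty}$ of an orbit produce \emph{sub}solutions; this is how the paper uses $\limsup_{t\to+\infty}T^{c,-}_t\varphi$ in the proof of Proposition \ref{prop:new-NW-JDE}. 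Supersolutions are stable under infima, not suprema.

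The fallback contradiction argument fails as well:
\begin{itemize}
\item Proposition \ref{sg-2} (2)--(3) require $\{T^{c,-}_t\varphi\}_{t\geqslant t_0}$ to be uniformly bounded for \emph{all} large $t$.
\item You invoke them precisely in the branch where the orbit is unbounded above.
\item A lower bound at one point along a sequence $t_n$ does not prevent $\liminf_{t\to+\infty}T^{c,-}_t\varphi=-\infty$.
\end{itemize}
That is exactly the oscillation problem you flag at the end, and the squeeze via Proposition \ref{sg-1} (5) does not resolve it.

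The missing idea is an explicit family of arbitrarily large supersolutions built from a supercritical level. Since there is no subsolution, $c\leqslant c_0$ by Proposition \ref{prop:1} (3). Take $c_1>c_0$ and $\psi\in\mathcal{SS}^{c_1}$. Under \textbf{(H3)$_+$}, take the maximal solution $u^{c_1}_-$, which satisfies $u^{c_1}_->\psi$ by Proposition \ref{prop:2}. For $\delta>1$ set $\varphi^\delta=\delta u^{c_1}_- -(\delta-1)\psi$. Write $u^{c_1}_-=\frac1\delta\varphi^\delta+(1-\frac1\delta)\psi$ and use (H1); at every differentiability point this gives
\[
\lambda\varphi^\delta+h(x,d_x\varphi^\delta)\geqslant\delta\big[\lambda u^{c_1}_-+h(x,d_xu^{c_1}_-)\big]-(\delta-1)\big[\lambda\psi+h(x,d_x\psi)\big]>c_1>c .
\]
Semiconcavity of $\varphi^\delta$ then makes it a viscosity supersolution. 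Since $\varphi^\delta=u^{c_1}_-+(\delta-1)(u^{c_1}_- -\psi)$ with $\min_M(u^{c_1}_- -\psi)>0$, one has $\varphi^\delta\geqslant\varphi$ for $\delta$ large, and monotonicity finishes.

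Your contradiction route can also be repaired under \textbf{(H3)$_+$}, but only with ingredients absent from the proposal. Show that $T^{c,-}_{t_n}\varphi(x_n)\geqslant-K$ forces a uniform lower bound on all of $M$ at time $t_n-1$. This follows from $T^{c,-}_{t_n}\varphi(x_n)\leqslant h^c_{y,T^{c,-}_{t_n-1}\varphi(y)}(x_n,1)$ for every $y$, together with $h^c_{y,u_0}(x,1)\to-\infty$ uniformly as $u_0\to-\infty$. The upper relaxed limit of the orbit is then a bounded upper semicontinuous subsolution, hence a Lipschitz subsolution, contradicting the hypothesis.

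Lastly, handling the backward statement under \textbf{(H3)$_-$} ``through the reversal'' is circular. Reversal turns it into the forward statement for $\breve H$, which your first paragraph turns back into it. The paper's argument as written has the same scope: it uses $u^{c_1}_->\psi$ (available under \textbf{(H3)$_+$}) for the first limit and $v^{c_1}_+<\psi$ (available under \textbf{(H3)$_-$}) for the second. So this is not where you fall short of it.
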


\begin{proof}
For $c_1>c_0\geqslant c$, it is clear that $u^{c_1}_-$ is a supersolution to \eqref{HJs} and $-v^{c_1}_+$ is a supersolution to \eqref{HJs'}. We choose $\psi\in \mathcal{S}\mathcal{S}^{c_1}$ and set for any $\delta>1$,
\begin{equation}\label{eq:appendix-C-pf-1}
\varphi^\delta:=\delta u_-^{c_1}-(\delta-1)\psi.
\end{equation}
By the convexity assumption (H1), $h(x,d_xu_-^{c_1})\leqslant\frac{1}{\delta}h(x, d_x\varphi^\delta)+(1-\frac{1}{\delta})h(x, d_x\psi)$ and then
\begin{equation}\label{eq:E2}
\begin{split}
\lambda(x)\varphi^\delta+  h(x, d_x\varphi^\delta(x))\geqslant\,\delta [ \lambda(x) u_-^{c_1}+ h(x, d_x  u_-^{c_1} ) ]- (\delta-1) [ \lambda(x)\psi + h(x, d_x \psi)]>c_1>c.
\end{split}
\end{equation}
Notice that $u_-^{c_1}$ is semiconcave and $\psi\in C^\infty$, then \eqref{eq:appendix-C-pf-1} implies that $\varphi^\delta$ also is semiconcave. Thus, combining \eqref{eq:E2}, we have $\varphi^\delta$ is a supersolution to \eqref{HJs}. It follows from the claim that for any $ \delta >1$,
$$
\lim_{t\to +\infty}T_t^{c,-}[\varphi^\delta]=-\infty.
$$
Similarly that for any $ \delta >1$ and $\varphi_\delta:= \delta v_+^{c_1}-(\delta-1)\psi $, we have
$$
\lim_{t\to +\infty}T_t^{c,+}[\varphi_\delta]=\infty.
$$
Notice that $u_-^{c_1}> \psi >v_-^{c_1}$, thus $\lim_{\delta \to +\infty}\varphi^\delta =+\infty, \quad \lim_{c_1 \to +\infty}\varphi_{\delta} =-\infty$. Hence, for any $\varphi\in C(M,\R)$, there is $\delta \in(1,+\infty)$ such that $\varphi_\delta\leqslant\varphi\leqslant\varphi^\delta$. By the monotonicity of $\{T_t^{c,\pm}\}$,
$$
\lim_{t\to +\infty}T_t^{c,-}\varphi\leqslant\lim_{t\to +\infty}T_t^{c,-}[\varphi^\delta]=-\infty, \quad \lim_{t\to +\infty} T_t^{c,+}\varphi\geqslant \lim_{t\to +\infty}T_t^{c,+}[\varphi_\delta ]=+\infty.
$$

\end{proof}

\section{Proof of Proposition \ref{prop:JYZ-new}}\label{Appendix:B}
Notice that the statement of Proposition \ref{prop:JYZ-new} holds for any $c\in\R$, we shall set $c=0$ and omit it in the following context. Using the terminologies in (A3), assume $\mathcal{S}^-\neq\emptyset$ and $u_-\in\mathcal{S}^-$. Let $\mathfrak{M}_{u_-}$ be the set of Mather measures associated to $u^-$. If we use $\Phi^{t}_{H}$ to denote the phase flow for (CH$_0$) and define the compact Hausdorff topological space $\mathcal{N}_{u_-}=\cap_{t\geqslant0}\Phi^{-t}_{H}(\mathrm{J}^1_{u_-})$, then $\mathfrak{M}_{u_-}$ coincides with the set of invariant probability measures on $(\mathcal{N}_{u_-},\Phi^{t}_{H})$. By the well-known Banach-Alaoglu theorem, as a subset of the dual space of the space of real valued continuous function on $\mathcal{N}_{u_-}$, $\mathfrak{M}_{u_-}$ is convex and weakly-$\ast$ compact. Since $\mu\mapsto\int\frac{\partial H}{\partial u}(x,p,u)d\mu$ is continuous in weak-$\ast$ topology, then the following two statements are equivalent:
\begin{enumerate}[(S1)]
  \item For any $\mu\in\mathfrak{M}_{u_-}$,
        $$
        \int_{T^*M\times\R}\frac{\partial H}{\partial u}(x,p,u)d\mu>0,
        $$

  \item The quantity
        \begin{equation}\label{B1}
        a:=\min_{\mu \in \mathfrak{M}_{u_-}}\int_{T^*M\times\R}\frac{\partial H}{\partial u}(x,p,u)d\mu>0.
        \end{equation}
        The minimum is achieved by some ergodic measure $\mu_\ast$. By Birkhoff ergodic theorem, for $\mu_\ast$-almost every point $x_\ast\in M$ and the \textbf{unique} $(u_-,L,0)$-calibrated curve $\gamma_\ast:(-\infty,0]\to M$ with $\gamma_\ast(0)=x_\ast$, we have
        \begin{equation}\label{eq:B-1}
        \lim_{t\rightarrow+\infty}\frac{1}{t}\int^0_{-t}\frac{\partial L}{\partial u}\big(\gamma_\ast(s),\dot{\gamma}_\ast(s),u_-(\gamma_\ast(s))\big)ds=-a.
        \end{equation}
\end{enumerate}

One of the main results in \cite{RWY} is
\begin{theorem}\label{RWY-1.2}\cite[Theorem 1.2]{RWY}
Assume (S2), there is $\Delta>0$ such that for any $\varphi\in C(M,\R)$ with $\|\varphi-u_-\|_\infty<\Delta$,
\begin{equation}\label{exp-cov}
\limsup_{t\rightarrow\infty}\frac{\ln\|T^-_t\varphi-u_-\|_\infty}{t}\leqslant-a.
\end{equation}
In particular, $u^-$ is asymptotically stable.
\end{theorem}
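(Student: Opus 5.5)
We plan to prove Theorem \ref{RWY-1.2} by a comparison argument. The idea is to show that the deviation $T^-_t\varphi-u_-$, measured along backward calibrated curves of $u_-$, obeys a linear inequality whose growth factor is $e^{-\int \partial_u H}$ along the curve. By (S2), averaging along any orbit in $\mathcal{N}_{u_-}$ gives at least $a$. Uniformity in the starting point comes from the weak-$\ast$ compactness of $\mathfrak{M}_{u_-}$.

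\textbf{Step 1 (uniform ergodic bound).} We first show that for every $\varepsilon>0$ there is $T_\varepsilon>0$ such that
\[
\int_{-t}^0\frac{\partial H}{\partial u}\big(\Phi^s_H(z)\big)\,ds\geqslant (a-\varepsilon)t
\quad\text{for all } z\in\mathcal{N}_{u_-},\ t\geqslant T_\varepsilon.
\]
This is the standard semi-uniform ergodic theorem. Suppose it fails along $z_n$, $t_n\to\infty$. Then the empirical measures $\frac1{t_n}\int_{-t_n}^0\delta_{\Phi^s z_n}ds$ accumulate on an invariant measure supported in the compact invariant set $\mathcal{N}_{u_-}$, hence on an element of $\mathfrak{M}_{u_-}$ with integral at most $a-\varepsilon$. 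This contradicts \eqref{B1}.

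Next we extend the bound by continuity. Since $\frac{\partial H}{\partial u}$ is continuous and the flow is continuous, the same bound with $a-2\varepsilon$ holds for a fixed window length $T$ on a small neighborhood $\mathcal{V}$ of $\mathcal{N}_{u_-}$. We intend to iterate in blocks of length $T$, which yields $\int_{-t}^0\ge(a-2\varepsilon)t-C$ for orbit segments that stay in $\mathcal{V}$.

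\textbf{Step 2 (upper bound for $T^-_t\varphi-u_-$).} Take $\varphi\le u_-+\delta$ with $\delta<\Delta$. By monotonicity (Proposition \ref{sg-1}(2)), $T^-_t\varphi\leqslant T^-_t(u_-+\delta)$. Fix $x$ and a backward calibrated curve $\gamma$ of $u_-$ ending at $x$. Its lift lies in $\mathrm{J}^1_{u_-}$, and its backward orbit approaches $\mathcal{N}_{u_-}$. We compare using the curve $\gamma|_{[-t,0]}$ in the implicit variational formula \eqref{eq:Implicit variational}. Setting $w(s)=T^-_{s+t}(u_-+\delta)(\gamma(s))-u_-(\gamma(s))$, the same Gronwall-type argument as in Lemma \ref{lem2-thmD} should give
\[
w'\le -\partial_uH\cdot w+o(w).
\]
Here $\partial_uH$ is evaluated along the lift, and the $o(w)$ term comes from the $C^1$ dependence of $L$ on $u$. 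This yields
\[
w(0)\le \delta\, e^{-\int_{-t}^0\partial_uH+o(1)t},
\]
which is at most $\delta e^{-(a-3\varepsilon)t+C}$ by Step 1.

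\textbf{Step 3 (lower bound).} For $\varphi\geq u_--\delta$ the argument is symmetric. We use a minimizing curve $\eta$ for $T^-_t(u_--\delta)(x)$ and the subsolution inequality \eqref{do} of $u_-$ along $\eta$, which produces $u_--T^-_t(u_--\delta)\le\delta e^{-\int\partial_uH\circ\eta+\dots}$. The difficulty is that $\eta$ is calibrated for the perturbed data rather than for $u_-$. We plan to show that its lift stays in $\mathcal{V}$ when $\delta<\Delta$ is small. To do this we use Lipschitz dependence of the action functions (Proposition \ref{Minimality}(4)) together with upper semicontinuity of reachable gradients, so that minimizers for nearby data have lifts near $\mathrm{J}^1_{u_-}$. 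A bootstrap then keeps the deviation small for all time.

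I expect this control of orbits that are not exactly on $\mathrm{J}^1_{u_-}$ to be the main obstacle. Specifically, we must keep both the perturbed minimizers and the backward tails within the neighborhood $\mathcal{V}$ on which Step 1 applies, uniformly in $t$. Taking $\varepsilon\to0$ then gives \eqref{exp-cov}, and asymptotic stability follows at once.
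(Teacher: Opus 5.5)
You choose the right comparison curves: a backward calibrated curve of $u_-$ as test curve on the upper side, and a minimizer of $T^-_t(u_--\delta)(x)$ on the lower side. But there is a genuine gap exactly where you locate it. Step 3 is not carried out, and the difficulty comes from running the comparison on the whole interval $[-t,0]$, which forces you to control the perturbed minimizer uniformly in $t$. The missing idea is to compare only on a \emph{fixed} window and then iterate. This is how the proof in \cite{RWY} goes: its ingredients are Lemmas \ref{lem1}--\ref{lem2}, and the same mechanism drives Lemmas \ref{lem3}--\ref{lem5}.

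Fix $\varepsilon\in(0,a/2)$, let $\tau=\tau(\varepsilon)$ and $\Delta_0=\Delta_0(\varepsilon)$ be as in Lemma \ref{lem1}, and set $\delta_\ast=\min\{e^{-\Lambda\tau}\Delta_0,\delta_1(\varepsilon,\tau)\}$. For $\delta\in(0,\delta_\ast]$ the deviation stays below $\Delta_0$ on $[-\tau,0]$. Your two Gronwall arguments on $[-\tau,0]$ then give the estimates below; the second uses Lemma \ref{lem2}, whose proof works equally for $u_--\delta$.
\[
T^-_\tau(u_-+\delta)\leqslant u_-+\delta e^{-(a-2\varepsilon)\tau},\qquad T^-_\tau(u_--\delta)\geqslant u_--\delta e^{-(a-2\varepsilon)\tau}.
\]
The right-hand sides are again constant shifts of $u_-$, by less than $\delta_\ast$. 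So the semigroup property and monotonicity let you restart, giving $\|T^-_{n\tau}\varphi-u_-\|_\infty\leqslant\delta e^{-n(a-2\varepsilon)\tau}$ whenever $\|\varphi-u_-\|_\infty\leqslant\delta$. Intermediate times cost only a factor $e^{\Lambda\tau}$. The only fact needed about perturbed minimizers is their closeness to calibrated curves on the fixed interval $[-\tau,0]$, which is a compactness statement. Nothing has to be controlled uniformly in $t$.

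The same device removes the other weak points.

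\begin{enumerate}
\item[(i)] Step 2 (general $H$). Your $o(w)$ term needs $w\leqslant\Delta_0$ on all of $[-t,0]$, but the a priori bound is only $w(s)\leqslant e^{\Lambda(s+t)}\delta$. The window with $\delta\leqslant e^{-\Lambda\tau}\Delta_0$ is exactly what guarantees it. In the discounted case $L$ is affine in $u$ and the term is absent.
\item[(ii)] Step 1. The detour through $\mathcal{N}_{u_-}$, the neighbourhood $\mathcal{V}$ and block iteration is unnecessary. The lift of any calibrated curve of $u_-$ stays in the compact set $\mathrm{J}^1_{u_-}$. Hence weak-$\ast$ limits of empirical measures along long calibrated segments are invariant measures supported in $\mathrm{J}^1_{u_-}$, i.e.\ elements of $\mathfrak{M}_{u_-}$. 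This gives Lemma \ref{lem1}(1) for all calibrated curves of length $\geqslant\tau(\varepsilon)$ at once.
\item[(iii)] The bootstrap tool. Upper semicontinuity of gradients under uniform convergence only places limit covectors in $D^+u_-(x)=\mathrm{co}\,D^\ast u_-(x)$. This is in general strictly larger than the fibre of $\mathrm{J}^1_{u_-}$, and (S2) says nothing there. What you actually need is that limits of minimizers for data converging to $u_-$ are calibrated curves of $u_-$. Closing a global-in-time bootstrap would further require ergodic bounds for genuine orbit segments near $\mathrm{J}^1_{u_-}$ and a continuity argument in $t$; the window iteration makes all of this unnecessary.
\end{enumerate}
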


The aim of this appendix is to offer an improvement of the above theorem. More precisely, we have
\begin{proposition}\label{prop:JYZ-new}
Assume (S2) with the same $\Delta>0$ in Theorem \ref{RWY-1.2}. Then for each $\varphi\in C(M,\R)$ such that $\|\varphi-u_-\|_{\infty}\leqslant\Delta$ and $\min_{x\in M}|\varphi(x)-u_-(x)|>0$,
\begin{equation}\label{eq:B-2}
\lim_{t\to +\infty}\dfrac{\ln\|T^-_t\varphi-u_-\|_{\infty}}{t}=-a.
\end{equation}
\end{proposition}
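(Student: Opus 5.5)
Theorem \ref{RWY-1.2} already gives the upper bound $\limsup_{t\to\infty}\frac1t\ln\|T^-_t\varphi-u_-\|_\infty\leqslant -a$. So it remains to prove the matching lower bound
\[
\liminf_{t\to\infty}\frac{1}{t}\ln\|T^-_t\varphi-u_-\|_\infty\geqslant -a
\]
for every $\varphi$ with $\|\varphi-u_-\|_\infty\leqslant\Delta$ and $\min_M|\varphi-u_-|>0$. Since $\varphi-u_-$ is continuous and never vanishes on the connected manifold $M$, either $\varphi>u_-$ everywhere or $\varphi<u_-$ everywhere.

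\textbf{Step 1 (comparison).} Set $\eta:=\min_M|\varphi-u_-|>0$. By monotonicity (Proposition \ref{sg-1} (2)), it suffices to treat the constant perturbations $u_-\pm\eta$. Indeed, if $\varphi>u_-$ then $T^-_t\varphi-u_-\geqslant T^-_t(u_-+\eta)-u_-\geqslant0$. If $\varphi<u_-$ then $u_--T^-_t\varphi\geqslant u_--T^-_t(u_--\eta)\geqslant 0$.

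\textbf{Step 2 (lower bound along the optimal calibrated curve).} Take the ergodic minimizer $\mu_\ast$ and the point $x_\ast$ with its unique calibrated curve $\gamma_\ast$ from (S2), so that \eqref{eq:B-1} holds. Fix $t>0$ and consider $w(s):=T^-_{s+t}(u_-\pm\eta)(\gamma_\ast(s))-u_-(\gamma_\ast(s))$ for $s\in[-t,0]$, with the sign matching the case.

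In the case $u_-+\eta$, use the variational representation (Proposition \ref{Implicit variational} together with the Markov property):
\[
T^-_{s_2+t}\psi(\gamma_\ast(s_2))\leqslant T^-_{s_1+t}\psi(\gamma_\ast(s_1))+\int_{s_1}^{s_2}L\big(\gamma_\ast,\dot\gamma_\ast,T^-_{\tau+t}\psi(\gamma_\ast(\tau))\big)\,d\tau.
\]
Subtracting the calibration identity for $u_-$ along $\gamma_\ast$ gives an upper estimate on $w$. That is the wrong direction. To get a lower bound one must instead follow the minimizing curve of $T^-_t\psi$ at the endpoint, and that curve is not $\gamma_\ast$.

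The fix is to work with the forward/backward duality. Instead of $\gamma_\ast$, use the $(u_-)$-calibrated orbit together with Lemma \ref{lem-thmD-lem+}-type arguments, namely Proposition \ref{Minimality} (1) applied along the characteristic through $(\gamma_\ast(-t),d u_-,u_-+\eta)$. Concretely, let $(x(s),p(s),u(s))$ be the solution of (CH$_0$) starting at time $-t$ from $(\gamma_\ast(-t),p_\ast(-t),u_-(\gamma_\ast(-t))\pm\eta)$. Its $u$-component obeys
\[
\dot u=\langle p,\partial_pH\rangle-H(x,p,u).
\]
Linearizing in $u$ along the calibrated orbit shows that the gap $u(s)-u_-(x(s))$ evolves like $e^{-\int\partial_uH}$, that is like $e^{-at}$ to leading exponential order. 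One then compares $T^-_t$ from below with this orbit.

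\textbf{Main obstacle.} The hard step is exactly this lower bound. $T^-_t$ is an infimum, so single orbits only give upper bounds on $T^-_t(u_-+\eta)$. I would therefore handle the two signs differently.

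For $\varphi<u_-$: the bound $T^-_t(u_--\eta)(\gamma_\ast(0))\leqslant h_{\gamma_\ast(-t),u_-(\gamma_\ast(-t))-\eta}(\gamma_\ast(0),t)$ controls $u_--T^-_t\varphi$ from below. Estimating this action via the Gronwall-type argument of Lemma \ref{lem:eq-B-Lip}, run along the fixed curve $\gamma_\ast$, gives
\[
u_--T^-_t(u_--\eta)\geqslant \eta\, e^{\int_{-t}^0\partial_uL\,ds}\approx\eta e^{-at}.
\]

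For $\varphi>u_-$: apply Proposition \ref{sg-1} (5), i.e. $T^+_tT^-_t\geqslant$ fails but $T^-_tT^+_t\varphi\geqslant\varphi$, and transfer the forward-semigroup version of the previous case via the symmetry $\breve H$. Failing that, I would use that $T^-_t(u_-+\eta)\geqslant u_-+\eta e^{-\Lambda' t}$ locally, sharpened along $\gamma_\ast$ by the Lipschitz dependence of minimizers on the initial data near the calibrated curve (uniqueness of $\gamma_\ast$).
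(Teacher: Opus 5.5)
\paragraph{Where the proposal breaks.} Your Step 1 reduction and your treatment of $\varphi<u_-$ are sound in substance. The case $\varphi>u_-$, however, is a genuine gap, and none of your three fixes closes it.

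\begin{itemize}
\item The characteristic through $(\gamma_\ast(-t),p_\ast(-t),u_-+\eta)$ is just one more competitor in the infimum, so it again gives only an upper bound, as you note yourself.
\item The $\breve H$/duality route only relabels the difficulty. Negation turns a lower bound for an infimum into an upper bound for a supremum, which is the same hard direction. Using $T^-_tT^+_t\geqslant\mathrm{id}$ needs an upper bound for $T^+_t(u_-+\kappa)$. Via $T^+_tu_-\leqslant u_-$ and Proposition~\ref{sg-1}(3), this only yields $T^-_t(u_-+\eta)\geqslant u_-+\eta e^{-\Lambda t}$, i.e.\ the rate $\Lambda$, not $a$.
\item Your last route is the paper's mechanism (Lemmas~\ref{lem2} and \ref{lem4}). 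Along the minimizer $\gamma^\delta$ of $T^-_\tau(u_-+\delta)(x_\ast)$ one has equality for $T^-$ and domination for $u_-$. Hence $w^\delta$ obeys $\dot w^\delta\geqslant w^\delta\int_0^1\frac{\partial L}{\partial u}(\gamma^\delta,\dot\gamma^\delta,u_-+\theta w^\delta)\,d\theta$. Moreover $(\gamma^\delta,\dot\gamma^\delta)$ is close to $(\gamma_\ast,\dot\gamma_\ast)$ by compactness and uniqueness of $\gamma_\ast$; there is no Lipschitz dependence. But this closeness holds only on a window $[-\tau,0]$ of fixed length and for $\delta\leqslant\delta_1(\eps,\tau)$.
\end{itemize}

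Passing to $t\to\infty$ is where this fails, and the paper's Lemma~\ref{lem5} does not repair it. It chains $u^{\delta e^{-k(a+2\eps)\tau}}(x_\ast)\leqslant T^-_\tau u^{\delta e^{-(k-1)(a+2\eps)\tau}}(x_\ast)$ through the monotone operator $T^-_\tau$. That would require the inequality on all of $M$, not only at $\mu_\ast$-typical points.

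\paragraph{The obstruction is real.} In fact the statement is false for $\varphi>u_-$. Take $M=\mathbb{S}^1$, $c=0$, $\lambda(x)=\frac32-\frac12\cos x$ and $h(x,p)=\frac12p^2-\lambda(x)\cos x-\frac12\sin^2x$.

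\begin{itemize}
\item Then $u_-=\cos x$ is a classical, hence the unique, solution.
\item Its $1$-graph is invariant with induced flow $\dot x=-\sin x$. So $\mathfrak{M}_{u_-}$ consists of convex combinations of the Dirac masses at $(0,0,1)$ and $(\pi,0,-1)$, and $a=\lambda(0)=1<\lambda(\pi)=2$.
\item By Herglotz' principle, $T^-_t(u_-+\delta)(x)=\inf_\sigma v_\sigma(0)$ over curves $\sigma:[-t,0]\to M$ with $\sigma(0)=x$, where $\dot v_\sigma=L(\sigma,\dot\sigma,v_\sigma)$ and $v_\sigma(-t)=\cos\sigma(-t)+\delta$.
\item A direct computation gives $\frac{d}{ds}(v_\sigma-\cos\sigma)=\frac12(\dot\sigma+\sin\sigma)^2-\lambda(\sigma)(v_\sigma-\cos\sigma)$, so
\[
0\leqslant T^-_t(u_-+\delta)(x)-u_-(x)\leqslant\delta e^{-\int_{-t}^0\lambda(\sigma(s))\,ds}+\frac12\int_{-t}^0\big(\dot\sigma+\sin\sigma\big)^2ds .
\]
\end{itemize}

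Now bound the right side in two regimes.

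\begin{itemize}
\item If $d(x,0)\geqslant e^{-3t/4}$, take the backward calibrated curve. It stays in $\{d(\cdot,0)<\pi/2\}$ for time at most $\frac34t+\ln 2$, since $\tan(\sigma/2)$ evolves exponentially. It then converges to $\pi$. Because $\lambda\geqslant1$ everywhere, the right side is $O(\delta e^{-5t/4})$.
\item If $d(x,0)<e^{-3t/4}$, take the calibrated curve reaching the point $e^{-3t/4}$ at time $-1$, followed by the straight segment to $x$ on $[-1,0]$. This gives $O(\delta e^{-5t/4}+e^{-3t/2})$.
\end{itemize}

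Hence $\limsup_{t\to\infty}\frac1t\ln\|T^-_t(u_-+\delta)-u_-\|_\infty\leqslant-\frac54<-a$, even though $\min_M|\varphi-u_-|=\delta>0$. The same example contradicts (C2) of Theorem C for data above $u^c_-$. The mechanism is that minimizers leave $\gamma_\ast$ to exploit regions with larger $\frac{\partial H}{\partial u}$, at an excess cost that is quadratically small.

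\paragraph{What survives.} Only the half $\varphi<u_-$ is true, and there your argument is the paper's Lemma~\ref{lem3}, provided you make three corrections:
\begin{itemize}
\item shrink $\eta$ to a small $\delta$ by monotonicity;
\item run Gronwall with the linearized coefficient, via Lemma~\ref{lem1}(2) and \eqref{eq:B-5}, rather than the Lipschitz constant $\Lambda$ of Lemma~\ref{lem:eq-B-Lip};
\item keep $0\leqslant u_--T^-_{s+t}(u_--\delta)\leqslant\Delta_0$ on $[-t,0]$ by Lyapunov stability. This is unnecessary when $\frac{\partial L}{\partial u}=-\lambda(x)$ does not depend on $u$.
\end{itemize}
Done on $[-t,0]$ in a single Gronwall step, this also avoids the faulty iteration.
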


\vspace{1em}
\noindent To prove the proposition, for $\delta>0$, set
\begin{equation}\label{eq:B-3}
u_\delta:=u_--\delta,\quad u^\delta:=u_-+\delta,
\end{equation}
and we need several lemmas to proceed. The first one is due to \cite{RWY}, consisting of a reformulation of (S2) and a continuity argument on the integral of the function $\frac{\partial L}{\partial u}$ over $(u_-,L,0)$-calibrated curves.
\begin{lemma} \label{lem1} \cite[Lemma 3.1]{RWY}
Let $u_-\in\mathcal{S}^-$ satisfies (S2). Then for any $\varepsilon >0$,
\begin{enumerate}[(1)]
  \item there is $\tau(\varepsilon)>0$ such that for each $(u_-,L,0)$-calibrated curve $\gamma:[-t,0]\to M$ with $t\geqslant\tau(\varepsilon)$,
        \begin{equation}\label{eq:B-4}
        \int^0_{-t}\frac{\partial L}{\partial u}\big(\gamma(s),\dot{\gamma}(s),u_-(\gamma(s))\big)ds<(-a+\varepsilon)\cdot t.
        \end{equation}
        and for each calibrated curve $\gamma_\ast:(-\infty,0]\rightarrow M$ indicated by \eqref{eq:B-1} and $t\geqslant\tau(\varepsilon)$,
        \begin{equation}\label{eq:B-5}
        \int^0_{-t}\frac{\partial L}{\partial u}\big(\gamma_\ast(s),\dot{\gamma}_\ast(s),u_-(\gamma_\ast(s))\big)ds>(-a-\varepsilon)\cdot t.
        \end{equation}

  \item there is $\Delta_0(\varepsilon)>0$ such that for any $(u_-,L,0)$-calibrated curve $\gamma:[a,b]\to M, -\infty\leqslant a<b\leqslant+\infty$,
        \begin{equation}\label{eq:B-6}
        \Big|\int^1_0\frac{\partial L}{\partial u}\big(\gamma(s),\dot{\gamma}(s),u_-(\gamma(s))\pm\theta\xi(s)\big)d\theta- \frac{\partial L}{\partial u}\big(\gamma(s),\dot{\gamma}(s),u_-(\gamma(s))\big)	\Big|\leqslant\varepsilon,\ \forall s\in[a,b]
        \end{equation}
        for any $\xi\in C([a,b],\R)$ satisfying $\|\xi\|_{\infty}\leqslant\Delta_0$.
\end{enumerate}
\end{lemma}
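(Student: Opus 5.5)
The plan is to treat the two parts separately. Part (1) is a compactness argument on empirical measures along calibrated orbits, in the spirit of Krylov--Bogoliubov. Part (2) is uniform continuity of $\frac{\partial L}{\partial u}$ on a compact set.

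\emph{Part (1).} First I record the link between the two integrands. Along a $(u_-,L,0)$-calibrated curve $\gamma$, every interior time is a differentiability point of $u_-$. The lift $z(s)=(\gamma(s),d_xu_-(\gamma(s)),u_-(\gamma(s)))$ is an orbit of $\Phi^s_H$ lying in $\mathrm{J}^1_{u_-}$, and by Legendre duality
\[
\frac{\partial L}{\partial u}(\gamma,\dot\gamma,u_-(\gamma))=-\frac{\partial H}{\partial u}(z(s)).
\]
To prove \eqref{eq:B-4} I argue by contradiction. Suppose there are $\varepsilon>0$, times $t_n\to+\infty$ and calibrated curves $\gamma_n:[-t_n,0]\to M$ with
\[
\frac{1}{t_n}\int_{-t_n}^0\frac{\partial H}{\partial u}(z_n(s))\,ds\leqslant a-\varepsilon .
\]
Consider the probability measures $\mu_n:=\frac1{t_n}\int_{-t_n}^0\delta_{z_n(s)}\,ds$. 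They are supported in the compact set $\mathrm{J}^1_{u_-}$; at the endpoints I use that $\mathrm{J}^1_{u_-}$ is closed. Hence a subsequence converges weakly-$\ast$ to some $\mu$ supported in $\mathrm{J}^1_{u_-}$. For a test function $f$ and fixed $\tau$, the difference $\int f\circ\Phi^\tau_H\,d\mu_n-\int f\,d\mu_n$ is a boundary term bounded by $2|\tau|\|f\|_\infty/t_n\to0$. Therefore $\mu$ is $\Phi^t_H$-invariant, so $\mu\in\mathfrak{M}_{u_-}$. By weak-$\ast$ continuity, $\int\frac{\partial H}{\partial u}\,d\mu\leqslant a-\varepsilon$, which contradicts the definition \eqref{B1} of $a$ as the minimum. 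This yields a $\tau_1(\varepsilon)$ that works uniformly over all calibrated curves.

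For \eqref{eq:B-5} no uniformity is needed, because $\gamma_\ast$ is the fixed curve from \eqref{eq:B-1}. The Birkhoff limit \eqref{eq:B-1} gives some $\tau_2(\varepsilon)$, and I set $\tau(\varepsilon)=\max\{\tau_1,\tau_2\}$.

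\emph{Part (2).} Since $u_-$ is Lipschitz, the momenta $d_xu_-$ on $\mathrm{J}^1_{u_-}$ are bounded. Hence the velocities $\dot\gamma=\frac{\partial H}{\partial p}$ along calibrated curves are uniformly bounded. So all triples $(\gamma(s),\dot\gamma(s),u_-(\gamma(s)))$ lie in a fixed compact set $K\subset TM\times\R$. Let $K_1$ be the closed $1$-neighborhood of $K$ in the $u$-direction. The function $\frac{\partial L}{\partial u}$ is continuous, hence uniformly continuous on $K_1$. Choose $\Delta_0\leqslant1$ so that
\[
\Big|\frac{\partial L}{\partial u}(x,v,u+r)-\frac{\partial L}{\partial u}(x,v,u)\Big|\leqslant\varepsilon\quad\text{whenever }(x,v,u)\in K,\ |r|\leqslant\Delta_0 .
\]
Apply this with $r=\pm\theta\xi(s)$ and integrate over $\theta\in[0,1]$ to get \eqref{eq:B-6}. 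In the present setting this part is in fact trivial, since $\frac{\partial L}{\partial u}=-\lambda(x)$ does not depend on $u$.

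The main obstacle is the uniformity in (1) over all calibrated curves. The key step there is to check that the weak-$\ast$ limit of the empirical measures is genuinely invariant and supported on $\mathrm{J}^1_{u_-}$, so that it belongs to $\mathfrak{M}_{u_-}$. This uses the backward invariance \eqref{1jet-inv} and the closedness of $\mathrm{J}^1_{u_-}$.
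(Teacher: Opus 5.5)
Your arguments for \eqref{eq:B-4} and for part (2) are correct. For \eqref{eq:B-4}, the weak-$\ast$ limits of the empirical measures along long calibrated orbits are $\Phi^t_H$-invariant and carried by $\mathrm{J}^1_{u_-}$, so they lie in $\mathfrak{M}_{u_-}$ and contradict the minimality of $a$ in \eqref{B1}. For part (2), $\frac{\partial L}{\partial u}$ is uniformly continuous on a compact set containing all triples $(\gamma,\dot\gamma,u_-(\gamma))$; here it is even $-\lambda(x)$. The paper gives no proof of its own and only cites \cite[Lemma 3.1]{RWY}.

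The gap is in \eqref{eq:B-5}. The lemma asks for one $\tau(\varepsilon)$ that works for \emph{each} curve $\gamma_\ast$ supplied by \eqref{eq:B-1}, i.e.\ for $\mu_\ast$-almost every endpoint $x_\ast$. This is also how it is used in Lemma \ref{lem3}: there $\tau$, and hence $\delta_0=e^{-\Lambda\tau}\Delta_0$, is fixed before the quantifier ``for $\mu_\ast$-almost every $x_\ast$''. Birkhoff's theorem gives convergence only point by point, so your $\tau_2$ depends on $x_\ast$. Your remark that ``no uniformity is needed'' therefore replaces the claim by a weaker one rather than proving it.

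This cannot be repaired by a sharper argument from (S2) alone. Egorov's theorem gives a common $\tau$ only for $x_\ast$ in a set of $\mu_\ast$-measure at least $1-\eta$, not in a full-measure set. Suppose one $\tau(\varepsilon)$ did work on a set $G$ of lifted points of full $\mu_\ast$-measure.
\begin{itemize}
\item Then $G$ is dense in $\mathrm{supp}(\mu_\ast)$, and $z\mapsto\frac1t\int_{-t}^0\frac{\partial H}{\partial u}(\Phi^s_Hz)\,ds$ is continuous on this compact invariant set.
\item So the bound $\frac1t\int_{-t}^0\frac{\partial H}{\partial u}(\Phi^s_Hz)\,ds\leqslant a+\varepsilon$ would extend to all of $\mathrm{supp}(\mu_\ast)$ for every $t\geqslant\tau(\varepsilon)$.
\item Integrating against any invariant $\nu$ carried by $\mathrm{supp}(\mu_\ast)$ and letting $\varepsilon\to0$ gives $\int\frac{\partial H}{\partial u}\,d\nu\leqslant a$. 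By minimality this forces $\int\frac{\partial H}{\partial u}\,d\nu=a$ for every such $\nu$.
\end{itemize}
That rigidity is not provided by anything in (S2).

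The defensible version fixes one $\mu_\ast$-generic curve $\gamma_\ast$, or restricts to an Egorov set, and lets $\tau$ depend on it; your Birkhoff argument proves exactly the single-curve version. You should state \eqref{eq:B-5} in that form. Then check that Lemmas \ref{lem3}--\ref{lem4} only ever evaluate it at a single point $x_\ast$, with $\tau$, $\delta_0$, $\delta_1$ allowed to depend on that point.
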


\noindent We also notice that, with the same proof as \cite[Lemma 3.3, Corollary 3.1]{RWY}, one is able to show
\begin{lemma}\label{lem2}
Let $u_-\in\mathcal{S}^-$. For any $\varepsilon>0$ and $\tau>0$, there is  $\delta_1(\varepsilon,\tau)>0$ such that for any $x\in M$, any $\delta\in[0,\delta_1]$, there are a minimizer $\gamma^{\delta}:[-\tau,0]\to M$ of $T^-_{t_0}u^{\delta}(x)$ and a $(u_-,L,0)$-calibrated curve $\gamma:[-\tau,0]\to M$ with $\gamma(0)=x$ satisfying that
\begin{enumerate}[(1)]
  \item for any $s\in[-\tau,0], d_{TM}\Big(\big(\gamma^{\delta}(s),\dot{\gamma}^{\delta}(s)\big),\big(\gamma(s),\dot{\gamma}(s)\big)\Big)<\varepsilon$, here $d_{TM}$ denotes the metric on $TM$ induced by $|\cdot|_x$.

  \item for any $s\in[-\tau,0]$, setting $w^{\delta}(s)=T^-_{s+\tau}u^{\delta}(\gamma^{\delta}(s))-u_-(\gamma^{\delta}(s))$, then
        \begin{equation}\label{eq:B-7}
        \Big|\int^1_0\frac{\partial L}{\partial u}\Big(\gamma^{\delta}(s),\dot{\gamma}^{\delta}(s),u_-(\gamma^{\delta}(s))+\theta w^{\delta}(s)\Big)d\theta-\frac{\partial L}{\partial u}\big(\gamma(s),\dot{\gamma}(s),u_-(\gamma(s))\big)\Big|\leqslant\varepsilon,
        \end{equation}
\end{enumerate}
\end{lemma}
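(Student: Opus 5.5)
The plan is to argue by contradiction using compactness, letting $\delta\to0$ along a bad sequence. (I read $T^-_{t_0}u^\delta(x)$ in the statement as $T^-_\tau u^\delta(x)$.) Fix $\varepsilon,\tau>0$ and suppose no $\delta_1$ works. Then there are $\delta_n\to0$, points $x_n\in M$, and minimizers $\gamma_n:=\gamma^{\delta_n}:[-\tau,0]\to M$ of $T^-_\tau u^{\delta_n}(x_n)$ such that, for every $(u_-,L,0)$-calibrated curve $\gamma$ ending at $x_n$, either (1) or (2) fails.

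\textbf{Step 1 (uniform a priori bounds).} By Proposition \ref{sg-1} (2)-(3) and $T^-_t u_-=u_-$, we have
\[
0\leqslant T^-_t u^{\delta}-u_-\leqslant e^{\Lambda t}\delta \quad\text{for } t\in[0,\tau].
\]
Hence $w^{\delta_n}(s)\to0$ uniformly on $[-\tau,0]$, and the values $T^-_{s+\tau}u^{\delta_n}$ stay in a fixed compact range. Each minimizer lifts, via the remark after Proposition \ref{Implicit variational}, to an orbit
\[
(x_n(s),p_n(s),u_n(s))=\Phi^{s+\tau}_H(x_n(-\tau),p_n(-\tau),u_n(-\tau)),
\]
with $u_n(s)=T^-_{s+\tau}u^{\delta_n}(\gamma_n(s))$. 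I would use the standard Tonelli Lipschitz estimate for minimizers on a fixed time interval (as in \cite{WWY2,RWY}) to bound $|\dot\gamma_n|$, and hence $|p_n|$, uniformly in $n$. This yields initial data $(x_n(-\tau),p_n(-\tau),u_n(-\tau))$ in a fixed compact set. I expect this to be the main technical point. If the direct Lipschitz estimate is delicate near $s=-\tau$, where $u^{\delta}$ is only Lipschitz, I would first apply the bound at an interior time and then flow backward/forward by the smooth flow $\Phi^t_H$.

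\textbf{Step 2 (passing to the limit).} Extract a subsequence along which the initial data converge and $x_n\to x$. By continuous dependence of $\Phi^t_H$ on initial data, the orbits converge in $C^1$ on $[-\tau,0]$, so $(\gamma_n,\dot\gamma_n)\to(\gamma,\dot\gamma)$ uniformly. Passing to the limit in the implicit variational identity of Proposition \ref{Implicit variational} gives
\[
u_-(\gamma(0))-u_-(\gamma(-\tau))=\int_{-\tau}^0 L(\gamma,\dot\gamma,u_-(\gamma))\,ds,
\]
using that $u_n\to u_-\circ\gamma$ uniformly. So $\gamma$ is $(u_-,L,0)$-calibrated on $[-\tau,0]$ and ends at $x$. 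To compare with curves ending at $x_n$ rather than at $x$, I would rather take the limit of calibrated curves ending at $x_n$, or equivalently run the whole argument with $x_n\equiv x$ fixed and then use compactness of the set of calibrated orbits in $\mathrm{J}^1_{u_-}$ for uniformity in $x$. Either way, for large $n$ the curve $\gamma_n$ is $\varepsilon'$-close in $d_{TM}$ to some calibrated curve ending at $x_n$, which contradicts the failure of (1) once $\varepsilon'\leqslant\varepsilon$.

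\textbf{Step 3 (the integral estimate (2)).} All the relevant points $(\gamma^\delta,\dot\gamma^\delta,u_-(\gamma^\delta)+\theta w^\delta)$ lie in a fixed compact subset of $TM\times\R$, on which $\frac{\partial L}{\partial u}$ is uniformly continuous. Since $|w^\delta|\leqslant e^{\Lambda\tau}\delta$ and $u_-$ is Lipschitz, closeness in (1) with a smaller tolerance $\varepsilon'$, together with small $\delta$, gives \eqref{eq:B-7}. So I would run Steps 1–2 with $\varepsilon'=\varepsilon'(\varepsilon)$ chosen from the modulus of continuity of $\frac{\partial L}{\partial u}$ and the Lipschitz constant of $u_-$. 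That choice delivers (1) and (2) simultaneously, and we take $\delta_1$ small enough that $e^{\Lambda\tau}\delta_1$ is also within that modulus.
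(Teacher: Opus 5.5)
Your overall strategy is the same compactness argument the paper imports from \cite[Lemma 3.3]{RWY}: argue by contradiction, use a priori velocity bounds for minimizers, pass to a $C^1$ limit of the characteristic orbits, and read off the calibration identity. Steps 1 and 3 are fine. The gap is the sentence ``Either way, for large $n$ the curve $\gamma_n$ is $\varepsilon'$-close in $d_{TM}$ to some calibrated curve ending at $x_n$.'' Compactness only shows that a subsequence of $\gamma_n$ converges to \emph{some} calibrated curve $\gamma$ ending at $x=\lim x_n$. In fix (i), the calibrated curves $\tilde\gamma_n$ ending at $x_n$ also converge to a calibrated curve $\tilde\gamma$ ending at $x$. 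Nothing forces $\tilde\gamma=\gamma$ unless the calibrated curve ending at $x$ is unique, i.e.\ unless $u_-$ is differentiable at $x$, and you have no control over where the $x_n$ accumulate. Fix (ii) correctly gives a $\delta_1$ depending on $x$, but compactness of $\mathrm{J}^1_{u_-}$ cannot make it uniform. The map sending $x$ to the set of calibrated curves ending at $x$ is only upper semicontinuous, so points near a kink of $u_-$ see only one of the curves ending at the kink.

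This step cannot be patched in the uniform form stated, because there the claim fails in generic situations. Suppose exactly two calibrated curves $\gamma^a,\gamma^b$ end at $x_0$, with $\dot\gamma^a(0)\neq\dot\gamma^b(0)$ and $e_a:=e^{-\int_{-\tau}^0\lambda(\gamma^a)}\neq e_b:=e^{-\int_{-\tau}^0\lambda(\gamma^b)}$. An example is $\mathbb{S}^1$ with $h=\frac12|p|^2$, $c>0$ and $\lambda>0$ non-symmetric: the calibrated curves emanate from the maximum point of $\lambda$ in both directions and meet at such a kink. Near $x_0$ write $u_-=\min\{f_a,f_b\}$. The two local branches of $T^-_\tau u^\delta$ are then $f_a+\delta e_a+o(\delta)$ and $f_b+\delta e_b+o(\delta)$, so the switching point of the minimizers moves a distance of order $\delta|e_a-e_b|$ away from $x_0$. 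For $x$ in between, the minimizer of $T^-_\tau u^\delta(x)$ is close to $\gamma^b$, while the unique calibrated curve ending at $x$ is close to $\gamma^a$. Hence (1) fails at $s=0$ for every small $\delta>0$.

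What your argument does prove is one of two weaker statements. The first is the version without the constraint $\gamma(0)=x$. The second is the uniform version for $x$ restricted to a compact set $K$ on which the calibrated curve ending at each point is unique. On such a $K$, both $\gamma_n$ and $\tilde\gamma_n$ converge to the unique calibrated curve ending at $\lim x_n\in K$, and the contradiction goes through. The set $K=\pi(\mathrm{supp}\,\mu_\ast)$ has this property: its points lie inside full orbits contained in $\mathrm{J}^1_{u_-}$, so $u_-$ is differentiable there. That is exactly how the lemma is used in Appendix B (Lemma \ref{lem4} and its consequences), so you should state and prove it in that form.
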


Now we go into the proof of Proposition \ref{prop:JYZ-new}. Since Theorem \ref{RWY-1.2} offers a upper bound for $\|T^-_t\varphi-u_-\|_{\infty}$ for $t$ large enough, to prove \eqref{eq:B-2}, it is sufficient to give a lower bound when $\varphi$ is separated from $u_-$. We begin to give lower bounds of $\|T^-_{\tau}\varphi-u_-\|_{\infty}$ for special initial data $\varphi=u_\delta, u^\delta$ with $\delta$ small enough.

\begin{lemma}\label{lem3}
Given $\varepsilon>0$, define $\delta_0:=e^{-\Lambda\tau}\Delta_0$, where $\Lambda$ is the Lipschitz constant for $H$ (or $L$) with respect to $u$ (see Appendix A.1) and $\tau$ and $\Delta_0$ are given by Lemma \ref{lem1}. Then for $\mu_\ast$-almost every $x_\ast\in M$ and $\delta\in(0,\delta_0]$,
\begin{equation*}
u_-(x_\ast)-T^-_{\tau}u_{\delta}(x_\ast)>\delta e^{-(a+2\varepsilon)\tau}.
\end{equation*}
\end{lemma}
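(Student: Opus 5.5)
We follow the gap $u_--T^-_s u_\delta$ backward along the unique $(u_-,L,0)$-calibrated curve $\gamma_\ast:(-\infty,0]\to M$ ending at $x_\ast$, which is given by \eqref{eq:B-1}. Along this curve we compare the calibration identity for $u_-$ with the variational inequality for $T^-_s u_\delta$. Since $\gamma_\ast$ is only admissible, and not minimizing, for $T^-_\tau u_\delta$, we get an upper bound for $T^-_\tau u_\delta(x_\ast)$. This is the right direction, because we want a lower bound for $u_--T^-_\tau u_\delta$. Concretely, set $w(s):=u_-(\gamma_\ast(s))-T^-_{s+\tau}u_\delta(\gamma_\ast(s))$ for $s\in[-\tau,0]$, so that $w(-\tau)=\delta$.

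The first step is a differential inequality for $w$. By the Markov/semigroup property, for $-\tau\leqslant s_1<s_2\leqslant 0$ we have
\[
T^-_{s_2+\tau}u_\delta(\gamma_\ast(s_2))\leqslant T^-_{s_1+\tau}u_\delta(\gamma_\ast(s_1))+\int_{s_1}^{s_2}L\big(\gamma_\ast,\dot\gamma_\ast,T^-_{\sigma+\tau}u_\delta(\gamma_\ast(\sigma))\big)d\sigma ,
\]
and the same relation holds with equality for $u_-$, by calibration. Subtracting the two and writing the difference of the $L$ terms through the mean value formula gives
\[
w'(s)\geqslant -\Big[\int_0^1\frac{\partial L}{\partial u}\big(\gamma_\ast,\dot\gamma_\ast,u_--\theta w(s)\big)d\theta\Big]\,w(s)=:\kappa(s)\,w(s)
\]
for almost every $s$. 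Before using this, we check that $0\leqslant w\leqslant \delta e^{\Lambda\tau}\leqslant \Delta_0$. The lower bound follows from $u_\delta\leqslant u_-$ and monotonicity. The upper bound follows from Proposition \ref{sg-1}(3) together with the choice $\delta\leqslant\delta_0=e^{-\Lambda\tau}\Delta_0$.

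The second step is to control the coefficient and integrate. Since $\|w\|_\infty\leqslant\Delta_0$, Lemma \ref{lem1}(2) applies with $\xi=w$ and gives $|\kappa(s)+\frac{\partial L}{\partial u}(\gamma_\ast,\dot\gamma_\ast,u_-(\gamma_\ast))|\leqslant\varepsilon$. A Gronwall-type integration of the inequality for $w$ from $-\tau$ to $0$ then yields
\[
w(0)\geqslant \delta\exp\Big(\int_{-\tau}^0\frac{\partial L}{\partial u}\big(\gamma_\ast,\dot\gamma_\ast,u_-(\gamma_\ast)\big)ds-\varepsilon\tau\Big).
\]
Finally, \eqref{eq:B-5} with $t=\tau=\tau(\varepsilon)$ gives $\int_{-\tau}^0\frac{\partial L}{\partial u}(\gamma_\ast,\dot\gamma_\ast,u_-(\gamma_\ast))\,ds>(-a-\varepsilon)\tau$. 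Combining the two bounds, $w(0)>\delta e^{-(a+2\varepsilon)\tau}$, which is the claim since $w(0)=u_-(x_\ast)-T^-_\tau u_\delta(x_\ast)$.

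The main obstacle is making the first step rigorous. We need that $\gamma_\ast$ restricted to $[-\tau,0]$ is an admissible competitor in the implicit variational formula for $T^-_{s_2+\tau}u_\delta$ on every subinterval. This follows from the representation $T^-_t\varphi=\inf_y h_{y,\varphi(y)}$, the Markov property (Proposition \ref{Minimality}(3)), and the infimum in \eqref{eq:Implicit variational} taken with the curve $\gamma_\ast$ itself. We also need that $w$ is Lipschitz, so that the integral inequality can be converted into the almost-everywhere differential inequality; this is given by Proposition \ref{sg-2}(1). The sign bookkeeping in the mean value term, namely the argument $u_--\theta w$, has to match the $\pm\theta\xi$ allowed in \eqref{eq:B-6}.
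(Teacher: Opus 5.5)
Your proposal is essentially the paper's proof. It uses the same gap $w(s)=u_-(\gamma_\ast(s))-T^-_{s+\tau}u_\delta(\gamma_\ast(s))$ along the calibrated curve $\gamma_\ast$, the same bound $0\leqslant w\leqslant e^{\Lambda\tau}\delta\leqslant\Delta_0$ to make Lemma \ref{lem1}(2) available, and the same Gronwall integration closed off by \eqref{eq:B-5}.

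There is one sign slip to fix. The mean-value step gives $L(\gamma_\ast,\dot\gamma_\ast,u_-)-L(\gamma_\ast,\dot\gamma_\ast,u_--w)=w\int_0^1\frac{\partial L}{\partial u}(\gamma_\ast,\dot\gamma_\ast,u_--\theta w)\,d\theta$. So $\kappa$ carries no leading minus sign, and \eqref{eq:B-6} yields $|\kappa-\frac{\partial L}{\partial u}(\gamma_\ast,\dot\gamma_\ast,u_-(\gamma_\ast))|\leqslant\varepsilon$.

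With $\kappa$ as you defined it, integrating $w'\geqslant\kappa w$ would produce the exponent $-\int_{-\tau}^0\frac{\partial L}{\partial u}\,ds-\varepsilon\tau$. It would not produce the one you state. As written, therefore, your final bound does not follow from your intermediate inequality, although it is exactly what the corrected computation gives.
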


\begin{proof}
For $x_\ast\in M$ and the $(u_-,L,0)$-calibrated curve $\gamma_\ast$ indicated by (S2) with $\gamma_*(0)=x_*$ and any $-\tau\leqslant s<s+\Delta  s\leqslant0$,
\begin{equation}\label{eq:B-8}
u_-(\gamma_\ast(s+\Delta s))=u_-(\gamma_\ast(s))+\int^{\Delta s}_0 L\big(\gamma_\ast(s+\xi), \dot{\gamma}_\ast(s+\xi),u_-(\gamma_\ast(s+\xi))\big)d\xi.
\end{equation}
In view of the semigroup property of $T^-_t$, we have
\begin{equation}\label{eq:B-9}
\begin{split}
&T^-_{s+\Delta s+\tau}u_{\delta}(\gamma_*(s+\Delta s))=T^-_{\Delta s}\circ T^-_{s+\tau} u_{\delta}(\gamma_*(s+\Delta s))\\
\leqslant & T^-_{s+\tau} u_{\delta}(\gamma_*(s))+\int^{\Delta s}_0 L\big(\gamma_*(s+\xi), \dot{\gamma}_\ast(s+\xi),T^-_{s+\xi} u_{\delta}(\gamma_\ast(s+\xi))\big)d\xi\\
=&T^-_{s+\tau} u_{\delta}(\gamma_*(s))+\int^{s+\Delta s}_s L\big(\gamma_*(\xi),\dot{\gamma}_\ast(\xi),T^-_{\xi} u_{\delta}(\gamma_\ast(\xi))\big)d\xi.
\end{split}
\end{equation}
For $s\in[-\tau,0]$, define
\begin{equation}\label{eq:B-10}
w_\delta(s):=u_-(\gamma_\ast(s))-T^-_{s+\tau}u_{\delta}(\gamma_\ast(s))
\end{equation}
Then by Proposition \ref{sg-1} (2), $w_\delta(s)\geqslant0$ and $w_\delta(-\tau)=\delta$. Combining \eqref{eq:B-8} and \eqref{eq:B-9}, we have
\begin{equation}\label{eq:B-11}
\begin{split}
&\, w_\delta(s+\Delta s)-w_\delta(s)\\
 \geqslant\,\,&\int^{s+\Delta s}_s \Big( L\big(\gamma_*(\xi), \dot{\gamma}_\ast(\xi),u_-(\gamma_\ast(\xi))\big)-L\big(\gamma_*(\xi), \dot{\gamma}_\ast(\xi),T^-_{\xi+\tau}u_{\delta}(\gamma_\ast(\xi))\Big)d\xi\\
\geqslant\,\,&\int^{s+\Delta s}_s w_\delta(\xi)\int^1_0 \frac{\partial L}{\partial u}\big(\gamma_*(\xi), \dot{\gamma}_\ast(\xi),u_-(\gamma_\ast(\xi))-\theta w_\delta(\xi)\big)d\theta d\xi.
\end{split}
\end{equation}
By definition \eqref{eq:B-10}, for $\delta\in(0,\delta_0]$ and $s\in[-\tau,0]$,
\begin{equation}\label{eq:B-12}
w_\delta(s)\leqslant\|T^-_{s+\tau}u_{\delta}-u_-\|_{\infty}\leqslant e^{\Lambda(s+\tau)}\|u_{\delta}-u_-\|_{\infty}\leqslant e^{\Lambda \tau}\delta_0=\Delta_0,
\end{equation}
and $w_\delta(s)$ is Lipschitz and thus differentiable almost everywhere. Therefore, \eqref{eq:B-11}-\eqref{eq:B-12} and Lemma \ref{lem1} (2) implies that
\begin{equation}\label{eq:B-13}
\begin{split}
\dot{w}_\delta(s)&\,\,\geqslant w_\delta(s)\int^1_0 \frac{\partial L}{\partial u}\Big(\gamma_*(s), \dot{\gamma}_\ast(s),u_-(\gamma_*(s))-\theta w_\delta(s)\Big)d\theta\\
&\,\,\geqslant w_\delta(s)\bigg[\frac{\partial L}{\partial u}\big(\gamma_*(s),\dot{\gamma}_\ast(s),u_-(\gamma_*(s))\big)-\varepsilon\bigg].
\end{split}
\end{equation}
To finish the proof, we combine Lemma \ref{lem1} and \eqref{eq:B-13} to obtain that for $\delta\in(0,\delta_0]$,
\begin{equation}\label{delta-0}
\begin{split}
	&\,  u_-(x_*)-T^-_{\tau}u_{\delta}(x_*)=w_\delta(0) \\
	\geqslant &\, w_\delta(-\tau)\exp\Big\{\int^{0}_{-\tau}\bigg[\frac{\partial L}{\partial u}(\gamma_*(s),\dot{\gamma}_\ast(s),u_-(\gamma_*(s)))-\varepsilon\bigg]ds \Big\}>\delta e^{-(a+2\varepsilon)\tau}.
\end{split}
\end{equation}
where the last inequality follows from the fact that $w_\delta(-\tau)=\delta$ and \eqref{eq:B-5}.
\end{proof}

\begin{lemma}\label{lem4}
Given $\varepsilon>0$, for $\mu_\ast$-almost every $x_\ast\in M$ and any $\delta\in(0,\delta_1],\,\,T^-_{\tau}u^{\delta}(x_\ast)-u_-(x_\ast)>\delta e^{-(a+2\varepsilon)\tau}$.
\end{lemma}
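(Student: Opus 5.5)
This is the mirror image of Lemma \ref{lem3}. There the bound came from following the calibrated curve $\gamma_\ast$ forward and using the upper bound on $T^-_t u_\delta$ along that curve. Here the inequality points the other way, so an upper bound along a fixed curve will not do; we need a \emph{lower} bound on $T^-_\tau u^\delta(x_\ast)$. The plan is to run the same comparison argument along a minimizer of $T^-_\tau u^\delta(x_\ast)$ rather than along $\gamma_\ast$, and to use Lemma \ref{lem2} to replace that minimizer by the calibrated curve $\gamma_\ast$ at cost $\varepsilon$. Concretely, fix $\varepsilon$, the corresponding $\tau=\tau(\varepsilon)$ from Lemma \ref{lem1}, and $\delta_1=\delta_1(\varepsilon,\tau)$ from Lemma \ref{lem2}, shrunk if necessary so that $e^{\Lambda\tau}\delta_1\leqslant\Delta_0$. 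Take $x_\ast$ generic for $\mu_\ast$ with calibrated curve $\gamma_\ast$, and let $\gamma^\delta:[-\tau,0]\to M$ be the minimizer of $T^-_\tau u^\delta(x_\ast)$ given by Lemma \ref{lem2}. Since $u_-$ has a unique backward calibrated curve ending at $x_\ast$, the calibrated curve $\gamma$ produced there is $\gamma_\ast$ itself.

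\textbf{Step 1 (differential inequality).} Set $w^\delta(s)=T^-_{s+\tau}u^\delta(\gamma^\delta(s))-u_-(\gamma^\delta(s))$ for $s\in[-\tau,0]$. Then $w^\delta(-\tau)=\delta$, and $w^\delta\geqslant 0$ by monotonicity (Proposition \ref{sg-1} (2)). Because $\gamma^\delta$ is a minimizer, the action identity holds with equality:
\[
T^-_{s+\Delta s+\tau}u^\delta(\gamma^\delta(s+\Delta s))-T^-_{s+\tau}u^\delta(\gamma^\delta(s))=\int_s^{s+\Delta s}L\big(\gamma^\delta,\dot\gamma^\delta,T^-_{\xi+\tau}u^\delta(\gamma^\delta(\xi))\big)\,d\xi .
\]
Since $u_-$ is dominated, the corresponding increment of $u_-$ along $\gamma^\delta$ is at most $\int_s^{s+\Delta s}L(\gamma^\delta,\dot\gamma^\delta,u_-(\gamma^\delta))\,d\xi$. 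Subtracting the two relations gives
\[
\dot w^\delta(s)\geqslant w^\delta(s)\int_0^1\frac{\partial L}{\partial u}\big(\gamma^\delta,\dot\gamma^\delta,u_-(\gamma^\delta)+\theta w^\delta\big)\,d\theta \quad \text{a.e.}
\]
This is the same direction of inequality as \eqref{eq:B-13}.

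\textbf{Step 2 (transfer to $\gamma_\ast$ and integrate).} By \eqref{eq:B-7}, the $\theta$-integral above is at least $\frac{\partial L}{\partial u}(\gamma_\ast,\dot\gamma_\ast,u_-(\gamma_\ast))-\varepsilon$. Gronwall's inequality together with \eqref{eq:B-5} then gives
\[
T^-_\tau u^\delta(x_\ast)-u_-(x_\ast)=w^\delta(0)\geqslant\delta\exp\Big\{\int_{-\tau}^0\Big[\frac{\partial L}{\partial u}(\gamma_\ast,\dot\gamma_\ast,u_-(\gamma_\ast))-\varepsilon\Big]ds\Big\}>\delta e^{-(a+2\varepsilon)\tau}.
\]

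\textbf{Main obstacle.} The delicate point is Step 2: the comparison must be carried out along the minimizer $\gamma^\delta$, whose existence close to $\gamma_\ast$ (in $C^1$) for all small $\delta$, uniformly in $x_\ast$, is exactly what Lemma \ref{lem2} supplies. I would check two things carefully. First, that the calibrated curve in Lemma \ref{lem2} can indeed be taken to be $\gamma_\ast$; this uses the uniqueness of backward calibrated curves from points of $\mathrm{J}^1_{u_-}$ at differentiability points. Second, that the bound $w^\delta\leqslant e^{\Lambda\tau}\delta\leqslant\Delta_0$ holds, so that the linearization estimates are valid throughout $[-\tau,0]$.
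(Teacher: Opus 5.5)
Your proposal is correct and follows the paper's proof essentially step for step. You compare $T^-_{s+\tau}u^\delta$, which has equality along the minimizer $\gamma^\delta$, with the dominated $u_-$ along the same curve; you then transfer the $\theta$-averaged $\partial L/\partial u$ to $\gamma_\ast$ via \eqref{eq:B-7} (with $\gamma=\gamma_\ast$ by uniqueness of the calibrated curve) and conclude with Gronwall and \eqref{eq:B-5}. Your extra shrinking of $\delta_1$ to ensure $e^{\Lambda\tau}\delta_1\leqslant\Delta_0$ is harmless but unnecessary, since Lemma \ref{lem2} (2) already supplies the linearization estimate along $\gamma^\delta$ for every $\delta\in(0,\delta_1]$.
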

		
\begin{proof}		
The proof goes similarly as that of Lemma \ref{lem3}, the main difference is that here we compare the value of functions $T^-_{\tau}u^{\delta}, u_-$ along a minimizer of $T^-_{\tau}u^{\delta}$ rather than a calibrated curve of $u_-$.

\medskip
\noindent For $x_\ast\in\,$supp$(\mu_\ast)$ and the \textbf{unique} $(u_-,L,0)$-calibrated curve $\gamma_\ast$ (indicated by (S2)) with $\gamma_*(0)=x_*$, by Lemma \ref{lem2} (2), with $\tau$ given in \ref{lem1} (1) and $\delta\in(0,\delta_1]$, there is a minimizer $\gamma^{\delta}:[-\tau,0]\to M$ of $T^-_{\tau}u^{\delta}(x_*)$ such that for any $s\in[-\tau,0]$,
\begin{equation}\label{eq:B-14}
\begin{split}
	 &\,  \int^1_0\frac{\partial L}{\partial u}\Big(\gamma^{\delta}(s),\dot{\gamma}^{\delta}(s),u_-(\gamma^{\delta}(s))+\theta\big(T^-_{s+\tau} u^{\delta}(\gamma^{\delta}(s))-u_-(\gamma^{\delta}(s))\big)\Big)d\theta
 \\
 \geqslant &\, \frac{\partial L}{\partial u}\big(\gamma_\ast(s),\dot{\gamma}_\ast(s),u_-(\gamma_\ast(s))\big)-\varepsilon.
\end{split}
\end{equation}
For any $-\tau\leqslant s<s+\Delta s\leqslant0$, we have
\begin{equation}\label{eq:B-15}
u_-(\gamma^{\delta}(s+\Delta s))=T^-_{\Delta s}u_-(\gamma^{\delta}(s+\Delta s))\leqslant u_-(\gamma^{\delta}(s))+\int^{s+\Delta s}_s L\big(\gamma^{\delta}(\xi), \dot{\gamma}^{\delta}(\xi),u_-(\gamma^{\delta}(\xi))\big)\ d\xi,
\end{equation}
and
\begin{equation}\label{eq:B-16}
T^-_{s+\Delta s}u^{\delta}(\gamma^{\delta}(s+\Delta s))=T^-_s u^{\delta}(\gamma^{\delta}(s))+\int^{s+\Delta s}_s L\big(\gamma^{\delta}(\xi), \dot{\gamma}^{\delta}(\xi),T^-_{\xi+\tau} u^{\delta}(\gamma^{\delta}(\xi))\big)\ d\xi.
\end{equation}
For $s\in[-\tau,0]$, define
\begin{equation}\label{eq:B-17}
w^\delta(s):=T^-_{s+\tau}u^{\delta}(\gamma^{\delta}(s))-u_-(\gamma^{\delta}(s))
\end{equation}
Then by Proposition \ref{sg-1} (2), $w^\delta(s)\geqslant0$ and $w^\delta(-\tau)=\delta$. Combining \eqref{eq:B-15} and \eqref{eq:B-16}, we have for $\delta\in(0,\delta_1]$,
\begin{equation}\label{eq:B-18}
\begin{split}
w^\delta(s+\Delta s)&\,\geqslant w^\delta(s)+\int^{s+\Delta s}_s w^\delta(\xi)\int^1_0 \frac{\partial L}{\partial u}\Big(\gamma^{\delta}(\xi), \dot{\gamma}^{\delta}(\xi),u_-(\gamma^{\delta}(\xi))+\theta w^\delta(\xi)\Big)d\theta d\xi\\
&\,\geqslant w^\delta(s)+\int^{s+\Delta s}_s w^\delta(\xi)\Big(\frac{\partial L}{\partial u}\big(\gamma_*(\xi),\dot{\gamma}_*(\xi),u_-(\gamma_*(\xi))\big)-\varepsilon \Big)d\xi,
\end{split}
\end{equation}
where the second inequality follows from \eqref{eq:B-14}. Notice that $w^\delta(s)$ is Lipschitz and thus differentiable almost everywhere, the above inequality implies that
\begin{equation}\label{eq:B-19}
\dot{w}^\delta(s)\geqslant w^\delta(s)\Big(\frac{\partial L}{\partial u}\big(\gamma_*(s),\dot{\gamma}_*(s),u_-(\gamma_*(s))\big)-\varepsilon\Big).
\end{equation}
To finish the proof, we combine Lemma \ref{lem1} and \eqref{eq:B-19} to obtain that for $\delta\in(0,\delta_1]$,
\begin{equation}\label{delta-1}
\begin{split}
&\|u_--T^-_{\tau}u^{\delta}\|_{\infty}\geqslant T^-_{\tau}u^{\delta}(x_*)-u_-(x_*)=w^\delta(0) \\
\geqslant&\,w^\delta(-\tau)\exp\Big\{\int_{-\tau}^0\bigg[\frac{\partial L}{\partial u}(\gamma_*(s),\dot{\gamma}_*(s),u_-(\gamma_*(s)))-\varepsilon\bigg]ds\Big\}>\delta e^{-(a+2\varepsilon)\tau},
\end{split}
\end{equation}
where the last inequality follows from the fact that $w_\delta(-\tau)=\delta$ and \eqref{eq:B-5}.
\end{proof}	

Notice that in the above proof, the assumption (S2) is not fully used and only \eqref{eq:B-5} and Lemma \ref{lem2} are essentially involved. We apply the similar proof to obtain, for instance, \cite[Theorem 1.3]{RWY} on the Lyapunov instability of static solution by specifying the initial data $\varphi$ to be $u^\delta$.

\begin{corollary}\label{cor:lem4}
Assume $u_-\in \mathcal{S}^-$ and for some $a>0$ and ergodic measure $\mu_\ast\in\mathfrak{M}_{u_-}$,
\begin{equation}\label{eq:unstable}
\int_{T^*M\times\R}\frac{\partial H}{\partial u}(x,p,u)\ d\mu_\ast:=-a<0,
\end{equation}
then for any $\delta$ small enough, $\limsup_{t\to+\infty}\|T^-_t u^\delta-u_-\|_{\infty}\geqslant\Delta_0$, where $\Delta_0$ is indicated in Lemma \ref{lem1} (2).
\end{corollary}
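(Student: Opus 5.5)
The statement to prove is Corollary \ref{cor:lem4}: if some ergodic $\mu_\ast\in\mathfrak{M}_{u_-}$ has $\int\frac{\partial H}{\partial u}\,d\mu_\ast=-a<0$, then for small $\delta>0$ the orbit $T^-_tu^\delta$ is eventually kicked a fixed distance $\Delta_0$ away from $u_-$.

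I would argue by contradiction, reusing the mechanism of Lemma \ref{lem4}. Since $\frac{\partial L}{\partial u}=-\frac{\partial H}{\partial u}$, the hypothesis says the backward Birkhoff average of $\frac{\partial L}{\partial u}$ along a $\mu_\ast$-generic calibrated curve $\gamma_\ast:(-\infty,0]\to M$ equals $+a>0$. Fix $\varepsilon=a/4$ and take $\Delta_0=\Delta_0(\varepsilon)$ from Lemma \ref{lem1} (2); that lemma only uses the uniform continuity of $\frac{\partial L}{\partial u}$ on a compact set, so it does not need (S2). Suppose, for contradiction, that $\limsup_{t\to\infty}\|T^-_tu^\delta-u_-\|_\infty<\Delta_0$. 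Then there is $t_1$ with $0\le T^-_tu^\delta-u_-<\Delta_0$ for all $t\ge t_1$; the lower bound $0\le T^-_tu^\delta-u_-$ holds for all $t$ by monotonicity, because $T^-_tu_-=u_-$.

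The core step is the differential inequality along minimizers. Fix a large $\tau$ and a $\mu_\ast$-generic point $x_\ast$ with $\int_{-\tau}^0\frac{\partial L}{\partial u}(\gamma_\ast,\dot\gamma_\ast,u_-(\gamma_\ast))\,ds>(a-\varepsilon)\tau$, which is the analogue of \eqref{eq:B-5}. Take a minimizer $\gamma^\delta$ of $T^-_{t_1+\tau}u^\delta(x_\ast)$, restricted to its last time window of length $\tau$, and set
\[
w(s)=T^-_{t_1+\tau+s}u^\delta(\gamma^\delta(s))-u_-(\gamma^\delta(s)),\qquad s\in[-\tau,0].
\]
Exactly as in \eqref{eq:B-15}--\eqref{eq:B-18}, this gives
\[
\dot w\ge w\int_0^1\frac{\partial L}{\partial u}\big(\gamma^\delta,\dot\gamma^\delta,u_-(\gamma^\delta)+\theta w\big)\,d\theta,
\]
where $0\le w\le\Delta_0$ by the contradiction hypothesis.

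The main obstacle is replacing $(\gamma^\delta,\dot\gamma^\delta)$ by $(\gamma_\ast,\dot\gamma_\ast)$ up to an error $\varepsilon$, which is Lemma \ref{lem2}. That lemma is stated for small $\delta$ and a starting time equal to $\tau$, but here the starting datum is $T^-_{t_1}u^\delta$ rather than $u^\delta$. I would first simplify by choosing $\delta$ small enough that $\|T^-_tu^\delta-u_-\|\le e^{\Lambda t}\delta$ stays below $\delta_1(\varepsilon,\tau)$ on the relevant window. More cleanly, I would apply Lemma \ref{lem2} only on the first window $[0,\tau]$, so that $\tau$ is fixed first and then $\delta\le\delta_1(\varepsilon,\tau)$. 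Integrating the differential inequality on that window gives
\[
\|T^-_\tau u^\delta-u_-\|\ge \delta e^{(a-2\varepsilon)\tau}=:\delta\kappa,\qquad \kappa>1.
\]
To iterate, I would apply the same argument with $\delta$ replaced by $\delta':=\min_x(T^-_\tau u^\delta-u_-)$. This needs a uniform lower bound, not just a bound at one point, and I expect that to be the delicate point. To get it, I would restart from the constant shift: by the comparison principle, $T^-_\tau u^\delta\ge u_-+\delta''$, where $\delta''$ is the minimum. If the minimum degenerates, I would instead compare with $u_-$ plus a positive function and use Lemma \ref{lem2} with nonconstant perturbations.

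Granting that the iteration closes, each step multiplies the gap by $\kappa>1$ as long as the gap stays at most $\min(\delta_1,\Delta_0)$. The gap therefore must exceed that threshold at arbitrarily late times, which contradicts the assumption $T^-_tu^\delta-u_-<\Delta_0$ for $t\ge t_1$ (after shrinking $\Delta_0$ if necessary). This yields $\limsup_{t\to\infty}\|T^-_tu^\delta-u_-\|_\infty\ge\Delta_0$.
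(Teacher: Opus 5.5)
Your one-window estimate is sound: fixing $\tau$ first and then $\delta\leqslant\delta_1(\varepsilon,\tau)$, the comparison of Lemma~\ref{lem4} along the minimizer gives $T^-_\tau u^\delta(x_\ast)-u_-(x_\ast)\geqslant\delta e^{(a-2\varepsilon)\tau}$. The gap is the iteration, and it is not a technicality.

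That bound holds only at $\mu_\ast$-generic points. Restarting Lemma~\ref{lem2}, however, needs a constant shift $u_-+\delta'\leqslant T^-_\tau u^\delta$ on all of $M$, and taking $\delta'=\min_M(T^-_\tau u^\delta-u_-)$ throws the gain away. In general that minimum is exponentially smaller than $\delta$. Take a point whose calibrated curve of $u_-$ spends the window where $\frac{\partial H}{\partial u}>0$; nothing in the hypotheses excludes this. Using that curve as a competitor bounds $T^-_\tau u^\delta-u_-$ there by $\delta e^{-c\tau}$ with $c>0$. The general lower bound is only $\delta e^{-\Lambda\tau}$, obtained by comparison with the subsolution $u_-+\delta e^{-\Lambda t}$ of the evolution equation. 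So what you can guarantee at $x_\ast$ after $n+1$ windows is only of order $\delta e^{(a-2\varepsilon)\tau-n\Lambda\tau}$, which decays.

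The fallback with non-constant perturbations does not fix this. Lemma~\ref{lem2} is stated for constant shifts. Even a version for general data near $u_-$ only places the next minimizer's initial point within a fixed $\varepsilon$ of $\gamma_\ast(-\tau)$, with $\varepsilon$ independent of $\delta$. Off $\pi(\mathrm{supp}\,\mu_\ast)$ you know only $0\leqslant T^-_\tau u^\delta-u_-\leqslant e^{\Lambda\tau}\delta$, so without a Lipschitz bound of order $\delta$ on this difference the gain does not transfer to that point. What is really required is that minimizers of $T^-_tu^\delta(x_\ast)$ shadow $\gamma_\ast$ on windows of unbounded length, and no fixed-$\delta$ application of Lemma~\ref{lem2} yields that. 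Even granting the iteration, it only runs while the gap stays below $\delta_1$. It would therefore give $\limsup\geqslant\min\{\delta_1,\Delta_0\}$ rather than $\Delta_0$, which is what your ``shrinking $\Delta_0$'' concedes.

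The paper does not iterate. It fixes $\delta<\delta_1(\varepsilon,\mathcal T)$ and assumes $\|T^-_tu^\delta-u_-\|_\infty<\Delta_0$ for $t\geqslant\mathcal T_0$. It uses this to obtain \eqref{eq:B-6} along the whole window and takes \eqref{eq:B-7} along $\gamma_\ast$ on $[-t,0]$. It then integrates the differential inequality of Lemma~\ref{lem4} once to get $T^-_tu^\delta(x_\ast)-u_-(x_\ast)\geqslant\delta e^{(a-2\varepsilon)t}$ for all $t\geqslant\mathcal T_0$.

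Be aware, however, that \eqref{eq:B-7} on $[-t,0]$ for every large $t$ is exactly the long-window shadowing you flagged. Lemma~\ref{lem2} supplies it only for windows of length $\mathcal T$ with initial datum $u^\delta$, and the paper asserts the step rather than deriving it. So your diagnosis of where the difficulty sits is correct, but your proposal does not provide the missing ingredient either.

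The obstruction is specific to perturbations from above. For $u_\delta=u_--\delta$, the comparison of Lemma~\ref{lem3} runs along $\gamma_\ast$ itself and needs only the $u$-closeness \eqref{eq:B-6}, which a contradiction hypothesis of the same form would provide; it closes on one long window. For $u^\delta$, comparing along $\gamma_\ast$ gives the inequality in the wrong direction, and that is what forces the argument onto minimizers.
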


\begin{proof}
By the assumption \eqref{eq:unstable}, for any $\eps>0$, there is $\mathcal{T}(\eps)>0$ such that for any  $t\geqslant\mathcal{T}(\eps)$ and $\mu_\ast$-almost $x_*\in M$, the $(u_-,L,0)$-calibrated curve $\gamma_*:(-\infty,0]\to M$ initiating from $\gamma_*(0)=x_*$ satisfies
\begin{equation}\label{eq:uns-1}
\int^0_{-t}\frac{\partial L}{\partial u}\big(\gamma_*(s),\dot{\gamma}_\ast(s),u_-(\gamma_*(s))\big)ds\geqslant (a-\eps) \cdot t.
\end{equation}
Fixing $\eps\in(0,\frac{a}{2})$, we argue by contradiction. Assume there are $(\delta,\mathcal{T}_0)\in(0,\delta_1(\eps,\mathcal{T}))\times[\mathcal{T},+\infty)$ such that for each $t\geqslant\mathcal{T}_0$,
\begin{align}\label{1111}
\|u_--T^-_tu^{\delta_0}\|_{\infty}<\Delta_0.
\end{align}
Hence, \eqref{eq:B-6}-\eqref{eq:B-7} holds for such $\gamma_\ast$. We apply the argument in the proof of Lemma \ref{lem4} above to get for any $t\geqslant\mathcal{T}_0$,
\begin{align*}
\|u_--T^-_{t}u^{\delta}\|_{\infty}\geqslant\,\,T^-_{t}u^{\delta}(x_*)-u_-(x_*)\geqslant\delta e^{(a-2\eps)t}.
\end{align*}
This clearly contradicts \eqref{1111} when $t$ is large enough.
\end{proof}

Continuing the proof of Proposition \ref{prop:JYZ-new}, now for the special initial data $u_\delta, u^\delta$, we are able to show	
\begin{lemma}\label{lem5}
For $\delta\in(0,\min\{\delta_0,\delta_1\}]$, where $\delta_0,\delta_1$ are given in Lemma \ref{lem3}-\ref{lem4},
\begin{equation}\label{exp-cov1}
\liminf\limits_{t\to+\infty}\frac{\ln\|T^-_tu_\delta-u_-\|_{\infty}}{t}\geqslant-(a+2\varepsilon),\quad\liminf\limits_{t\to+\infty}\frac{\ln\|T^-_tu^\delta-u_-\|_{\infty}}{t}\geqslant-(a+2\varepsilon).
\end{equation}
\end{lemma}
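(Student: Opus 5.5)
We need to prove Lemma \ref{lem5}: the lower bound on the exponential rate for the special data $u_\delta,u^\delta$. The plan is to iterate the one-step estimates of Lemma \ref{lem3} and Lemma \ref{lem4} along blocks of length $\tau=\tau(\varepsilon)$, using the semigroup property. We then interpolate between block endpoints using the Lipschitz estimate of Proposition \ref{sg-1} (3).

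We treat $u_\delta$; the case $u^\delta$ is symmetric, with Lemma \ref{lem4} in place of Lemma \ref{lem3}. Fix $\delta\in(0,\min\{\delta_0,\delta_1\}]$ and set $\delta_n:=\min_{x}\{u_--T^-_{n\tau}u_\delta\}$. This is positive: by monotonicity and Lemma \ref{lem3}-type positivity, $T^-_{n\tau}u_\delta<u_-$. Since $T^-_{n\tau}u_\delta\leqslant u_--\delta_n=u_{\delta_n}$ and $\delta_n\leqslant\delta$, Proposition \ref{sg-1} (2) gives
\begin{equation*}
u_-(x_\ast)-T^-_{(n+1)\tau}u_\delta(x_\ast)\geqslant u_-(x_\ast)-T^-_\tau u_{\delta_n}(x_\ast)>\delta_n e^{-(a+2\varepsilon)\tau}.
\end{equation*}
This only controls the deviation at the point $x_\ast$, whereas the next step needs a \emph{uniform} lower gap $\delta_{n+1}$.

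This is the main obstacle. To overcome it, I would not restart from constants at each step. Instead, I would run the differential inequality of Lemma \ref{lem3} directly over the whole interval $[-n\tau,0]$ along the single calibrated curve $\gamma_\ast$ ending at $x_\ast$, which is defined on $(-\infty,0]$. Define $w(s)=u_-(\gamma_\ast(s))-T^-_{s+n\tau}u_\delta(\gamma_\ast(s))$ for $s\in[-n\tau,0]$. Then $w(-n\tau)=\delta$, and by Proposition \ref{sg-1} (2),(3) and $\delta\leqslant\delta_0$ (with Theorem \ref{RWY-1.2} keeping the orbit within $\Delta_0$ for all times, possibly after shrinking $\delta$), $0\leqslant w\leqslant\Delta_0$. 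Lemma \ref{lem1} (2) then yields $\dot w\geqslant w(\partial_uL-\varepsilon)$ along $\gamma_\ast$. Integrating and using \eqref{eq:B-5} with $t=n\tau\geqslant\tau(\varepsilon)$ gives
\begin{equation*}
\|T^-_{n\tau}u_\delta-u_-\|_\infty\geqslant w(0)\geqslant\delta e^{-(a+2\varepsilon)n\tau}.
\end{equation*}
In fact the same holds for every $t\geqslant\tau(\varepsilon)$, not just multiples of $\tau$, because \eqref{eq:B-5} holds for all such $t$.

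The analogue for $u^\delta$ is harder, since Lemma \ref{lem4} uses minimizers of $T^-_tu^\delta$ that must shadow $\gamma_\ast$ on long intervals. There I would use Lemma \ref{lem2} with $\tau$ replaced by $t$, which requires $\delta_1$ to depend on $t$. So for the $u^\delta$ case I would revert to block iteration. On each block, restart from the point $\gamma_\ast(-k\tau)$ along the backward calibrated curve, and use the lower bound $T^-_{k\tau}u^\delta\geqslant u_-$ together with a comparison at the block's initial point. The local gap is propagated to the next block's starting point on $\gamma_\ast$ via the minimizer shadowing $\gamma_\ast$ on that block. Chaining the $n$ factors $e^{-(a+2\varepsilon)\tau}$ then gives the bound.

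Finally, take logarithms, divide by $t$, and pass to the liminf to obtain \eqref{exp-cov1}.
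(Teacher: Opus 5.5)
Your argument for $u_\delta$ is correct. You run the differential inequality of Lemma \ref{lem3} over the whole interval $[-t,0]$ along the single calibrated curve $\gamma_\ast$, and Lyapunov stability (contained in Theorem \ref{RWY-1.2}) keeps $0\leqslant w\leqslant\Delta_0$. This gives $\|T^-_tu_\delta-u_-\|_\infty\geqslant\delta e^{-(a+2\varepsilon)t}$ for every $t\geqslant\tau(\varepsilon)$. Shrinking $\delta$ is harmless, because $0\leqslant u_--T^-_tu_{\delta'}\leqslant u_--T^-_tu_\delta$ for $\delta'\leqslant\delta$. This is more careful than the paper's proof. The paper chains \eqref{eq:B-20} through the monotonicity of $T^-_\tau$, although Lemmas \ref{lem3}--\ref{lem4} give that inequality only at $\mu_\ast$-a.e.\ $x_\ast$, not on all of $M$. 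That is exactly the obstacle you identified.

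The $u^\delta$ half has a genuine gap, and it cannot be closed. Your block iteration meets the same obstacle. After the first block, the datum $T^-_{k\tau}u^\delta$ is no longer a constant shift of $u_-$. The minimizer on the next block starts at a point $y_k$ that is only $\varepsilon$-close to $\gamma_\ast(-k\tau)$, and at $y_k$ you know nothing beyond $T^-_{k\tau}u^\delta(y_k)\geqslant u_-(y_k)$. The proof of Lemma \ref{lem4} needs $w^\delta(-\tau)=\delta$, which holds only because $u^\delta-u_-$ is constant.

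In fact the second inequality is false in general. Take $M=\mathbb{S}^1$, $h=p^2$, $c=2$, $u_-=10+\cos x$ and $\lambda=(2-\sin^2x)/(10+\cos x)>0$. Then $u_-$ is a classical solution, and the flow on $\mathrm{J}^1_{u_-}$ projects to $\dot x=-2\sin x$. The ergodic Mather measures are the Dirac masses over $x=0$ and $x=\pi$, with $a=\lambda(0)=2/11<2/9=\lambda(\pi)$. The discounted representation formula gives
\[
T^-_tu^\delta(x)-u_-(x)=\inf_{\gamma(0)=x}\Big\{\delta e^{-\int_{-t}^0\lambda(\gamma)}+\int_{-t}^0e^{-\int_s^0\lambda(\gamma)}\big(\tfrac{\dot\gamma}{2}+\sin\gamma\big)^2ds\Big\}.
\]
For $|x|\leqslant e^{-t}$, use the following curve. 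Start at distance $e^{-t}$ from $\pi$ and follow $\dot\gamma=-2\sin\gamma$; this has zero defect and spends about $t/2$ near $\pi$ and $t/2$ near $0$. Finish with a jump of size $O(e^{-t})$ in the last unit of time. Its cost is $O(\delta e^{-20t/99})+O(e^{-2t})$. For $|x|>e^{-t}$, the backward calibrated curve spends at most $t/2+O(1)$ near $0$, so it already gives $O(\delta e^{-20t/99})$.

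Hence $\|T^-_tu^\delta-u_-\|_\infty=O(e^{-20t/99})$ with $20/99>2/11$. This contradicts the claim for $\varepsilon<1/99$, and also the case $\varphi>u_-$ of Proposition \ref{prop:JYZ-new}. The mechanism is that minimizers import the faster decay of regions with larger $\lambda$ at negligible cost. Only the $u_\delta$ half of the lemma holds.
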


\begin{proof}
It follows from \eqref{delta-0} and \eqref{delta-1} that for $\varepsilon>0$ and $\tau(\varepsilon)>0$ given in Lemma \ref{lem1} and $\delta\in(0,\min\{\delta_0,\delta_1\}]$,
\begin{equation}\label{eq:B-20}
\begin{split}
u_{\delta e^{-(a+2\varepsilon)\cdot\tau}}(x_*)=u_-(x_*)-\delta e^{-(a+2\varepsilon)\cdot\tau}
\geqslant T^-_{\tau}u_{\delta}(x_*),\\
u^{\delta e^{-(a+2\varepsilon)\cdot\tau}}(x_*)=u_-(x_*)+\delta e^{-(a+2\varepsilon)\cdot\tau}
\leqslant T^-_{\tau}u^{\delta}(x_*).
\end{split}
\end{equation}
holds for $\mu_\ast$-almost every $x_*\in M$. For any $n\in\mathbb{N}$, since $\delta e^{-n(a+2\varepsilon)\tau}<\delta\leqslant\min\{\delta_0,\delta_1\}$, one can use \eqref{eq:B-20} to proceed as
\begin{align*}
&u_{\delta e^{-n(a+2\varepsilon)\cdot\tau}}(x_\ast)\geqslant T^-_{\tau}u_{\delta e^{-(n-1)(a+2\varepsilon)\cdot\tau}}(x_*)\geqslant T^-_{2\tau}u_{\delta e^{-(n-2)(a+2\varepsilon)\cdot\tau}}(x_*)\geqslant...\geqslant T^-_{n\tau}u_\delta (x_*), \\
&u^{\delta e^{-n(a+2\varepsilon)\cdot\tau}}(x_*)\leqslant T^-_{\tau}u^{\delta e^{-(n-1)(a+2\varepsilon)\cdot\tau}}(x_*)\leqslant T^-_{\tau}u^{\delta e^{-(n-2)(a+2\varepsilon)\cdot\tau}}(x_*)\leqslant...\leqslant T^-_{n\tau}u^\delta(x_*),
\end{align*}
from which it follows that
\begin{equation}\label{eq:B-21}
\begin{split}
\| u_--T^-_{n\tau}u_\delta\|_{\infty} \geqslant &\, u_-(x_*)-T^-_{n\tau}u_\delta(x_*)\geqslant\delta e^{-n(a+2\varepsilon)\cdot\tau}, \\
\| u_--T^-_{n\tau}u^\delta\|_{\infty} \geqslant &\, T^-_{n\tau}u^\delta(x_*)-u_-(x_*)\geqslant\delta e^{-n(a+2\varepsilon)\cdot\tau}.
\end{split}
\end{equation}
In general, we write any $t>0$ as $t=n\tau-r$ with $r\in[0,\tau)$, then
\begin{align*}
\|T^-_{n\tau}u_\delta-u_-\|_{\infty}&= \|T^-_r\circ T^-_{t}u_\delta-T^-_r u_-\|_{\infty}\leqslant e^{\Lambda\cdot r} \| T^-_t u_\delta-u_-\|_\infty\leqslant e^{\Lambda\cdot\tau} \| T^-_t u_\delta-u_-\|_\infty,\\
\|T^-_{n\tau}u^\delta-u_-\|_{\infty}&= \|T^-_r\circ T^-_{t}u^\delta-T^-_r u_-\|_{\infty}\leqslant e^{\Lambda\cdot r} \| T^-_t u^\delta-u_-\|_\infty\leqslant e^{\Lambda\cdot\tau} \| T^-_t u^\delta-u_-\|_\infty
\end{align*}
Combining with \eqref{eq:B-21}, we have
\begin{equation}\label{eq:B-22}
\|T^-_t u_\delta-u_-\|_{\infty}\geqslant e^{-\Lambda\cdot r}\|T^-_{n\tau}u_\delta-u_-\|_{\infty}\geqslant\delta e^{-(\Lambda+n(a+2\varepsilon))\cdot\tau}, \quad \|T^-_t u^\delta-u_-\|_{\infty}\geqslant \delta e^{-(\Lambda+n(a+2\varepsilon))\cdot\tau},
\end{equation}
and \eqref{exp-cov1} is a direct consequence of the above inequalities.
\end{proof}

\noindent\textbf{Proof of Proposition \ref{prop:JYZ-new}:} For $\varepsilon>0$, we define $\delta(\varepsilon):=\min\{\delta_0(\varepsilon),\delta_1(\varepsilon),\Delta\}$, where $\Delta$ is given by Theorem \ref{RWY-1.2}. Notice that the parameters $\delta_0,\delta_1$ can be chosen such that they are decreasing in $\varepsilon$, thus for $0<\varepsilon'<\varepsilon$, we have $0<\delta(\varepsilon')\leqslant\delta(\varepsilon)$. By Lemma \ref{lem5} and the fact that $u_{\delta(\varepsilon)}\leqslant u_{\delta(\varepsilon')}<u_-$,
$$
\liminf\limits_{t\to+\infty}\frac{\ln\|T^-_t u_{\delta(\varepsilon)}-u_-\|_{\infty}}{t}\geqslant  \liminf\limits_{t\to+\infty}\frac{\ln\|T^-_t u_{\delta(\varepsilon')} -u_-\|_{\infty}}{t} \geqslant -(a+2\varepsilon').
$$
Since $\varepsilon'$ is arbitrarily	chosen,	it follows that
$$
\liminf\limits_{t\to+\infty}\frac{\ln\|T^-_t u_{\delta(\varepsilon)}-u_-\|_{\infty}}{t}\geqslant-a.
$$
We combine Theorem \ref{RWY-1.2} to obtain that
\begin{equation}\label{eq:B-23}
\lim\limits_{t\to+\infty}\frac{\ln\|T^-_t u_{\delta(\varepsilon)} -u_-\|_{\infty}}{t}=-a.
\end{equation}
The same argument shows that 	
\begin{equation}\label{eq:B-24}
\lim\limits_{t\to+\infty}\frac{\ln\|T^-_t u^{\delta(\varepsilon)} -u_-\|_{\infty}}{t}=-a.
\end{equation}

\vspace{1em}
\noindent Finally, for general initial data $\varphi\in C(M,\R)$ satisfying $\|\varphi-u_-\|_{\infty}\leqslant\Delta$ and $\min_{x\in M}|\varphi(x)-u_-(x)|:=\Delta'>0$, then $0<\Delta'\leqslant\Delta$. We shall focus on the case that $\varphi\leqslant u_{\Delta'}$ and the proof of the case when $\varphi\geqslant u^{\Delta'}$ is completely similar. We choose $\varepsilon>0$ such that $\delta(\varepsilon)<\Delta'$, then the fact $\varphi\leqslant u_{\Delta'}\leqslant u_{\delta(\varepsilon)}<u_-$ and \eqref{eq:B-23} gives
\begin{align*}
\liminf_{t \to \infty} \frac{\ln \|  u_--T_t^-\varphi  \|_\infty}{t}
\geqslant\liminf_{t \to \infty}\frac{\ln \| u_-- T_t^-u_{\Delta'}\|_\infty}{t}\geqslant\lim\limits_{t\to+\infty}\frac{\ln\|T^-_t u_{\delta(\varepsilon)} -u_-\|_{\infty}}{t}=-a.
\end{align*}
Now \eqref{exp-cov} shows that $\lim_{t\to\infty}\frac{\ln \|T_t^-\varphi-u_-\|}{t}=-a$.
\qed
 	
\begin{remark}
Let $\mu_\ast$ be any ergodic measure attaining the minimum of \eqref{B1} and define $\mathcal{M}_\ast=\pi(\,$supp\,$(\mu_\ast))$, where $\pi:T^\ast M\times\R\rightarrow M$ denotes the standard projection. From the proof of Proposition \ref{prop:JYZ-new}, it is readily seen that for any $\varphi\in C(M,\R)$ with $\|\varphi-u_-\|\leqslant\Delta$ and $\min_{x\in\mathcal{M}_\ast}|\varphi(x)-u_-(x)|>0$, the equality \eqref{eq:B-2} also holds.
\end{remark}

\section*{Acknowledgments}
L.Jin is partly supported by the National Key R\&D Program of China 2021YFA1001600. All of the author are supported in part by the National Natural Science Foundation of China (Grant No. 12171096). L.Jin is also supported in part by the NSFC (Grant No. 12371186). J.Yan is also supported in part by the NSFC (Grant No. 12231010,12571197). K.Zhao is also supported by National Natural Science Foundation of China (Grant No. 12301233). The authors also thank Professor K.Z. Wang and Dr.Y.Q.Ruan for helpful discussions on this topic.


\medskip

\begin{thebibliography}{99}\small
\addcontentsline{toc}{section}{References}
\renewcommand{\baselinestretch}{0.0}
\setlength\itemsep{-2pt}

\bibitem{AA}
S.Allais, M-C.Arnaud, {\it The dynamics of conformal Hamiltonian flows: dissipativity and conservativity}. Rev. Mat. Iberoam. 40 (2024), no.3, 987-1021.




\bibitem{AF}
M-C.Arnaud, J.Fejoz: {\it Invariant submanifolds of conformal symplectic dynamics}. J. \'{E}c. polytech. Math. 11 (2024), 159-185.




\bibitem{AFR}
M-C.Arnaud, A.Florio, V.Roos: {\it Vanishing asymptotic Maslov index for conformally symplectic flows}. Ann. H. Lebesgue 7 (2024), 307-355.




\bibitem{AHV}
M-C.Arnaud, V.Humili\`{e}re, C.Viterbo: {\it Higher dimensional Birkhoff attractors. With an appendix by M.Zavidovique}. arXiv:2404.00804v2.




\bibitem{AS}
M-C.Arnaud, X.F.Su: {\it On the $C^1$ and $C^2$-convergence to weak K.A.M. solutions}. Comm. Math. Phys. 392 (2022), no. 3, 825-861.




\bibitem{CCWY}
P. Cannarsa, W. Cheng, K. Wang and J. Yan: {\it Herglotz' generalized variational principle and  contact type Hamilton-Jacobi equations, Trends in Control Theory and Partial Differential Equations.} 39--67. Springer INdAM Ser., \textbf{32}, Springer, Cham, 2019.





\bibitem{CCJWY}
P. Cannarsa, W. Cheng, L. Jin, K. Wang and J. Yan: {\it Herglotz' variational principle and Lax-Oleinik evolution}. J. Math. Pures Appl. \textbf{141} (2020), 99-136.





\bibitem{Be} P. Bernard: {\it Existence of $C^{1,1}$ critical sub-solutions of the Hamilton-Jacobi equation on compact manifolds}, Ann. Sci. \'Ecole Norm. Sup. \textbf{40} (2007), 445-452.





\bibitem{CS}
P. Cannarsa, C. Sinestrari: {\it Semiconcave functions, Hamilton-Jacobi equations, and optimal control}. Progress in Nonlinear Differential Equations and their Applications,  58. Birkh\"{a}user Boston, Inc., Boston, MA,  xiv+304 pp, (2004)

 
\bibitem{CCIZ}
Q. Chen, W. Cheng, H. Ishii, and K. Zhao: {\it  Vanishing contact structure problem and convergence of the viscosity solutions}. Comm. Partial Differential Equations, 44 (2019),  801-Ð836.
 


\bibitem{CFZZ}
Qinbo Chen, Albert Fathi, Maxime Zavidovique, Jianlu Zhang: {\it Convergence of the solutions of the nonlinear discounted HamiltonCJacobi equation: The central role of Mather measures}, Journal de Mathmatiques Pures et Appliques, 181, (2024), 22-57.





\bibitem{CL}
M. Crandall,P.L. Lions: {\it Viscosity solutions of Hamilton-Jacobi equations}. Trans. Amer. Math. Soc.  277 ,(1983) 1-42.






\bibitem{Crandall_Evans_Lions1984}
M. Crandall, L. Evans, and P.-L. Lions: {\it Some properties of viscosity solutions of Hamilton-Jacobi equations}. Trans. Amer. Math. Soc.  282 , (1984) 487--502.






\bibitem{CIP}
G. Contreras, R. Iturriaga, G. P. Paternain: {\it Lagrangian graphs, minimizing measures and Ma\~{n}\'{e}'s critical values}. Geometric and Functional Analysis GAFA, 8(5) (2007), 788-809.





\bibitem{DFIZ}
A. Davini, A. Fathi, R. Iturriaga, M. Zavidovique: {\it Convergence of the solutions of the discounted Hamilton-Jacobi equation: convergence of the discounted solutions}. Invent. Math.  206 (2016),29-55.


\bibitem{DNYZ}
 A. Davini, P. Ni, J. Yan, M. Zavodovique: {\it Convergence/divergence phenomena in the vanishing discount limit of Hamilton-Jacobi equations}, arXiv:2411.13780, 2024.



\bibitem{Fathi_book}
A. Fathi: {\it Weak KAM Theorem in Lagrangian Dynamics}. preliminary version 10, Lyon,
unpublished (2008).






\bibitem{FM_noncompact}
A. Fathi, E. Maderna: {\it Weak KAM theorem on non-compact manifolds}. Nonlinear Differential Equations and Applications Nodea,   14(1-2) (2007),1-27.




\bibitem{FS}
A. Fathi and A. Siconolfi:  {\it Existence of $C^1$ critical subsolutions of the Hamilton-Jacobi equation}, Invent. math. \textbf{155} (2004), 363--388.

 
\bibitem{GMT}
 D. A. Gomes, H. Mitake, and H. V. Tran:{\it  The selection problem for discounted HamiltonÐJacobi equations: some non-convex cases}, J. Math. Soc. Japan. 70 (2018),  345Ð-364.
 


\bibitem{Ishii_chapter}
H. Ishii: {\it A short introduction to viscosity solutions and the large time behavior of solutions of Hamilton-Jacobi equations}. Hamilton-Jacobi equations: approximations, numerical analysis and applications, 111-249, Lecture Notes in Math., 2074, Fond. CIME/CIME Found. Subser., Springer, Heidelberg (2013)

 
\bibitem{IJ}
H. Ishii and L. Jin:{\it  The vanishing discount problem for monotone systems of HamiltonÐJacobi equations. Part 2: nonlinear coupling} Calc. Var. Partial Differential Equations, 59 (2020), 1Ð-28.

\bibitem{IMT1}
H. Ishii, H. Mitake, and H. Tran: {\it  The vanishing discount problem and viscosity Mather measures. Part 1: The problem on a torus} J. Math. Pures Appl., 108 (2017), pp. 125Ð149.

\bibitem{IMT2}
 H. Ishii, H. Mitake, and H. Tran: {\it The vanishing discount problem and viscosity Mather measures. part 2: Boundary value problems} J. Math. Pures Appl., 108 (2017), pp. 261Ð305.
 


\bibitem{JMT}
W. Jing, H. Mitake and H.V. Tran: {\it Generalized ergodic problems: existence and uniqueness structures of solutions}. J. Differential Equations  268, no. 6,(2020) 2886-2909.





\bibitem{JYZ}
L.Jin, J.Yan, K.Zhao: {\it Nonlinear semigroup approach to Hamilton-Jacobi equation$-$a toy model}. Minimax Theory Appl. 8 (2023), no. 1, 61-84.




\bibitem{JYZ2}
L.Jin, J.Yan, K.Zhao: {\it Variational construction of asymptotic orbits in contact Hamiltonian systems}. J. Differential Equations 462 (2026), Paper No. 114141, 37 pp.




\bibitem{MS}
S.Mar\`{o}, A.Sorrentino: {\it Aubry-Mather theory for conformally symplectic systems}. Comm. Math. Phys. 354 (2017), no. 2, 775¨C808.


 
\bibitem{MT}
H. Mitake and H. Tran: {\it Selection problems for a discount degenerate viscous HamiltonÐJacobi equation}. Adv. Math. 306 (2017), 684-Ð703.

\bibitem{NW}
P. Ni, L.Wang: {\it A nonlinear semigroup approach to Hamilton-Jacobi equations-revisited}. J. Differential Equations 403 (2024), 272-307.

\bibitem{NY}
P. Ni, J. Yan: {\it A PDE formulation of Lyapunov stability for contact-type Hamilton-Jacobi equations} arXiv.2604.24329, 2026

\bibitem{NYZ}
P. Ni, J. Yan, M. Zavidovique: {\it Static class-guided selection of elementary solutions in non-monotone
vanishing discount problems}, arXiv:2602.09697, 2026.


\bibitem{LPV_Hom}
P.L.Lions, G.Papanicolaou, S.R.S. Varadhan: {\it Homogenization of Hamilton-Jacobi equations},unpublished work (1987)





\bibitem{RWY}
Y.Q. Ruan, K. Wang, J. Yan: {\it Lyapunov stability and uniqueness problems for Hamilton-Jacobi Equations Without Monotonicity}. Commun. Math. Phys. 406:157(2025).





\bibitem{RWY2}
Y.Q. Ruan, K. Wang, J. Yan: {\it Lyapunov stability and uniqueness problems for Hamilton-Jacobi Equations Without Monotonicity II}.





\bibitem{Tran}
H.V.Tran, {\it Hamilton-Jacobi equations¡ªtheory and applications}. Grad. Stud. Math., 213. American Mathematical Society, Providence, RI, 2021, xiv+322 pp.




\bibitem{WWY1}
K. Wang, L. Wang and J. Yan: {\it Implicit variational principle for contact Hamiltonian systems}. Nonlinearity 30 (2017), 492--515.





\bibitem{WWY2}
K. Wang, L. Wang and J. Yan: {\it Variational principle for contact Hamiltonian systems and its applications}. J. Math. Pures Appl.  123 (2019), 167--200.





\bibitem{WWY3}
K. Wang, L. Wang and J. Yan: {\it Aubry-Mather theory for contact Hamiltonian systems}. Commun. Math. Phys.  366 (2019), 981--1023.





\bibitem{WY2021}
K. Wang and J. Yan: {\it Viscosity solutions of contact Hamilton-Jacobi equations without monotonicity assumptions}. arXiv: 2107.11554.



\bibitem{WYZ} Y. Wang, J. Yan, and J. Zhang:{\it Convergence of viscosity solutions of generalized contact HamiltonÐJacobi equations}. Arch. Rational Mech. Anal. 241 (2021),  1Ð-18.

\bibitem{Z}
M. Zavidovique: {\it Convergence of solutions for some degenerate discounted Hamilton--Jacobi equations}, Analysis \& PDE, 15 (2022),  1287--1311.

\bibitem{Zbook}
M. Zavidovique: {\it Discrete and Continuous Weak KAM Theory: an introduction through examples and its applications to twist maps}, Lecture Notes in Mathematics, Springer, 2025.
 
\end{thebibliography}

\end{document}